\documentclass{amsart}
\usepackage{amsxtra, microtype}
\usepackage{stmaryrd }
\usepackage{mathrsfs}
\usepackage[margin=1.45in]{geometry}
\usepackage{amssymb,amsmath,amsthm,stmaryrd,latexsym,wasysym}
\usepackage[all]{xy}
\usepackage{hyperref}
\usepackage{mathtools}
\usepackage{tikz}
\usepackage{tikz-cd}
\usetikzlibrary{decorations.pathmorphing}
\usetikzlibrary{arrows}

\usepackage{scalerel}

\usepackage{stackrel}

\usepackage{cancel}
\usepackage{appendix}
\theoremstyle{plain}
\usepackage{xcolor}
\usepackage{xspace}
\newcommand{\dvplus}[2]{
\stackrel[{\scriptscriptstyle #2}]{{\scriptscriptstyle #1}}{+}
}


\newtheorem{theorem}{Theorem}[section]
\newtheorem{lemma}[theorem]{Lemma}
\newtheorem{proposition}[theorem]{Proposition}
\newtheorem{corollary}[theorem]{Corollary}
\newtheorem{definition}[theorem]{Definition} \theoremstyle{definition}
\newtheorem{example}[theorem]{Example}
\newtheorem{remark}[theorem]{Remark}
\newcommand{\lie}[1]{\mathfrak{#1}}




\allowdisplaybreaks

\newcommand{\R}{\mathbb{R}} 
 
\newcommand{\inv}{^{-1}}
\newcommand{\N}{\mathbb{N}}

\newcommand{\an}[1]{\arrowvert_{#1}}

\newcommand{\Man}{{\mathbf{Man}^\infty}}
\newcommand{\nset}{{\underline{n}}}
\newcommand{\lset}{{\underline{l}}}
\newcommand{\F}{\mathbb F}
\newcommand{\nvbzero}[2]{
\mathbf{0}^{#1}_{#2}
}
\newcommand{\E}{\mathbb E}

\newcommand{\A}{\mathcal A}
\renewcommand{\S}{\mathcal S}
\renewcommand{\P}{\mathbb P}

\DeclareMathOperator{\pr}{pr}
\DeclareMathOperator{\Hom}{Hom} 

\DeclareMathOperator{\Obj}{Obj}

\DeclareMathOperator{\sgn}{sgn}
\DeclareMathOperator {\id}    {id}

\allowdisplaybreaks

\begin{document}
\title{A geometrisation of $\N$-manifolds}


\author{M.~Heuer} \address{}  \email{malteheuer@hotmail.de}
\author{M.~Jotz} \address{Institut f\"ur Mathematik,
Julius-Maximilians-Universit\"at W\"urzburg, Germany}
\email{madeleine.jotz-lean@mathematik.uni-wuerzburg.de}
\subjclass[2010]{Primary: 
58A50, 
20B30. 
  Secondary:
  53B05, 
  53C05, 
  20B05. 
}

\begin{abstract}
 This paper proposes a \emph{geometrisation} of $\N$-manifolds of degree $n$ as $n$-fold vector bundles equipped with a (signed) $S_n$-symmetry.
 More precisely, it proves an equivalence between the categories of $[n]$-manifolds and the category of symmetric $n$-fold vector bundles, by finding that symmetric $n$-fold vector bundle cocycles and $[n]$-manifold cocycles are identical.
 
 This extends the already known equivalences of $[1]$-manifolds with vector bundles, and of $[2]$-manifolds with involutive double vector bundles, where the involution is understood as an $S_2$-action.
\end{abstract}
\maketitle

\tableofcontents

\section{Introduction}

An $\N$-graded manifold over a smooth manifold $M$ is a sheaf of
$\N$-graded, graded commutative, associative, unital
$C^\infty(M)$-algebras over $M$, that is locally freely generated by
finitely many elements of strictly positive degree.  $\N$-manifolds of
degree $1$ are easily seen to be 
the exterior algebras of sections of smooth vector bundles. 
$\N$-manifolds of degree $2$, called for short $[2]$-manifolds, have recently been 
geometrised to double vector bundles with a linear \emph{indirect} involution \cite{Jotz18b}.
(Such double vector
bundles were called ``symmetric double vector bundles with inverse
symmetry'' by Pradines \cite{Pradines77}.)
Before that, Lie $2$-algebroids i.e.~$[2]$-manifolds with a \emph{homological vector
field}, were linked to VB-Courant algebroids by
Li-Bland in his thesis \cite{Li-Bland12}, see also \cite{Jotz19b}, building up on the
correspondence of Courant algebroids with symplectic Lie 2-algebroids
\cite{Roytenberg02, Severa05}.

Generally, positively graded manifolds are the geometric objects underlying Lie $n$-algebroids, which are widely accepted to infinitesimally describe Lie $n$-groupoids.
A Lie $n$-algebroid is an $[n]$-manifold equipped with a  homological vector field, i.e.~a vector field of degree $1$ that squares to zero.   Lie $n$-algebroids appeared in the early
00's in Voronov's study of Lie bialgebroids \cite{Voronov02} and in
Roytenberg's supergeometric approach to Courant algebroids
\cite{Roytenberg02}, see also \cite{Severa05}. Precisely, Courant algebroids are equivalent to Lie $2$-algebroids equipped with a compatible symplectic structure of degree $2$.
This is at the origin of the interest for Lie $n$-algebroids in the Poisson community, as this fact leads to a path towards the \emph{integration} of Courant algebroids -- to Lie $2$-groupoids with an additional geometric structure. The correspondence of symplectic Lie $2$-algebroids with Courant algebroids fits in fact in the more general equivalence of Lie $2$-algebroids with VB-Courant algebroids \cite{Li-Bland12,Jotz19b}, which, in turn, is based on the underlying equivalence between double vector bundles equipped with a linear involution, and positively graded manifolds generated in the degrees $1$ and $2$ \cite{Jotz18b}.

Note that involutive double vector bundles dualise to metric double vector bundles, and the two classes of objects are therefore equivalent.
Understanding the equivalence between metric or involutive double vector bundles and $[2]$-manifolds provides a precise dictionary, as summarised below, for linear geometric structures on involutive double vector bundles versus compatible geometric structures on $[2]$-manifolds. 
\begin{center}\label{table_question}
\begin{tabular}{ |c|c|} 
\hline
Double vector bundles & Degree $2$ graded geometry \\
 \hline
 metric double vector bundles & $[2]$-manifolds \\ 
 metric VB-algebroids  & Poisson $[2]$-manifolds  \\ 
 VB-Courant algebroids &  Lie $2$-algebroids \\
 LA-Courant algebroids & Poisson Lie $2$-algebroids\\
 tangent doubles of Courant algebroids & symplectic Lie $2$-algebroids \\
 \hline
\end{tabular}
\end{center}
In particular, several new explicit examples of Lie $2$-algebroids, of Poisson $[2]$-manifolds and of Poisson Lie $2$-algebroids arise as the counterparts of known examples of VB-Courant algebroids, of metric VB-algebroids and of LA-Courant algebroids \cite{Jotz18b, Jotz19b, Jotz20}.

\medskip

The involution of an involutive double vector bundle can be understood as an $S_2$-action, while a classical vector bundle, which is commonly known as the geometrisation of a $[1]$-manifold, is trivially acted upon by the trivial group $S_1$.
The goal of this paper, which can be considered a sequel of \cite{Jotz18b}, 
is to establish an equivalence between the category of \emph{$S_n$-symmetric $n$-fold vector bundles} and the category of positively graded manifolds 
of degree $n$.
In other words, this paper geometrises $[n]$-manifolds  via $n$-fold vector bundles with a compatible $S_n$-action.
This is the groundwork needed for the geometrisation of Lie $n$-algebroids, or of Lie $n$-algebroids with additional compatible 
geometric structures such as symplectic or Poisson structures.

The approach in \cite{Jotz18b} is a classical one; an extension of the
construction of vector bundles over a manifold $M$ from free and
locally finitely generated sheaves of $C^\infty(M)$-modules, using the
double vector bundle charts in \cite{Pradines77}. 
Here also, the key to the geometrisation of $[n]$-manifolds is a precise understanding of the atlases of symmetric $n$-fold vector bundles, which, in turn, only exist
once it has been proved that symmetric $n$-fold vector bundles can always be equivariantly decomposed.
The authors prove in \cite{HeJo20} that $n$-fold vector bundles always admit a linear decomposition, and this paper proves that a \emph{symmetric} $n$-fold vector bundle admits a \emph{symmetric} linear decomposition, and so a symmetric $n$-fold vector bundle atlas.
Unlike in the case $n=2$, adding the symmetry to the picture requires a thorough reconsideration of the proof of the decomposition in the general case. This, together with a little gap in \cite{HeJo20}, 
is the reason why some parts of \cite{HeJo20} are considerably revisited here.

However, it turns out that the equivalence of $[n]$-manifolds with symmetric $n$-fold vector bundles is obtained in a more straightforward manner by studying cocycles for both types of geometric structures. This paper concentrates hence on showing that $[n]$-manifold cocycles and symmetric $n$-fold vector bundle cocycles are basically the same objects, in particular with same cocycle conditions.

Along the way, an explicit formula for morphisms of split $[n]$-manifolds and for the composition of two such morphisms is given. The latter is heavily inferred by the correspondence 
of split $[n]$-manifolds with decomposed symmetric $n$-fold vector bundles, and, as far as the authors know, the first explicit formula avoiding Koszul signs, or more precisely computing them explicitly.

This paper does not describe how the graded functions of an $[n]$-manifold correspond to special functions on a symmetric $n$-fold vector bundle. This is part of a further project in progress. Also, the general equivalence of $\mathbb N$-manifolds (or arbitrary, even infinite, degree) with symmetric multiple vector bundles is easily deduced from the main result of this paper, but will be carried out elsewhere.

\medskip

The differentiation of Lie $n$-groupoids to Lie $n$-algebroids is considered folklore in the research community, and goes back to 
Severa \cite{Severa06}.
 However, a precise differentiation process carrying out all the details, in particular the multiple Lie brackets -- or in other words the homological vector field --  has not been published yet. Recently, Kadiyan and Blohmann, as well as independently Du, Fernandes, Ryvkin, and Zhu
have announced a differentiation process, that involves simplicial structures closely related to the $n$-cube categories used here, and to symmetric $n$-fold vector bundles.
This paper hence builds the geometric fundaments for the infinitesimal description of Lie $n$-groupoids as Lie $n$-algebroids, via the intermediate step of symmetric $n$-fold vector bundles with an additional geometric structure
that will correspond to the homological vector field.

\subsection*{Outline, main results and applications}
This paper is organised as follows.  

Section \ref{section_partitions} collects some facts about (ordered) integer partitions, ordered partitions of finite subsets of $\N$ and their sign, cube categories.
Partitions and integer partitions, as well as the interplay between the two notions, are at the core of many constructions in this paper.

Section \ref{background_n_manifolds} 
quickly recalls the definitions of $[n]$-manifolds and their morphisms. Then it discusses in details the split case, in particular morphisms of split $\N$-manifolds and their composition. $\N$-manifold cocycles are then defined as structures that are equivalent to $\N$-manifolds.

Section \ref{multiple_vb} gives necessary background on  $n$-fold vector bundles, as well as on their cores, their linear splittings and their linear decompositions. 
Then it introduces a new indexing for the iterated highest order cores of an $n$-fold vector bundle, and fills a gap in the proof of Corollary 3.6 in \cite{HeJo20}, establishing the existence
 of a linear decomposition for each $n$-fold vector bundle \cite{HeJo20}. This yields that each $n$-fold vector bundle has an $n$-fold vector bundle atlas \cite{HeJo20}. Appendix \ref{dec_splittings} refines the correspondence between linear splittings and decompositions of $n$-fold vector bundles by showing a uniqueness result, which is needed in this paper.

Section \ref{symmetric_nvb} introduces symmetric $n$-fold vector bundles and their morphisms. It discusses decomposed symmetric $n$-fold vector bundles, as well as symmetric linear splittings and decompositions of symmetric $n$-fold vector bundles. Morphisms of decomposed symmetric $n$-fold vector bundles, as well as their composition, are discussed in detail since this is crucial for defining symmetric $n$-fold vector bundle cocycles. The existence of a symmetric linear decomposition of a symmetric $n$-fold vector bundle is proved in Appendix \ref{proof_dec_sym_nvb}.

Finally, Sections \ref{algebraisation} constructs from the equality of the cocycles the functors from symmetric $n$-fold vector bundles to $[n]$-manifolds, and from $[n]$-manifold to symmetric $n$-fold vector bundles, that establish together the equivalence between the two categories.

\subsection*{Relation to other work}
Note that the question solved in this paper is already raised in 
\cite{Vishnyakova19}. More precisely, the author of \cite{Vishnyakova19} asks how to extend the table on Page \pageref{table_question} with the geometrisation of graded manifolds of higher degree. However, $n$-fold vector bundles are \emph{defined} in this reference as special $\mathbb Z^n$-graded manifolds, see also
\cite{Voronov12} and \cite{GrRo09}. The idea is that a double vector bundle $D$ with sides $A$ and $B$ defines a bigraded algebra $D[1]_A[1]_B$, see also \cite{GrMe10a}, and similarly, $n$-fold vector bundles define $n$-graded algebras.
The paper \cite{Vishnyakova19} conversely associates to an $[n]$-manifold $\mathcal M$ its $n$-times iterated tangent bundle and establishes using this idea an equivalence between graded manifolds and multiple vector bundles. 

The approach and result here are different since $[n]$-manifolds are geometrised by objects in ``classical'' differential geometry --  in the sense that the symmetric $n$-fold vector bundles considered here are not considered in the graded setting.

\subsection*{Acknowledgements}
The authors thank Leonid Ryvkin and Chenchang Zhu for encouraging them to complete this long-standing project and for discussions on the differentiation of Lie $n$-groupoids. They thank as well  Christian Blohmann and Lory Kadiyan for interesting discussions, and again Lory Kadiyan for her careful reading of and useful comments on an early version of this paper.

\subsection{Notation and convention}

The cardinality of a finite set $S$ is written $\# S$. As a convention, $0$ is not considered a natural number.

\medskip

Let $E\to M$ and $F\to N$ be smooth vector bundles and let $\omega\colon E\to F$ be 
a morphism of vector bundles over a smooth map $\omega_0\colon M\to
N$. 
The dual of $\omega$ is in general not a morphism of
vector bundles, but a morphism $\omega^\star$ of modules over the unital algebra morphism
$\omega_0^*\colon C^\infty(N)\to C^\infty(M)$:
\begin{equation}\label{dual_of_VB_map}
\omega^\star\colon \Gamma(F^*)\to \Gamma(E^*),
\qquad  \omega^\star(\epsilon)(m)=\omega_m^*(\epsilon_{\omega_0(m)})
\end{equation}
for all $\epsilon\in\Gamma(F^*)$ and $m\in M$. 
The following lemma is immediate (see e.g.~the appendix of \cite{Jotz18b} for a proof).
\begin{lemma}\label{bundlemap_eq_to_morphism}
  The map $\cdot^\star$, that sends a morphism of vector bundles
  $\omega\colon E\to F$ over $\omega_0\colon M\to N$ to the morphism
  $\omega^\star\colon\Gamma(F^*)\to\Gamma(E^*)$ of modules over
  $\omega_0^*\colon C^\infty(N)\to C^\infty(M)$, is a bijection.
\end{lemma}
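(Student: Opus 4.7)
The plan is to verify the two directions of bijectivity separately; both reduce to standard arguments about sections of vector bundles, but well-definedness in the surjective direction requires care because a module morphism over $\omega_0^*$ is a priori only defined on global sections.

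For injectivity, suppose $\omega,\omega'\colon E\to F$ are two vector bundle morphisms over the same base map $\omega_0\colon M\to N$ with $\omega^\star=\omega'^\star$. Fix $m\in M$. For any $\epsilon\in F^*_{\omega_0(m)}$, there is a global section $\tilde\epsilon\in\Gamma(F^*)$ with $\tilde\epsilon_{\omega_0(m)}=\epsilon$ (extend a local constant section via a bump function). By assumption $\omega_m^*(\epsilon)=\omega^\star(\tilde\epsilon)(m)=\omega'^\star(\tilde\epsilon)(m)=(\omega'_m)^*(\epsilon)$, so $\omega_m^*=(\omega'_m)^*$ for every $m$, hence $\omega_m=\omega'_m$ and $\omega=\omega'$.

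For surjectivity, one must start from a unital algebra morphism $\omega_0^*\colon C^\infty(N)\to C^\infty(M)$, which by Milnor's exercise comes from a unique smooth map $\omega_0\colon M\to N$, and a module morphism $\Phi\colon\Gamma(F^*)\to\Gamma(E^*)$ over $\omega_0^*$. I would construct $\omega\colon E\to F$ over $\omega_0$ fibrewise by first defining its dual $\omega_m^*\colon F^*_{\omega_0(m)}\to E^*_m$. For $\epsilon\in F^*_{\omega_0(m)}$, choose any global extension $\tilde\epsilon\in\Gamma(F^*)$ with $\tilde\epsilon_{\omega_0(m)}=\epsilon$ and set $\omega_m^*(\epsilon):=\Phi(\tilde\epsilon)(m)$. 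The key step is well-definedness, i.e.~showing that if $\tilde\epsilon$ vanishes at $\omega_0(m)$, then $\Phi(\tilde\epsilon)(m)=0$. This is proved in two stages: (i) \emph{locality}: if $\tilde\epsilon$ vanishes on an open neighbourhood $V$ of $\omega_0(m)$, choose $\chi\in C^\infty(N)$ with $\chi=0$ on a neighbourhood of $\omega_0(m)$ and $\chi\cdot\tilde\epsilon=\tilde\epsilon$; then $\Phi(\tilde\epsilon)(m)=\omega_0^*(\chi)(m)\Phi(\tilde\epsilon)(m)=\chi(\omega_0(m))\Phi(\tilde\epsilon)(m)=0$; (ii) \emph{pointwise vanishing}: on a trivialising neighbourhood $U\ni\omega_0(m)$ of $F^*$ with local frame $s_1,\dots,s_r$, write $\tilde\epsilon=\sum_j g_j s_j$ on $U$ with $g_j(\omega_0(m))=0$, multiply by a bump function $\chi$ supported in $U$ with $\chi(\omega_0(m))=1$ to get globally defined sections, reduce via (i) to $\Phi(\chi\tilde\epsilon)(m)$, and conclude using the module property that this equals $\sum_j g_j(\omega_0(m))\Phi(\chi s_j)(m)=0$.

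Once $\omega_m^*$ is well-defined, linearity in $\epsilon$ is immediate from additivity of $\Phi$. Smoothness of the assembled map $\omega\colon E\to F$ follows because for any $\epsilon\in\Gamma(F^*)$ the composition $m\mapsto \omega_m^*(\epsilon_{\omega_0(m)})=\Phi(\epsilon)(m)$ is smooth, and smoothness against all sections of $F^*$ implies smoothness of $\omega^*$ and hence of $\omega$. By construction $\omega^\star=\Phi$, so the map $\cdot^\star$ is surjective. I expect the main technical obstacle to be the well-definedness in step (ii): the proof of locality and pointwise vanishing via bump functions is the one point where the argument is not merely formal and where the module-over-$\omega_0^*$ hypothesis is used in an essential way.
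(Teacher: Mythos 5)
Your argument is correct and is essentially the standard localization proof that the paper itself omits, deferring to the appendix of \cite{Jotz18b}: injectivity by extending covectors to global sections, surjectivity by recovering $\omega_0$ via Milnor's exercise and defining $\omega_m^*$ through arbitrary extensions, with well-definedness from the usual locality-plus-local-frame bump-function argument and smoothness checked against a local frame. The one bookkeeping point to tighten in step (ii) is that the module property requires \emph{globally} defined functions and sections, so one should pass to $\chi^2\tilde\epsilon=\sum_j(\chi g_j)(\chi s_j)$ (or two nested bumps) rather than $\chi\tilde\epsilon$, after which the evaluation at $m$ vanishes exactly as you state.
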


\medskip

Let $E\to M$ be a smooth vector bundle.
This paper adopts the convention that the wedge product $\omega\wedge\eta\in\Gamma(\wedge^{k+l}E^*)$ of two forms $\omega\in\Gamma(\wedge^kE^*)$ and $\eta\in\Gamma(\wedge^lE^*)$ is given by 
\[\omega\wedge \eta=\frac{(k+l)!}{k!l!}\operatorname{Alt}(\omega\otimes \eta), 
\]
i.e.~
\begin{equation}\label{standard_wedge_convention}
(\omega\wedge\eta)(e_1,\ldots, e_{k+l})=\frac{1}{k!l!}\sum_{\sigma\in S_{k+l}}(-1)^\sigma\cdot \omega(e_{\sigma(1)}, \ldots, e_{\sigma(k)})\cdot \eta(e_{\sigma(k+1)}, \ldots, e_{\sigma(k+l)})
\end{equation}
for $e_1,\ldots, e_{k+l}$ in the same fiber of $E$.

\section{Partitions and integer partitions}\label{section_partitions}
Partitions and integer partitions, together with a notion of sign that partitions define, are at the core of the equivalence between $\N$-manifolds and symmetric multiple vector bundles.
Partitions of finite subsets of $\N$ are also important for indexing iterated higher order cores of multiple vector bundles. This section collects all the notions, notations and constructions with partitions that are needed in the rest of the paper.
\medskip

In this paper the following notation is used.
For $\nset:=\{1,\ldots,n\}$ the 
\textbf{standard $n$-cube category $\square^n$} 
is the category with subsets $I$ of $\nset$ as objects and with arrows
$I\to J\, \Leftrightarrow \, J\subseteq I$.
More generally, an \textbf{$n$-cube category} is
a category that is isomorphic to the standard $n$-cube category
  $\square^n$.

\subsection{Partitions and cube-categories}
Choose a finite subset $I\subseteq \nset$. A \textbf{partition of $I$} is a set $\{I_1,\ldots, I_l\}$ of pairwise disjoint and non-empty subsets $I_1, \ldots, I_l\subseteq I$ for some $l\in\mathbb N$, such that $I=I_1\cup\ldots \cup I_l$. 
Since the order of the subsets \emph{should not} have any significance, i.e.~$\{I_1,I_2,\ldots, I_l\}$ is naturally the same partition of $I$ as $\{I_2,I_1,\ldots, I_l\}$,
for clarity a partition is always  assumed to be \emph{ordered}, and if not mentioned otherwise, in the \textbf{canonical order}.
That is,
$0<\# I_1\leq \# I_2\leq \ldots \leq \# I_l$ and the subsets of same
cardinality are ordered by lexicographic order. Later on, the fact that partitions are always ordered is also crucial for defining \emph{signs} of partitions.
The set of (canonically ordered) partitions of $I$ is written $\mathcal P(I)$ and a partition $\{I_1,\ldots,I_l\}$  is written as a list $(I_1,\ldots, I_l)$ if it needs to be emphasized that it is ordered -- in particular if it is non-canonically ordered, in which case this is always clearly stated.
However, in some situations, it is simpler if the indices of the elements of a partition do not take into account that the partition is ordered (see for instance Lemma \ref{intersectionl-1} below), and then the partition is simply written as a set.

\medskip

Each partition $\rho=(I_1,\ldots, I_l)$ of $\nset$ in $l$ subsets defines an \textbf{$l$-cube category $\lozenge^\rho$} with 
objects the subsets 
\begin{equation}\label{building_bundles_l_cores} 
J\subseteq \nset \text{ such that for each }i=1,\ldots, l, \text{ either } I_i\cap J=\emptyset \text{ or } I_i\subseteq J
\end{equation}
and with arrows $J\to J' \,\, \Leftrightarrow \,\,J'\subseteq J$. 
The $\#\rho$-cube category $\lozenge^\rho$ is a full subcategory of $\square^n$. 
The inclusion functor is written \[i^\rho\colon \lozenge^\rho\to\square^n.\]

\begin{remark}
Given $\rho=(I_1,\ldots, I_l)$, with $l\in\{1,\ldots, n\}$, then the objects of $\lozenge^\rho$ are the unions of elements of $\rho$. The canonical equivalence with the standard $l$-cube category $\square^l$ is given by sending $J\in\Obj(\square^l)$ to $\cup_{j\in J}I_j\in \Obj(\lozenge^\rho)$ and vice-versa. So $\lozenge^\rho$ is really just $\square^l$, but with the ``indices'' $I_1,\ldots, I_l$ replacing the natural numbers $1, \ldots, l$. \emph{Then $\square^n$ is simply understood as the $n$-cube category over the elements $1, \ldots, n$, while $\lozenge^\rho$ is the $l$-cube category over the elements $I_1,\ldots, I_l$.} This point of view is useful for expressing some of the constructions later in the paper, in particular in Section \ref{higher_cores}, and also for keeping track of them.
\end{remark}

Note that if $\rho=(I_1,\ldots,I_l)$ is a canonically ordered partition of
$I\subseteq\nset$, then $(\# I_1,\ldots,\# I_l)$ is a naturally ordered integer partition of
$\# I$. The tuple $(\# I_1,\ldots,\# I_l)$ is called the \textbf{size} of the partition $(I_1,\ldots,I_l)$, and also denoted by $|I_1,\ldots, I_l|=|\rho|$.
In general, integer partitions are assumed to be ordered and written as tuples. If not specified otherwise, they are naturally ordered.

\medskip

For simplicity, a partition of $\nset$ in $l$ subsets is sometimes called an \textbf{$l$-partition} of $\nset$ and $l$ is called the \textbf{length} of $\rho$.
Let $\rho$ be such an $l$-partition of $\nset$. Then a \textbf{coarsement} of $\rho$ is a partition $\rho'$ of $\nset$ that consists of (non-empty) unions of the elements of $\rho$.
The length of $\rho'$ is thus necessarily less than or equal to the length of $\rho$.
The set of coarsements of $\rho$ is written $\operatorname{coars}(\rho)$. Given $(J_1,\ldots, J_{l'})\in\operatorname{coars}(\rho)$ and  $i\in\{1,\ldots, l'\}$,  then $\rho\cap J_i$
is defined to be the (canonically ordered) partition of $J_i$ given by the elements of $\rho$ it is a union of.

A partition $\rho'$ of $\nset$ is a \textbf{refinement} of $\rho$ if $\rho$ is a coarsement of $\rho'$. For the study of cores in Section \ref{higher_cores}, it is important to understand the different coarsements of length $l-2$ of an $l$-partition.

\begin{lemma}\label{intersectionl-1}
Let $\rho=\{I_1,\ldots, I_l\}$ be a partition of $\nset$. Consider two different \emph{coarsements} \[\rho_{ij}=\left\{I_i\cup I_j, I_1, \ldots, \widehat{I_i}, \ldots, \widehat{I_j}, \ldots, I_l\right\}
\text{ and } \rho_{rs}=\left\{I_r\cup I_s, I_1, \ldots, \widehat{I_r}, \ldots, \widehat{I_s}, \ldots, I_l\right\}\] of length $l-1$ of $\rho$, with $i<j$ and $r<s$ in $\{1, \ldots, l\}$.
Then 
\[ \Obj\left(\lozenge^{\rho_{ij}}\right)\cap \Obj\left(\lozenge^{\rho_{rs}}\right)=\Obj\left(\lozenge^{\rho'}\right)
\]
with $\rho'$ the coarsement of length $l-2$ of $\rho$ given by 
\begin{equation}\label{intersection_rho}
 \rho'=\left\{\begin{array}{ll}
\left\{I_i\cup I_j\cup I_r;  I_t \mid t\in\lset, t\neq i,j,r\right\} & \text{ if } i=s \text{ or } j=s,\\
 \left\{I_i\cup I_j\cup I_s;  I_t \mid t\in\lset, t\neq i,j,s\right\} & \text{ if } i=r \text{ or } j=r,\\
 \left\{ I_i\cup I_j, I_r\cup I_s; I_t \mid t\in\lset, t\neq i,j,r,s\right\} & \text{ if } i\neq r,s \text{ and } j\neq r,s.
\end{array}\right.
\end{equation}

\end{lemma}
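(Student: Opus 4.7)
The plan is to unpack what it means for a subset $J\subseteq\nset$ to belong to each of $\Obj(\lozenge^{\rho_{ij}})$ and $\Obj(\lozenge^{\rho_{rs}})$, and then to compare the resulting conditions with those defining $\Obj(\lozenge^{\rho'})$ for the three cases of (\ref{intersection_rho}). The key small observation, which I would record first, is the following: since $I_i$ and $I_j$ are disjoint, the condition ``$(I_i\cup I_j)\cap J=\emptyset$ or $I_i\cup I_j\subseteq J$'' is equivalent to
\[
(I_i\cap J=\emptyset \text{ and } I_j\cap J=\emptyset) \quad\text{or}\quad (I_i\subseteq J \text{ and }I_j\subseteq J).
\]
Introducing the shorthand that $I_k$ is \emph{synchronised} with $I_{k'}$ relative to $J$ when either both are contained in $J$ or both are disjoint from $J$, this shows that $J\in\Obj(\lozenge^{\rho_{ij}})$ iff $J\in\Obj(\lozenge^\rho)$ (equivalently, every $I_k$ is either disjoint from or contained in $J$) and the pair $(I_i,I_j)$ is synchronised relative to $J$.

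Applying the same reformulation to $\rho_{rs}$, I obtain that $J$ lies in $\Obj(\lozenge^{\rho_{ij}})\cap\Obj(\lozenge^{\rho_{rs}})$ if and only if $J\in\Obj(\lozenge^\rho)$, the pair $(I_i,I_j)$ is synchronised, and the pair $(I_r,I_s)$ is synchronised. The remaining work is a case analysis on the overlap of $\{i,j\}$ and $\{r,s\}$, using that synchronisation of subsets of $\{I_1,\ldots,I_l\}$ with respect to a fixed $J$ is an equivalence relation (reflexive and transitive in particular).

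In the case $\{i,j\}\cap\{r,s\}=\emptyset$, the two synchronisation constraints are independent, so the conditions are exactly those defining $\Obj(\lozenge^{\rho'})$ with $\rho'=\{I_i\cup I_j, I_r\cup I_s, I_t\mid t\neq i,j,r,s\}$. If on the other hand the two pairs share a common index, say $i=r$ (and similarly in the remaining subcases $i=s$, $j=r$, $j=s$), then by transitivity the three sets $I_i=I_r, I_j, I_s$ are pairwise synchronised relative to $J$, which amounts to the single condition that $I_i\cup I_j\cup I_s$ is either disjoint from or contained in $J$. This is precisely the condition for $J$ to belong to $\Obj(\lozenge^{\rho'})$ with $\rho'=\{I_i\cup I_j\cup I_s, I_t\mid t\neq i,j,s\}$, matching (\ref{intersection_rho}), and the three other subcases are handled symmetrically.

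Conversely, in each case it is immediate that every $J\in\Obj(\lozenge^{\rho'})$ satisfies both synchronisation conditions (and the $\Obj(\lozenge^\rho)$ condition), so lies in both $\Obj(\lozenge^{\rho_{ij}})$ and $\Obj(\lozenge^{\rho_{rs}})$. The main point is thus conceptual rather than technical: the whole statement reduces to observing that the only interaction between the two $(l-1)$-coarsements is through shared indices, which merge the corresponding synchronisation classes by transitivity. The only mild obstacle is bookkeeping the four subcases of the ``shared index'' situation and checking that the formula (\ref{intersection_rho}) records them correctly; no further computation is involved.
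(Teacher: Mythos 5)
Your proposal is correct and follows essentially the same route as the paper: both unwind the definition of $\Obj(\lozenge^{\rho_{ij}})$ to see that membership amounts to membership in $\Obj(\lozenge^{\rho})$ together with the constraint that $I_i$ and $I_j$ are simultaneously contained in or disjoint from $J$, and then perform the case analysis on whether $\{i,j\}$ and $\{r,s\}$ overlap. Your ``synchronisation is an equivalence relation'' phrasing is just a clean repackaging of the paper's elimination of its cases (iii) and (iv) when the index sets share an element, so there is nothing substantively different to compare.
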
 

\begin{definition}
In the situation of the previous lemma, the $(l-2)$-cube category $\lozenge^{\rho'}$ is denoted $\lozenge^{\rho_{ij}}\cap\lozenge^{\rho_{rs}}$, and called the \textbf{intersection} 
of the two $(l-1)$-cube categories $\lozenge^{\rho_{ij}}$ and $\lozenge^{\rho_{rs}}$. It is again a full subcategory of $\square^n$.

The partition $\rho'$ is itself sometimes written $\rho_{ij}\sqcap\rho_{rs}$. It is the unique common $(l-2)$-coarsement of  $\rho_{ij}$ and $\rho_{rs}$.
\end{definition}

\begin{proof}[Proof of Lemma \ref{intersectionl-1}]
A subset $K\subseteq \nset$ is an object of $\lozenge^{\rho_{ij}}$ and of $\lozenge^{\rho_{rs}}$ if and only if 
$K\cap I_t=\emptyset$ or $I_t\subseteq K$ for all $t\in \{1,\ldots,l\}\setminus\{i,j,r,s\}$ and 
\begin{enumerate}
\item[(i)] $I_i\cup I_j\subseteq K$ and $I_r\cup I_s\subseteq K$ i.e.~$I_i\cup I_j\cup I_r\cup I_s\subseteq K$, or 
\item[(ii)] $(I_i\cup I_j)\cap K=\emptyset$ and $(I_r\cup I_s)\cap K=\emptyset$ i.e.~$(I_i\cup I_j\cup I_r\cup I_s)\cap K=\emptyset$, or 
\item[(iii)] $I_i\cup I_j\subseteq K$ and $(I_r\cup I_s)\cap K=\emptyset$, or
\item[(iv)] $I_r\cup I_s\subseteq K$ and $(I_i\cup I_j)\cap K=\emptyset$.
\end{enumerate}
If $\{i,j\}\cap\{r,s\}\neq \emptyset$, the cases (iii) and (iv) are not possible. The intersection $\{i,j\}\cap \{r,s\}$ has then one element because $\rho_{ij}\neq \rho_{rs}$ and by (i) and (ii) $K$ is an object of $\lozenge^{\rho'}$ with one of the first two descriptions of $\rho'$ in \eqref{intersection_rho}.
If $\{i,j\}\cap\{r,s\}=\emptyset$ then $K$  is an object of $\lozenge^{\rho'}$ with the third description of $\rho'$.
\end{proof}

\subsection{Partitions and signs}\label{partitions_and_orders}
For $\sigma\in S_n$ and $I\subseteq\nset$, denote by
$\epsilon(\sigma,I)$ the \emph{order of inversions} (or simply the \emph{sign}) of
$\sigma\an{I}\colon I\to \sigma(I)$, where $I$ and $\sigma(I)$ are
equipped with the natural order inherited from the natural numbers.
That is, 
\[ \epsilon(\sigma,I):=(-1)^{\#\{(i,j)\in I\times I\mid i<j,
  \sigma(i)>\sigma(j)\}}=\prod_{\substack{i,j\in I\\ i<j}}\frac{\sigma(j)-\sigma(i)}{j-i}
\]
For $\sigma,\nu\in S_n$ and $I\subseteq\nset$, the product formula
\begin{equation}\label{prod1}
\epsilon(\sigma\nu,I)=\epsilon(\sigma,\nu(I))\cdot\epsilon(\nu,I)
\end{equation}
is immediate.

\bigskip

An ordered
partition $\rho=(I_1,\ldots,I_l)$ of $I$ defines as follows a reordering of the elements of $I$. If
$I_j=\{i^j_1,\ldots,i^j_{k_j}\}$ for $j=1,\ldots,l$, with $i^j_1,\ldots,i^j_{k_j}\in\N$ naturally ordered, then
\begin{equation}\label{reordering_I}
i^1_1,\ldots,i_{k_1}^1,i^2_1,\ldots,i^2_{k_2},\ldots,i^l_1,\ldots,i^l_{k_l}
\end{equation}
is a reordering of the elements of $I$. For instance, the (non-canonically) ordered partition $(\{2\},\{1,4\}, \{3\})$ of $\{1,2,3,4\}$ defines the reordering 
$2,1,4,3$ of the numbers $1,2,3,4$.

Given $n\in\N$ and a (not necessarily naturally) ordered integer partition $(i_1,\ldots,i_l)$ of $n$, set the \textbf{canonical partition} $\rho^{i_1,\ldots,i_l}_{\rm can}$ of $\{1,\ldots, n\}=\nset$ to be the unique ordered partition
 $\rho^{i_1,\ldots,i_l}_{\rm can}:=(K_1,\ldots,K_l)$ of $\nset$ with $\# K_s=i_s$ for $s=1,\ldots, l$ and such that the order on $\nset$ induced by $\rho^{i_1,\ldots,i_l}_{\rm can}$ is the natural one.
For instance, for the integer partition $(1,2,1,3,3)$ of the number $10$, the canonical partition 
$\rho^{1,2,1,3,3}_{\rm can}$ of $\underline{10}$ is the partition $(\{1\},\{2,3\},\{4\},\{5,6,7\}, \{8,9,10\})$. 

Given $\sigma \in S_n$ and an ordered partition $\rho=(I_1,\ldots,I_l)$ of a subset $I\subseteq \nset$, write $\sigma(\rho)$ for the ordered partition $(\sigma(I_1),\ldots, \sigma(I_l))$.

\medskip
Given a subset $I\subseteq \nset$ and a partition $\rho=(I_1,\ldots,I_l)$ of $I$, set $\# I_j=:i_j$ and $i =\# I=i_1+\ldots+i_l$.
Consider $\rho_{\rm can}^{i_1,\ldots, i_l}=(K_1,\ldots, K_l)$ and choose a permutation $\sigma\in S_n$ with 
\[ \sigma(K_j)=I_j \quad \text{ for } \quad j=1,\ldots,l,
\]
i.e.~with $\rho=\sigma\left(\rho^{i_1,\ldots,i_l}_{\rm can}\right)$.
Set then 
\[ \sgn(\rho):=
\frac{\prod_{j=1}^l\epsilon(\sigma, K_j)}{\epsilon(\sigma,\{1,\ldots, i\})},
\]
the \textbf{sign of the partition $\rho$}.

\begin{lemma}\label{lemma_sign_partition_welldef}
In the situation above, the sign $\sgn(\rho)$ does not depend on the choice of $\sigma\in S_n$ with 
$\sigma(K_j)=I_j$ for $j=1,\ldots,l$. Hence $\sgn(\rho)$ equals 
\[\epsilon(\sigma,\{1,\ldots, i\})
\]
for a permutation $\sigma\in S_n$
with $\rho=\sigma\left(\rho^{i_1,\ldots,i_l}_{\rm can}\right)$ and preserving in addition the order of the sets $K_1,\ldots, K_l$.

As a consequence, $\sgn(\rho)$ is the sign of the unique permutation $I \to I$ sending $I$ in its canonical order to $I$ in its order induced by $\rho$ as in \eqref{reordering_I}.
\end{lemma}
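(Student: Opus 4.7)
The plan is to prove the three assertions in order, beginning with well-definedness, then specializing to order-preserving $\sigma$, and finally reinterpreting the result as the sign of an intrinsic permutation of $I$.

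First, to show independence of $\sigma$, I take two permutations $\sigma,\sigma'\in S_n$ with $\sigma(K_j)=\sigma'(K_j)=I_j$ for all $j$ and set $\tau:=\sigma^{-1}\sigma'$. Then $\tau(K_j)=K_j$ for every $j$, so $\tau$ restricts to a permutation of $\{1,\ldots,i\}=K_1\sqcup\cdots\sqcup K_l$. Applying the product formula \eqref{prod1} with $\sigma'=\sigma\tau$, I obtain
\[
\epsilon(\sigma',K_j)=\epsilon(\sigma,\tau(K_j))\cdot\epsilon(\tau,K_j)=\epsilon(\sigma,K_j)\cdot\epsilon(\tau,K_j)
\]
for each $j$, and similarly $\epsilon(\sigma',\{1,\ldots,i\})=\epsilon(\sigma,\{1,\ldots,i\})\cdot\epsilon(\tau,\{1,\ldots,i\})$. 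Dividing, the invariance of $\sgn(\rho)$ reduces to the identity $\prod_{j=1}^l\epsilon(\tau,K_j)=\epsilon(\tau,\{1,\ldots,i\})$.

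This last identity is the main combinatorial step, and I expect it to be the principal technical content. The key observation is that $K_1,\ldots,K_l$ are blocks of consecutive integers in the canonical partition, so if $p\in K_a$ and $q\in K_b$ with $a<b$ then $p<q$ and $\tau(p)\in K_a$, $\tau(q)\in K_b$ give $\tau(p)<\tau(q)$; thus cross-block pairs never contribute inversions to $\tau|_{\{1,\ldots,i\}}$. All inversions therefore come from within a single block, and the identity follows by definition of $\epsilon$.

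For the second assertion, I choose $\sigma\in S_n$ whose restriction $\sigma|_{K_j}\colon K_j\to I_j$ preserves the natural order for each $j$ (this is possible and, by the first part, gives the same value of $\sgn(\rho)$ as any other choice). Then $\epsilon(\sigma,K_j)=1$ for every $j$, and since $\epsilon(\sigma,\{1,\ldots,i\})\in\{\pm 1\}$, the definition collapses to $\sgn(\rho)=\epsilon(\sigma,\{1,\ldots,i\})$.

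Finally, to identify this with the sign of the unique permutation $\pi\colon I\to I$ sending $I$ in canonical order to $I$ in the order \eqref{reordering_I} induced by $\rho$, I let $\iota\colon\{1,\ldots,i\}\to I$ be the order-preserving bijection. With the above choice of $\sigma$, the list of values $\sigma(1),\sigma(2),\ldots,\sigma(i)$ is precisely the reordering \eqref{reordering_I}, so $\sigma|_{\{1,\ldots,i\}}=\pi\circ\iota$. Because $\iota$ is order-preserving, the pairs $(p,q)$ with $p<q$ and $\sigma(p)>\sigma(q)$ correspond bijectively to pairs $(\iota(p),\iota(q))$ in $I\times I$ with $\iota(p)<\iota(q)$ and $\pi(\iota(p))>\pi(\iota(q))$. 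Thus $\epsilon(\sigma,\{1,\ldots,i\})$ coincides with the sign of $\pi$ as a permutation of the naturally ordered set $I$, completing the proof.
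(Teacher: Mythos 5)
Your proof is correct, and the overall skeleton (reduce to a permutation stabilising the partition, prove a product identity for signs, then specialise to an order-preserving $\sigma$ and reinterpret) matches the paper's. The interesting difference is where you place the stabilising permutation and how you prove the resulting identity. The paper writes the second permutation as $\nu=\lambda\circ\sigma$ with $\lambda$ preserving each \emph{target} set $I_j$, and must therefore establish $\prod_{j}\epsilon(\lambda,I_j)=\epsilon(\lambda,I)$ for a permutation preserving \emph{arbitrary} subsets $I_1,\ldots,I_l$; this is done by decomposing $\lambda$ into transpositions and invoking \eqref{prod1} repeatedly (equations \eqref{signs_transposition} and \eqref{signs_general_preserving}). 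You instead set $\tau=\sigma^{-1}\sigma'$, which stabilises the \emph{canonical} blocks $K_j$, and since these are consecutive intervals of integers you get $\prod_j\epsilon(\tau,K_j)=\epsilon(\tau,\{1,\ldots,i\})$ immediately from the observation that cross-block pairs can never be inverted. This is a genuinely cleaner argument for the lemma at hand. The one thing your route does not buy is the more general identity \eqref{signs_general_preserving} for non-consecutive subsets, which the paper reuses later (for instance in the proof of Lemma \ref{lemma_intermediate_morphisms}), so in the context of the whole paper the harder version still has to be proved somewhere. Your treatment of the final two assertions (taking $\sigma$ order-preserving on each $K_j$, and factoring $\sigma|_{\{1,\ldots,i\}}=\pi\circ\iota$ through the order-preserving bijection $\iota$) is essentially identical to the paper's.
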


\begin{proof}
Assume that $\nu\in S_n$ is a second permutation with $\nu(K_j)=I_j$ for $j=1,\ldots,l$. Then $\nu=\lambda\circ \sigma$ with 
 $\lambda\in S_n$ satisfying $\lambda(I_j)=I_j$ for $j=1,\ldots,l$.
 
 First assume that $\lambda\an{I}$ is a transposition, say $\lambda=(s,t)$ with $s\neq t$ in $\{1,\ldots, n\}$. Since $\lambda$ preserves each of the sets $I_j$, $j=1,\ldots,l$, there exists an $r\in \{1,\ldots,l\}$ such that $\lambda\an{I_j}=\id_{I_j}$ for $j\in\{1,\ldots, l\}\setminus\{r\}$ and $\lambda\an{I_r}$ is a transposition.
 Then
\begin{equation}\label{signs_transposition}
 \prod_{j=1}^l\epsilon(\lambda, I_j)=\epsilon(\lambda, I_r)=-1=\epsilon(\lambda,I). 
\end{equation}
The general case follows with \eqref{prod1}: a permutation $\lambda$ preserving each of the sets $I_1,\ldots,I_l$ can be written as a product of transpositions $\lambda_1, \ldots, \lambda_s$ preserving these sets.
Then
\begin{equation}\label{signs_general_preserving}
\begin{split}
\prod_{j=1}^l\epsilon(\lambda, I_j)&=\prod_{j=1}^l\epsilon(\lambda_s\circ\ldots\circ\lambda_1, I_j)\overset{\eqref{prod1}}{=}\prod_{j=1}^l\prod_{k=1}^s\epsilon(\lambda_k, (\lambda_{k-1}\circ\ldots\circ \lambda_1)(I_j))\\
&=\prod_{k=1}^s\left(\prod_{j=1}^l\epsilon(\lambda_k, (\lambda_{k-1}\circ\ldots\circ \lambda_1)(I_j))\right)\\
&\overset{\eqref{signs_transposition}}{=}\prod_{k=1}^s\epsilon(\lambda_k, (\lambda_{k-1}\circ\ldots\circ \lambda_1)(I))\overset{\eqref{prod1}}{=}\epsilon(\lambda, I).
\end{split}
\end{equation}
Now the claim follows with a similar computation:
\[ \frac{\prod_{j=1}^l\epsilon(\nu, K_j)}{\epsilon(\nu,\{1,\ldots,i\})}=\frac{\prod_{j=1}^l\epsilon(\lambda\sigma, K_j)}{\epsilon(\lambda\sigma,\{1,\ldots,i\})}\overset{\eqref{prod1}}{=}\frac{\prod_{j=1}^l\epsilon(\lambda, I_j)}{\epsilon(\lambda,I)}\frac{\prod_{j=1}^l\epsilon(\sigma, K_j)}{\epsilon(\sigma, \{1,\ldots,i\})}\overset{\eqref{signs_general_preserving}}{=}\frac{\prod_{j=1}^l\epsilon(\sigma, K_j)}{\epsilon(\sigma, \{1,\ldots,i\})}.
\]

To see the last claim of the lemma, note that there is a unique bijection $f\colon I\to \{1,\ldots, i\}$ preserving the natural order of both sets.
The sign of this map is hence $1$ and its composition with $\sigma$ gives then the bijection $\sigma\circ f\colon I \to I$, which sends $I$ in its canonical order to $I$ in the order induced by $\rho$ as in \eqref{reordering_I}.
\end{proof}

Finally, note that given $i\in\nset$ and an ordered integer partition $(i_1,\ldots, i_l)$ of $i$, the canonical partition $\rho_{\rm can}^{i_1,\ldots,i_l}$ of $\underline i$ has sign $1$ by construction.

\subsection{Integer partitions of natural numbers}
For $n\in\N$ let $\mathcal P(n)$ be the set of naturally ordered integer partitions of the natural numbers\footnote{The reader should be aware that the notations $\mathcal P(n)$ and $\mathcal P(\nset)$ are not consistent:  $\mathcal P(n)$ is the set of ordered integer partitions of the integers between 1 and $n$, while $\mathcal P(\nset)$ is the set of canonically ordered partitions of  the set $\nset$. These conventions differ for simplicity of some of the formulas.}
$1\leq j\leq n$:
\[\mathcal P(n):=\left\{ (i_1,\ldots, i_l)\in\N^l\left|\,\,  l\in\N, 1\leq i_1\leq\ldots\leq i_l \text{ and } 1\leq i_1+\ldots+i_l\leq n\right.\right\}.
\]
Given a partition $p=(i_1,\ldots, i_l)\in\mathcal P(n)$, the sum $\Sigma p$ of $p$ is simply 
\[ \sum p:=\sum_{j=1}^li_j.
\]
Given $p\in\mathcal P(n)$, the number $\sharp p$ is the length of the list $p$, i.e.~the number of its elements.

\medskip

As explained in the preceding section, there is a bijection between ordered integer partitions of a number $i\in \N$ and ordered partitions of $\underline i$ inducing the canonical order on the elements of $\underline i$.   
A integer partition $(i_1,\ldots,i_l)$ of $k$ corresponds via this bijection to the unique partition $\rho^{i_1,\ldots,i_l}_{\rm can}$.

\section{The category of $\N$-manifolds}\label{background_n_manifolds}
This section begins by recalling the definitions of $\N$-manifolds and their
morphisms, and by discussing in detail the split case.  The reader is referred to
\cite{Mehta06,BoPo13} for more details.
\subsection{Definitions}

An \textbf{$\N$-manifold}  $\mathcal M$ 
  of degree $n\in \N$ and dimension $(m;
  r_1,\ldots, r_n)$ is a smooth manifold $M$ of dimension $m$ together
  with a sheaf $C^\infty(\mathcal M)$ of $\N$-graded, graded
  commutative, associative, unital $C^\infty(M)$-algebras, that is locally freely
  generated by $r_1+\ldots+ r_n$ elements
  $\xi_1^{1},\ldots,\xi_1^{r_1}$, $\xi_2^1,\ldots,\xi_2^{r_2},\ldots$,
  $\xi_n^1,\ldots,\xi_n^{r_n}$ with $\xi_i^j$ of degree $i$ for
  $i\in\{1,\ldots,n\}$ and $j\in\{1,\ldots,r_i\}$.

  A morphism of $\N$-manifolds $\mu\colon\mathcal M\to
  \mathcal N$ over a smooth map $\mu_0\colon M\to N$ of the underlying
  smooth manifolds is a morphism $\mu^\star\colon C^\infty(\mathcal
  N)\to C^\infty(\mathcal M)$ of sheaves of graded algebras over
  $\mu_0^*\colon C^\infty(N)\to C^\infty(M)$.
\medskip

Note that the degree $0$ elements of $C^\infty(\mathcal M)$ are
precisely the smooth functions on $M$.  In the following 
 an $\N$-manifold of degree $n\in \N$ is called an \textbf{$[n]$-manifold}. 
$|\xi|$ is the degree of a homogeneous element $\xi\in
C^\infty(\mathcal M)$, i.e.~an element which can be written as a sum
of functions of the same degree, and $C^\infty(\mathcal M)^i$
is the space of elements of degree $i$ in $C^\infty(\mathcal M)$.  Note that a
\textbf{$[1]$-manifold} over a smooth manifold $M$ is equivalent to a
locally free and finitely generated sheaf of $C^\infty(M)$-modules, hence to a vector bundle over $M$.
The category of $[n]$-manifolds is written $\mathsf{[n]Man}$, and the category of $\N$-manifolds
is written $\N\mathsf{Man}$.

Given an $[n]$-manifold $\mathcal M$ over a smooth manifold $M$, and an open subset $U\subseteq M$, then $\mathcal M\an{U}$ is the $n$-manifold over $U$ 
defined by $C^\infty(\mathcal M\an{U}):=C^\infty(\mathcal M)\an U$. For simplicity, it is called the \textbf{restriction} of $\mathcal M$ to $U$.

\subsection{The category of split $\N$-manifolds}\label{cat_snMan}
  Let $E$ be a smooth vector bundle of rank $r$ over a smooth manifold
  $M$ of dimension $m$. Assign the degree $n$ to the fiber coordinates
  of $E$. This defines $E[-n]$, an $[n]$-manifold of dimension
  $(m;r_1=0,\ldots,r_{n-1}=0,r_n=r)$ with generators
  $C^\infty(E[-n])^n=\Gamma(E^*)$ and $C^\infty(E[-n])^0=C^\infty(M)$.

 Now let $E_{1},E_{2},\ldots,E_{n}$ be smooth vector bundles
  of finite ranks $r_1,\ldots,r_n$ over $M$ and assign the degree $-i$
  to the fiber coordinates of $E_{i}$, for each $i=1,\ldots,n$.  The
  direct sum $E=E_{1}\oplus \ldots\oplus E_{n}$ is a graded vector
  bundle with grading concentrated in degrees $-1,\ldots,-n$.  The
  $[n]$-manifold $E_{1}[-1]\oplus\ldots\oplus E_{n}[-n]$
  has local basis sections of ${E_{i}^*}$ as local generators of
  degree $i$, for $i=1,\ldots,n$, and so dimension $(d;r_1,\ldots,r_n)$. 
  The $[n]$-manifold $E_{1}[-1]\oplus\ldots\oplus
  E_{n}[-n]$ is
  called a \textbf{split $[n]$-manifold}.

  For instance, in the case $n=3$, choose three vector bundles $E_{1}$, $E_2$
  and $E_{3}$ of ranks $r_1$, $r_2$ and $r_3$ over a smooth manifold
  $M$. Then the $[3]$-manifold $\mathcal M=E_{1}[-1]\oplus E_{2}[-2]\oplus E_3[-3]$ is defined by
  $C^\infty(\mathcal M)^0=C^\infty(M)$,
  $C^\infty(\mathcal M)^1=\Gamma(E_{1}^*)$, 
  $C^\infty(\mathcal M)^2=\Gamma(E_{2}^*\oplus \wedge^2E_{1}^*)$,  $C^\infty(\mathcal M)^3=\Gamma(E_3^*\oplus E_1^*\otimes E_2^*\oplus \wedge^3E_1^*)$, and 
  \[C^\infty(\mathcal M)^4=\Gamma(E_4^*\oplus E_1^*\otimes E_3^*\oplus S^2E_2^*\oplus \wedge^2E_1^*\otimes E_2^*\oplus \wedge^4E_1^*),
  \] etc.
 Set $E_{j}^*\odot E_{k}^*$ to mean
  \begin{itemize}
  \item $\wedge$ if $j=k$ is an odd number,
  \item the symmetric product $\cdot$ if $j=k$ is an even number,
  \item $\otimes$ if $j\neq k$.
  \end{itemize}
  Then given an $[n]$-manifold $\mathcal M=E_{1}[-1]\oplus\ldots\oplus E_{n}[-n]$, its functions of degree $k$ are elements of 
  \[ C^\infty(\mathcal M)^k=\bigoplus_{\substack{1\leq i_1\leq\ldots\leq i_l\leq n,\\ i_1+\ldots+i_l=k}}\Gamma\left(E^*_{i_1}\odot\ldots\odot E^*_{i_l}\right)
  \]
  for all $k\in\mathbb N$.

\bigskip

The graded product $\odot$ on
\[ C^\infty(\mathcal M)=\bigoplus_{k\in\mathbb N}\bigoplus_{\substack{1\leq i_1\leq\ldots\leq i_l\leq n,\\ i_1+\ldots+i_l=k}}\Gamma\left(E^*_{i_1}\odot\ldots\odot E^*_{i_l}\right)
  \]
is given by the following lemma.
\begin{lemma}\label{graded_wedge_prod}
Let $\mathcal M=E_{1}[-1]\oplus \ldots\oplus E_{n}[-n]$ be a split $[n]$-manifold over a smooth manifold $M$ and 
consider  $\xi_1,\ldots, \xi_l\in C^\infty(\mathcal M)$ such that
\[ \xi_j\in \Gamma\left(E^*_{i^j_1}\odot\ldots\odot E^*_{i^j_{l_j}}\right)
\]
with $1\leq i_1^j\leq \ldots\leq i^j_{l_j}\leq n$, for $j=1,\ldots, l$. That is, $\xi_j$ has degree $d_j:=i^j_1+\ldots+i^j_{l_j}$.
Set $p_j=(i_1^j,\ldots, i_{l_j}^j)$. It is an ordered integer partition of $d_j$ for $j=1,\ldots, l$.

The list $p$ obtained as the union of $p_1,\ldots, p_l$, and canonically reordered, is then an integer partition with $s:=l_1+\ldots+l_l$ elements of its sum $d=d_1+\ldots+d_l$. Consider the canonical partition 
$\rho^{\rm can}_p=(K_1,\ldots, K_s)$ of $\{1,\ldots, d\}$ associated to it. 
The graded product
\[ \xi_1\odot\ldots\odot\xi_l 
\]
a section of $E^*_{|K_1|}\odot\ldots\odot E^*_{|K_s|}$, 
is defined on $e_{K_1}\in E_{|K_1|}, \ldots, e_{K_s}\in E_{|K_s|}$ by 
\begin{equation}\label{graded_wedge}
\begin{split}
 \sum_{\substack{\rho^p_{\rm can}=\rho_1\cup\ldots\cup \rho_l\\
\text{as sets and}\\
\rho_1, \ldots, \rho_l \text{canonically ordered with}\\
|\rho_j|=p_j \text{ for }j=1,\ldots, l
}}\!\!\!\!\!\!\!\!\!\!\!\!\!\!\sgn(\rho_1,\ldots, \rho_l)\cdot \xi_1(e_{K}\mid K\in \rho_1)\cdot\ldots\cdot \xi_l(e_{K}\mid K\in\rho_l).
\end{split}
\end{equation}
Here, $(\rho_1,\ldots, \rho_l)$ is the partition $\rho_{\rm can} ^p$ of $\underline d$, but reordered in the order of\footnote{For instance, if $p_1=(1,3)$ and $p_2=(1,2)$, then $d=7$, $p=(1,1,2,3)$ and $\rho_{\rm can}^p=(\{1\}, \{2\}, \{3,4\}, \{5,6,7\})$.  Then if $\rho_1=(\{2\}, \{3,4\})$ and $\rho_2=(\{1\}, \{5,6,7\})$, then $(\rho_1,\rho_2)=(\{2\}, \{3,4\},\{1\}, \{5,6,7\})$.
}
 $\rho_1,\ldots, \rho_l$.
\end{lemma}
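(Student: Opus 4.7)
The plan is to reduce to decomposable sections and then match Koszul signs. By $C^\infty(M)$-multilinearity of both sides of \eqref{graded_wedge} in each argument $\xi_j$, and the fact that each $\Gamma(E^*_{i^j_1}\odot\ldots\odot E^*_{i^j_{l_j}})$ is locally spanned by decomposables, it suffices to treat the case
\[\xi_j=\epsilon_j^1\odot\ldots\odot \epsilon_j^{l_j}, \qquad \epsilon_j^k\in\Gamma(E_{i_k^j}^*),\]
for each $j=1,\ldots,l$.

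On such decomposable inputs, associativity of $\odot$ gives
\[\xi_1\odot\ldots\odot \xi_l=\epsilon_1^1\odot\ldots\odot\epsilon_1^{l_1}\odot\epsilon_2^1\odot\ldots\odot \epsilon_l^{l_l}.\]
Graded commutativity of $C^\infty(\mathcal M)$ allows one to rearrange the factors so that all $\epsilon$'s living in the same $E_i^*$ appear consecutively, in the order dictated by $(K_1,\ldots,K_s)=\rho_{\rm can}^p$; each transposition of two odd-degree neighbours contributes a $-1$, producing a Koszul sign $\varepsilon_{\rm rearr}$. After this rearrangement, the expression is a decomposable section of $E^*_{|K_1|}\odot\ldots\odot E^*_{|K_s|}$, and evaluation on $(e_{K_1},\ldots,e_{K_s})$ via convention \eqref{standard_wedge_convention} reduces, block by block, to a sum over permutations of the arguments within each same-$E_i$ block, accompanied by an antisymmetrization sign $\varepsilon_{\rm shuf}$ on the odd-degree blocks. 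Jointly across the blocks, these permutations correspond precisely to the choices of how to parcel the arguments $\{e_K\mid K\in\rho_{\rm can}^p\}$ amongst the inputs $\xi_j$: such a choice is encoded by canonically ordered sub-partitions $\rho_j$ with $|\rho_j|=p_j$ exactly as in the sum \eqref{graded_wedge}.

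The remaining point is the identification $\varepsilon_{\rm rearr}\cdot \varepsilon_{\rm shuf}=\sgn(\rho_1,\ldots,\rho_l)$, which is the heart of the matter. Both factors are signs of concrete permutations of $\underline d$: composing the block-rearrangement of the second step with the within-block shuffle of the third step produces exactly the permutation that sends $\underline d$ in its canonical order to the order induced by the concatenation $(\rho_1,\ldots,\rho_l)$ as in \eqref{reordering_I}. By Lemma \ref{lemma_sign_partition_welldef}, the sign of this permutation is $\sgn(\rho_1,\ldots,\rho_l)$. The main obstacle is precisely this sign bookkeeping; once Lemma \ref{lemma_sign_partition_welldef} is invoked, what remains is the combinatorial verification that the composition described really is the shuffle permutation of \eqref{reordering_I}, which follows by inducting on $l$ using the product formula \eqref{prod1}.
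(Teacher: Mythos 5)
Your argument is circular in the context of this lemma. In the paper, formula \eqref{graded_wedge} \emph{is} the definition of $\odot$ on general homogeneous elements, and the content of the proof is precisely to verify that this formula (i) is graded-symmetric in the $\xi_j$, (ii) actually lands in $\Gamma(E^*_{|K_1|}\odot\ldots\odot E^*_{|K_s|})$, i.e.~is graded-symmetric under exchanging $e_{K_i}$ with $e_{K_j}$ when $\#K_i=\#K_j$, (iii) restricts to the block-wise graded skew-symmetrisation on generators, and (iv) is associative. You invoke ``associativity of $\odot$'' and ``graded commutativity of $C^\infty(\mathcal M)$'' as known facts in order to reduce to products of generators, but these are exactly the conclusions (i) and (iv) that the lemma's proof must establish; nothing in the paper provides them independently of \eqref{graded_wedge}. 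The reduction could be rescued by first constructing the product abstractly (as the free graded-commutative $C^\infty(M)$-algebra on $\Gamma(E_1^*)\oplus\ldots\oplus\Gamma(E_n^*)$) together with an explicit identification of monomials in generators with multilinear forms — extending convention \eqref{standard_wedge_convention} to the symmetric and mixed factors — and then proving that \eqref{graded_wedge} computes that product; but as written you assume the structure you are asked to produce, and items (ii) and (iv), which occupy most of the paper's proof, are never addressed.

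Even granting that setup, the decisive step is only asserted: the identity $\varepsilon_{\rm rearr}\cdot\varepsilon_{\rm shuf}=\sgn(\rho_1,\ldots,\rho_l)$ is ``the heart of the matter'' by your own account, yet you offer only the remark that the composed permutation ``really is the shuffle permutation of \eqref{reordering_I}'' and that this ``follows by inducting on $l$ using \eqref{prod1}.'' Lemma \ref{lemma_sign_partition_welldef} identifies $\sgn(\rho_1,\ldots,\rho_l)$ with the sign of the reordering permutation of $\underline d$, but you still owe the verification that your two-stage permutation (block rearrangement followed by within-block shuffles, with the shuffle sign raised to the parity of the block degree) composes to exactly that permutation; this is where all the sign subtleties live — compare the paper's careful case analysis of what happens when exchanging $e_{K_i}$ with $e_{K_j}$ forces a re-ordering inside some $\rho_r$ — and it cannot be waved through. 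In short: the strategy of deriving the formula from the free graded-commutative algebra is a legitimate alternative route, but your proposal neither sets up the prerequisites that would make it non-circular nor carries out the sign computation that is its only substantive content.
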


Note that here, the notation $\xi_j(e_{K}\mid K\in \rho_j)$ is short for the following. If $\rho_j=(K_{i_1}, \ldots, K_{i_{l_j}})$, then 
\[\xi_j(e_{K}\mid K\in \rho_j)=\xi_j(e_{K_{i_1}}, \ldots, e_{K_{i_{l_j}}}).
\]

\begin{example}
Before considering the proof, the reader can convince themself with a few examples that \eqref{graded_wedge} works as it should.
\begin{enumerate}
\item Take $\xi_1\in\Gamma(E_1^*)$, $\eta_1\odot\eta_2\in\Gamma(E_1^*\odot E_2^*)$ and $\tau_2\odot\tau_3\in\Gamma(E_2^*\odot E_3^*)$.
Then $p_1=(1)$, $p_2=(1,2)$ and $p_3=(2,3)$, and so $p=(1,1,2,2,3)$. Here \[\rho_{\rm can}^p=(\{1\}, \{2\}, \{3,4\}, \{5,6\}, \{7,8,9\})\]
and so $(\rho_1,\rho_2, \rho_3)$ range over 
\begin{itemize}
\item $((\{1\}), (\{2\},\{3,4\}), (\{5,6\}, \{7,8,9\}))$ with sign $1$,
\item $((\{2\}), (\{1\},\{3,4\}), (\{5,6\}, \{7,8,9\}))$ with sign $-1$,
\item $((\{1\}), (\{2\},\{5,6\}), (\{3,4\}, \{7,8,9\}))$ with sign $1$,
\item $((\{2\}), (\{1\},\{5,6\}), (\{3,4\}, \{7,8,9\}))$ with sign $-1$.
\end{itemize}
Therefore 
a quick computation shows that $(\xi_1\odot(\eta_1\odot\eta_2)\odot(\tau_2\odot\tau_3))(e_1, e_2, e_{34}, e_{56}, e_{789})$ equals
\[ \left(\xi_1(e_1)\eta_1(e_2)-\xi_1(e_2)\eta_1(e_1)\right)\cdot \left(\eta_2(e_{34})\tau_2(e_{56})+\eta_2(e_{56})\tau_2(e_{34})\right)\cdot \tau_3(e_{789}).
\]
\item Take $\xi\in\Gamma(E_3^*)$, $\eta\in\Gamma(E_1)$ and $\tau\in\Gamma(E_2^*)$. Then $p_1=(3)$, $p_2=(1)$ and $p_3=(2)$ and so $p=(1,2,3)$ and 
$\rho_{\rm can}^p=(\{1\}, \{2,3\}, \{4,5,6\})$. The only possible list $(\rho_1,\rho_2,\rho_3)$ is here 
\[(\{4,5,6\},\{1\}, \{2,3\}),
\]
which has sign $-1$. Hence 
\[ (\xi\odot\eta\odot\tau)(e_1,e_{23}, e_{456})=-\xi(e_{456})\cdot\eta(e_1)\cdot\tau(e_{23}),
\]
which shows 
\[ \xi\odot\eta\odot\tau=-\eta\otimes\tau\otimes\xi.
\]

 \item The reader is invited to check that $\xi\odot \eta$ for $\xi\in\Gamma(\wedge^kE_1^*)$ and $\eta\in\Gamma(\wedge^lE_1^*)$ equals
 \[ \xi\odot\eta=\frac{(k+l)!}{k!l!}\operatorname{Alt}(\xi\otimes \eta)=\xi\wedge \eta.
 \]
 This example illustrates nicely the compatibility of the graded tensor in Lemma \ref{graded_wedge_prod} with the usual wedge product of sections of $\Gamma(\wedge^\bullet E_1^*)$. 
 \end{enumerate}
\end{example}

\begin{proof}[Proof of Lemma \ref{graded_wedge_prod}]
First, the graded symmetry of this `product' in the entries $\xi_1,\ldots, \xi_l$ is checked as follows:
It suffices to show that for $i=1,\ldots, l-1$, exchanging $\xi_i$ with $\xi_{i+1}$ in \eqref{graded_wedge} is the same as a multiplication by the factor $(-1)^{d_id_{i+1}}$. This exchange amounts to exchanging $p_i$ with $p_{i+1}$ in the list of partitions, 
and so exchanging the roles of $\rho_i$ and $\rho_{i+1}$ in each summand of \eqref{graded_wedge}, which only amounts to replacing in each summand the factor
$\sgn(\rho_1,\ldots, \rho_l)$ by $\sgn(\rho_1, \ldots, \rho_{i+1}, \rho_i, \ldots,\rho_l)$. 

But given a permutation $\sigma\in S_d$ such that 
$\sgn(\rho_1,\ldots, \rho_l)=(-1)^\sigma$, the composition of $\sigma$ with $d_i\cdot d_{i+1}$ transpositions on both sides\footnote{Note that the canonical partitions 
$\rho_{\rm can}^{(p_1,\ldots, p_l)}$ and $\rho_{\rm can}^{(p_1,\ldots, p_{i+1},p_i,\ldots, p_l)}$ are not equal in general.}
 gives a permutation $\sigma'\in S_d$ such that 
$\sgn(\rho_1,\ldots,\rho_{i+1}, \rho_i, \ldots, \rho_l)=(-1)^{\sigma'}$. Hence 
\[ \sgn(\rho_1, \ldots, \rho_{i+1}, \rho_i, \ldots,\rho_l)=(-1)^{d_id_{i+1}}\sgn(\rho_1,\ldots, \rho_l).
\]

\bigskip

Then check that \eqref{graded_wedge} is indeed an element of $\Gamma\left(E^*_{|K_1|}\odot\ldots \odot E^*_{|K_s|}\right)$.
The following shows that if  $|K_i|=|K_{j}|=a$ for some $i<j\in\{1,\ldots, s\}$, then exchanging $e_{K_i}$ with $e_{K_{j}}$ in \eqref{graded_wedge}, amounts to multiplying it by $(-1)^a=(-1)^{a\cdot a}$. 

Consider a list of canonically ordered partitions $\rho_1,\dots,\rho_l$ the union of which is $\rho_{\rm can }^p$ and such that $\rho_j$ has size $p_j$ for $j=1,\ldots, l$. Write $(\rho_1,\ldots, \rho_l)=(I_1,\ldots, I_s)$.
Exchanging $K_i$ and $K_j$ in this list defines a new list
$(\rho_1, \ldots,\rho_l)^{ij}=(\rho_1^* , \ldots, \rho_l^*)=(I_1^*,\ldots, I_d^*)$ with size $p$. 
First assume that the partitions $\rho_1^*, \ldots, \rho_l^*$ are all still (canonically) ordered. By the last  claim of Lemma \ref{lemma_sign_partition_welldef}, 
 $\sgn(\rho_1, \ldots, \rho_d)^{ij}=(-1)^a\sgn(\rho_1,\ldots, \rho_l)$,
 since the set $\underline d$ in its order induced by $(\rho_1,\ldots, \rho_l)$ is sent to $\underline d$ in its order induced by $(\rho_1^*, \ldots, \rho_l^*)$ by $a$ permutations exchanging the elements of $K_i$ with the elements of $K_j$.
 As a consequence, exchanging $e_{K_i}$ with $e_{K_j}$  in the summand of \eqref{graded_wedge} defined by $(\rho_1,\ldots, \rho_l)$ yields 
\begin{equation*}
\begin{split}
&\sgn( \rho_1,\ldots, \rho_l)\cdot \xi_1(e_K\mid K\in \rho_1^*)\cdot\ldots\cdot \xi_l(e_{K}\mid K\in\rho_l^*)\\
&=(-1)^a\sgn(\rho_1^*,\ldots, \rho_l^*)\cdot \xi_1(e_K\mid K\in \rho_1^*)\cdot\ldots\cdot \xi_l(e_{K}\mid K\in\rho_l^*),
\end{split}
\end{equation*}
which is $(-1)^a$ times the term of \eqref{graded_wedge} defined by $(\rho_1^*, \ldots, \rho_l^*)$.

Assume now that $K_i\in \rho_r$ and $K_j\in \rho_t$ for some $r,t\in\{1,\ldots, l\}$, and that 
exchanging $K_i$ with $K_j$ leads to one or both of $\rho_r^*$ and $\rho_t^*$ not being canonically ordered anymore. If $r=t$, then exchanging $e_{K_i}$ and $e_{K_j}$ in the summand of \eqref{graded_wedge} defined by $(\rho_1,\ldots, \rho_l)$ multiplies the same term by $(-1)^a$ since $\xi_r$ is graded-symmetric.

Assume next that $r\neq t$ and replacing $K_i$ by $K_j$ in $\rho_r$ leads to a partition which is not ordered anymore. Without loss of generality, $K_i$ needs to be exchanged with one element $K_h$ in this partition for it  becoming canonically ordered again. The obtained ordered partition is then denoted by $\rho_r^*$ as above. 
Assume without loss of generality that after this step, $\rho_1^*, \ldots, \rho_l^*$ are all ordered. 
By Lemma \ref{lemma_sign_partition_welldef}, $\sgn(\rho_1^*, \ldots, \rho_d^*)=(-1)^{2a}\sgn(\rho_1,\ldots, \rho_l)=\sgn(\rho_1,\ldots, \rho_l)$.
Then exchanging $e_{K_i}$ with $e_{K_j}$ in the summand of \eqref{graded_wedge} defined by $(\rho_1,\ldots, \rho_l)$ yields 
\begin{equation*}
\begin{split}
&\sgn(\rho_1,\ldots, \rho_l)\cdot \xi_1(e_K\mid K\in \rho_1^*)\cdot\ldots\cdot(-1)^a\cdot \xi_r(e_K\mid K\in\rho_r^*)\cdot\ldots\cdot \xi_l(e_{K}\mid K\in\rho_l^*)\\
&=(-1)^a\cdot\sgn(\rho_1^*,\ldots, \rho_l^*)\cdot \xi_1(e_K\mid K\in \rho_1^*)\cdot\ldots\cdot\xi_r(e_K\mid K\in\rho_r^*)\cdot\ldots\cdot \xi_l(e_{K}\mid K\in\rho_l^*).
\end{split}
\end{equation*}
The remaining cases are treated similarly.

The above shows that exchanging $e_{K_i}$ with $e_{K_j}$ in \eqref{graded_wedge} multiplies all summands of \eqref{graded_wedge} by $(-1)^a=(-1)^{a\cdot a}$. As a consequence 
\begin{equation*}
\begin{split}
&(\xi_1\odot\ldots\odot\xi_l)(e_{K_1}, \ldots, e_{K_{j}}, \ldots,  e_{K_i},\ldots, e_{K_d})
= (-1)^a\cdot (\xi_1\odot\ldots\odot\xi_l)(e_{K_1}, \ldots, e_{K_d}).
\end{split}
\end{equation*}
That is,  $\xi_1\odot \ldots\odot\xi_l$ is graded symmetric.
\bigskip

Then show that the graded symmetric product $\odot$ coincides with the graded-symmetric product on generators.
Take $\xi_j\in\Gamma(E^*_{i_j})$ for $j=1,\ldots, l$.
Set for $i=1,\ldots, n$
\[ M_i=\{i_j \mid j\in\{1,\ldots, l\} \text{ and }  i_j=i\}.
\]
Set $\mathcal S_i$ to be the set of permutations of $M_i$ for $i=1,\ldots, n$. 

Assume without loss of generality (see the first step of this proof) that $1\leq i_1\leq \ldots\leq i_l\leq n$. Then $p=(i_1,\ldots, i_l)$ 
 and  \eqref{graded_wedge} reads as follows on $e_{K_1}, \ldots, e_{K_l}$, with $\rho_{\rm can}^{p}=(K_1,\ldots, K_l)$.
\begin{equation*}
\begin{split}
(\xi_1\odot\ldots\odot\xi_l)(e_{K_1},\ldots, e_{K_l})&= 
\sum_{\substack{\rho_{\rm can}^{(i_1,\ldots,i_l)}=(I_1,\ldots,I_l)\text{ as sets}\\
\# I_j=i_j \text{ for } j=1,\ldots, l
}}\!\!\!\!\!\!\!\!\!\!\!\!\sgn(I_1,\ldots, I_l)\cdot \xi_1(e_{I_1})\cdot\ldots\cdot \xi_l(e_{I_l}).
\end{split}
\end{equation*}
A choice of a (non-canonically) ordered partition $(I_1,\ldots, I_l)$ of $\underline d$ such that $\rho_{\rm can}^{(i_1,\ldots,i_l)}=(I_1,\ldots,I_l)$ as sets and $\# I_j=i_j$ for $j=1,\ldots, l$
amounts to a choice of order of each of the tuples 
\[ \left( K_{j}\, \mid \, j\in M_i \right),
\]
 hence of a permutation $\sigma_i\in \mathcal S_i$ for $i=1,\ldots, n$.
It is then easy to see that 
\begin{equation*}
\begin{split}
&
\sum_{\substack{\rho_{\rm can}^{(i_1,\ldots,i_l)}=(I_1,\ldots,I_l)\text{ as sets}\\
|I_j|=i_j \text{ for } j=1,\ldots, l
}}\sgn(I_1,\ldots, I_l)\cdot \xi_1(e_{I_1})\cdot\ldots\cdot \xi_l(e_{I_l})\\
&=\prod_{i=1}^n\left(
\sum_{\sigma\in \mathcal S_i}((-1)^{\sigma})^i\cdot\prod_{j=1+\sum_{r=1}^{i-1}N_r}^{\sum_{r=1}^{i}N_r}\xi_j(e_{K_{\sigma(j)}})\right).
\end{split}
\end{equation*}
That is, $\xi_1\odot\ldots\odot\xi_l$ is the block-wise graded skew-symmetrisation
of 
\[ \xi_1\otimes\ldots\otimes \xi_l=\bigotimes_{i=1}^n\underset{\in \Gamma\left((E_i^*)^{\otimes N_i}\right)}{\underbrace{\left(\bigotimes_{j=1+\sum_{r=1}^{i-1}N_r}^{\sum_{r=1}^{i}N_r}\xi_j\right)}}. \]

\bigskip

Finally the associativity \eqref{graded_wedge} is proved. Consider without loss of generality the case $l=3$, and show as follows that 
\[ (\xi_1\odot\xi_2)\odot\xi_3=\xi_1\odot\xi_2\odot\xi_3=\xi_1\odot(\xi_2\odot\xi_3).
\]
By the graded-symmetry of $\odot$, it is enough to prove the first equality on the left-hand side. Let $q$ be the partition obtained as the reordered union of $p_1$ and $p_2$. Then $p$ is the reordered union of $q$ and $p_3$. With the same notation as above, the form 
$(\xi_1\odot\xi_2)\odot\xi_3$ applied to $e_{K_1},\ldots, e_{K_s}$ reads
\begin{equation*}
\begin{split}
&
\!\!\!\!\!\!\!\!\!\!\!\!\!\!\!\!\sum_{\substack{\rho^p_{\rm can}=\rho\cup \rho_3\\
\text{as sets and}\\
\rho, \rho_3 \text{canonically ordered with}\\
|\rho|=q \text{ and } |\rho_3|=p_3
}}\!\!\!\!\!\!\!\!\!\!\!\!\!\!\sgn(\rho, \rho_3)\cdot (\xi_1\odot\xi_2)(e_{K}\mid K\in \rho)\cdot \xi_3(e_{K}\mid K\in\rho_3)\\
=&
\!\!\!\!\!\!\!\!\!\!\!\!\!\!\!\!\sum_{\substack{\rho^p_{\rm can}=\rho\cup \rho_3\\
\text{as sets and}\\
\rho, \rho_3 \text{canonically ordered with}\\
|\rho|=q \text{ and } |\rho_3|=p_3
}}\!\!\!\!\!\!\!\!\!\!\!\!\!\!\sgn(\rho, \rho_3)\cdot\!\!\!\!\!\!\!\!\!\!\!\sum_{\substack{\rho=\rho_1\cup \rho_2\\
\text{as sets and}\\
\rho_1, \rho_2 \text{canonically ordered with}\\
|\rho_1|=p_1 \text{ and } |\rho_2|=p_2
}}\!\!\!\!\!\!\!\!\!\!\!\!\!\!\sgn^\rho(\rho_1,\rho_2)\cdot \xi_1(e_K\mid K\in \rho_1)\cdot\xi_2(e_{K}\mid K\in \rho_2)\cdot \xi_3(e_{K}\mid K\in\rho_3),
\end{split}
\end{equation*}
where $\sgn^\rho(\rho_1,\rho_2)$ is the sign of the unique permutation of $I:=\cup\rho$ sending $I$ in its order induced by $\rho$ to $I$ in its order induced by $(\rho_1,\rho_2)$, see Lemma \ref{lemma_sign_partition_welldef}. 
This is $(\xi_1\odot\xi_2\odot\xi_3)(e_{K_1},\ldots, e_{K_s})$ if 
\begin{equation}\label{sgnsgn}
\sgn(\rho,\rho_3)\cdot\sgn^\rho(\rho_1,\rho_2)=\sgn(\rho_1,\rho_2,\rho_3)
\end{equation}
 for ordered partitions $\rho, \rho_1,\rho_2,\rho_3$ such that 
$\rho^p_{\rm can}=\rho\cup \rho_3$ and $\rho=\rho_1\cup \rho_2$ as sets, and
$|\rho|=q$, $|\rho_3|=p_3$, 
$|\rho_1|=p_1$, $|\rho_2|=p_2$.

 In order to show \eqref{sgnsgn}, use the last claim of Lemma \ref{lemma_sign_partition_welldef}.
 Let $I$ be the union of the elements of $\rho$, i.e.~let $\rho$ be an ordered partition of $I\subseteq \underline d$, and write the elements of  $I$ as $i_1,\ldots, i_{d_1+d_2}$, in the order defined by $\rho$. The elements of $I_j:=\cup\rho_j$ are $i^j_1, \ldots, i^j_{d_j}$ in the order defined by $\rho_j$, for $j=1,2,3$. Note that $I=I_1\cup I_2$.
Then $\sgn(\rho, \rho_3)$ is the sign of the unique permutation $\sigma$ of $\underline d$ sending $\underline d$ in its canonical order to $\underline d$ in the order 
\[ i_1,\ldots, i_{d_1+d_2}, i^3_1, \ldots, i^3_{d_3}.
\]
The number $\sgn^\rho(\rho_1,\rho_2)$ is the sign of the unique permutation $\tau$ of $I_1\cup I_2=I$ sending $I$ in the order $ i_1,\ldots, i_{d_1+d_2}$ to $I$ in the order 
\[ i_1^1,\ldots, i_{d_1}^1, i^2_1, \ldots, i^2_{d_2},
\]
and the sign of $(\rho_1,\rho_2,\rho_3)$ is the sign of the unique permutation $\eta$ of $\underline d$ sending $\underline d$ in its canonical order to $\underline d$ in the order 
\[ i_1^1,\ldots, i^1_{d_1}, i^2_1, \ldots, i^2_{d_2}, i^3_1, \ldots, i^3_{d_3}.
\]
Extend $\tau$ to a permutation of $\underline d$ by setting $\tau\an{I_3}=\id_{I_3}$, which does not change its sign. Then $\eta$ equals $\tau\circ \sigma$, and so $\sgn(\rho_1,\rho_2,\rho_3)=(-1)^\eta=(-1)^\tau(-1)^\sigma=\sgn^\rho(\rho_1,\rho_2)\cdot\sgn(\rho, \rho_3)$.
 \end{proof}

\bigskip
In this paper the category of split $[n]$-manifolds is written $\mathsf{s[n]Man}$.
  A morphism
  \[\mu\colon E_{1}[-1]\oplus \ldots\oplus E_{n}[-n]\to F_{1}[-1]\oplus
  \ldots\oplus F_{m}[-m]\] of split $\N$-manifolds over the bases $M$
  and $N$, respectively, consists of a smooth map
  $\mu_0\colon M\to N$, and for each  $p=(i_1,\ldots,i_l)\in\mathcal P(m)$ 
  a  morphism
  \[ \mu_{p}\colon E_{i_1}\odot E_{i_2}\odot\ldots\odot E_{i_l}\to F_{i_1+\ldots+i_l}
  \]
  of vector bundles over $\mu_0$.
  The map
  $\mu^\star$ sends a degree $k$ generator $\xi\in\Gamma(F_{k}^*)$ to
  \begin{equation*}
    \sum_{\substack{p=(i_1,\ldots,i_l)\in\mathcal P(m)\\
      \sum p=k}} \quad \underset{\in\Gamma(E_{i_1}^*\odot E_{i_2}^*
      \odot\ldots\odot E^*_{i_l})}{\underbrace{{\mu_{(i_1, \ldots,i_l)}}^\star(\xi)}} 
      \,\in C^\infty(E_{1}[-1]\oplus \ldots\oplus E_{n}[-n])^k.
\end{equation*}
The morphism $\mu$ is therefore written here
\[ \mu=\left(\mu_p\right)_{p\in\mathcal P(m)},
\]
the smooth map $\mu_0$ being here implicit as the common base map of all the vector bundle morphisms $\mu_p$, $p\in\mathcal P(m)$.

In the following, given an ordered integer partition $(i_1,\ldots, i_l)$ of a natural number and a list of vectors $e_1\in E_{i_1},\ldots, e_l\in  E_{i_l}$, it is useful as before to index this list by 
\[ K_1, \ldots, K_l
\]
where $(K_1,\ldots, K_l)=\rho_{\rm can}^{(i_1,\ldots, i_l)}$. Hence 
\[ e_{K_1}\in E_{\# K_1}, \ldots, e_{K_l}\in E_{\# K_l}.
\]

\begin{lemma}
Let \[\mu\colon E_{1}[-1]\oplus \ldots\oplus E_{n}[-n]\to F_{1}[-1]\oplus
  \ldots\oplus F_{m}[-m]\] be a morphism of split $\N$-manifolds over base manifolds $M$
  and $N$. Let $\xi_1\in\Gamma(F^*_{d_1}), \ldots, \xi_l\in\Gamma(F^*_{d_l})$, with $d_1\leq \ldots \leq d_l$.

  Then for each list of ordered integer partitions $p_1, \ldots, p_l$ such that $\Sigma p_j=d_j$, $j=1,\ldots, l$
  the image $ \mu^\star(\xi_1\odot\ldots\odot\xi_l)$ has a term in 
  \[ \Gamma\left(E_{i_1}^*\odot \ldots \odot E_{i_s}^*\right),
  \]
  if $p=(i_1,\ldots, i_s)$ is the canonically reordered  integer partition $p_1\cup\ldots\cup p_l$.
  
 More precisely, set $\rho^p_{\rm can}=(K_1,\ldots, K_s)$. Then for all $e_{K_1}\in \Gamma(E^*_{\#K_1}), \ldots, e_{K_s}\in\Gamma(E^*_{\#K_s})$, 
 $\mu^\star(\xi_1\odot\ldots\odot\xi_l)(e_{K_1}, \ldots, e_{K_s})$ equals 
  \begin{equation}\label{mu_on_products_precise}
 \!\!\!\!\!\!\!\!\!\!\!\!\!\!\!\sum_{\substack{\rho_1,\ldots, \rho_l \text{ ordered partitions}\\
 \rho_{\rm can}^p=\rho_1\cup\ldots\cup\rho_l \text{ as sets}\\
 \cup\rho_1<\ldots<\cup \rho_l\\
 \#\cup\rho_1=d_1,\ldots, \#\cup\rho_l=d_l
 }}\!\!\!\!\!\!\!\!\!\!\!\!\!\!\!\!\sgn(\rho_1,\ldots,\rho_l)\cdot(\xi_1\odot\ldots\odot\xi_l)\left(\mu_{|\rho_1|}(e_K\mid K\in\rho_1), \ldots, \mu_{|\rho_l|}(e_K\mid K\in\rho_l)\right).
  \end{equation}
\end{lemma}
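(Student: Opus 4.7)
My plan is to use that $\mu^\star$ is a morphism of graded-commutative algebras, expand via the definition of $\mu^\star$ on generators, apply Lemma \ref{graded_wedge_prod} to compute the resulting graded products explicitly on the $E$-side, and reorganise the double sum to match the RHS of \eqref{mu_on_products_precise}.

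First I would write
\[
\mu^\star(\xi_1 \odot \ldots \odot \xi_l) = \mu^\star(\xi_1) \odot \ldots \odot \mu^\star(\xi_l) = \sum_{p_1, \ldots, p_l} \mu_{p_1}^\star(\xi_1) \odot \ldots \odot \mu_{p_l}^\star(\xi_l),
\]
where each $p_j$ ranges over the ordered integer partitions of $d_j$. For a fixed canonically reordered union $p = (i_1, \ldots, i_s)$, every such summand lies in $\Gamma(E_{i_1}^* \odot \ldots \odot E_{i_s}^*)$, establishing the first claim. To evaluate the $p$-component on $e_{K_1}, \ldots, e_{K_s}$ (with $\rho_{\rm can}^p = (K_1, \ldots, K_s)$) I apply \eqref{graded_wedge} to each summand. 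Since $\mu_{p_j}^\star(\xi_j)$ has ``internal size'' $p_j$ and $\mu_{p_j}^\star(\xi_j)(e_K \mid K \in \rho_j) = \xi_j(\mu_{p_j}(e_K \mid K \in \rho_j))$ by \eqref{dual_of_VB_map}, the double sum collapses into a sum over all tuples $(\rho_1, \ldots, \rho_l)$ of canonically ordered partitions satisfying $\rho_1 \cup \ldots \cup \rho_l = \rho_{\rm can}^p$ and $\# \cup \rho_j = d_j$, of the terms
\[
\sgn(\rho_1, \ldots, \rho_l) \prod_{j=1}^l \xi_j\bigl(\mu_{|\rho_j|}(e_K \mid K \in \rho_j)\bigr),
\]
with no constraint on the ordering of $\cup \rho_1, \ldots, \cup \rho_l$.

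To recover the RHS of \eqref{mu_on_products_precise}, I partition these tuples into orbits under the cardinality-preserving permutations $\tau \in S_l$ (those with $d_{\tau(j)} = d_j$), and choose as canonical representative in each orbit the unique tuple satisfying $\cup \rho_1 < \ldots < \cup \rho_l$. Applying Lemma \ref{graded_wedge_prod} on the $F$-side to the evaluation of $\xi_1 \odot \ldots \odot \xi_l$ on the vectors $\mu_{|\rho_j|}(e_K \mid K \in \rho_j) \in F_{d_j}$ produces precisely a sum over such $\tau$, weighted by the block-permutation sign of $\tau$.

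The main obstacle is the sign bookkeeping. By Lemma \ref{lemma_sign_partition_welldef} applied to the reordering of $\underline d$ together with the product formula \eqref{prod1}, the sign $\sgn(\rho_{\tau(1)}, \ldots, \rho_{\tau(l)})$ factors as $\sgn(\rho_1, \ldots, \rho_l)$ times the sign of the block-permutation induced by $\tau$ on the blocks of sizes $d_1, \ldots, d_l$. This latter factor is exactly the sign appearing in the $F$-side expansion, so the sum over non-canonically ordered tuples matches the canonical-representative sum with the graded product unfolded, establishing the identity.
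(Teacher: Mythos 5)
Your proposal is correct and follows the same overall strategy as the paper's proof: expand $\mu^\star(\xi_1\odot\ldots\odot\xi_l)$ using that $\mu^\star$ is a morphism of graded algebras, evaluate each summand $\mu_{p_1}^\star(\xi_1)\odot\ldots\odot\mu_{p_l}^\star(\xi_l)$ with Lemma \ref{graded_wedge_prod}, and regroup the resulting unconstrained sum over tuples $(\rho_1,\ldots,\rho_l)$ into the constrained sum of \eqref{mu_on_products_precise}. Where you genuinely differ is in how the regrouping is carried out: the paper only treats $l=2$ explicitly, splitting into the cases $d_1<d_2$ (where the ordering constraint is vacuous) and $d_1=d_2$ (where the two terms of each orbit are paired by hand and the factor $(-1)^a$ is absorbed into $\xi_1\odot\xi_2$), and then asserts that $l\geq 3$ ``works in the same manner''. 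Your orbit decomposition under cardinality-preserving permutations $\tau\in S_l$, together with the factorization $\sgn(\rho_{\tau(1)},\ldots,\rho_{\tau(l)})=\sgn(\rho_1,\ldots,\rho_l)\cdot(\text{block sign of }\tau)$ --- which is the same computation as \eqref{sgnsgn} and the graded-symmetry step in the proof of Lemma \ref{graded_wedge_prod} --- and the identification of that block sign with the weight appearing in the generator-level expansion of $\xi_1\odot\ldots\odot\xi_l$ on the $F$-side, treats all $l$ and all multiplicities among $d_1,\ldots,d_l$ uniformly, so it is arguably more complete than the paper's argument. The one point you should make explicit is that $\cup\rho_1<\ldots<\cup\rho_l$ must be read as comparison by cardinality first and lexicographically within a cardinality class (consistently with the paper's canonical order on partitions); with that convention each cardinality-preserving orbit contains exactly one representative satisfying the constraint, which is what your selection of canonical representatives requires and what the paper's $d_1<d_2$ step implicitly uses.
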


Here, for each $j$
the section $\mu_{|\rho_j|}$
 is fed the list of vectors
$(e_{K} \mid K\in \rho_j)$. The latter is a notation for the \emph{ordered} tuple with order given by the (canonical) order of the  partition $\rho_j$.

\begin{proof}
Since $\mu^\star$ is a morphism of graded  algebras, the image of $\xi_1\odot\ldots\odot\xi_l$ under $\mu^\star$ is given by 
  \begin{equation}\label{mu_on_products}
  \begin{split}
  \mu^\star(\xi_1\odot\ldots\odot\xi_l)&=\sum_{\substack{p_1,\ldots, p_l\in\mathcal P(n)\\
\sum p_j=d_j \text{ for } j=1,\ldots, l\\  }
  }\mu^\star_{p_1}(\xi_1)\odot\ldots\odot \mu^\star_{p_l}(\xi_l).
  \end{split}
  \end{equation}
  
  Assume for simplicity that $l=2$ and take first $d_1<d_2$. Choose as in the claim two ordered integer partitions $p_1,p_2$ such that $\sum p_1=d_1$ and $\sum p_2=d_2$.
  Then 
 \begin{equation*}
  \begin{split}
  \mu^\star(\xi_1\odot\xi_2)(e_{K_1}, \ldots, e_{K_s})&=\sum_{\substack{q_1,q_2\in\mathcal P(n)\\
\sum p_j=d_j \text{ for } j=1,2\\ 
q_1\cup q_2=p \text{ as sets}}
  }\left(\mu^\star_{q_1}(\xi_1)\odot \mu^\star_{q_2}(\xi_2)\right)(e_{K_1}, \ldots, e_{K_s}).
  \end{split}
  \end{equation*}
  By Lemma \ref{graded_wedge_prod} this equals 
   \begin{equation}\label{sum_morphism_detail}
  \begin{split}
&  \sum_{\substack{q_1,q_2\in\mathcal P(n)\\
\sum p_j=d_j \text{ for } j=1,2\\ 
q_1\cup q_2=p \text{ as sets}}
  }\sum_{\substack{\rho_1,\rho_2 \text{ ordered }\\
  \rho^p_{\rm can}=\rho_1\cup \rho_2 \text{ as sets }\\
  |\rho_1|=q_1, |\rho_2|=q_2}}\sgn(\rho_1,\rho_2)\cdot \xi_1\left(\mu_{q_1}(e_K\mid K\in \rho_1)\right)\cdot \xi_2\left(\mu_{q_2}(e_{K}\mid K\in \rho_2)\right)\\
  &=\sum_{\substack{\rho_1,\rho_2 \text{ ordered }\\
  \rho^p_{\rm can}=\rho_1\cup \rho_2 \text{ as sets }\\
  \#\cup\rho_1=d_1,\#\cup\rho_2=d_2}}\sgn(\rho_1,\rho_2)\cdot \xi_1\left(\mu_{|\rho_1|}(e_K\mid K\in \rho_1)\right)\cdot \xi_2\left(\mu_{|\rho_2|}(e_{K}\mid K\in \rho_2)\right).
  \end{split}
  \end{equation}
  But since $\#\cup\rho_1=d_1<d_2=\#\cup\rho_2$ for each pair of ordered partitions $\rho_1,\rho_2$ indexing this sum, the sum equals 
  \begin{equation*}
  \begin{split}
\sum_{\substack{\rho_1,\rho_2 \text{ ordered }\\
  \rho^p_{\rm can}=\rho_1\cup \rho_2 \text{ as sets }\\
  \#\cup\rho_1=d_1,\#\cup\rho_2=d_2\\
  \cup\rho_1<\cup \rho_2}}\sgn(\rho_1,\rho_2)\cdot \xi_1\left(\mu_{|\rho_1|}(e_K\mid K\in \rho_1)\right)\cdot \xi_2\left(\mu_{|\rho_2|}(e_{K}\mid K\in \rho_2)\right).
  \end{split}
  \end{equation*}
  \medskip
  
  Next take $l=2$ but $d_1=d_2=:a$. Choose as above two different ordered integer partitions $p_1,p_2$ such that $\sum p_1=\sum p_2=a$ and assume for simplicity\footnote{In general, $\mu^\star(\xi_1\odot\xi_2)$ has more terms, that need to be considered alone or pairwise, as is done here.} that under all candidates for these two partitions, only $p_1$ and $p_2$ give non-vanishing vector bundle morphisms $\mu_{p_1}$ and $\mu_{p_2}$. Then $\mu^\star(\xi_1\odot\xi_2)$ is given by 
  \begin{equation*}
  \begin{split}
  \mu^\star(\xi_1\odot\xi_2)=\mu_{p_1}^\star(\xi_1)\odot\mu_{p_1}^\star(\xi_2)+\mu_{p_1}^\star(\xi_1)\odot\mu_{p_2}^\star(\xi_2)+\mu_{p_2}^\star(\xi_1)\odot\mu_{p_1}^\star(\xi_2)+\mu_{p_2}^\star(\xi_1)\odot\mu_{p_2}^\star(\xi_2).
  \end{split}
  \end{equation*}
  Since $p_1\neq p_2$, the reordered unions $p_1\cup p_1$, $p_2\cup p_2$ and $p_1\cup p_2$ are all different.
  Consider first the term $\mu_{p_1}^\star(\xi_1)\odot\mu_{p_1}^\star(\xi_2)$. (The term $\mu_{p_2}^\star(\xi_1)\odot\mu_{p_2}^\star(\xi_2)$ is treated in the same manner.)
   Let again $p$ be the reordered union $p_1\cup p_1$ and let $\rho^p_{\rm can}=(K_1,\ldots, K_s)$. In this case, by Lemma \ref{graded_wedge_prod}
   the value of $\left(\mu_{p_1}^\star(\xi_1)\odot\mu_{p_1}^\star(\xi_2)\right)(e_{K_1},\ldots, e_{K_s})$
   reads
   \begin{equation*}
  \begin{split}
& \sum_{\substack{\rho_1,\rho_2 \text{ ordered }\\
  \rho^p_{\rm can}=\rho_1\cup \rho_2 \text{ as sets }\\
  |\rho_1|=|\rho_2|=p_1}}\sgn(\rho_1,\rho_2)\cdot \xi_1\left(\mu_{p_1}(e_K\mid K\in \rho_1)\right)\cdot \xi_2\left(\mu_{p_1}(e_{K}\mid K\in \rho_2)\right)\\
  &=\sum_{\substack{\rho_1,\rho_2 \text{ ordered }\\
  \rho^p_{\rm can}=\rho_1\cup \rho_2 \text{ as sets }\\
 |\rho_1|=p_1=|\rho_2|\\
  \cup\rho_1<\cup\rho_2}}\sgn(\rho_1,\rho_2)\cdot \xi_1\left(\mu_{p_1}(e_K\mid K\in \rho_1)\right)\cdot \xi_2\left(\mu_{p_1}(e_{K}\mid K\in \rho_2)\right)\\
  &\hspace*{4cm} +\sgn(\rho_2,\rho_1)\cdot \xi_1\left(\mu_{p_1}(e_K\mid K\in \rho_2)\right)\cdot \xi_2\left(\mu_{p_1}(e_{K}\mid K\in \rho_1)\right)\\
  &=\sum_{\substack{\rho_1,\rho_2 \text{ ordered }\\
  \rho^p_{\rm can}=\rho_1\cup \rho_2 \text{ as sets }\\
 |\rho_1|=p_1=|\rho_2|\\
  \cup\rho_1<\cup\rho_2}}\sgn(\rho_1,\rho_2)\cdot \Bigl(\xi_1\left(\mu_{p_1}(e_K\mid K\in \rho_1)\right)\cdot \xi_2\left(\mu_{p_1}(e_{K}\mid K\in \rho_2)\right)\\
  &\hspace*{5cm} \qquad+(-1)^a\cdot \xi_1\left(\mu_{p_1}(e_K\mid K\in \rho_2)\right)\cdot \xi_2\left(\mu_{p_1}(e_{K}\mid K\in \rho_1)\right)\Bigr)\\
   &=\sum_{\substack{\rho_1,\rho_2 \text{ ordered }\\
  \rho^p_{\rm can}=\rho_1\cup \rho_2 \text{ as sets }\\
 |\rho_1|=p_1=|\rho_2|\\
  \cup\rho_1<\cup\rho_2}}\sgn(\rho_1,\rho_2)\cdot(\xi_1\odot\xi_2)\left(\mu_{|\rho_1|}(e_K\mid K\in \rho_2), \mu_{|\rho_2|}(e_{K}\mid K\in \rho_1)\right).
  \end{split}
  \end{equation*}
  
 Consider the terms $\mu_{p_1}^\star(\xi_1)\odot\mu_{p_2}^\star(\xi_2)+\mu_{p_2}^\star(\xi_1)\odot\mu_{p_1}^\star(\xi_2)$. Let $p$ be the reordered union of $p_1$ and $p_2$ and set $\rho_{\rm can}^p=(K_1,\ldots, K_s)$.
 Here
   \[\left(\mu_{p_1}^\star(\xi_1)\odot\mu_{p_2}^\star(\xi_2)+\mu_{p_2}^\star(\xi_1)\odot\mu_{p_1}^\star(\xi_2)\right)(e_{K_1},\ldots, e_{K_s})\]
   reads 
   \begin{equation*}
   \begin{split}
 &  \sum_{\substack{\rho_1,\rho_2 \text{ ordered }\\
  \rho^p_{\rm can}=\rho_1\cup \rho_2 \text{ as sets }\\
 |\rho_1|=p_1, |\rho_2|=p_2\\
  }}\sgn(\rho_1,\rho_2)\cdot \xi_1\left(\mu_{p_1}(e_K\mid K\in \rho_1)\right)\cdot \xi_2\left(\mu_{p_2}(e_{K}\mid K\in \rho_2)\right)\\
  &+\sum_{\substack{\rho_1,\rho_2 \text{ ordered }\\
  \rho^p_{\rm can}=\rho_1\cup \rho_2 \text{ as sets }\\
 |\rho_1|=p_1, |\rho_2|=p_2\\
  }}\sgn(\rho_2,\rho_1)\cdot \xi_1\left(\mu_{p_2}(e_K\mid K\in \rho_2)\right)\cdot \xi_2\left(\mu_{p_1}(e_{K}\mid K\in \rho_1)\right).
   \end{split}
   \end{equation*}
For a choice of two partitions $\rho_1$ and $\rho_2$ indexing these sums, either\footnote{In fact, for some choices of $p_1$ and $p_2$, 
 $\cup\rho_1<\cup\rho_2$ is automatically true. For instance if $p_1=(1,3)$ and $p_2=(2,2)$ the partition $p$ is $p=(1,2,2,3)$ and the partitions
  must be $\rho_1=(\{1\},\{6,7,8\})$ and $\rho_2=(\{2,3\}, \{4,5\})$.
  But for some choices of $p_1$ and $p_2$ the order of $\cup\rho_1$ and $\cup\rho_2$ cannot be predicted. For instance if $p_1=(1,3)$ and $p_2=(1,1,2)$ since then $p=(1,1,1,2,3)$ and  $\rho_1=(\{1\},\{6,7,8\})$ and $\rho_2=(\{2\},\{3\},\{4,5\})$ satisfy $\cup\rho_1<\cup\rho_2$ while 
$\rho_1=(\{2\},\{6,7,8\})$ and $\rho_2=(\{1\},\{3\},\{4,5\})$ satisfy $\cup\rho_1>\cup\rho_2$.}
 $\cup\rho_1<\cup\rho_2$ or $\cup\rho_1>\cup\rho_2$. In both cases the corresponding terms in the sum give together
   \begin{equation*}
   \begin{split}
   &\sgn(\rho_1,\rho_2)\cdot \xi_1\left(\mu_{p_1}(e_K\mid K\in \rho_1)\right)\cdot \xi_2\left(\mu_{p_2}(e_{K}\mid K\in \rho_2)\right)\\
   &+\sgn(\rho_2,\rho_1)\cdot \xi_1\left(\mu_{p_2}(e_K\mid K\in \rho_2)\right)\cdot \xi_2\left(\mu_{p_1}(e_{K}\mid K\in \rho_1)\right),
   \end{split}
   \end{equation*}
   which is written 
   \[ \sgn(\rho_1,\rho_2)(\xi_1\odot\xi_2)(\mu_{|\rho_1|}(e_K \mid K\in\rho_1), \mu_{|\rho_2|}(e_K\mid K\in\rho_2))
   \]
   if $\cup\rho_1<\cup\rho_2$ and 
    \[ \sgn(\rho_2,\rho_1)(\xi_1\odot\xi_2)(\mu_{|\rho_2|}(e_K \mid K\in\rho_2), \mu_{|\rho_1|}(e_K\mid K\in\rho_1))
   \]
   if $\cup\rho_2<\cup\rho_1$.
   
As a summary   \[\left(\mu_{p_1}^\star(\xi_1)\odot\mu_{p_2}^\star(\xi_2)+\mu_{p_2}^\star(\xi_1)\odot\mu_{p_1}^\star(\xi_2)\right)(e_{K_1},\ldots, e_{K_s})\]
equals 
\[ \sum_{\substack{\rho_1,\rho_2 \text{ ordered }\\
  \rho^p_{\rm can}=\rho_1\cup \rho_2 \text{ as sets }\\
 \#\cup\rho_1=a=\#\cup\rho_2\\
 \cup\rho_1<\cup\rho_2
 }}\sgn(\rho_1,\rho_2)\cdot \sgn(\rho_1,\rho_2)(\xi_1\odot\xi_2)(\mu_{|\rho_1|}(e_K \mid K\in\rho_1), \mu_{|\rho_2|}(e_K\mid K\in\rho_2)).
\]

   The above proves the claim for $l=2$. The general case for $l\geq 3$ works in the same manner, but with more different cases to consider.
  \end{proof}

\medskip
The following result is then an immediate corollary of the preceding lemma. The proof is left to the reader.
\begin{corollary}
Given two morphisms 
\[\mu\colon E_{1}[-1]\oplus \ldots\oplus E_{n}[-n]\to F_{1}[-1]\oplus
  \ldots\oplus F_{m}[-m]\] and \[\nu\colon  F_{1}[-1]\oplus \ldots\oplus F_{m}[-m]\to G_{1}[-1]\oplus
  \ldots\oplus G_{q}[-q]\]
  of split $[n]$-manifolds over smooth maps $\mu_0\colon M\to N$ and $\nu_0\colon N\to Q$, respectively, the composition 
  \[
  \nu\circ \mu=\left( (\nu\circ \mu)_p\right)_{p\in\mathcal P(q)}
\]
is defined by
\begin{equation}\label{composition_graded_morphisms}
 (\nu\circ \mu)_p=
\!\!\!\!\!\!\!\!\! \sum_{\substack{\rho_1,\ldots, \rho_l \text{ ordered}\\
\text{such that } \rho_1\cup\ldots\cup \rho_l=\rho^p_{\rm can} \\
\text{ as sets and}\\
 \cup\rho_1< \ldots< \cup\rho_l}} \sgn(\rho_1,\ldots,\rho_l)\cdot\nu_{\left(\Sigma |\rho_1|,\ldots,\Sigma |\rho_l|\right)}\circ \left(\mu_{|\rho_1|}, \ldots, \mu_{|\rho_l|}\right) 
\end{equation}
for all $p\in\mathcal P(q)$. 
\end{corollary}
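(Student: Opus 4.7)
The plan is to use that $(\nu\circ\mu)^\star$ must equal $\mu^\star\circ\nu^\star$ by functoriality, so that identifying the composition with the morphism of split $\N$-manifolds encoded by the family $((\nu\circ\mu)_p)_{p\in\mathcal P(q)}$ of vector bundle morphisms in \eqref{composition_graded_morphisms} reduces, by the universal property of split $[q]$-manifolds, to checking the formula applied to an arbitrary generator $\xi\in\Gamma(G_k^*)$ of degree $k$, for each $k=1,\ldots,q$.

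First apply $\nu^\star$:
\[\nu^\star(\xi)=\sum_{\substack{q'=(j_1,\ldots,j_l)\in\mathcal P(m)\\ \Sigma q'=k}}\nu_{q'}^\star(\xi),\qquad\nu_{q'}^\star(\xi)\in\Gamma(F^*_{j_1}\odot\ldots\odot F^*_{j_l}),\]
with $\nu_{q'}^\star(\xi)$ determined by duality: $\nu_{q'}^\star(\xi)(f_1,\ldots,f_l)=\xi(\nu_{q'}(f_1,\ldots,f_l))$ on fibrewise arguments. Next apply $\mu^\star$ to each $\nu_{q'}^\star(\xi)$. Locally, every section of $\Gamma(F^*_{j_1}\odot\ldots\odot F^*_{j_l})$ is a $C^\infty(M)$-linear combination of pure wedges $\eta_1\odot\ldots\odot\eta_l$ with $\eta_i\in\Gamma(F^*_{j_i})$, and both sides of \eqref{mu_on_products_precise} are $C^\infty(M)$-multilinear in their arguments, so the preceding lemma extends to give the component of $\mu^\star(\nu_{q'}^\star(\xi))$ in $\Gamma(E^*_{i_1}\odot\ldots\odot E^*_{i_s})$ for $p=(i_1,\ldots,i_s)\in\mathcal P(n)$ with $\Sigma p=k$ and $\rho^p_{\rm can}=(K_1,\ldots,K_s)$: evaluated at $e_{K_1}\in E_{\#K_1},\ldots,e_{K_s}\in E_{\#K_s}$ it equals
\[\sum_{(\rho_1,\ldots,\rho_l)}\sgn(\rho_1,\ldots,\rho_l)\cdot\xi\bigl(\nu_{q'}(\mu_{|\rho_1|}(e_K\mid K\in\rho_1),\ldots,\mu_{|\rho_l|}(e_K\mid K\in\rho_l))\bigr),\]
the sum running over ordered partitions $(\rho_1,\ldots,\rho_l)$ of $\rho^p_{\rm can}$ (as a set of subsets) with $\cup\rho_1<\ldots<\cup\rho_l$ and $\#\cup\rho_i=j_i$. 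Summing over $q'$ drops the size constraint while recording $q'=(\Sigma|\rho_1|,\ldots,\Sigma|\rho_l|)$, which is automatically a naturally ordered integer partition in $\mathcal P(m)$ since $\cup\rho_1<\ldots<\cup\rho_l$ forces the sequence of sizes to be non-decreasing. The resulting expression is precisely the dual of $(\nu\circ\mu)_p$ as defined in \eqref{composition_graded_morphisms} applied to $\xi$.

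The main obstacle is the bookkeeping of orderings and signs. The extension of the preceding lemma from pure wedges to general sections of $\Gamma(F^*_{j_1}\odot\ldots\odot F^*_{j_l})$ is automatic from $C^\infty(M)$-linearity. Verifying that the condition $\cup\rho_1<\ldots<\cup\rho_l$ selects exactly one representative per unordered partition class contributing to the sum, that the $q'$-index runs exactly through $\mathcal P(m)$ without double counting, and that the single sign $\sgn(\rho_1,\ldots,\rho_l)$ produced by the preceding lemma coincides with the one appearing in \eqref{composition_graded_morphisms}, all reduce to the product formula \eqref{prod1} and the characterisation of $\sgn$ as the unique reordering sign from Lemma \ref{lemma_sign_partition_welldef}.
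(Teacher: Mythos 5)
Your proposal is correct and follows exactly the route the paper intends: the paper states this corollary as an immediate consequence of the preceding lemma and leaves the proof to the reader, and your argument (checking $(\nu\circ\mu)^\star=\mu^\star\circ\nu^\star$ on degree-$k$ generators $\xi\in\Gamma(G_k^*)$, expanding $\nu^\star(\xi)$ over integer partitions, and applying \eqref{mu_on_products_precise} termwise) is precisely that consequence spelled out. The only blemish is a harmless slip where you write ``$C^\infty(M)$-linear combination'' for the local decomposition of $\nu^\star_{q'}(\xi)\in\Gamma(F^*_{j_1}\odot\ldots\odot F^*_{j_l})$, which is a module over $C^\infty(N)$; the extension from pure products to general sections then uses $\mu_0^*$-semilinearity of $\mu^\star$ (or, equivalently, is checked fibrewise over $\mu_0(m)$), as your multilinearity remark in effect does.
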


 Here, 
$(\nu\circ\mu)_p$ is fed a list of vectors
\[ (e_{K_1},\ldots, e_{K_s})
 \]
 with $\rho^p_{\rm can}=(K_1,\ldots, K_s)$.
The term $\nu_{\left(\Sigma |\rho_1|,\ldots,\Sigma |\rho_l|\right)}\circ \left(\mu_{|\rho_1|}, \ldots, \mu_{|\rho_l|}\right) $ of \eqref{composition_graded_morphisms} indexed by $(\rho_1,\ldots, \rho_l)$ applied to this list is then precisely
\[\nu_{\left(\Sigma |\rho_1|,\ldots,\Sigma |\rho_l|\right)}\left(\mu_{|\rho_1|}(e_{K}\mid K\in \rho_1), \ldots, \mu_{|\rho_l|}(e_{K}\mid K\in \rho_l)\right),
\]
where, as before, 
$(e_{K} \mid K\in \rho_1)$ is a notation for the \emph{ordered} tuple with order given by the  partition $\rho_1$, etc.

\medskip

A morphism $\mu\colon E_{1}[-1]\oplus \ldots\oplus E_{n}[-n]\to F_{1}[-1]\oplus
  \ldots\oplus F_{n}[-n]$ of split $[n]$-manifolds is an \textbf{isomorphism} of split $[n]$-manifolds if it has an inverse.
  More precisely, 
  if $\mu_0$ is a diffeomorphism and there exists a morphism $\nu\colon F_{1}[-1]\oplus \ldots\oplus F_{n}[-n]\to E_{1}[-1]\oplus
  \ldots\oplus E_{n}[-n]$ of split $[n]$-manifolds such that $\nu_0$ is the smooth inverse of $\mu_0$ and 
  \[ (\nu\circ \mu)_p=0 \text{ and } (\mu\circ \nu)_p=0
  \]
  for $p\in\mathcal P(n)$ with $\sharp p\geq 2$, and 
  \[ (\nu\circ \mu)_p=\id_{E_{\Sigma p}}
  \]
  for $\sharp p=1$. (This implies as always $(\mu\circ \nu)_p=\id_{F_{\Sigma p}}$ for $\sharp p=1$.)  For instance, a collection of $n$-isomorphisms $\mu_{(1)}\colon E_1\to F_1$, \ldots, $\mu_{(n)}\colon E_n\to  F_n$ over a smooth diffeomorphism $\mu_0\colon M\to N$  gives an isomorphism of $E_{1}[-1]\oplus \ldots\oplus E_{n}[-n]$ with $F_{1}[-1]\oplus
  \ldots\oplus F_{n}[-n]$ by setting $\mu_p=0$ for $\sharp p\geq 2$.

\medskip Any $\N$-manifold is non-canonically isomorphic to a split
$\N$-manifold of the same degree. More precisely, the embedding of the category of split $[n]$-manifolds in the one of  $[n]$-manifolds
is fully faithful and essentially surjective.

 This is true locally, per definition, and proved globally 
for instance in \cite{BoPo13}, following the proof of the
$\mathbb Z/2\mathbb Z$-graded version of this theorem, which is called
there \emph{Batchelor's theorem} \cite{Batchelor79}, see also
\cite{Berezin87} and \cite{Voronov02}.

\begin{proposition}
Any $[n]$-manifold
  is non-canonically isomorphic to a split $[n]$-manifold.
\end{proposition}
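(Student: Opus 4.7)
The plan is to proceed by induction on the degree $n$, following the standard proof of Batchelor's theorem adapted to the $\mathbb N$-graded setting (as carried out in \cite{BoPo13}, cf. also \cite{Batchelor79}). The base case $n=1$ is immediate from the discussion in Section \ref{background_n_manifolds}: a $[1]$-manifold over $M$ is equivalent to a locally free and finitely generated sheaf of $C^\infty(M)$-modules, hence to the sheaf of sections $\Gamma(E_1^*)$ of a smooth vector bundle $E_1 \to M$ of rank $r_1$, whence $\mathcal M$ is canonically already of the split form $E_1[-1]$.

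For the inductive step, let $\mathcal M$ be an $[n]$-manifold over $M$. First I would extract the vector bundles underlying the split model. Let $\mathcal J$ denote the sheaf of ideals in $C^\infty(\mathcal M)$ generated by the elements of strictly positive degree. The graded quotient sheaf $\mathcal J / \mathcal J^2$ is locally freely generated as a $C^\infty(M)$-module in each degree by the images of the local generators $\xi_i^1,\ldots,\xi_i^{r_i}$, so its degree-$i$ component is the sheaf of sections of the dual of a vector bundle $E_i \to M$ of rank $r_i$. The candidate split model is then
\[
\mathcal S := E_{1}[-1] \oplus E_{2}[-2] \oplus \ldots \oplus E_{n}[-n].
\]

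Next I would construct a morphism $\mu \colon \mathcal S \to \mathcal M$ over $\id_M$ by lifting generators degree by degree. The identification $\Gamma(E_1^*) = (\mathcal J/\mathcal J^2)^1 = C^\infty(\mathcal M)^1$ is canonical (since $(\mathcal J^2)^1 = 0$) and fixes $\mu^\star$ on $\Gamma(E_1^*)$. Inductively, having defined $\mu^\star$ on generators of degree strictly less than $i$, I would lift each global section $\epsilon \in \Gamma(E_i^*)$ to some $\tilde\epsilon \in C^\infty(\mathcal M)^i$ projecting to $\epsilon$ in $(\mathcal J/\mathcal J^2)^i$. Such a lift exists locally by the very definition of $\mathcal M$; the ambiguity in a local lift lies in the $C^\infty(M)$-submodule $(\mathcal J^2)^i \cap C^\infty(\mathcal M)^i$, which is module-theoretic (not involving the higher-degree algebra structure), so a partition of unity subordinate to a cover trivialising $\mathcal M$ glues the local lifts into a global one. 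Lemma \ref{graded_wedge_prod} then extends $\mu^\star$ uniquely to all of $C^\infty(\mathcal S)$ as an algebra morphism over the identity.

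The main obstacle will be verifying that $\mu$ so constructed is an isomorphism. Since being an isomorphism of sheaves is a local property, it suffices to check it on a trivialising chart $U \subseteq M$: there the lifts $\tilde\epsilon$ coincide, up to a local frame change on $E_i$, with the local generators of $\mathcal M|_U$, so $\mu^\star|_U$ is an isomorphism of split $[n]$-manifolds in the sense of Section \ref{cat_snMan}. The non-canonicity of $\mu$ reflects the choices of local lifts and of partition of unity at each degree. The technical verification that partition-of-unity gluing of the individual lifts of generators preserves the graded-algebraic structure is exactly the content of the $\mathbb N$-graded Batchelor theorem proved in \cite{BoPo13}, to which I would refer for the complete argument.
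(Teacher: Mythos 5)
Your proposal is correct and follows essentially the same route as the paper, which gives no proof of its own but simply observes that the statement holds locally by definition and cites \cite{BoPo13} for the global $\mathbb N$-graded Batchelor-type argument --- exactly the lift-generators-via-$\mathcal J/\mathcal J^2$-and-partitions-of-unity scheme you sketch. The only small imprecision is that on a trivialising chart the glued lifts agree with the local generators up to a frame change \emph{and} decomposable corrections in $(\mathcal J^2)^i$, so the local invertibility of $\mu^\star$ needs the (easy) observation that such a ``triangular'' change of generators is an automorphism of the freely generated algebra; this is covered by the reference you, like the paper, defer to.
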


Note that $[1]$-manifolds are automatically split since they are just
vector bundles with a degree shifting in the fibers,
i.e.~a $[1]$-manifold over $M$ is $E[-1]$ for some vector bundle $E\to M$ and
$C^\infty(E[-1])=\Gamma(\wedge^\bullet E^*)$, the exterior
algebra of $E$.

\subsection{$[n]$-manifold cocycles}

Let $\mathcal M$ be an $[n]$-manifold over a smooth manifold $M$ and choose an open cover $(U_\alpha)_{\alpha\in\Lambda}$ of $M$ 
by open sets trivialising $C^\infty(\mathcal M)$. That is, for each $\alpha\in\Lambda$, the $\N$-graded, graded commutative, associative, unital $C^\infty(M)$-algebra
$C^\infty_{U_\alpha}(\mathcal M)$ is freely generated by its elements 
\[  \xi_{\alpha,1}^{1},\ldots,\xi_{\alpha,1}^{r_1}, \xi_{\alpha,2}^1,\ldots,\xi_{\alpha,2}^{r_2}, \ldots,
  \xi_{\alpha,n}^1,\ldots,\xi_{\alpha,n}^{r_n} 
\]
with $\xi_{\alpha,i}^j$ of degree $i$ for
  $i\in\{1,\ldots,n\}$ and $j\in\{1,\ldots,r_i\}$. In other words, for each $\alpha\in\Lambda$ the restriction $\mathcal M\an{U_\alpha}$ is isomorphic via a morphism $\phi_\alpha$ over the identity on $U_\alpha$ to the split $[n]$-manifold
  \[ (U_\alpha\times \R^{r_1})[-1]\oplus \ldots\oplus (U_\alpha\times \R^{r_n})[-n],
  \]
  where the spaces $U_\alpha\times \R^{r_j}$ carry the canonical trivial vector bundle structures over $U_\alpha$, for $j=1,\ldots,n$.
  
 Take $\alpha,\beta\in\Lambda$ and set $U_{\alpha\beta}:=U_\alpha\cap U_\beta$. Then the following diagram of isomorphisms of $[n]$-manifold commutes
  \[
\begin{tikzcd}
\mathcal M\an{U_{\alpha\beta}} \ar[drr,"\phi_\alpha\an{\mathcal M\an{U_{\alpha\beta}}}"]  \ar[d,"\phi_\beta\an{\mathcal M\an{U_{\alpha\beta}}}"] 
&& \\
 (U_{\alpha\beta}\times \R^{r_1})[-1]\oplus \ldots\oplus (U_{\alpha\beta}\times \R^{r_n})[-n]  \ar[rr,"\phi_\alpha\circ\phi_\beta\inv  "] & & (U_{\alpha\beta}\times \R^{r_1})[-1]\oplus \ldots\oplus (U_{\alpha\beta}\times \R^{r_n})[-n]
\end{tikzcd} 
\]
and $\phi^{\alpha\beta}:=\phi_\alpha\circ\phi_\beta\inv$ is an isomorphism of split $[n]$-manifolds. By construction,
\[ \phi^{\alpha\gamma}=\phi^{\alpha\beta}\circ\phi^{\beta\gamma}
\]
over $U_{\alpha\beta\gamma}:=U_\alpha\cap U_\beta\cap U_\gamma$, i.e.
\[ \phi^{\alpha\gamma}_p=(\phi^{\alpha\beta}\circ\phi^{\beta\gamma})_p
\]
for all $p\in\mathcal P(n)$.
The open cover $\{U_\alpha\}_{\alpha\in\Lambda}$ of $M$ together with the collection of isomorphisms
\[( \phi^{\alpha\beta}\mid \alpha,\beta\in\Lambda) \]
satisfying 
\begin{enumerate}
\item $\phi^{\alpha\gamma}=\phi^{\alpha\beta}\circ\phi^{\beta\gamma}$
over $U_{\alpha\beta\gamma}$ for all $\alpha,\beta,\gamma\in\Lambda$ and 
\item $\phi^{\alpha\alpha}=\id_{(U_\alpha\times \R^{r_1})[-1]\oplus \ldots\oplus (U_\alpha\times \R^{r_n})[-n]}$
for all $\alpha\in\Lambda$
\end{enumerate}
is an \textbf{$[n]$-manifold cocycle on $M$}. By their very definition, $[n]$-manifold cocycles on $M$ are equivalent to $[n]$-manifolds over $M$.

\section{Multiple vector bundles and charts}\label{multiple_vb}

This section recalls the definitions, notation
and results of \cite{HeJo20} which are needed in the rest of the paper,
and studies in more detail the \emph{iterated higher order cores} of an $n$-fold vector bundle.

  \medskip
	
In \cite{HeJo20} the authors define as follows an $n$-fold vector bundle. This is just a
different, in their opinion more convenient, formulation for the $n$-fold vector bundles defined by
Mackenzie and Gracia-Saz in \cite{GrMa09}.

\begin{definition}\label{def_n_fold}
  An \textbf{$n$-fold vector bundle}, is a covariant functor
  $\E\colon \square^{n}\to \Man$ --
 to the category of smooth manifolds,
  such that, writing $p^I_{J}:=\E(I\to J)$ for $J\subseteq I\subseteq \nset$,
 \begin{enumerate}
    \item[(a)] for all $I \subseteq \nset$ and all $i\in I$,
  $p^I_{I\setminus\{i\}}\colon \E(I)\to \E(I\setminus \{i\})$ has a 
	smooth vector bundle structure, and 
  \item[(b)] for all
  $I\subseteq \nset$ and $i\neq j\in I$,
\[
\begin{tikzcd}
\E(I)\ar[rr,"p^{I}_{I\setminus\{i\}}"] \ar[d,"p^{I}_{I\setminus\{j\}}"] 
&& \E(I\setminus \{i\})\ar[d,"p^{I\setminus \{i\}}_{I\setminus\{i,j\}} "]\\
\E(I\setminus \{j\}) \ar[rr,"p^{I\setminus \{j\}}_{I\setminus\{i,j\}}  "] 
& & \E(I\setminus \{i,j\}) 
\end{tikzcd} 
\]
is a double vector bundle.
\end{enumerate}

The smooth manifold $\E(\emptyset)$ is denoted $M$ if not mentioned otherwise.

\medskip

Given two $n$-fold vector bundles 
$\E\colon \square^n\to \Man$ and
$\F\colon \square^n\to \Man$, a 
\textbf{morphism of $n$-fold vector bundles} from $\E$ to $\F$ is a
natural transformation $\Phi\colon \E\to \F$ 
such that the commutative diagrams
\[
\begin{tikzcd}
\E(I)\ar[r,"\Phi(I)"] \ar[d,"p^{I}_{I\setminus\{i\}}"] & \F(I)\ar[d,"p^{I}_{I\setminus\{i\}} "]\\
\E(I\setminus \{i\}) \ar[r,"\Phi(I\setminus\{i\})  "] & \F(I\setminus \{i\})
\end{tikzcd} 
\] 
are vector bundle homomorphisms for all
$I\subseteq \nset$ and $i\in I$. The morphism $\tau$ is surjective 
(respectively injective) if each of its components $\tau(I)$, $I\subseteq \nset$ 
is fibrewise surjective (respectively fibrewise injective). It is then called \textbf{an epimorphism} (respectively a \textbf{monomorphism}) of $n$-fold vector bundles.
\end{definition}
Note that such a morphism $\Phi\colon \E\to \F$ is completely determined by its top map $\Phi(\nset)\colon \E(\nset)\to\F(\nset)$. However, the definition above is convenient because it can be extended to morphisms of $\infty$-fold vector bundles \cite{HeJo20}.
\medskip

A subset $S\subseteq \nset$ defines a full subcategory $\square^S$ of the $n$-cube category $\square^n$, with objects the subsets of $S$. Given an $n$-fold vector bundle $\E\colon\square^n\to\Man$, 
the \textbf{$S$-side} of $\E$ is then the restriction $\E_S$ of $\E$ to the subcategory $\square^S$. The functors obtained in this manner are the \textbf{sides} of $\E$. Given two $n$-fold vector bundles $\E$ and $\F$ and a morphism $\Phi\colon \E\to\F$ of $n$-fold vector bundles, the restriction of $\Phi$ to the sides $\E_S$ and $\F_S$ is written $\Phi\an{S}\colon \E_S\to \F_S$.
It is defined by $\Phi\an{S}(J)=\Phi(J)\colon \E(J)\to \F(J)$ for all $J\subseteq S$.

\subsection{Cores of an $n$-fold vector bundle} 
 
   Let $\E\colon \square^n\to \Man$ be an $n$-fold vector bundle, and choose two subsets $J\subseteq S\subseteq \nset$. Then by Proposition 2.18 in \cite{HeJo20}, the space 
   \[ 
   \E^{S}_{J}:=\bigcap_{j\in J}(p^S_{S\setminus\{j\}})\inv\left(
	\nvbzero{S\setminus \{j\}}{S\setminus J}\right)=\left\{ e\in\E(S) \, \left| \, p^S_{S\setminus\{j\}}(e)=\nvbzero{S\setminus \{j\}}{p^S_{S\setminus J}(e)} \text{ for all } j\in J\right.\right\}
   \]
   with $\nvbzero{S\setminus\{j\}}{S\setminus J}\colon \mathbb E(S\setminus J)\to \mathbb E(S\setminus\{j\})$ the composition of zero sections, is an embedded submanifold of $\mathbb E(S)$. 
   (For $J=\emptyset$ the empty intersection is $\E(S)$ by convention, and for $\# J=1$ the ``zero section'' $\nvbzero{S\setminus\{j\}}{S\setminus J}=\nvbzero{S\setminus J}{S\setminus J}$ is the identity on $\E(S\setminus J)$, so $\E^S_J=\E(S)$.)
   It has further a vector bundle structure over $\mathbb E(S\setminus J)$ with projection \[p^S_{S\setminus J}\an{\E^S_J}=\E(S\rightarrow S\setminus J)\an{\E^S_J}\colon \E^S_J\to \mathbb E(S\setminus J)\]
   and with addition defined by
   \[ e_1\dvplus{}{S\setminus J}e_2:=e_1\dvplus{}{S\setminus\{j\}}e_2
   \]
   for any $j\in J$.
   $\E^S_J$ is the total space of an $(\#S-\#J+1)$-fold vector bundle $\E^{(S,J)}$, called here the $(S,J)$-core of $\mathbb E$ and defined as follows.
   Consider the $(\#S-\#J+1)$-cube subcategory $\lozenge^S_J$ of $\square^S$ with objects the subsets $I\subseteq S$ with 
   \[ I\cap J=\emptyset \text{ or } J\subseteq I
   \]
   and with arrows
   \[ I\rightarrow I'\,\, :\Leftrightarrow \,\, I'\subseteq I.
   \]
   That is, $\lozenge^S_J$ is a full subcategory of $\square^S$. Note that if $S=\nset$ the $(n-\# J+1)$-cube category $\lozenge^\nset_J$ equals the $(n-\# J+1)$-cube category $\lozenge^{\rho_J}$
with $\rho_J$ the partition of $\nset$ in the subset $J$ and $\nset-\# J$ subsets with one element each. 
   
The functor $\E^{(S,J)}\colon \lozenge^S_J\to \Man$ sends an object $I$ as above 
\begin{equation}\label{def_objects_core}
 \text{ to }\,\,\E(I)=\E_S(I) \,\,\text{ if } \,\, I\cap J=\emptyset \,\,\text { and to   } \,\, \E^I_J \,\, \text{ if } J\subseteq I,
\end{equation}
and an arrow $I\rightarrow I'$ to 
\begin{equation}\label{def_arrows_core}
 \left\{\begin{array}{lc}
\mathbb E(I\rightarrow I')\an{\E^I_ J}\colon \E^I_J\to \E^{I'} _J & \text{ if } J\subseteq I'\subseteq I,\\
\mathbb E(I\rightarrow I')\colon \E(I)\to \E(I') & \text{ if } I\cap J=\emptyset \text{ and }\\
\mathbb E(I\to I')\an{\E^I_J}\colon \E^I_J\to \E(I')& \text{ if } I'\cap J=\emptyset \text{ but } J\subseteq I.
\end{array}\right.
\end{equation}

Write $i^S_J\colon \lozenge^S_J\to \square^S$ for the inclusion functor.
Then the assignment 
\[ \tau\colon \operatorname{Obj}\left(\lozenge^S_J\right)\to \operatorname{Mor}(\Man)
\]
sending $I\subseteq S$ with 
 $I\cap J=\emptyset$  or $J\subseteq I$ to the smooth embedding 
 \[ \tau(I):=\iota_{\E^{(S,J)}(I)}\colon \E^{(S,J)}(I)\hookrightarrow \E_S(I)=\E(I)
 \]
 defines a natural transformation 
 \[ \E^{(S,J)}\longrightarrow \E_S\circ i^S_J.
 \]
In the following a little abuse of notation is made, and such a natural transformation by embeddings between functors from a subcategory of $\square^n$ is called a \emph{natural transformation by embeddings in $\E$.}

Consider two $n$-fold vector bundles $\E$ and $\F$.
Choose again $J\subseteq S\subseteq \nset$ and build the $(S,J)$-cores $\E^{(S,J)}$ and $\F^{(S,J)}$.
Then a morphism $\Phi\colon\E\to \F$ of $n$-fold vector bundles induces as follows a core morphism
\begin{equation}\label{core_morphism}
\Phi^{(S,J)}\colon \E^{(S,J)}\to \F^{(S,J)}.
\end{equation}
For all $I\in\operatorname{Obj}(\lozenge^S_J)$ the map $\Phi^{(S,J)}(I)\colon \E^{(S,J)}(I)\to \F^{(S,J)}(I)$ is simply the (necessarily smooth) restriction of $\Phi(I)$ to the 
embedded domain and codomain $ \E^{(S,J)}(I)\subseteq \E(I)$ and $\F^{(S,J)}(I)\subseteq \F(I)$, which is well-defined because $\Phi$ preserves the $n$-fold vector bundle structure and so in particular the zeros.

\medskip

Finally note that by definition, the face $\E_{S\setminus J}$ of $\E$ is also a face of the core $\E^{(S,J)}$ since 
\[\operatorname{Obj}(\square^{S\setminus J})\subseteq \operatorname{Obj}(\lozenge^S_J)\]
($\square^{S\setminus J}$ is a full subcategory of $\lozenge^S_J$) and 
\[ \E^{(S,J)}(I)=\E(I)=\E_{S\setminus J}(I)
\]
for all $I\subseteq S\setminus J$.

\subsection{Linear splittings and decompositions of an $n$-fold vector bundle}
Consider a collection $\mathcal A=\{A_I\mid I\subseteq \nset\}$ of vector bundles $A_I$ over a smooth manifold $M$, where as a convention, $A_\emptyset$ is taken to
be the trivial vector bundle $M\to M$.
Set 
\[\E^{\A}\colon \square^n\to\Man,\]
\[\E^{\mathcal A}\colon \square^n\to \Man, \qquad \E^{\mathcal A}(J):=\prod^M_{I\subseteq J}A_I\]
for $J\subseteq \nset$, 
where $\Pi^M$ are the fibered products over $M$.
Set $\E^{\A}(J\rightarrow J')$ to be the canonical projection $\prod^M_{I\subseteq J}A_I\to \prod^M_{I\subseteq J'}A_I$.
Then $\E^{\mathcal A}$ with the obvious vector bundle structures is an $n$-fold vector bundle, the \textbf{decomposed} $n$-fold vector bundle defined by $\mathcal A$ \cite{HeJo20}.
The reader is invited to check that 
\[ (\E^{\mathcal A})^{(S,J)}(I)=\prod^M_{K\in\operatorname{Obj}(\lozenge^I_J)}A_K
\]
for all $J\subseteq I\subseteq S\subseteq \nset$.
Here, the right-hand face is an embedded submanifold of $\E^{\mathcal A}(I)$ by taking its fibered products with the zero sections of the ``missing'' bundles.
 
The \textbf{vacant decomposed} $n$-fold vector bundle defined by $\mathcal A$ is 
\[ \overline{\E^{\mathcal A}}\colon \square^n\to \Man, \quad \overline{\E^{\mathcal A}}(J)=\prod^M_{i\in J}A_ {\{i\}}.
\]
(Note that it uses only the vector bundles in $\mathcal A$ indexed by one-element sets. Hence it can also be defined by a family of $n$ vector bundles over $M$ indexed by the number $1$ to $n$.)
For each $J\subseteq \nset$ the manifold $\overline{\E^{\mathcal A}}(J)$ is clearly also embedded in $\E^{\A}(J)$. Denote the embedding by $\iota(J)\colon \overline{\E^{\mathcal A}}(J)\rightarrow \E^{\A}(J)$.
The collection of these embeddings defines a monomorphism \begin{equation}\label{emb_spl_dec}
\iota\colon \overline{\E^{\mathcal A}}\to \E^{\A}\end{equation}
 of $n$-fold vector bundles.

Write $\overline{\E^{\mathcal A}}=\overline{\E}$ for simplicity.
For $\# J\geq 2$, $J\subseteq S\subseteq \nset$ and all $I\in\operatorname{Obj}(\lozenge^S_J)$ the space 
$\overline{\E}^{(S,J)}(I)$ is here further 
\[ \overline{\E}^{(S,J)}(I)=\prod^M_{i\in I\setminus J}A_ {\{i\}}
\]
since
\[\overline{\E}^{(S,J)}(I)=\left\{\begin{array}{ll}\overline{\E}^I_J  &\text{ if } J\subseteq I\\
\overline{\E}(I) &\text{ if } I\subseteq S\setminus J,
\end{array}\right.
\]
with 
\[ \overline{\E}^I_J=\left\{ e\in\prod^M_{i\in I}A_{\{i\}}\,\left|\ p^I_{I\setminus\{j\}}(e)=\nvbzero{I\setminus\{j\}}{p^I_{I\setminus J}(e)} \text{ for all } j\in J\right.\right\}
=\prod^M_{\{i\}\in \operatorname{Obj}(\lozenge^I_J)}A_{\{i\}}
=\prod^M_{i\in I\setminus J}A_{\{i\}},
\]
The name \emph{vacant} \cite{Mackenzie92} comes from the fact that the $(I,I)$-cores of $\overline{\E}$ are consequently all trivial, for $I\subseteq \nset$ with $\# I\geq 2$.

The induced monomorphism $\tau^{(S,J)}\colon \overline{\E}^{(S,J)}\rightarrow (\E^{\mathcal A})^{(S,J)}$ is clearly the one defined by the natural embeddings 
\[\overline{\E}^{(S,J)}(I)=\prod^M_{i\in I\setminus J}A_{\{i\}} \hookrightarrow \prod^M_{
K\in\operatorname{Obj}(\lozenge^I_J)}A_K=(\E^{\mathcal A})^{(S,J)}(I).
\]
\medskip

Let $\E$ be an $n$-fold vector bundle and consider
the collection of vector bundles $\E^J_J\to M=\E(\emptyset)$ for $J\subseteq \nset$. In particular, $\E^{\{i\}}_{\{i\}}=\E(\{i\})=:E_i$ are the \textbf{lower sides} of $\E$ and $\E^\emptyset_\emptyset=\E(\emptyset)=M$. 
The collection $\mathcal A_{\E}=\left\{\left. \E_I^I \right| \emptyset \neq I\subseteq \nset\right\}$ is the family of \emph{building bundles} of $\E$ in the sense that $\E$ is non-canonically isomorphic to the $n$-fold vector bundle $\E^{\mathcal A_{\E}}$, or in other words, $\E$ is \emph{decomposed by $\A$}.
The proof of the existence of such an isomorphism, called a \emph{decomposition}, is the subject of the following two sections.
\begin{definition}\label{def_n-dec}
Let $\E\colon\square^n\to \Man$ be an $n$-fold vector bundle.
\begin{enumerate}
\item The decomposed $n$-fold vector bundle $\E^{\mathcal A_{\E}}$ defined as above by $\E$ is denoted by $\E^{\rm dec}$.
The vacant decomposed $n$-fold vector bundle $\overline{\E^{\mathcal A_{\E}}}$ is written $\overline{\E}$.
\item A \textbf{linear splitting} of the $n$-fold vector bundle
$\E$ is a monomorphism
$\Sigma\colon\overline{ \E}\to \E$ of $n$-fold vector bundles,
such that for $i=1,\ldots,n$, $\Sigma(\{i\})\colon E_i\to E_i$ is the
identity.

\item A \textbf{decomposition} of the $n$-fold vector bundle $\E$ is a
natural isomorphism $\mathcal S\colon \E^{\rm dec}\to \E$
of $n$-fold vector bundles over the identity maps
$\mathcal S(\{i\})=\id_{E_{i}}\colon E_i\to E_i$ such that
additionally the induced core morphisms $\S^{(I,I)}(\{I\})$ are the
identities $\id_{\E^I_I}$ for all $I\subseteq\nset$.
\end{enumerate}
\end{definition}

By Corollary 3.6 in \cite{HeJo20}, any $n$-fold vector bundle admits a decomposition. Since there appears to be a little gap in the proof of this result in \cite{HeJo20}, it is revisited in Section \ref{fix_of_3.6}.

\bigskip

Let $\mathcal A=(A_I\mid I\subseteq \nset)$, $\mathcal B=(B_I\mid I\subseteq \nset)$ and $\mathcal C=(C_I\mid I\subseteq \nset)$    be three families of vector bundles over smooth manifolds $M$, $N$ and $P$ respectively.        A morphism $\tau\colon \E^{\mathcal A}\to\E^{\mathcal B}$ is easily seen to amount to a collection of vector bundle morphisms 
\[ \tau_\rho\colon A_{I_1}\otimes \ldots\otimes A_{I_l}\to B_I
\]
over $\tau(\emptyset)\colon M\to N$
for all $\emptyset \neq I\subseteq \nset$ and all $\rho=(I_1,\ldots,I_l)\in\mathcal P(I)$, see \cite{HeJo20}.
Given this collection, the map $\tau(\nset)\colon \E^{\mathcal A}(\nset)\to\E^{\mathcal B}(\nset)$ is given by 
\[ \left(a_I\right)_{\emptyset\neq I\subseteq \nset}\quad \mapsto \quad \left(\sum_{\rho=(I_1,\ldots, I_l)\in\mathcal P(I)}\tau_\rho(a_{I_1}, \ldots, a_{I_l})\right)_{\emptyset\neq I\subseteq \nset}.
\]
Given a second morphism $\mu\colon \E^{\mathcal B}\to \E^{\mathcal C}$ over $n$-fold vector bundles, the composition $\mu\circ \tau\colon \E^{\mathcal A}\to \E^{\mathcal C}$ is given by 
\[ (\mu\circ \tau)_\rho\left((a_J)_{J\in\rho}\right)=\sum_{(J_1,\ldots, J_l)\in\operatorname{coars}(\rho)}
			\mu_{(J_1,\ldots,J_l)}
			\Bigl(\tau_{\rho\cap J_1}
			\bigl((a_{J})_{J\in\rho\cap J_1}\bigr),\ldots,
			\tau_{\rho\cap J_l}\bigl((a_{J})_{J\in\rho\cap J_l}\bigr)\Bigr)
\]
for all $\emptyset\neq I\subseteq \nset$ and all $\rho\in \mathcal P(I)$. Here, $\rho\cap J_k$ is the canonically ordered partition $\{I_s\in \rho\mid I_s\subseteq J_k\}$ for $k=1,\ldots, l$.

 \subsection{Iterated highest order cores of an $n$-fold vector bundle}\label{higher_cores}

This section studies in detail the \emph{iterated highest order cores} of an $n$-fold vector bundle. 
\begin{definition}
Let $\E\colon \square^n\to \Man$ be an $n$-fold vector bundle. Then the \textbf{highest order cores} 
$\E^{(\nset,J)}$, written simply $\E^J$, for $J\subseteq \nset$ with $\#J=2$ are also called \textbf{$(n-1)$-cores of $\E$}, since they are modelled on the $(n-1)$-cube categories $\lozenge^\nset_J$.
The \textbf{$(n-2)$-cores $\E$} are then defined to be the highest order  cores of the $(n-1)$-cores of $\E$, hence the $(n-2)$-cores of the $(n-1)$-cores of $\E$.

This construction can then be iterated: for $l\in \{1, \ldots, n-2\}$, the \textbf{$l$-cores of $\E$} are defined to be the $l$-cores of the $(l+1)$-cores of $\E$. 
Further, the (unique) \textbf{$n$-core of $\E$} is set by convention to be $\E$ itself.

The  $l$-cores for $l=1,\ldots, n-1$ are generally called the \textbf{iterated highest order cores of $\E$}.
\end{definition}

\bigskip
By construction an iterated higher order core of $\E$ is a functor from a full subcategory of $\square^n$ to $\Man$. As above, an $l$-core has a natural transformation by embeddings in the former $(l+1)$-core in its recursive construction, and all morphisms are restrictions of the morphisms of this $(l+1)$-core. Then the $l$-core has a natural transformation by embeddings in $\E$. A priori the recursively defined subcategory of $\square^n$ indexing an $l$-core is complicated to write down. In addition, two different chains of construction of $l$-cores can lead to the same $l$-core. 
The goal of the following theorem is to remedy to these problems, by understanding that  the collection of $l$-cores of $\E$, for $l\in \{1, \ldots, n\}$, is simply parametrised by the partitions of $\nset$ in $l$ subsets.
\begin{proposition}\label{l_cores_as_partitions}
Let $\E\colon \square^n\to \Man$ be an $n$-fold vector bundle and choose $l\in \{1, \ldots, n\}$.
\begin{enumerate}
\item For each $l$-core $\F$ of $\E$
 there is a partition $\rho:=\{I_1,\ldots, I_l\}$ of $\nset$ in $l$ (non-empty) subsets  such that $\F$ is a functor
from the $l$-cube subcategory $\lozenge^\rho$ of $\square^n$ to $\Man$. Conversely, for each partition $\rho$ of $\nset$ in $l$ elements there is an $l$-core 
$\F\colon\lozenge^\rho\to\Man$ of $\E$.
If two $l$-cores are defined on the same subcategory $\lozenge^\rho$, they are equal. 

\item
Choose an  $l$-partition $\rho$ of $\nset$, and consider the corresponding $l$-core $\E^\rho$ of $\E$. Then the $i$-cores of $\E^\rho$, for $i=1,\ldots, l$, are the $i$-cores of $\E$ indexed by coarsements of $\rho$.

\item Assume that $\rho=\{I_1,\ldots, I_l\}$ is a partition of $\nset$ in $l$ subsets and that $\rho_{1}$ and $\rho_{2}$ are two different $(l-1)$-coarsements of $\rho$ as in Lemma \ref{intersectionl-1}. Then the $(l-2)$-core $\E^{\rho_{1}\sqcap \rho_{2}}$ is a common $(l-2)$-core of $\E^{\rho_{1}}$ and $\E^{\rho_{2}}$.

\end{enumerate}
\end{proposition}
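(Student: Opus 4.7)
The plan is to prove (1) by downward induction on $l$, starting from $l=n$ (where the unique $n$-partition $\rho_0=\{\{1\},\ldots,\{n\}\}$ gives $\lozenge^{\rho_0}=\square^n$ and $\E$ itself is the $n$-core) and then the base case $l=n-1$, where the highest order cores $\E^{(\nset,J)}$ by definition live on $\lozenge^\nset_J=\lozenge^{\rho_J}$ with $\rho_J=\{J\}\cup\{\{k\}:k\notin J\}$, so that the assignment $J\mapsto\rho_J$ produces the desired bijection between $2$-subsets of $\nset$ and $(n-1)$-partitions. For the inductive step, assume every $(l+1)$-core of $\E$ has the form $\E^{\rho'}$ on $\lozenge^{\rho'}$ for a unique $(l+1)$-partition $\rho'=\{I_1,\ldots,I_{l+1}\}$. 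Viewing $\E^{\rho'}$ as an $(l+1)$-fold vector bundle indexed by the \emph{blocks} of $\rho'$, its highest order cores are parametrised by $2$-subsets $\{I_i,I_j\}\subseteq \rho'$. Each such choice produces an $l$-core defined on $\lozenge^{\rho'}_{\{I_i\cup I_j\}}=\lozenge^\rho$, where $\rho=(\rho'\setminus\{I_i,I_j\})\cup\{I_i\cup I_j\}$ is an $l$-partition of $\nset$; conversely, every $l$-partition $\rho$ arises this way by splitting any one of its blocks into two subsets.

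The hard part will be the uniqueness statement in (1): a given $l$-partition $\rho$ can be reached from $\rho_0$ by many sequences of block-merges, and one must verify that all such sequences produce the same functor on $\lozenge^\rho$. I would handle this by giving a closed-form description of $\E^\rho(K)$ at each object $K\in\Obj(\lozenge^\rho)$ as an explicit intersection of preimages of zero sections inside $\E(K)$, namely
\[
\E^\rho(K)\;=\;\bigcap_{\substack{I_s\in\rho,\,I_s\subseteq K\\ \#I_s\geq 2}}\;\bigcap_{j\in I_s}\bigl(p^K_{K\setminus\{j\}}\bigr)^{-1}\!\Bigl(\mathbf{0}^{K\setminus\{j\}}_{K\setminus I_s}\Bigr),
\]
with the arrows of $\E^\rho$ being the corresponding restrictions of the arrows of $\E$. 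Unwinding the recursive definition \eqref{def_objects_core}--\eqref{def_arrows_core} and using that the intersection of zero-section preimages commutes with itself, one checks inductively that each coarsening step $\rho'\rightsquigarrow\rho$ enlarges the set of equations defining $\E^\rho(K)$ precisely by those indexed by the newly merged block; since the resulting system of equations manifestly depends only on $\rho$ and $K$, uniqueness follows. The subtlety to watch is that when two blocks are merged in the coarsening step, the new core is taken with respect to the \emph{quotient} projection induced by the earlier core, and one must verify (by a direct computation on the fibred intersections) that composing these quotient projections reproduces the projections of $\E$ itself on the subspace described above.

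Granted the closed-form description, part (2) becomes transparent: an $i$-core of $\E^\rho$ is obtained by $(l-i)$ further merging steps on the blocks $\{I_1,\ldots,I_l\}$, which is exactly an $i$-coarsement of $\rho$, and the above formula then identifies the resulting functor with the corresponding $i$-core $\E^{\rho''}$ of $\E$. For part (3), Lemma \ref{intersectionl-1} already identifies $\rho_1\sqcap\rho_2$ as the unique common $(l-2)$-coarsement of the two $(l-1)$-coarsements $\rho_1,\rho_2$ of $\rho$, hence by part (2) applied to each of $\E^{\rho_1}$ and $\E^{\rho_2}$, the $(l-2)$-core $\E^{\rho_1\sqcap\rho_2}$ arises both as an $(l-2)$-core of $\E^{\rho_1}$ and as an $(l-2)$-core of $\E^{\rho_2}$, as desired; the uniqueness from (1) guarantees that both constructions yield literally the same functor on $\lozenge^{\rho_1\sqcap\rho_2}$.
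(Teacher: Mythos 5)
Your proposal is correct and takes essentially the same route as the paper: downward induction on $l$ starting from the highest-order cores, with uniqueness settled by the closed-form description of $\E^\rho(K)$ as an intersection of preimages of zero sections indexed only by the blocks of $\rho$ contained in $K$ — this is precisely the paper's formula \eqref{explicit_iterated_core}, and the paper likewise disposes of the arrows by noting they are restrictions of those of $\E$. The only cosmetic difference is that the paper first works the case $l=n-2$ in full detail before stating the general recursion.
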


\begin{definition}
Let $\E\colon \square^n\to\Man$ be an $n$-fold vector bundle and let $\rho$ be a partition of $\nset$ in $l$ elements.
The unique $l$-core of $\E$ corresponding to $\rho$ as in the previous theorem is denoted by $\E^\rho$.
\end{definition}

Note that an $n$-fold vector bundle has only one $1$-core, since there is only one partition of $n$ with $1$ element. It is the ultracore. 

Note also that the $2$-cores
of $\E$ are indexed by pairs of nonempty subsets $I,J\subseteq \nset$ with $I\cap J=\emptyset$, and $I\cup J=\nset$. 
The corresponding $2$-cube category $\lozenge^{\{I,J\}}$ has then the objects $\emptyset, I, J, I\cup J=\nset$.

The characterisation above shows as well that an $n$-fold vector bundle can be naturally understood as its own $n$-core, since the partition of $\nset$ in $n$ subsets gives with \eqref{building_bundles_l_cores}  the $n$-cube category $\square^n$.

\begin{proof}[Proof of Proposition \ref{l_cores_as_partitions}]
 For $l=n$, the first statement is clearly true since there is only one partition of $\nset$ in $n$ subsets and $\E\colon\square^n\to\Man$ is its own $n$-core.

Choose a subset $J\subseteq \nset$ of cardinality $2$. 
Then by Theorem 2.20 in \cite{HeJo20}, the core $\E^{(\nset,J)}$ is an $(n-1)$-fold vector bundle
\begin{equation}\label{simplified_notation_higher_core}
 \E^J:=\E^{(\nset,J)}\colon \lozenge^{\rho_J}\to \Man,
\end{equation}
where as  before $\rho_J$ is the partition of $\nset$ with $J$ and one-element sets, hence a partition of $\nset$ with $n-1$ subsets. By definition there is a one-to-one
correspondence between  subsets of $\nset$ of cardinality $2$ and $(n-1)$-cores, hence between partitions of $\nset$ with $n-1$ elements and $(n-1)$-cores. So (1) is also true for $l=n-1$, while (2) is true for $l=n$ and $i=n-1$.
\medskip

A partition of $\nset$ in $n-2$ subsets has either one subset with $3$ elements and $n-3$ subsets with one element each, or two subsets with $2$ elements each and $n-4$ subsets with one elements each. Of course, here $n\geq 3$ and if $n=3$ only the first case is possible, while for $n\geq 4$ both cases are possible.

Consider first the first case, and assume without loss of generality that the partition is $\rho:=\{\{1,2,3\}, \{4\}, \ldots, \{n\}\}$. Consider $J_1:=\{1,2\}$ and $J_2:=\{1,3\}$
and the
two $(n-1)$-cores $\E^{J_1}$ and $\E^{J_2}$ as in \eqref{simplified_notation_higher_core}. Then $\lozenge^{\rho_{J_1}}$ is the cube category over the elements $J_1, \{3\}, \ldots, \{n\}$, and $\lozenge^{\rho_{J_2}}$ is the cube category over the elements $J_2, \{2\},\{4\}, \{5\}, \ldots, \{n\}$.

Build the $\{J_1, \{3\}\}$-core of $\E^{J_1}$, which is an $(n-2)$-core of $\E^{J_1}$ and of $\E$. By definition, it is modeled on the full subcategory $\lozenge$ 
of $\lozenge^{\rho_{J_1}}$
with objects $J\subseteq n$ consisting in unions of the sets $J_1\cup\{3\}, \{4\}, \ldots, \{n\}$. Hence $\lozenge=\lozenge^{\rho}$.
This shows the existence of an $(n-2)$-core modeled on $\rho$.

The $\{J_2, \{2\}\}$-core of $\E^{J_2}$ is then similarly modeled on the same category $\lozenge^\rho$.
These two $(n-2)$-cores are hence modeled on the same partition of $\nset$. 
Choose $I\in\Obj(\lozenge^\rho)$. If $I\cap\{1,2,3\}=\emptyset$, then also $I\cap J_1=I\cap J_2=\emptyset$ and 
\[ (\E^{J_1})^{\{J_1,\{3\}\}}(I)=\E^{J_1}(I)=\E(I)=\E^{J_2}(I)=(\E^{J_2})^{\{J_2,\{2\}\}}(I).
\]
If $\{1,2,3\}\subseteq I$ then also $J_1,J_2\subseteq I$ and so\footnote{There is a little abuse of notation here; in the formula below $I$ should formally be replaced by $\{J_1;\{t\}\mid t\in I\setminus J_1\}$ and $\{J_2;\{t\}\mid t\in I\setminus J_2\}$, depending on which chain of cores is considered. For the convenience of the reader, it is just written $I$ and understood from the context.}
\begin{equation*}
\begin{split}
 (\E^{J_1})^{\{J_1,\{3\}\}}(I)&=(\E^{J_1})^{I}_{\{J_1, \{3\}\}}
 =(c^{I}_{J_1})\inv \left(\nvbzero{I\setminus J_1}{I\setminus \{1,2,3\}}\right)\cap (p^I_{3})\inv\left(\nvbzero{I\setminus \{3\}}{I\setminus \{1,2,3\}}\right)\\
&=\E^I_{\{1,2,3\}}\\
&=(c^{I}_{J_2})\inv \left(\nvbzero{I\setminus J_2}{I\setminus \{1,2,3\}}\right)\cap (p^I_{2})\inv\left(\nvbzero{I\setminus \{2\}}{I\setminus \{1,2,3\}}\right)\\
 &=(\E^{J_2})^{I}_{\{J_2, \{2\}\}}=(\E^{J_2})^{\{J_2,\{2\}\}}(I),
\end{split}
\end{equation*}
where the third and fourth equalities are applications of \cite[Lemma 2.19]{HeJo20}.
Since the projections in each $l$-core are restrictions of the projections of the $n$-fold vector bundle $\E$, it suffices to show that the images of the two functors  $(\E^{J_1})^{\{J_1,\{3\}\}}$ and $(\E^{J_2})^{\{J_2,\{2\}\}}$ on objects of $\lozenge^\rho$ are the same to get that the two functors are equal.

\medskip

Next assume that $n\geq 4$ and consider the partition $\rho:=\{\{1,2\}, \{3,4\}, \{5\}, \ldots, \{n\}\}$ of $\nset$ in $(n-2)$ elements.
Set $J_1:=\{1,2\}$ and $J_2:=\{3,4\}$
and consider as before the
two $(n-1)$-cores $\E^{J_1}$ and $\E^{J_2}$ of $\E$. Then $\lozenge^{\rho_{J_1}}$ is the square category over the elements $J_1, \{3\}, \ldots, \{n\}$, and $\lozenge^{\rho_{J_2}}$ is the square category over the elements $\{1\}, \{2\}, J_2, \{5\},\{6\}, \ldots, \{n\}$.

Build the $\{\{3\},\{4\}\}$-core of $\E^{J_1}$, which is an $(n-2)$-core of $\E^{J_1}$ and of $\E$. By definition, it is modeled on the full subcategory $\lozenge$ 
of $\lozenge^{\rho_{J_1}}$
with objects $I\subseteq n$ consisting in unions of the sets $J_1, J_2, \{5\}, \{6\}, \ldots, \{n\}$. Hence $\lozenge=\lozenge^{\rho}$.
This shows the existence of an $(n-2)$-core modeled on $\rho$.

The $\{\{1\}, \{2\}\}$-core of $\E^{J_2}$ is then similarly modeled on the same category $\lozenge^\rho$.
These two $(n-2)$-cores are hence modeled on the same partition of $\nset$. In order to show that 
\[ (\E^{J_1})^{\{\{3\},\{4\}\}}=(\E^{J_2})^{\{\{1\},\{2\}\}}\colon\lozenge^\rho\to\Man,
\]
it is again enough to show that the functors are equal on objects of $\lozenge^\rho$.
First take $I\in\Obj(\lozenge^\rho)$ with $I\cap J_1=\emptyset$ and $I\cap J_2=\emptyset$. Then
\[(\E^{J_1})^{\{\{3\},\{4\}\}}(I)=\E^{J_1}(I)=\E(I)=\E^{J_2}(I)=(\E^{J_2})^{\{\{1\},\{2\}\}}(I).
\]
If $I\in\Obj(\lozenge^\rho)$ does $J_1\subseteq I$ and $I\cap J_2=\emptyset$, then
\[(\E^{J_1})^{\{\{3\},\{4\}\}}(I)=\E^{J_1}(I)=E^I_{J_1}\]
while 
\[(\E^{J_2})^{\{\{1\},\{2\}\}}(I)=(\E^{J_2})^I_{\{\{1\},\{2\}\}}=E^I_{J_1}
\]
since the $I$-face of $\E^{J_2}$ is the $I$-face of $\E$.
Finally if $J_1\cup J_2\subseteq I$, then 
\begin{equation*}
\begin{split}
(\E^{J_1})^{\{\{3\},\{4\}\}}(I)&=(\E^{J_1})^I_{\{\{3\},\{4\}\}}\\
&=\left\{ e\in \E^{J_1}(I) \left| p^I_{I\setminus\{3\}}(e)=\nvbzero{I\setminus\{3\}}{p^I_{I\setminus J_2}(e)} \text{ and } p^I_{I\setminus\{4\}}(e)=\nvbzero{I\setminus\{4\}}{p^I_{I\setminus J_2}(e)}\right.\right\}\\
&=\left\{ e\in \E(I) \left| 
\begin{array}{c}p^I_{I\setminus\{1\}}(e)=\nvbzero{I\setminus\{1\}}{p^I_{I\setminus J_1}(e)},\quad  p^I_{I\setminus\{2\}}(e)=\nvbzero{I\setminus\{2\}}{p^I_{I\setminus J_1}(e)}\\
p^I_{I\setminus\{3\}}(e)=\nvbzero{I\setminus\{3\}}{p^I_{I\setminus J_2}(e)} \text{ and } p^I_{I\setminus\{4\}}(e)=\nvbzero{I\setminus\{4\}}{p^I_{I\setminus J_2}(e)}\end{array}\right.\right\}\\
&=(\E^{J_2})^{\{\{1\},\{2\}\}}(I),
\end{split}
\end{equation*}
since the description in the third line is symmetric in $J_1$ and $J_2$.

\medskip

Given an arbitrary $(n-1)$-core $\E^J$ for $J\subset\nset$ with $\#J=2$, its $(n-2)$-cores are either 
$(\E^J)^{\{\{i\},\{j\}\}}$ for $i,j\in\nset\setminus J$ or $(\E^J)^{\{J,\{i\}\}}$ for $i\in\nset\setminus J$. As explained above, in the first case the $(n-2)$-core is indexed by the partition $\{J,\{i,j\}; \{t\}\mid t\in\nset\setminus(J\cup\{i,j\})\}$, and in the second case on the partition $\{J\cup\{i\};  \{t\}\mid t\in\nset\setminus(J\cup\{i\})\}$. 

Hence (1) is proved for $l=n-2$. (2) is true for $l=n$ and $i=n-1,n-2$, and $l=n-1$ and $i=n-2$. (3) for $l=n$ was proved above at the same time as the equality of the two $(n-2)$-cores indexed by the same partition, since in the first case \[
\{\{1,2,3\}, \{4\}, \ldots, \{n\}\}=\{J_1, \{3\}, \ldots, \{n\}\}\sqcap\{J_2, \{2\},\{4\}, \{5\}, \ldots, \{n\}\}.
\]
and in the second case
\[\{\{1,2\}, \{3, 4\}, \{5\}, \ldots, \{n\}\}=\{J_1, \{3\}, \ldots, \{n\}\}\sqcap\{\{1\}, \{2\}, J_2, \{5\},\{6\}, \ldots, \{n\}\}.
\]
\bigskip

The proof now works recursively since an $l$-core of $\E$ is always an $l$-core of an $(l+1)$-core of $\E$:
Assume that (1) is true for some fixed $l\in \{2, \ldots, n\}$. 
Then an $(l-1)$-core $\F$ of $\E$ is an $(l-1)$-core of an $l$-core of $\E$, hence of an $l$-core $\E^\rho$ for some partition $\rho=\{I_1, \ldots, I_l\}$ of $\nset$.  The $(l-1)$-core $\F$ is then defined by a choice of $i< j$ in $\{1, \ldots, l\}$ such that  
\[ \F=(\E^\rho)^{\{I_i, I_j\}}.
\]
$\F$ is then modeled on 
$\rho'=\{I_i\cup I_j, I_1, \ldots, \widehat i, \ldots, \widehat j, \ldots, I_l\}$, which is an $(l-1)$-partition of $\nset$.
Conversely, choose a partition $\rho=\{I_1,\ldots, I_{l-1}\}$ of $\nset$ in $(l-1)$ subsets. Then since $l-1\in\{1, \ldots, n-2\}$
one of the subsets $I_1, \ldots, I_{l-1}$ of $\nset$ must contain more than one element. Without generality, $I_{l-1}$ does. Write $I_{l-1}=J_{l-1}\cup J_l$
with $J_{l-1}, J_l\subseteq \nset$ disjoint and non-empty. Set $\rho'=\{I_1,\ldots, I_{l-2}, J_{l-1}, J_l\}$. Then 
\[ (\E^{\rho'})^{\{J_{l-1},J_l\}}
\]
is an $(l-1)$-core of $\E$ indexed by $\rho$. Computations as above show that this $(l-1)$-core of $\E$ does not depend on the choice of $\rho'$ above  $\rho$.
Precisely, the considerations above show that for $I=I_{i_1}\cup\ldots\cup I_{i_k}\in\Obj(\lozenge^\rho)$,
\begin{equation}\label{explicit_iterated_core}
\begin{split}
\E^\rho(I)
&=\left\{ e\in \E(I) \left| \begin{array}{c}
\text{ For } s=1,\ldots,k \text{ and all } j\in I_{i_s}:\\
p^{I}_{I\setminus\{j\}}(e)=\nvbzero{I\setminus\{j\}}{p^{I}_{I\setminus I_{i_s}}(e)}.
\end{array}\right.\right\}\\
&=\bigcap_{s=1}^k\bigcap_{j\in I_{i_s}}(p^I_{I\setminus\{j\}})\inv\left(
	\nvbzero{I\setminus \{j\}}{I\setminus I_{i_s}}\right)\
\end{split}
\end{equation}
which does not depend on the choice of $\rho'$. Therefore (1) holds for all $l=1,\ldots, n$. By construction (2) holds as well for all $l=1, \ldots, n$ and all $i=1, \ldots, l$, and (3) follows again from the one-to-one correspondence between $l$-partitions and $l$-cores, for all $l=1,\ldots, n$.
\end{proof}

\begin{corollary}
Let $\E\colon \square^n\to\Man$ be an $n$-fold vector bundle and let $\rho=\{I_1,\ldots, I_l\}$ be a partition of $\nset$ in $l$ elements.
The lower sides of $\E^\rho$ are the vector bundles $\E^{I_i}_{I_i}\to M$ for $i=1, \ldots, l$, and the other building bundles of $\E^\rho$ are the bundles 
$\E_J^J$ for  $J\in\operatorname{Obj}(\lozenge^\rho)$.
\end{corollary}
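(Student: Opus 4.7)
The plan is to apply the explicit formula \eqref{explicit_iterated_core} for $\E^\rho$ to the appropriate objects of $\lozenge^\rho$, together with the fact that the $(J,J)$-core of $\E^\rho$ is defined using only the projections internal to $\lozenge^\rho$, i.e.~those of the form $p^J_{J\setminus I_{i_s}}$.

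First I would identify the lower sides. Under the canonical identification $\lozenge^\rho\simeq\square^l$ sending $I_s$ to $\{s\}$, the lower sides of $\E^\rho$ are $\E^\rho(I_i)$ for $i=1,\ldots,l$. Applying \eqref{explicit_iterated_core} with $I=I_i$ (so $k=1$) gives immediately
\[
\E^\rho(I_i)=\bigcap_{j\in I_i}(p^{I_i}_{I_i\setminus\{j\}})\inv\bigl(\nvbzero{I_i\setminus\{j\}}{\emptyset}\bigr)=\E^{I_i}_{I_i},
\]
which is the claimed vector bundle over $M$.

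For an arbitrary $J=I_{i_1}\cup\cdots\cup I_{i_k}\in\Obj(\lozenge^\rho)$ I would then identify $(\E^\rho)^J_J$ with $\E^J_J$ as subsets of $\E(J)$. By definition applied inside the $l$-fold vector bundle $\E^\rho$,
\[
(\E^\rho)^J_J=\E^\rho(J)\cap\bigcap_{s=1}^k(p^J_{J\setminus I_{i_s}})\inv\bigl(\nvbzero{J\setminus I_{i_s}}{\emptyset}\bigr),
\]
where the projections are restrictions of those of $\E$. For the inclusion $\E^J_J\subseteq(\E^\rho)^J_J$, take $e\in\E^J_J$; applying the projection $p^{J\setminus\{j\}}_{J\setminus I_{i_s}}$ (for any $j\in I_{i_s}$) to the identity $p^J_{J\setminus\{j\}}(e)=\nvbzero{J\setminus\{j\}}{\emptyset}$ and using that compositions of zero sections are zero sections yields $p^J_{J\setminus I_{i_s}}(e)=\nvbzero{J\setminus I_{i_s}}{\emptyset}$, and hence also $p^J_{J\setminus\{j\}}(e)=\nvbzero{J\setminus\{j\}}{p^J_{J\setminus I_{i_s}}(e)}$, so that $e\in\E^\rho(J)$ by \eqref{explicit_iterated_core}. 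The reverse inclusion is dual: if $e\in(\E^\rho)^J_J$, then \eqref{explicit_iterated_core} gives $p^J_{J\setminus\{j\}}(e)=\nvbzero{J\setminus\{j\}}{p^J_{J\setminus I_{i_s}}(e)}=\nvbzero{J\setminus\{j\}}{\emptyset}$ for all $j\in I_{i_s}$, and letting $s$ range over $1,\ldots,k$ covers all $j\in J$.

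The vector bundle structure on $(\E^\rho)^J_J$ over $M$ is the restriction of that of $\E^J_J$, since both are obtained by restricting the same addition on $\E(J)$. There is no real obstacle here; the only point requiring care is the bookkeeping between the two indexings of objects — as subsets of $\nset$ in $\square^n$, versus as unions of elements of $\rho$ in $\lozenge^\rho$ — and the fact that the intersections defining $(\E^\rho)^J_J$ use only the $I_{i_s}$-projections from $\lozenge^\rho$, while $\E^J_J$ uses all singleton projections from $\square^n$. The equality of the two descriptions then follows directly from the observation that zeros map to zeros under the bundle projections of $\E$.
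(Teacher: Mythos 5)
Your proposal is correct and follows essentially the same route as the paper: both identify the lower sides by applying \eqref{explicit_iterated_core} with $k=1$, and both identify $(\E^\rho)^J_J$ with $\E^J_J$ by combining the defining conditions of $\E^\rho(J)$ from \eqref{explicit_iterated_core} with the core conditions $p^J_{J\setminus I_{i_s}}(e)=\nvbzero{J\setminus I_{i_s}}{p^J_\emptyset(e)}$ and observing that together they are equivalent to $p^J_{J\setminus\{j\}}(e)=\nvbzero{J\setminus\{j\}}{p^J_\emptyset(e)}$ for all $j\in J$. The only cosmetic difference is that you spell out the two inclusions separately where the paper writes a single chain of set equalities.
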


\begin{proof}
The first statement is clear since $\lozenge^\rho$ is the $l$-cube category over the elements $I_1,\ldots, I_l$.
It is also a special case of the second statement.
For the second statement, choose $J\in\Obj(\lozenge^\rho)$, without loss of generality $J=I_1\cup \ldots \cup I_k$ for $1\leq k\leq l$ and compute with \eqref{explicit_iterated_core}
\begin{equation*}
\begin{split}
(\E^\rho)^J_J&=\left\{ e\in\E^\rho(J)\left|
\text{ for all } s=1,\ldots, k, \,\, p^J_{J\setminus I_ s}(e)=\nvbzero{J\setminus I_s}{p^J_\emptyset(e)}
\right.
\right\}\\
&=\left\{ e\in \E(J) \left| \begin{array}{c}
\text{ For } s=1,\ldots,k \text{ and all } j\in I_{s}:\\
p^{J}_{J\setminus\{j\}}(e)=\nvbzero{J\setminus\{j\}}{p^{J}_{J\setminus J_{s}}(e)}\\
\text{ for all } s=1,\ldots, k, \,\, p^J_{J\setminus I_ s}(e)=\nvbzero{J\setminus I_s}{p^J_\emptyset(e)}
\end{array}\right.\right\}\\
&=\left\{ e\in \E(J) \left| \begin{array}{c}
\text{ For all } j\in J:\,\,
p^{J}_{J\setminus\{j\}}(e)=\nvbzero{J\setminus\{j\}}{p^{J}_{\emptyset}(e)}\\
\end{array}\right.\right\}=\E^J_J.
\end{split}
\end{equation*}\qedhere
\end{proof}

\begin{lemma}\label{second_result_iterated_cores}
Let $\E\colon \square^n\to\Man$ be an $n$-fold vector bundle and let $\rho$ be a partition of $\nset$ in $l$ elements.
 \begin{enumerate}

\item For each $J\in \operatorname{Obj}(\lozenge^\rho)$ the manifold $\mathbb E^\rho(J)$ is an embedded submanifold of $\E(J)$ and the collection of these embeddings 
defines a natural transformation
\[ \tau^\rho\colon \E^\rho\longrightarrow \E\circ i^\rho
\]
since 
\item for each arrow $(J\rightarrow J') \in  \operatorname{Mor}(\lozenge^\rho)$ the arrow $\E^{\rho}(J\rightarrow J')$ is the restriction\footnote{Therefore, for simplicity, the 
restriction $\E^\rho(J\to J') \colon \E^\rho(J)\to \E^\rho(J')$ of $\E(J\to J')$ is written as well $p^J_{J'}$ in the following.}
 of $\E(J\rightarrow J')=p^J_{J'}\colon \E(J)\to \E(J')$
to the domain and codomain $\E^\rho(J)\rightarrow \E^\rho(J')$.
\item Let $\F\colon  \square^n\to\Man$ be a second $n$-fold vector bundle and $\Phi\colon \E\to \F$ a morphism of $n$-fold vector bundles.
Then $\Phi$ restricts to a morphism $\Phi^\rho\colon \E^\rho\to \F^\rho$ of $l$-fold vector bundles. Precisely, for each $J\in \operatorname{Obj}(\lozenge^\rho)$
\[ \Phi^\rho(J)=\Phi(J)\an{\E^\rho(J)}\colon \E^\rho(J)\to \F^\rho(J)\subseteq \F(J)
\]
is the restriction on the embedded submanifold $\E^\rho(J)$ of the domain $\E(J)$ of $\Phi(J)$ and with restricted codomain the embedded submanifold $\F^\rho(J)\subseteq \F(J)$.
\end{enumerate}
\end{lemma}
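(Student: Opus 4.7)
The plan is to proceed by induction on $n-l$, using the recursive construction of iterated higher order cores together with the closed-form description \eqref{explicit_iterated_core} established in the proof of Proposition \ref{l_cores_as_partitions}. The base case $l=n$ is trivial since $\E^\rho=\E$ and $i^\rho=\id_{\square^n}$, so $\tau^\rho$ is the identity natural transformation and parts (2) and (3) are tautological. For the base case $l=n-1$, the statement reduces to the already recorded facts for the $(\nset,J)$-core construction: $\E^{(\nset,J)}(I)$ is an embedded submanifold of $\E(I)$ by the definition of $\E^S_J$ recalled in Section \ref{multiple_vb}, its arrows are by construction restrictions of those of $\E$ (see \eqref{def_arrows_core}), and the restricted morphism $\Phi^{(\nset,J)}$ is well defined as in \eqref{core_morphism} because each $\Phi(I)$ is a morphism of vector bundles and hence preserves the zero sections cut out by the defining equations.

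For the inductive step, assume the statement holds for all $(l+1)$-cores of $\E$ and fix an $l$-partition $\rho$ of $\nset$. By Proposition \ref{l_cores_as_partitions}, $\E^\rho$ is the $(l)$-core of some $(l+1)$-core $\E^{\rho'}$ of $\E$, where $\rho'$ is a refinement of $\rho$ obtained by splitting one part. By induction $\E^{\rho'}$ is an $(l+1)$-fold vector bundle whose objects $\E^{\rho'}(J)$ are embedded in $\E(J)$ and whose arrows are restrictions of those of $\E$; the base case $l=n-1$ applied to $\E^{\rho'}$ then shows that $\E^\rho(J)$ is embedded in $\E^{\rho'}(J)$, hence in $\E(J)$, and that the arrows of $\E^\rho$ are restrictions of the arrows of $\E^{\rho'}$, hence of $\E$. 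This proves (1) and (2). That the embedded submanifold structure depends only on $\rho$ and not on the intermediate $\rho'$ is exactly the content of the explicit formula \eqref{explicit_iterated_core} from Proposition \ref{l_cores_as_partitions}, which identifies $\E^\rho(J)$ set-theoretically as the same intersection of preimages of zero sections regardless of the order in which the core operations are performed.

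For (3), at each recursive step the induced core morphism \eqref{core_morphism} is well defined because $\Phi$ is linear in each direction, hence sends zero sections to zero sections, hence preserves the cutting conditions of \eqref{explicit_iterated_core}. Consequently, restricting $\Phi^{\rho'}$ to $\E^\rho(J)\subseteq \E^{\rho'}(J)$ lands in $\F^\rho(J)\subseteq \F^{\rho'}(J)$, and the resulting maps assemble into a natural transformation $\Phi^\rho\colon \E^\rho\to \F^\rho$. The vector bundle morphism property in each direction is inherited from $\Phi$ because the vector bundle structures on the faces of $\E^\rho$ and $\F^\rho$ are restrictions of those on $\E$ and $\F$.

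The only slightly delicate point is the independence of the embedded submanifold structure, and hence of the naturality data, from the chain of cores used to construct $\E^\rho$; this is what makes the notation $\E^\rho$ unambiguous. I expect this to be handled cleanly by directly reading off the description \eqref{explicit_iterated_core}, which manifestly does not reference any particular refinement $\rho'$, so no additional compatibility check between different recursion chains is needed.
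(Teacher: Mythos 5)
Your proof is correct and takes essentially the same route as the paper, whose own proof simply declares the lemma immediate from the definitions \eqref{def_objects_core} and \eqref{def_arrows_core}, the recursive construction of the iterated highest order cores, and the recursive restriction of morphisms in \eqref{core_morphism}. Your induction on $n-l$ merely makes that recursion explicit, and your appeal to \eqref{explicit_iterated_core} for independence of the chosen chain of cores is precisely the content already secured by Proposition \ref{l_cores_as_partitions}.
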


\begin{proof}
This is immediate from the definition of the highest order cores in \eqref{def_objects_core} and \eqref{def_arrows_core} and the recursive construction of the iterated highest order cores, as well as the recursive restriction of morphisms as in \eqref{core_morphism}.
\end{proof}

\begin{example}

Consider as before an $n$-fold vector bundle $\E$ and the induced collection of vector bundles 
\[ \mathcal A=\left\{\E_K^K\mid K\subseteq \nset\right\}.
\]
A partition $\rho$ of $\nset$ defines then a $\#\rho$-fold vector bundle 
\[ \E^{\mathcal A, \rho}\colon \lozenge^\rho\to \Man, \qquad \E^{\mathcal A, \rho}(J):=\prod^M_{\substack{I\subseteq J\\ I\in \operatorname{Obj}(\lozenge^\rho)}}\E^I_I
\]
for $J\in \operatorname{Obj}(\lozenge^\rho)$,
and a vacant $\#\rho$-fold vector bundle 
\[ \overline{\E^{\rho}}\colon \lozenge^\rho\to \Man, \qquad  \overline{\E^{\rho}}(J):=\prod^M_{\substack{I\subseteq J\\ I\in \rho}}\E^I_I
\]
for $J\in \operatorname{Obj}(\lozenge^\rho)$.
Here $\overline{\E^{\rho}}(J)$  is clearly embedded in $\E^{\mathcal A, \rho}(J)$, which is embedded in $\E^{\mathcal A}(J)$ for all $J\in \operatorname{Obj}(\lozenge^\rho)$ and the collection of these embeddings define natural transformations
\begin{equation*}
\begin{split}
\overline{\E^{\rho}}\overset{\overline{\tau}^\rho}{\longrightarrow}\E^{\mathcal A, \rho}\overset{\tau^{\mathcal A, \rho}}{\longrightarrow} \E^{\mathcal A}\circ i^\rho.
\end{split}
\end{equation*}
(Recall that $i^\rho\colon \lozenge^\rho\to \square^n$ is the inclusion functor.)

\medskip
The $l$-cores of $\E^{\mathcal A}$ coincide with the functors $\E^{\mathcal A, \rho}\colon \lozenge^\rho\to \Man$ given by partitions $\rho$ of $\nset$ with $l$ elements. 
A decomposition $\mathcal S\colon \E^{\mathcal A}\to \E$ of $\E$ restricts as in Lemma \ref{second_result_iterated_cores} to a decomposition $\mathcal S^\rho\colon \E^{\mathcal A,\rho}\to \E^\rho$ of the $l$-core $\E^\rho$ such that the following diagram of natural transformations commutes.

\[\begin{tikzcd}
	{\mathbb E^{\mathcal A}\circ i^\rho} && {\mathbb E\circ i^\rho} \\
	\\
	{\mathbb E^{\mathcal A, \rho}} && {\mathbb E^\rho}
	\arrow["{\mathcal S^{\rho}}"', Rightarrow, from=3-1, to=3-3]
	\arrow["{\mathcal S\arrowvert_{\lozenge^\rho}}", Rightarrow, from=1-1, to=1-3]
	\arrow["{\tau^{\mathcal A, \rho}}", Rightarrow, from=3-1, to=1-1]
	\arrow["{\tau^\rho}"', Rightarrow, from=3-3, to=1-3]
\end{tikzcd}\]
\end{example}

Consider an $n$-fold vector bundle $\E$. Choose a partition $\rho=\{I_1,\ldots, I_l\}$ of $\nset$ in $l$ subsets and consider two distinct coarsements $\rho_1$ and $\rho_2$ of $\rho$ in $(l-1)$ subsets.
Let $\mathcal S^i\colon \E^{\mathcal A, \rho_i}\to \E^{\rho_i}$ be a decomposition of the $(l-1)$-core $\E^{\rho_i}$ for $i=1,2$.
The two decompositions $\mathcal S^{\rho_1}$ and $\mathcal S^{\rho_2}$ are \textbf{compatible} if
\begin{equation}\label{compatible_dec}
(\mathcal S^{\rho_1})^{\rho_1\sqcap\rho_2}=(\mathcal S^{\rho_2})^{\rho_1\sqcap\rho_2}\colon \E^{\mathcal A, \rho_1\sqcap \rho_2}\rightarrow \E^{\rho_1\sqcap \rho_2}.
\end{equation}
Take now more precisely the coarsement $\underline{\rho}:=\{I_1\cup I_2, I_3, \ldots, I_4\}$ of $\rho$. Then $(\E^\rho)_{\{I_3,\ldots, I_4\}}$ is a face of $\E^\rho$ and of $\E^{\underline{\rho}}$.
Consider a linear splitting $\Sigma\colon \overline{\E^\rho}\to \E^\rho$ of $\E^\rho$. Then $\Sigma$ \textbf{is compatible with} a decomposition $\mathcal S^{\underline{\rho}}$ of $\E^{\underline{\rho}}$ if the following diagram commutes
\begin{equation}\label{comp_sigma_S}
\begin{tikzcd}
	{(\overline{\E^{\underline{\rho}}})_{\{I_3, \ldots, I_4\}}} &&& {(\E^{\underline{\rho}})_{\{I_3, \ldots, I_4\}}} \\
	\\
	{(\overline{\E^\rho})_{\{I_3, \ldots, I_4\}}} &&& {(\E^\rho)_{\{I_3, \ldots, I_4\}}}
	\arrow[Rightarrow, no head, from=1-4, to=3-4]
	\arrow[Rightarrow, no head, from=1-1, to=3-1]
	\arrow["{(\mathcal S^{\underline{\rho}}\circ \iota)\an{\{I_3, \ldots, I_4\}}}", from=1-1, to=1-4]
	\arrow["{\Sigma\an{\{I_3, \ldots, I_4\}}}"', from=3-1, to=3-4]
\end{tikzcd}
\end{equation}
where $\iota$ is the canonical natural transformation
\[ \iota\colon \overline{\E^{\underline\rho}}\to \E^{\mathcal A, \underline\rho}.
\]
The compatibility of $\Sigma$ with a decomposition of $\E^{\underline{\rho}}$ for any other coarsement $\underline{\rho}$ of $\rho$ with $(l-1)$ elements is defined similarly.

\subsection{Proof of Corollary 3.6 in \cite{HeJo20}}\label{fix_of_3.6}
The following theorem is proved in \cite[Theorem 3.3]{HeJo20}.

\begin{theorem}\label{thm_split_dec_n_non_sym}
\begin{enumerate}
\item[(a)] Let $\mathcal S$ be a decomposition of an $n$-fold vector 
bundle $\E\colon\square^n\to \Man$. Then the composition 
$\Sigma=\mathcal S\circ\tau\colon\overline{\E}\to\E$, with 
$\tau$ defined as in \eqref{emb_spl_dec}, is a splitting of $\E$. 
Furthermore, the core morphisms $\S^{\rho_J}\colon\E^{\mathcal A, \rho_{J}}\to\E^{\rho_J}$
are decompositions of $\E^{\nset}_J$ for all $J\subseteq\nset$ with $\# J=2$ and 
these decompositions and the linear splitting are compatible.

\item[(b)] Conversely, given a linear splitting $\Sigma$ of $\E$ and 
	compatible\footnote{Pairwise compatible and all compatible with $\Sigma$.} decompositions of the $(n-1)$-cores $\E^{\rho_J}$ 
        for $J\subseteq \nset$ with $\# J=2$, there 
	exists a unique decomposition $\mathcal S$ of $\E$ such that 
	$\Sigma=\mathcal S\circ\tau$ and such that the core morphisms 
	of $\S$ are given by $\S^{\rho_J}=\S^J$ for all $J$. 
\end{enumerate}
\end{theorem}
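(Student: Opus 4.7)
For part (a), the plan is a purely functorial argument. The natural transformation $\tau\colon\overline{\mathbb E}\to \mathbb E^{\rm dec}$ is a monomorphism by construction, so $\Sigma:=\mathcal S\circ\tau$ is a monomorphism of $n$-fold vector bundles; on each lower side $\{i\}$ both factors are the identity, which yields a linear splitting. For each $J\subseteq\underline n$ with $\#J=2$, restricting $\mathcal S$ to the corresponding $(n-1)$-core as in Lemma~\ref{second_result_iterated_cores} produces $\mathcal S^{\rho_J}\colon \mathbb E^{\mathcal A,\rho_J}\to \mathbb E^{\rho_J}$, which is an isomorphism of $(n-1)$-fold vector bundles fixing the required data and therefore a decomposition of $\mathbb E^{\rho_J}$. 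The compatibility conditions \eqref{compatible_dec} and \eqref{comp_sigma_S} are then automatic, since the morphisms $\mathcal S^{\rho_J}$ and $\mathcal S\circ\tau$ are all restrictions of the single natural transformation $\mathcal S$, so their further restrictions to any common face or lower core coincide on the nose.

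For part (b), the plan is to construct the decomposition $\mathcal S$ by gluing. Uniqueness is already visible: any decomposition of $\mathbb E$ with $\mathcal S\circ\tau=\Sigma$ and $\mathcal S^{\rho_J}=\mathcal S^J$ is forced on the subfunctors $\tau(\overline{\mathbb E})$ and $\mathbb E^{\mathcal A,\rho_J}\subseteq \mathbb E^{\rm dec}$ for each $J$, and these generate $\mathbb E^{\rm dec}$ under the vector bundle additions $\dvplus{}{J\setminus\{j\}}$, so the full $\mathcal S$ is pinned down. For existence, I would first iterate the core restriction of Lemma~\ref{second_result_iterated_cores} together with the compatibility~\eqref{compatible_dec} to produce a consistent family of decompositions $\mathcal S^\rho\colon \mathbb E^{\mathcal A,\rho}\to\mathbb E^\rho$ for every partition $\rho$ of $\underline n$ with $2\leq\#\rho\leq n-1$; here compatibility is essential, since by Proposition~\ref{l_cores_as_partitions}(3) two different coarsening chains can converge on the same lower core, and an inconsistency would obstruct the gluing. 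The top-level map $\mathcal S(\underline n)\colon \mathbb E^{\rm dec}(\underline n)\to\mathbb E(\underline n)$ is then defined on a point $c=(c_I)_{\emptyset\neq I\subseteq\underline n}$ by adding, in the fibres of $\mathbb E(\underline n)\to M$, the vacant contribution $\Sigma(\underline n)(c_{\{1\}},\ldots,c_{\{n\}})$ and the lifts of each remaining $c_I$ through the embeddings $\tau^\rho\colon\mathbb E^\rho\hookrightarrow \mathbb E$ coming from the iterated highest-order cores.

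The principal difficulty, and the step I expect to absorb most of the work, is verifying that the piecewise map $\mathcal S$ is genuinely a morphism of $n$-fold vector bundles: that it is simultaneously linear with respect to each of the $n$ vector bundle structures on $\mathbb E^{\rm dec}$ with linear counterpart on $\mathbb E$, and that it commutes with every projection $p^J_{J'}$. Linearity will follow because each building block ($\Sigma$, and each $\mathcal S^\rho$) is itself a morphism of a lower-dimensional multiple vector bundle, and the compatibility conditions precisely forbid two different pieces from assigning conflicting linear contributions to a single core component. Naturality across projections reduces to checking that the restriction of the top-level formula for $\mathcal S(\underline n)$ to any face $J\subsetneq\underline n$ reproduces the analogous formula for $\mathcal S(J)$, which holds because each ingredient restricts correctly. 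The identities $\Sigma=\mathcal S\circ\tau$ and $\mathcal S^{\rho_J}=\mathcal S^J$ are then immediate from the construction, and the uniqueness result of Appendix~\ref{dec_splittings} confirms that $\mathcal S$ is the unique decomposition of $\mathbb E$ with the prescribed data.
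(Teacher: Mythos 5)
Part (a) of your proposal is correct and is essentially the paper's argument: since $\Sigma$ and all the $\mathcal S^{\rho_J}$ arise as restrictions of the single natural transformation $\mathcal S$, their further restrictions to common faces and cores agree on the nose, so the compatibilities come for free. Your uniqueness argument in (b) is also sound: a decomposition is a morphism of $n$-fold vector bundles and hence respects all $n$ additions, and $\E^{\rm dec}(\nset)$ is generated under iterated additions by the vacant part and the cores $(\E^{\rm dec})^{\nset}_J$, so the prescribed restrictions determine $\mathcal S$.

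The gap is in the existence step of (b). Your formula for $\mathcal S(\nset)$ --- ``adding, in the fibres of $\E(\nset)\to M$, the vacant contribution and the lifts of each remaining $c_I$'' --- is not well defined as written: an $n$-fold vector bundle carries no single fibrewise addition over $M$, only the $n$ additions $\dvplus{}{\nset\setminus\{i\}}$, and the summands you wish to combine (say $\Sigma(\nset)(c_{\{1\}},\ldots,c_{\{n\}})$ and a core lift of some $c_I$ with $\# I\geq 2$) do not project to the same point of any face $\E(\nset\setminus\{i\})$, so none of these additions applies directly. The paper's construction (Appendix \ref{dec_splittings}) repairs this with double-zero lifts of the form $\nvbzero{\nset}{p_s(\mathbf y)}\dvplus{}{\nset\setminus\{t\}}\mathbf z$ and incorporates the core contributions one $2$-subset $J_{k+1}=\{s,t\}$ at a time along a chosen ordering $J_1,\ldots,J_{\binom n2}$, building a chain $\overline\E=\E^0\hookrightarrow\cdots\hookrightarrow\E^{\binom n2}=\E^{\rm dec}$. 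Once this is set up, the real content of the proof --- which your sketch asserts but does not supply --- is that the result is independent of the chosen ordering. That is a genuine computation with the interchange law in the constituent double vector bundles, and it is exactly where the pairwise compatibility \eqref{compatible_dec} is consumed, in the form $\mathcal S^{J_{k+1}}(\mathbf e)=\mathcal S^{J_{k+2}}(\mathbf e)$ for the element $\mathbf e$ of the common $(n-2)$-core $(\E^{\rm dec})^{\rho_{J_{k+1}}\sqcap\rho_{J_{k+2}}}$. Without that computation, ``linearity will follow because each building block is itself a morphism'' does not establish that the piecewise-defined map is a single well-defined morphism of $n$-fold vector bundles. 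You correctly identify where the danger lies (two coarsening chains meeting on the same lower core), but the mechanism that neutralises it is the missing piece.
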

A symmetric version of this result is proved later on (see Proposition \ref{thm_split_dec_n}) and the details of the proof of this theorem are discussed in Appendix \ref{dec_splittings}.

Theorem 3.5 in \cite{HeJo20} states then that
  for each $n$-fold vector bundle $\E$, there is a linear
  splitting
  \[\Sigma\colon \overline{\E} \to \E\,,
    \]
    that is a monomorphism of $n$-fold vector bundles from the vacant,
    decomposed $n$-fold vector bundle $\overline{\E}$ associated to $\E$.		
	\cite{HeJo20} shows this result by proving inductively (over $n$) the following two claims:
\begin{enumerate}
\item[(a)] Given an $n$-fold vector bundle $\E$, there exist $n$
  linear splittings $\Sigma_{\nset\setminus\{k\}}$ of
  $\E_{\nset\setminus\{k\}}$ for $k\in\nset$, such that
  $\Sigma_{\nset\setminus\{i\}}(I)=\Sigma_{\nset\setminus\{j\}}(I)$
  for any $I\subseteq\nset\setminus\{i,j\}$, i.e.~such that
  \[\Sigma_{\nset\setminus\{i\}}\an{\nset\setminus\{i,j\}}=\Sigma_{\nset\setminus\{j\}}\an{\nset\setminus\{i,j\}}
  \]
  for all $i,j\in\nset$.
\item[(b)] Given a family of splittings as in (a), there exists a linear
  splitting $\Sigma$ of $\E$ with \[\Sigma\an{\nset\setminus\{k\}}=\Sigma_{\nset\setminus\{k\}}\]
 for each $k\in\nset$.
  \end{enumerate}

The following proposition arises as a corollary of the second claim, which is the missing piece to the proof of Corollary 3.6 in \cite{HeJo20}.
\begin{proposition}\label{missing_step}
Let $\E\colon\square^n\to \Man$ be an $n$-fold vector bundle. Assume that each $(n-1)$-core $\E^{\rho_J}\colon \lozenge^{\rho_J}\to\Man$ of $\E$, for $J\subseteq \nset$ with $\#J=2$, has 
a decomposition $\mathcal S^J\colon \E^{\mathcal A, \rho_J}\to \E^{\rho_J}$, such that for all $J_1,J_2\subseteq \nset$ with $\#J_1=\#J_2=2$ the decompositions
$\mathcal S^{J_1}$ and $\mathcal S^{J_2}$ are compatible as in \eqref{compatible_dec}.
Then there exists a splitting
\[ \Sigma\colon \overline{\E}\to \E
\]
of $\E$ that is compatible with all decompositions $\mathcal S^J$ as in \eqref{comp_sigma_S}: for all $J\subseteq \nset$ with $\#J=2$,
\begin{equation}\label{compatible_splitting_construction}
 \Sigma\an{\nset\setminus J}=\mathcal S^J\an{\nset\setminus J}\circ\tau\an{\nset\setminus J}\colon \overline{\E}_{\nset\setminus J}\to \E_{\nset\setminus J}.
\end{equation}
\end{proposition}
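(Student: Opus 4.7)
The condition \eqref{compatible_splitting_construction} forces $\Sigma(I)=\mathcal S^J(I)\circ\tau(I)$ for any $J\subseteq\nset\setminus I$ with $\#J=2$, and such a $J$ exists precisely when $\#I\leq n-2$. My plan is therefore to take this formula as the \emph{definition} of $\Sigma(I)$ for $\#I\leq n-2$, and then to extend to the top two levels $\#I=n-1,n$ by two successive applications of claim (b) from the outline above (valid at every dimension via the inductive proof in \cite{HeJo20}). For $\#I\leq n-2$, every $K\subseteq I$ is disjoint from $J$ and so lies in $\Obj(\lozenge^{\rho_J})$, so that $\E^{\mathcal A,\rho_J}(I)=\E^{\rm dec}(I)$ and $\E^{\rho_J}(I)=\E(I)$, making the composition
\[\Sigma(I):=\mathcal S^J(I)\circ\tau(I)\colon \overline{\E}(I)\to\E(I)\]
well-typed. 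Independence of the choice of $J$ follows from \eqref{compatible_dec}: for two choices $J_1\neq J_2$ disjoint from $I$, the set $I$ lies in $\Obj(\lozenge^{\rho_{J_1}\sqcap\rho_{J_2}})$ and $\E^{\mathcal A,\rho_{J_1}\sqcap\rho_{J_2}}(I)=\E^{\rm dec}(I)$, so evaluating \eqref{compatible_dec} at $I$ yields $\mathcal S^{J_1}(I)=\mathcal S^{J_2}(I)$. Naturality of the family $\{\Sigma(I)\}_{\#I\leq n-2}$ is inherited from that of any single $\mathcal S^J$.

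With this canonical definition in place, for each $k\in\nset$ the collection $\{\Sigma\an{\nset\setminus\{k,l\}}\}_{l\in\nset\setminus\{k\}}$ provides linear splittings of all $(n-2)$-fold vector bundle faces of $\E_{\nset\setminus\{k\}}$, and these agree on triple intersections $\E_{\nset\setminus\{k,l,l'\}}$ since all three restrictions literally coincide with the canonical value $\Sigma\an{\nset\setminus\{k,l,l'\}}$. Applying claim (b) at dimension $n-1$ to $\E_{\nset\setminus\{k\}}$ yields a linear splitting $\Sigma_{\nset\setminus\{k\}}$ extending them; one sets $\Sigma(\nset\setminus\{k\})$ to be its top component. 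The family $\{\Sigma_{\nset\setminus\{k\}}\}_{k\in\nset}$ in turn satisfies the hypothesis of claim (a), since for $k\neq l$ both $\Sigma_{\nset\setminus\{k\}}\an{\nset\setminus\{k,l\}}$ and $\Sigma_{\nset\setminus\{l\}}\an{\nset\setminus\{k,l\}}$ reduce by construction to the same canonical splitting of $\E_{\nset\setminus\{k,l\}}$. A second application of claim (b), now at dimension $n$, produces the desired $\Sigma\colon\overline{\E}\to\E$.

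Finally, \eqref{compatible_splitting_construction} holds at once: for each $J$ with $\#J=2$, the two natural transformations $\Sigma\an{\nset\setminus J}$ and $\mathcal S^J\an{\nset\setminus J}\circ\tau\an{\nset\setminus J}$ from $\overline{\E}_{\nset\setminus J}$ to $\E_{\nset\setminus J}$ coincide on every object (all of which have size $\leq n-2$) by the canonical definition with this particular $J$, hence are equal. The main subtlety is that \eqref{compatible_splitting_construction} constrains $\Sigma$ only on subsets of size $\leq n-2$, leaving $\Sigma(\nset\setminus\{k\})$ and $\Sigma(\nset)$ genuinely underdetermined; the whole point of organising the proof around the canonical definition is to transfer the compatibility \eqref{compatible_dec} of the $\mathcal S^J$'s into compatible splitting data on these small faces, after which claim (b) can be invoked twice without any further hypothesis.
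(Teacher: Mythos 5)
Your proposal is correct and follows essentially the same route as the paper: extract the canonical splittings on all faces of codimension at least two from the core decompositions (these are exactly the paper's $\Sigma^J=\mathcal S^J\an{\nset\setminus J}\circ\tau\an{\nset\setminus J}$, and their agreement on overlaps is exactly the paper's use of \eqref{compatible_dec} on the common core $\E^{\rho_{J_1}\sqcap\rho_{J_2}}$), then apply claim (b) once per $(n-1)$-side and once more at the top. The only difference is presentational — you organise the data as a single canonical assignment $I\mapsto\mathcal S^J(I)\circ\tau(I)$ on objects of size at most $n-2$ rather than as a family of face splittings — and your closing observation that \eqref{compatible_splitting_construction} only constrains $\Sigma$ on such objects correctly identifies why the construction succeeds.
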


\begin{proof}For each $J\subseteq \nset$ with $\# J=2$, the decomposition $\mathcal S^J$ of $\E^{\rho_J}$ defines a decomposition $\S^J\an{\nset\setminus J}$ of $\E_{\nset\setminus J}$ since $\E_{\nset\setminus J}$ is a face of $\E^{\rho_J}$.
Denote by $\Sigma^{J}\colon \overline{\E}_{\nset\setminus J}\to \E_{\nset\setminus J}$ the induced linear splitting of $\E_{\nset\setminus J}$, i.e.
\[ \Sigma^J:=\mathcal S^J\an{\nset\setminus J}\circ\tau\an{\nset\setminus J}\colon \overline{\E}_{\nset\setminus J}\rightarrow \E_{\nset\setminus J}.
\]

Consider $i\in \nset$. Then the $(n-2)$-fold vector bundles $\E_{\nset\setminus J}$, for $i\in J\subseteq \nset$  with $\#J=2$,  are sides of the $(n-1)$-fold 
vector bundle $\E_{\nset\setminus \{i\}}$. 
Choose two such subsets $J_1$ and $J_2$. Then $\rho_{J_1}\sqcap\rho_{J_2}=\{J_1\cup J_2; \{t\}\mid t\in\nset\setminus(J_1\cup J_2)\}$.
Since $\mathcal S^{J_1}$ and $\mathcal S^{J_2}$ coincide on the common core $\E^{\rho_{J_1}\sqcap \rho_{J_2}}$ of $\E^{J_1}$ and $\E^{J_2}$, 
they coincide in particular on its face $\E_{\nset\setminus (J_1\cup J_2)}$.
Hence $\mathcal S^{J_1}\an{\nset\setminus(J_1\cup J_2)}=\mathcal S^{J_2}\an{\nset\setminus(J_1\cup J_2)}$  and the induced splittings $\Sigma^{J_1}$ and $\Sigma^{J_2}$ satisfy consequently
\[ \Sigma^{J_1}(I)=\Sigma^{J_2}(I)
\]
for all $I\subseteq (\nset\setminus J_1)\cap(\nset\setminus J_2)=\nset\setminus(J_1\cup J_2)$. 

That is, the splittings $\Sigma^J$ for $\#J=2$ and $i\in J$ satisfy (a) above. By (b) there is consequently a linear splitting 
\[\Sigma_{\nset\setminus\{i\}}\colon \overline{\E}_{\nset\setminus\{i\}}\to \E_{\nset\setminus\{i\}}
\]
of $\E_{\nset\setminus\{i\}}$, such that
\[ \Sigma_{\nset\setminus\{i\}}\an{\nset\setminus J}=\Sigma^J.
\]
for all $J\subseteq \nset$ with $\#J=2$ and $i\in J$. Now choose $K\subseteq (\nset\setminus \{i\})\cap (\nset\setminus\{j\})=\nset\setminus\{i,j\}$ for $i\neq j\in \nset$. Set $I:=\{i,j\}$. Then 
\[ \Sigma_{\nset\setminus\{i\}}(K)=\Sigma^I(K)=\Sigma_{\nset\setminus \{j\}}(K)
\]
shows that (a) then still holds for the $(n-1)$-sides $\E_{\nset\setminus\{i\}}$ of $\E$, for all $i\in \nset$. As a consequence, there exists with (b) a linear splitting 
\[ \Sigma\colon \overline{\E}\to\E
\]
of $\E$ with 
\[ \Sigma\an{\nset\setminus\{i\}}=\Sigma_{\nset\setminus\{i\}}
\]
for $i=1,\ldots, n$. The compatibility in \eqref{compatible_splitting_construction} with the decompositions of the highest order cores is immediate by construction.
\end{proof}

Now the complete proof of \cite[Corollary 3.6]{HeJo20}
can be given.
\begin{corollary}\label{existence_dec_non_sym}
	Every $n$-fold vector bundle $\E$ is non-canonically isomorphic to the
	associated decomposed $n$-fold vector bundle $\E^{\mathcal A}$.	
	\end{corollary}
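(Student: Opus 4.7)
The plan is to obtain the decomposition of $\E$ by induction on $n$, combining Theorem \ref{thm_split_dec_n_non_sym}(b), Proposition \ref{missing_step}, and the existence of a linear splitting (Theorem 3.5 in \cite{HeJo20}). The base case $n=1$ is trivial since a vector bundle coincides with its own decomposition. For the inductive step, each $(n-1)$-core $\E^{\rho_J}$ with $J\subseteq\nset$ and $\#J=2$ is an $(n-1)$-fold vector bundle and hence, by inductive hypothesis, admits a decomposition. Once compatible choices $\S^J$ of such decompositions have been made, Proposition \ref{missing_step} produces a linear splitting $\Sigma\colon\overline\E\to\E$ compatible with all $\S^J$, and Theorem \ref{thm_split_dec_n_non_sym}(b) then assembles these into a decomposition of $\E$.

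The main obstacle is precisely ensuring the compatibility condition \eqref{compatible_dec} between the $\S^J$'s: two independently produced decompositions of $(n-1)$-cores need not induce the same decomposition on their common $(n-2)$-core $\E^{\rho_{J_1}\sqcap\rho_{J_2}}$. To handle this I would strengthen the inductive statement, proving simultaneously for every $n$-fold vector bundle $\E$ the existence of a \emph{coherent system} of decompositions $\{\S^\rho\colon \E^{\mathcal A,\rho}\to\E^\rho\}$ indexed by all partitions $\rho$ of $\nset$, such that
\[
(\S^{\rho_1})^{\rho_1\sqcap\rho_2}=(\S^{\rho_2})^{\rho_1\sqcap\rho_2}
\]
whenever $\rho_1,\rho_2$ admit a common coarsement (and hence a common $\rho_1\sqcap\rho_2$ as in Lemma \ref{intersectionl-1}). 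The decomposition of $\E$ itself is then recovered as $\S^{\{\{1\},\ldots,\{n\}\}}$, corresponding to the finest partition.

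Such a system is built from the bottom up on the lattice of partitions, using Proposition \ref{l_cores_as_partitions}. The 1-core $\E^\nset_\nset$ is a vector bundle and is taken as its own decomposition, so the $l=1$ level is trivial. Suppose inductively that coherent decompositions $\S^{\rho'}$ have been chosen for all $l$-partitions $\rho'$ of $\nset$, with $l\geq 1$. For each $(l+1)$-partition $\rho$ of $\nset$, the $l$-cores of $\E^\rho$ are precisely the $\E^{\rho'}$ with $\rho'$ an $l$-coarsement of $\rho$, and the already chosen $\S^{\rho'}$ for these $\rho'$ are pairwise compatible by inductive coherence; Proposition \ref{missing_step} (applied inside the $(l+1)$-fold vector bundle $\E^\rho$) then yields a linear splitting of $\E^\rho$ compatible with these $\S^{\rho'}$, and Theorem \ref{thm_split_dec_n_non_sym}(b) upgrades this to a decomposition $\S^\rho$ of $\E^\rho$ whose restrictions to all the $l$-cores reproduce the chosen $\S^{\rho'}$. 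The coherence at level $l+1$ follows directly from this last property, since any two $(l+1)$-cores $\E^{\rho_1},\E^{\rho_2}$ with common $l$-coarsement $\rho_1\sqcap\rho_2$ have $\S^{\rho_1\sqcap\rho_2}$ as the restriction of both $\S^{\rho_1}$ and $\S^{\rho_2}$. Iterating until $l+1=n$ produces the desired decomposition of $\E$.
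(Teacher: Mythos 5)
Your proposal is correct and follows essentially the same route as the paper: a bottom-up recursion over the lattice of partitions of $\nset$, producing at each level $l$ a coherent family of decompositions $\S^\rho$ of the $l$-cores that all restrict to the pre-chosen decompositions of the $(l-1)$-cores, with Proposition \ref{missing_step} supplying the compatible splittings and Theorem \ref{thm_split_dec_n_non_sym}(b) upgrading them to decompositions, so that compatibility at each new level is automatic. The only cosmetic difference is that the paper seeds the recursion at $l=2$ by citing the classical decomposability of double vector bundles and checks the $l=3$ compatibilities by hand, whereas your uniform inductive step starting from the ultracore subsumes these cases.
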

	\begin{proof}

	First consider all $2$-cores of $\E$. As explained in Proposition \ref{l_cores_as_partitions}, these are indexed by all possible partitions of $\nset$ in two (non-empty) subsets.
	Consider such a partition $\rho=\{I, \nset\setminus I\}$ for $\emptyset\neq I\subsetneq \nset$
and choose a linear decomposition
\[\begin{tikzcd}
	{\E_I^I\times_M \E^{\nset\setminus I}_{\nset\setminus I}\times_M \E_{\nset}^{\nset}} && {\E^{\rho}} \\
	& {\E_{\nset\setminus I}^{\nset\setminus I}} && {\E_{\nset\setminus I}^{\nset\setminus I}} \\
	{\E_I^I} && {\E_I^I} \\
	& M && M
	\arrow[from=1-1, to=2-2]
	\arrow[from=1-1, to=3-1]
	\arrow[from=3-1, to=4-2]
	\arrow[ from=2-2, to=4-2]
	\arrow["{\mathcal S^{\rho}}", from=1-1, to=1-3]
	\arrow[from=1-3, to=2-4]
	\arrow[ from=1-3, to=3-3]
	\arrow[from=3-3, to=4-4]
	\arrow[from=2-4, to=4-4]
	\arrow["{\id\qquad}", from=2-2, to=2-4]
	\arrow["{\id}", from=4-2, to=4-4]
	\arrow["{\id }", from=3-1, to=3-3]
\end{tikzcd}\]
of the $2$-core $\E^{\rho}$, which is a double vector bundle with sides $\E_I^I$ and $\E^{\nset\setminus I}_{\nset\setminus I}$ and with core $\E_{\nset}^{\nset}$.
(Double vector bundles always admit decompositions, see  \cite{GrRo09,delCarpio-Marek15}. See also the introduction of \cite{HeJo20} for more historical details.)

Choose now for each partition $\rho=\{I_1,I_2,I_3\}$ of $\nset$ in three subsets 
a linear splitting $\Sigma^\rho\colon \overline{\E^{\rho}}\to \E^\rho$ of the 3-core $\E^\rho$ of $\E$. 
Then $\Sigma^\rho$ is \emph{automatically} compatible with the decompositions 
\[ \mathcal S^{\{I_1\cup I_2, I_3\}}, \quad \mathcal S^{\{I_1\cup I_3, I_2\}}, \quad \text{ and }\quad \mathcal S^{\{I_2\cup I_3, I_1\}} 
\]
as in \eqref{comp_sigma_S}
 since \eqref{comp_sigma_S} 
reads here
\[ \Sigma^\rho(I_i)=\mathcal S^{\{I_j\cup I_k, I_i\}}(I_i)\colon \E_{I_i}^{I_i}\to \E_{I_i}^{I_i},
\]
for $\{i,j,k\}=\{1,2,3\}$, 
which is immediate since both $\Sigma^\rho(I_i)$ and $\mathcal S^{\{I_j\cup I_k, I_i\}}(I_i )$ are the identity on $\E_{I_i}^{I_i}$.
Secondly,  the decompositions $\mathcal S^{\{I_1\cup I_2, I_3\}}$, $\mathcal S^{\{I_1\cup I_3, I_2\}}$ and $\mathcal S^{\{I_2\cup I_3, I_1\}}$ of the $2$-cores of $\E^\rho$ are compatible since e.g.~\[
\{I_1\cup I_2, I_3\}\sqcap \{I_1\cup I_3, I_2\}=\{\nset\},
\]
so \eqref{compatible_dec}  reduces here to 
\[\left(\mathcal S^{\{I_1\cup I_2, I_3\}}\right)^{\{\nset\}}=
\left( \mathcal S^{\{I_1\cup I_3, I_2\}}\right)^{\{\nset\}}\colon \E_{\nset}^{\nset}\to \E_{\nset}^{\nset},
\]
which again is immediate since both decompositions restrict to the identity on the common core $\E_{\nset}^{\nset}$ of $\E^{\{I_1\cup I_2, I_3\}}$ and  $\E^{\{I_1\cup I_3, I_2\}}$. 

By Theorem \ref{thm_split_dec_n_non_sym},
there exists a decomposition $\mathcal S^\rho$ of $\E^\rho$ that restricts to $\Sigma^\rho$ and to $\mathcal S^{\{I_1\cup I_2, I_3\}}$, $\mathcal S^{\{I_1\cup I_3, I_2\}}$ and $\mathcal S^{\{I_2\cup I_3, I_1\}}$ on the $2$-cores of $\E^\rho$. Since $\rho$ was an arbitrary partition of $\nset$ in three subsets, this yields decompositions of all $3$-cores, such that the restrictions to two refinements of a same $2$-core are automatically compatible:
If $\rho_1$ and $\rho_2$ are two 3-partitions of $\nset$ such that $\rho_1\sqcap\rho_2$ is a 2-partition of $\nset$, then $\rho_1\sqcap \rho_2$ indexes the common $2$-core of $\E^{\rho_1}$ and $\E^{\rho_2}$, and by the construction above of $\S^{\rho_1}$ and $\S^{\rho_2}$
\[ \left(\mathcal S^{\rho_1}\right)^{\rho_1\sqcap\rho_2}=\mathcal S^{\rho_1\sqcap\rho_2}=\left(\mathcal S^{\rho_2}\right)^{\rho_1\sqcap\rho_2}.
\]
Use Proposition \ref{missing_step} and choose linear splittings of the $4$-cores which are compatible with all decompositions of the corresponding $3$-cores. By Theorem \ref{thm_split_dec_n_non_sym} there are decompositions of the $4$-cores with are compatible with the chosen splittings and with all decompositions of the $3$-cores constructed above. As above, the decompositions of the $4$-cores are therefore automatically compatible, and there exist by Proposition \ref{missing_step} linear splittings of the $5$-cores that are compatible with the decompositions of the $4$-cores. Repeat these steps until a decomposition of the $n$-core $\E$ is constructed.
        	\end{proof}

\subsection{Atlases of multiple vector bundles}

\cite{HeJo20} shows that an $n$-fold vector bundle can be equivalently 
defined as a smooth manifold endowed with an $n$-fold vector bundle atlas.

The following definition \cite{HeJo20} is a straightforward generalisation of Pradines' definition of a double vector bundle atlas \cite{Pradines77}.
\begin{definition}\label{n_fold_vb_atlas_def}
  Let $M$ be a smooth manifold and $E$ a topological space
  together with a continuous map $\pi\colon E\to M$.  An
  \textbf{n-fold vector bundle chart on $E$} is a tuple
  \[c=(U,\Theta,(V_I)_{\emptyset\neq I\subseteq \nset}),\] where $U$ is an open
  set in $M$, for each $\emptyset\neq I\subseteq \nset$ the space $V_I$ is a fixed (finite
  dimensional) real vector space,  and
  $\Theta\colon \pi\inv(U)\to U \times \prod_{\emptyset\neq I\subseteq \bar{n}}V_I$
  is a homeomorphism such that $\pi=\pr_1\circ\Theta$.

Two $n$-fold vector bundle charts $c$ and $c'$ are
\textbf{smoothly compatible} if the ``change of chart''
$\Theta'\circ\Theta\inv$ over $U\cap U'$ has the following form\footnote{Hence if it is a morphism of decomposed $n$-fold vector bundles, see \cite{HeJo20}.}:
\begin{equation*}\label{change_of_charts}
  \bigl(p,(v_I)_{\emptyset\neq I\subseteq\nset}\bigr)\mapsto
  \left(p,\left(\sum_{\rho=(I_1,\ldots,I_k)\in\mathcal P(I)}\omega_{\rho}(p)(v_{I_1},\ldots,v_{I_k})\right)_{\emptyset\neq I\subseteq\nset}\right)
\end{equation*}
with $p\in U\cap U'$, $v_I\in V_I$ and
$\omega_\rho\in C^\infty(U\cap U',\Hom(V_{I_1}\otimes\ldots\otimes
V_{I_k},V_{I}))$ for $\rho=(I_1,\ldots,I_k)\in\mathcal P(I)$.

A \textbf{smooth n-fold vector bundle atlas} $\lie A$ on $E$
is a set of n-fold vector bundle charts of $E$ that are
pairwise smoothly compatible and such that the set of underlying open
sets in $M$ is a covering of $M$.  As usual, $E$ is then a
smooth manifold and two smooth $n$-fold vector bundle atlases $\lie A_1$
and $\lie A_2$ are \textbf{equivalent} if their union is a smooth
n-fold vector bundle atlas.  A smooth \textbf{$n$-fold vector bundle structure}
on $E$ is an equivalence class of smooth n-fold vector bundle
atlases on $E$. The pair of $E$ and a smooth $n$-fold vector bundle structure on $E$ is written $\E$.
\end{definition}

Consider a smooth $n$-fold vector bundle atlas, and write
$\{U_\alpha\}_{\alpha\in\Lambda}$ for the underlying open covering of
$M$.  For $\alpha,\beta,\gamma\in\Lambda$ the identity
$\Theta_\gamma\circ\Theta_\alpha\inv=\Theta_\gamma\circ
\Theta_\beta\inv\circ\Theta_\beta\circ\Theta_\alpha\inv$ on
$\pi^{-1}(U_\alpha\cap U_\beta\cap U_\gamma)$ yields the following cocycle
conditions. For $\emptyset\neq I\subseteq \nset$ and $\rho=(I_1,\ldots,I_k)\in\mathcal P(I)$:
\begin{equation}\label{n-cocycles}
  \begin{split}
    \omega^{\gamma\alpha}_\rho&(p)(v_{I_1},\ldots,v_{I_k})=\\
    &\sum_{(J_1,\ldots, J_l)\in\operatorname{coars}(\rho)}
			\omega^{\gamma\beta}_{(J_1,\ldots,J_l)}(p)
			\Bigl(\omega^{\beta\alpha}_{\rho\cap J_1}(p)
			\bigl((v_{I})_{I\in\rho\cap J_1}\bigr),\ldots,
			\omega^{\beta\alpha}_{\rho\cap J_l}(p)
			\bigl((v_I)_{I\in\rho\cap J_l}\bigr)\Bigr)\,.
\end{split}
\end{equation}

\bigskip

Since the involved constructions need to be refined, this sections recalls 
in detail the proof of Corollary 3.10 in \cite{HeJo20}:
\begin{corollary}\label{eq_defs_nvb}
Definition \ref{def_n_fold} of an $n$-fold vector bundle as a functor
from the $n$-cube category is equivalent
to Definition \ref{n_fold_vb_atlas_def} of an $n$-fold vector bundle as a
space with a maximal $n$-fold vector bundle atlas.
\end{corollary}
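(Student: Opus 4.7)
The plan is to establish both directions of the equivalence by carefully using Corollary \ref{existence_dec_non_sym} for the forward direction, and a gluing construction for the reverse direction.

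For the forward direction, starting with an $n$-fold vector bundle $\E\colon\square^n\to\Man$ in the functor sense, the goal is to construct a smooth $n$-fold vector bundle atlas on $E:=\E(\nset)$ with projection $\pi:=\E(\nset\to\emptyset)\colon E\to M$. First, given any point $m\in M$, choose an open neighbourhood $U\subseteq M$ of $m$ on which each of the building bundles $\E^I_I\to M$, for $\emptyset\neq I\subseteq\nset$, is trivialisable, and fix vector space isomorphisms $\phi_I\colon\E^I_I\an U\to U\times V_I$ with $V_I=\R^{\rk(\E^I_I)}$. The restriction of $\E$ to sides and cores over $U$ is again an $n$-fold vector bundle, so by Corollary \ref{existence_dec_non_sym} there is a decomposition $\mathcal S\colon \E^{\mathcal A}\an U\to \E\an U$. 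Composing the inverse of $\mathcal S(\nset)$ with the trivialisations $\phi_I$ on each factor gives a homeomorphism $\Theta\colon\pi\inv(U)\to U\times\prod_{\emptyset\neq I\subseteq\nset}V_I$, which defines an $n$-fold vector bundle chart. Covering $M$ by such open sets produces an atlas.

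The pairwise compatibility of two such charts $\Theta,\Theta'$ on an overlap $U\cap U'$ reduces to the fact that $\Theta'\circ\Theta\inv$ is (after the identifications) a composition of isomorphisms of decomposed $n$-fold vector bundles and vector bundle isomorphisms on each $V_I$-factor. By the explicit description of morphisms of decomposed $n$-fold vector bundles $\E^{\mathcal A}\to\E^{\mathcal B}$ recalled earlier in the excerpt, any such morphism decomposes as a sum indexed by partitions $\rho=(I_1,\ldots,I_k)\in\mathcal P(I)$ of vector bundle maps $\tau_\rho\colon A_{I_1}\otimes\cdots\otimes A_{I_k}\to B_I$. This is exactly the required form, and the cocycle conditions \eqref{n-cocycles} are the formula given for the composition of two such morphisms.

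For the reverse direction, given $(E,\pi,\lie A)$ with atlas $\lie A=\{(U_\alpha,\Theta_\alpha,(V_I)_{\emptyset\neq I\subseteq\nset})\}_{\alpha\in\Lambda}$ and cocycles $\{\omega_\rho^{\alpha\beta}\}$ satisfying \eqref{n-cocycles}, the plan is to build the functor $\E\colon\square^n\to\Man$ by gluing local models $J$-face by $J$-face. For each subset $J\subseteq\nset$ set
\[
\E(J):=\Bigl(\bigsqcup_{\alpha\in\Lambda}U_\alpha\times\prod_{\emptyset\neq I\subseteq J}V_I\Bigr)\Big/\sim_J,
\]
where the equivalence relation $\sim_J$ is generated by identifying points via the transition formula of Definition \ref{n_fold_vb_atlas_def}, but using only the cocycles $\omega_\rho^{\alpha\beta}$ indexed by partitions $\rho$ of subsets of $J$. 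The full cocycle conditions \eqref{n-cocycles} restricted to $\mathcal P(I)$ for $I\subseteq J$ give well-defined gluings and hence a smooth manifold structure on $\E(J)$, with $\E(\nset)=E$. The projections $\E(I\to J)=p^I_J$ for $J\subseteq I$ are induced locally by the canonical projections $\pr_J\colon U_\alpha\times\prod_{\emptyset\neq K\subseteq I}V_K\to U_\alpha\times\prod_{\emptyset\neq K\subseteq J}V_K$; these are consistent across charts precisely because the transition form in Definition \ref{n_fold_vb_atlas_def} is triangular with respect to the ordering of subsets by inclusion.

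The main obstacle, and the only nontrivial bookkeeping in the argument, is to check that the resulting $\E$ satisfies the double vector bundle compatibility in Definition \ref{def_n_fold}(b): for each $I\subseteq\nset$ and $i\neq j\in I$, the square formed by $p^I_{I\setminus\{i\}}$, $p^I_{I\setminus\{j\}}$ and their further projections is a double vector bundle. This follows in each chart from the product structure, since the local model $U_\alpha\times\prod_{\emptyset\neq K\subseteq I}V_K$ is manifestly an $n$-fold vector bundle in the functorial sense; and the two constructions are mutually inverse up to equivalence of atlases (respectively up to natural isomorphism of functors) because a decomposition in one presentation yields trivialisations in the other, and the transition cocycles in the atlas picture are exactly the composition data of the decompositions in the functorial picture, as made explicit in \eqref{n-cocycles} and its counterpart for composition of morphisms of decomposed $n$-fold vector bundles.
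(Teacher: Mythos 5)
Your proof is correct and follows essentially the same route as the paper: the forward direction rests on the existence of decompositions (Corollary \ref{existence_dec_non_sym}) combined with trivialisations of the building bundles, and the reverse direction glues the local models face by face using the cocycles \eqref{n-cocycles}. The only difference is that you choose \emph{local} decompositions over each trivialising neighbourhood, which produces transition maps of the general form \eqref{change_of_charts}, whereas the paper fixes one global decomposition and obtains the simpler factor-wise transitions \eqref{simple_change_of_charts} --- a variation the paper itself points out in the remark following its proof.
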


\begin{proof}
Let $\mathbb E$ be an $n$-fold vector bundle. By Corollary \ref{existence_dec_non_sym} (see also Theorem 3.2 and
Theorem 3.3 in \cite{HeJo20}), there is a decomposition
$\mathcal S\colon \E^{\rm dec}\to \E$ of $\E$, with $\E^{\rm dec}$ the decomposed $n$-fold vector bundle defined by 
the family $(\E^I_I)_{I\subseteq \nset}$ of vector bundles over $M$.  Set $E:=\E(\nset)$, as usual $M:=\E(\emptyset)$, and 
$\pi=\E(\nset\to\emptyset)\colon E\to M$. For each $\emptyset\neq I\subseteq \nset$,
set $V_I:=\mathbb R^{\dim E^I_I}$, the vector space on which $E^I_I$ is
modelled. Take a covering $\{U_\alpha\}_{\alpha\in\Lambda}$ of $M$ by
open sets trivialising all the vector bundles $E^I_I$;
\[ \phi^\alpha_I\colon q_{I}^{-1}(U_\alpha)\overset{\sim}{\longrightarrow} U_\alpha\times V_I
\]
for all  $\emptyset\neq I\subseteq \nset$ and all $\alpha\in\Lambda$, where $q_I\colon E^I_I\to M$ is the projection (the restriction of $p^I_\emptyset$).
Then define $n$-fold vector bundle charts $\Theta_\alpha\colon\pi^{-1}(U_\alpha)\to U_\alpha\times \prod_{I\subseteq \nset}V_I$ by 
\[\Theta_\alpha=\left(\pi\times \left(\phi^\alpha_I\right)_{I\subseteq \nset}\right)\circ \mathcal S(\nset)^{-1}\an{\pi^{-1}(U_\alpha)}\colon \pi^{-1}(U_\alpha)\to U_\alpha\times  \prod_{\emptyset\neq I\subseteq \nset}V_I.
\]
Given $\alpha,\beta\in\Lambda$ with $U_\alpha\cap U_\beta\neq\emptyset$, the change of chart
\[\Theta_\alpha\circ\Theta_\beta\inv\colon (U_\alpha\cap U_\beta)\times \prod_{\emptyset\neq I\subseteq \nset}V_I\to  (U_\alpha\cap U_\beta)\times \prod_{\emptyset\neq I\subseteq \nset}V_I
\]
is given by 
\begin{equation}\label{simple_change_of_charts}
\left(p, (v_I)_{\emptyset \neq I\subseteq \nset}\right)\mapsto \left(p, (\rho^{\alpha\beta}_I(p)v_I)_{\emptyset \neq I\subseteq \nset}\right),
\end{equation}
with
$\rho_I^{\alpha\beta}\in C^\infty(U_\alpha\cap U_\beta,
\operatorname{Gl}(V_I))$
the cocycle defined by $\phi^\alpha_I\circ (\phi_I^\beta)\inv$. The
two charts are hence smoothly compatible. Hence his defines an $n$-fold vector
bundle atlas
$\lie A=\{(U_\alpha,\Theta_\alpha, (V_I)_{I\subseteq\nset})\mid \alpha\in
\Lambda\}$ on $E$.

\medskip

Conversely, given a space $E$ with an $n$-fold vector bundle structure
over a smooth manifold $M$ as in Definition \ref{n_fold_vb_atlas_def},
define $\mathbb E\colon \square^{\N}\to \Man$ as follows.  Take a
maximal atlas
$\lie A=\{(U_\alpha,\Theta_\alpha, (V_I)_{I\subseteq\nset})\mid \alpha\in
\Lambda\}$
of $E$; in particular $\{U_\alpha\}_{\alpha\in\Lambda}$ is an open cover
of $M$.  For $\alpha,\beta,\gamma\in\Lambda$  the
identity
$\Theta_\gamma\circ\Theta_\alpha\inv=\Theta_\gamma\circ
\Theta_\beta\inv\circ\Theta_\beta\circ\Theta_\alpha\inv$
on $\pi^{-1}(U_\alpha\cap U_\beta\cap U_\gamma)$ yields the cocycle
conditions \eqref{n-cocycles}.

Set $\mathbb E(\nset):=E$, $\mathbb E(\emptyset):=M$, 
and more generally for $\emptyset\neq I\subseteq \nset$, 
\[\mathbb
E(I)=\left.\left(\bigsqcup_{\alpha\in\Lambda}\left(U_\alpha\times\prod_{\emptyset\neq J\subseteq
  I}V_J\right)\right)\right/\sim \]
with $\sim$ the equivalence relation defined on
$\bigsqcup_{\alpha\in\Lambda}(U_\alpha\times\prod_{\emptyset\neq J\subseteq I}V_J)$
by 
\[ U_\alpha\times\prod_{\emptyset\neq J\subseteq I}V_J\quad \ni\quad
\left(p,(v_J)_{\emptyset\neq J\subseteq I}\right)\quad \sim \quad
\left(q,(w_J)_{\emptyset\neq J\subseteq I}\right)\quad \in \quad U_\beta\times\prod_{\emptyset\neq J\subseteq I}V_J
\]
if and only if $p=q$ and
\[ (v_J)_{\emptyset\neq J\subseteq I}=\left(\sum_{\rho=(J_1,\ldots,J_k)\in\mathcal
    P(J)}\omega^{\alpha\beta}_{\rho}(p)(w_{J_1},\ldots,w_{J_k})\right)_{\emptyset\neq J\subseteq I}.
\]
The relations \eqref{n-cocycles} show the symmetry and transitivity of
this relation. As in the construction of a vector bundle from vector
bundle cocycles,  $\mathbb E(I)$ has a unique smooth
manifold structure such that
$\pi_I\colon \mathbb E(I)\to M$, $\pi_I[p,(v_I)_{\emptyset\neq I\subseteq J}]=p$
is a surjective submersion and such that
 the maps 
\[\Theta^I_\alpha\colon \pr_I\left(U_\alpha\times\prod_{\emptyset\neq J\subseteq
  I}V_J\right)\to U_\alpha\times\prod_{\emptyset\neq J\subseteq I}V_J, \qquad
\left[p,(v_I)_{\emptyset\neq I\subseteq J}\right]\mapsto \left(p,(v_I)_{\emptyset\neq I\subseteq J}\right)
\]
are diffeomorphisms, where $\pr_I\colon \bigsqcup_{\alpha\in\Lambda}(U_\alpha\times\prod_{\emptyset\neq J\subseteq
  I}V_J)\to \mathbb E(I)$ is the projection to the equivalence classes.

$\E(I)$ comes equipped with $\# I$ surjective submersions
\[ p^I_{I\setminus \{i\}}\colon \E(I)\to \E(I\setminus\{i\})\]
for $i\in I$, defined in charts by 
\[ U_\alpha\times \prod_{\emptyset\neq J\subseteq I}V_J \,\ni\, \left(p, (v_J)_{\emptyset\neq J\subseteq
  I}\right)\,\mapsto \, \left(p, (v_J)_{ \emptyset\neq J\subseteq I\setminus\{i\}}\right)\,\in \, U_\alpha\times \prod_{\emptyset\neq J\subseteq I\setminus\{i\}} V_J
\]
and it is easy to see that $\E(I)$ is a vector bundle over
$\E(I\setminus\{i\})$, and that for $i,j\in I$, 
\[
\begin{tikzcd}
\E(I)\ar[rr,"p^{I}_{I\setminus\{i\}}"] \ar[d,"p^{I}_{I\setminus\{j\}}"] && \E(I\setminus \{i\})\ar[d,"p^{I\setminus \{i\}}_{I\setminus\{i,j\}} "]\\
\E(I\setminus \{j\}) \ar[rr,"p^{I\setminus \{j\}}_{I\setminus\{i,j\}}  "] & &\E(I\setminus \{i,j\})
\end{tikzcd} 
\]
is a double vector bundle,  with obvious local trivialisations given
by the local charts.
\end{proof}

\begin{remark}
  Note that the construction above of an $n$-fold vector bundle atlas
  on $\E(\nset)$ from an $n$-fold vector bundle yields an atlas with
  simpler changes of charts \eqref{simple_change_of_charts} than the
  most general allowed change of charts \eqref{change_of_charts}. This
  is due to the choice of a \emph{global} decomposition of the
  $n$-fold vector bundle. Choosing different local 
  decompositions yields an atlas with changes of charts as in
  \eqref{change_of_charts}.  
\end{remark}

\section{Symmetric $n$-fold vector bundles}\label{symmetric_nvb}
This section introduces \emph{symmetric $n$-fold vector bundles}
and their morphisms, their symmetric atlases and the decomposed
symmetric $n$-fold vector bundles.

\subsection{Global definition of symmetric $n$-fold vector bundles.}
This section defines symmetric $n$-fold vector bundles. Recall
that an $n$-fold vector bundle is a functor
$\E\colon\square^n\to\Man$. The $n$-cube category $\square^n$ is
equipped as follows with a canonical left action $\Phi$ of the symmetric group
$S_n$ by isomorphisms of categories. The
permutation $\sigma\in S_n$ acts by $\Phi_\sigma\colon\square^n\to\square^n$,
which maps an object $I\subseteq\nset$ to the object
$\sigma(I)\subseteq\nset$ and morphisms in the obvious way. This
action induces as follows a right action of $S_n$ on the category of $n$-fold
vector bundles.
\begin{definition}
	For $\sigma\in S_n$ define the \textbf{$\sigma$-flip} of an 
	$n$-fold vector bundle $\E$ to be the $n$-fold vector bundle 
	$\E^\sigma:=\E\circ\Phi_{\sigma}\colon\square^n\to\Man$. Since 
	$\Phi$ is a left action of $S_n$ on $\square^n$, then
	$(\E^\sigma)^\tau=\E^{\sigma\tau}$. Given a morphism $\tau\colon \E\to\F$ of $n$-fold 
	vector bundles, there is an obvious morphism 
	$\tau^\sigma\colon \E^\sigma\to\F^\sigma$ of $n$-fold vector bundles defined by $\tau^\sigma(I)=\tau(\sigma(I))$ for all $I\subseteq \nset$.
\end{definition}
Note that $\E^\sigma$ has the same underlying spaces as $\E$ but in 
different positions, which is crucial when considering morphisms of 
multiple vector bundles. For example the double vector bundle 
$(D;A,B;M)$ is different from the double vector bundle $(D;B,A;M)$.

\begin{definition}\label{def_sym_nvb}
	An $n$-fold vector bundle 
	$\E\colon \square^n\to \Man$ has a \textbf{symmetric structure}
if \begin{enumerate}
 \item  its building bundles satisfy\footnote{The building bundles $\E^I_I$ and $\E^J_J$ are of course equal up to isomorphism for $\# I=\# J$. In condition (c), the identity map is then this isomorphism.} $\E^I_I=\E^J_J$ 
   for $\#I=\#J$, and
 \item  $\mathbb E$ is endowed with a left
   $S_n$-action $\Psi\colon S_n\to \operatorname{Mor}(\E)$ in the sense that
   \begin{enumerate}
     \item[(a)] for any $\sigma\in S_n$,  $\Psi_\sigma\colon\E\to\E^\sigma$ 
     is a morphism of $n$-fold vector bundles, 
     \item[(b)] $\Psi_{\id}=\id_{\E}\colon\E\to\E$ and for all $\sigma,\tau\in S_n$
     \[ \Psi_{\sigma\tau}=(\Psi_\sigma)^\tau\circ \Psi_\tau\colon \E\to \E^\tau\to \E^{\sigma\tau}=(\E^\sigma)^\tau
     \]
     	\item[(c)]
		$\Psi_\sigma^{I,I}=\varepsilon(\sigma,I)\cdot\id_{\E^I_I}
		\colon \E^I_I\to \E^{\sigma(I)}_{\sigma(I)}=\E^I_I$.
              \end{enumerate}
\end{enumerate}
            
An $n$-fold vector bundle together with a symmetric structure
is called a \textbf{symmetric $n$-fold vector bundle}.

\medskip

Let $\E\colon \square^n\to \Man$ and $\F\colon \square^n\to \Man$ be two 
symmetric $n$-fold vector bundles with actions $\Phi$ and $\Psi$, respectively.
A \textbf{morphism $\tau\colon \mathbb E\to \mathbb F$ of symmetric $n$-fold vector bundles}
is a morphism of $n$-fold vector bundles which is $S_n$-equivariant, 
that is, which is such that the diagrams
\[
\begin{tikzcd}
\E(I)\ar[rr,"\tau(I)"] \ar[d,"\Phi_\sigma(I)"] && \mathbb F(I)\ar[d,"\Psi_\sigma(I) "]\\
\E(\sigma(I)) \ar[rr,"\tau(\sigma(I))  "] && \mathbb F(\sigma(I)) 
\end{tikzcd} 
\] 
commute for all
$I\subseteq \nset$ and all $\sigma\in S_n$. 
That is, the diagram 
\[
\begin{tikzcd}
\E\ar[rr,"\tau"] \ar[d,"\Phi_\sigma"] && \mathbb F\ar[d,"\Psi_\sigma "]\\
\E^\sigma \ar[rr,"\tau^\sigma  "] && \mathbb F^\sigma
\end{tikzcd} 
\] 
of natural transformations commutes for all $\sigma\in S_n$.

\medskip

  $\mathsf{SnVB}$ is the category of symmetric $n$-fold vector
  bundles.
\end{definition}

\subsection{Example: the pullback of a symmetric $n$-fold vector bundle}

Let $\E$ be an $n$-fold vector bundle.
Then the $n$\textbf{-pullback of }$\E$ is the set
\[P=\left\{(e_1,\ldots,e_n) \left| e_i\in \E(\nset\setminus\{i\}) %
		\text{ and } p_{\underline{n}\setminus\{i,j\}}^{\underline{n}\setminus\{i\}}(e_i)= p_{\underline{n}\setminus\{i,j\}}^{\nset\setminus\{j\}}(e_j)\in\E(\nset\setminus\{i,j\})
			\text{ for } i,j\in \nset\right.\right\}\,.
 \]
By Theorem 2.10 in \cite{HeJo20}, $P$ is a smooth embedded submanifold 
of the product $\E(\nset\setminus\{1\})\times \ldots \times \E(\nset\setminus\{n\})$, and 
the functor $\P$
    defined by 
    \begin{itemize}
    \item $\P(\nset)=P$, 
    \item $\P(S)=\E(S)$ for all $S\subsetneq \nset$
    \end{itemize}
    and the vector bundle projections
    \begin{itemize}
 \item   $p^S_{S\setminus\{i\}}\colon \E(S)\to \E(S\setminus\{i\})$ for all
    $S\subsetneq \nset$ and $i\in S$
    \item
    $p'_{\nset\setminus\{i\}}\colon P\to \E(\nset\setminus\{i\})$,
    $(e_1,\ldots,e_n)\mapsto e_i$ \end{itemize}
    is an $n$-fold vector bundle.
Furthermore, define the map $\pi(\nset)\colon \E(\nset)\to P$ by $\pi(\nset)\colon e\mapsto (p_{\nset\setminus\{1\}}(e),\ldots, p_{\nset\setminus\{n\}}(e))$.   The map $\pi(\nset)$ defines together with $\pi(J)=\id_{\E(J)}$ for $J\subsetneq \nset$,
    a surjective morphism $\pi\colon \E \to \P$ of  $n$-fold vector bundles.
Note that for each $i\in\nset$, the top map $\pi(\nset)\colon \E(\nset)\to P$
of $\pi$ is necessarily  a vector
bundle morphism over the identity on $\E(\nset\setminus\{i\})$.
Note also that the building bundle $\P^{\nset}_{\nset}\to M$ is 
\begin{equation}\label{ultracore_P}
\left\{\left.\left(0^{\nset\setminus\{1\}}_m,\ldots,0^{\nset\setminus\{n\}}_m\right)\in \E(\nset\setminus\{1\})\times\ldots\times \E(\nset\setminus\{n\})
\right| m\in M \right\}\,,
\end{equation}
hence the trivial vector bundle $M\to M$, while by construction 
$\mathbb P^I_I=\E^I_I$ for all $I\subsetneq \nset$.

\medskip

Let $\E$ be a symmetric $n$-fold vector bundle. This section shows that its pullback $n$-fold 
vector bundle $\mathbb P\colon \square^n\to \Man$ (see \cite{HeJo20}) is 
also a symmetric $n$-fold vector bundle, and the projection 
$\pi\colon \mathbb E\to \mathbb P$ is a morphism of symmetric $n$-fold vector bundles.

\medskip

The building bundles of $\mathbb P$ must satisfy $\mathbb P_I^I=\mathbb P_J^J$ for 
$I,J\subseteq \nset $ with $\# I=\# J$. This is immediate since 
for $I\subsetneq  \nset$ the vector bundle $\mathbb P_J^J$ equals $\E_J^J$, and 
the building blocks of $E$ satisfy $\E^I_I=\E^J_J$ for $I,J\subseteq \nset$ 
with $\# I=\# J$. There is further only one subset of $\nset$ with 
cardinality $n$, so the condition (1) in Definition \ref{def_sym_nvb} is 
trivially satisfied for that set.

Consider the action $\Psi$ of $S_n$ on $\E$ and define the following 
action $\Phi$ of $S_n$ on $\mathbb P$: for $\sigma\in S_n$ and $J\subsetneq \nset$, 
\[\Phi_\sigma(J):=\Psi_\sigma(J)\colon \mathbb E(J)\to \mathbb E^\sigma(J)
=\mathbb E(\sigma(J)).
\]
The map $\Phi_\sigma(\nset)\colon P\to P$ is further defined by
\begin{equation}\label{top_sym} (e_1, \ldots, e_n)\mapsto (\Psi_\sigma(\nset\setminus\{\sigma^{-1}(1)\})(e_{\sigma^{-1}(1)}), 
\ldots, \Psi_\sigma(\nset\setminus\{\sigma^{-1}(n)\})(e_{\sigma^{-1}(n)}))\in P\,,
\end{equation}
and is automatically smooth.

For all $I\subsetneq \nset$ and all $\sigma\in S_n$, the map 
$(\Phi_\sigma)^I_I\colon \mathbb P^I_I=\E^I_I\to \E^I_I$ equals 
$(\Psi_\sigma)^I_I=\epsilon(\sigma,I)\cdot \id_{\E^I_I}$ since 
$\Phi_\sigma(I)=\Psi_\sigma(I)$. Since $\mathbb P_{\nset}^{\nset}$ is the trivial 
vector bundle over $M$
and $(\Phi_\sigma)_{\nset}^{\nset}$ is obviously the identity on this trivial bundle 
by \eqref{ultracore_P} and \eqref{top_sym}, the third condition in Definition \ref{def_sym_nvb} 
is satisfied for $I=\nset$.

It remains to show that $\Psi_\sigma$ is a natural transformation for each 
$\sigma\in S_n$ and that for all $I\subseteq \nset$ and $i\in I$ the square
\[
\begin{tikzcd}
\E({I})\ar[rr,"\Psi_\sigma(I)"] \ar[d,"p^{I}_{I\setminus\{i\}}"] && \E(\sigma(I))\ar[d,"p^{\sigma(I)}_{\sigma(I)\setminus\{\sigma(i)\}} "]\\
\E(I\setminus \{i\}) \ar[rr,"\Psi_\sigma(I\setminus\{i\})  "] && \E(\sigma(I)\setminus \{\sigma(i)\}) 
\end{tikzcd} 
\] 
is a vector bundle homomorphism. But again, since $\Phi_\sigma(I)=\Psi_\sigma(I)$ for all $I\subsetneq \nset$, it suffices to check that 
\[
\begin{tikzcd}
P\ar[rr,"\Phi_\sigma(\nset)"] \ar[d,"p^{\nset}_{\nset\setminus\{i\}}"] && P\ar[d,"p^{\nset}_{\nset\setminus\{\sigma(i)\}} "]\\
\E(\nset\setminus \{i\}) \ar[rr,"\Psi_\sigma(\nset\setminus\{i\})  "] && \E(\nset\setminus \{\sigma(i)\})
\end{tikzcd} 
\] 
is a vector bundle homomorphism for each $i\in\nset$. This, in turn, follows immediately from the definition of $\Phi_\sigma(\nset)$.

Finally the diagrams 
\[
\begin{tikzcd}
\mathbb E(J)\ar[rr,"\pi(J)"] \ar[d,"\Psi_\sigma(J)"] && \mathbb P(J)\ar[d,"\Phi_\sigma(J)"]\\
\mathbb E^\sigma(J)\ar[rr,"\pi_\sigma(J)  "] && \mathbb P^\sigma(J)
\end{tikzcd} 
\] 
all commute, since for $\sigma\in S_n$ and $I\subsetneq \nset$, this is 
\[
\begin{tikzcd}
\E(J)\ar[rr,"\id"] \ar[d,"\Psi_\sigma(J)"] && \E(J)\ar[d,"\Psi_\sigma(J)"]\\
\E(\sigma(J))\ar[rr,"\id  "] && \E(\sigma(J))
\end{tikzcd}, 
\] 
and for $I=\nset$, this is 
\[
\begin{tikzcd}
\E(\nset)\ar[rr,"\pi(\nset)"] \ar[d,"\Psi_\sigma(\nset)"] && P\ar[d,"\Phi_\sigma(\nset)"]\\
\E(\nset)\ar[rr,"\pi(\nset)  "] && P
\end{tikzcd} ,
\] 
which commutes since for all $e\in E$, 
\begin{equation*}
\begin{split}\pi(\nset)(\Psi_\sigma(\nset)(e))&=\left(p_{1}(\Psi_\sigma(\nset)(e)),\ldots, p_{n}(\Psi_\sigma(\nset)(e))
\right)\\
&=\left(\Psi_\sigma(\nset\setminus\{\sigma\inv(1)\})(p_{\sigma\inv(1)}(e)), \ldots, \Psi_\sigma(\nset\setminus\{\sigma\inv(n)\})(p_{\sigma\inv(n)}(e))\right)\\
&=\Phi_\sigma(\nset)(p_1(e), \ldots, p_n(e))=\Phi_\sigma(\nset)(\pi(\nset)(e)).\qedhere 
\end{split}
\end{equation*}

\subsection{Iterated highest order cores of symmetric $n$-fold vector bundles}

This section shows that for $l\in \{1, \ldots, n\}$, the family of $l$-cores of a symmetric $n$-fold vector bundle
inherit an $S_n$-symmetry. More precisely, the $S_n$-action on $\E$ restricts to morphisms between the different $l$-cores.
\begin{lemma}
Let $\E$ be a symmetric $n$-fold vector bundle with $S_n$-action $\Phi$. Choose a partition $\rho=\{I_1,\ldots, I_l\}$ of $\nset$ in $\#\rho=l$ subsets. Then for each $\sigma\in S_n$, the partition $\sigma(\rho)=\{\sigma(I_1), \ldots, \sigma(I_l)\}$ again has $l$ elements, and $\Phi_\sigma\colon \E\to \E^\sigma$ restricts to a morphism
\[ \Phi^\rho_\sigma\colon \E^\rho\to (\E^{\sigma(\rho)})^\sigma
\]
of $l$-fold vector bundles.
For each $J\in \operatorname{Obj}(\lozenge^\rho)$ the smooth map
\[ \Phi^\rho_\sigma(J)\colon \E^\rho(J)\to \E^{\sigma(\rho)}(\sigma(J))
\]
is the restriction to the embedded domain $\E^\rho(J)\subseteq \E(J)$ and the embedded codomain $\E^{\sigma(\rho)}(\sigma(J))\subseteq \E(\sigma(J))$ of the smooth map
\[ \Phi(\sigma)(J)\colon \E(J)\rightarrow \E(\sigma(J)).
\]
\end{lemma}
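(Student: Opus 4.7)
The approach is to exploit the explicit description of $\E^\rho(J)$ as an intersection of preimages of zero sections given in \eqref{explicit_iterated_core}, together with the fact that $\Phi_\sigma\colon\E\to\E^\sigma$ is already a morphism of $n$-fold vector bundles. Since morphisms of $n$-fold vector bundles intertwine bundle projections and send zero sections to zero sections, the defining equations of the iterated higher order cores should be preserved essentially automatically under restriction.

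First I would observe that $\sigma$ induces an isomorphism of $l$-cube categories from $\lozenge^\rho$ to $\lozenge^{\sigma(\rho)}$ via $J=I_{i_1}\cup\ldots\cup I_{i_k}\mapsto\sigma(J)=\sigma(I_{i_1})\cup\ldots\cup\sigma(I_{i_k})$, so that $(\E^{\sigma(\rho)})^\sigma$ is a well-defined functor from $\lozenge^\rho$ to $\Man$ sending $J$ to $\E^{\sigma(\rho)}(\sigma(J))$. The key step is then to verify the set-theoretic inclusion
\[\Phi_\sigma(J)\bigl(\E^\rho(J)\bigr)\subseteq \E^{\sigma(\rho)}(\sigma(J))\]
for every $J\in\operatorname{Obj}(\lozenge^\rho)$. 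Spelling out the defining condition from \eqref{explicit_iterated_core} for $e\in\E^\rho(J)$, namely $p^J_{J\setminus\{j\}}(e)=\nvbzero{J\setminus\{j\}}{J\setminus I_{i_s}}(p^J_{J\setminus I_{i_s}}(e))$ for all $s\in\{1,\ldots,k\}$ and $j\in I_{i_s}$, applying $\Phi_\sigma(J\setminus\{j\})$ to both sides, and using both the naturality of $\Phi_\sigma$ (which gives $p^{\sigma(J)}_{\sigma(J)\setminus\{\sigma(j)\}}\circ\Phi_\sigma(J)=\Phi_\sigma(J\setminus\{j\})\circ p^J_{J\setminus\{j\}}$) and the fact that each component of $\Phi_\sigma$ preserves zero sections produces exactly the analogous identity for $\Phi_\sigma(J)(e)$ with $\sigma$ applied throughout, which is the defining condition of $\E^{\sigma(\rho)}(\sigma(J))$. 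Smoothness of the restricted map $\Phi^\rho_\sigma(J)$ is then automatic since $\E^\rho(J)$ and $\E^{\sigma(\rho)}(\sigma(J))$ are embedded submanifolds, naturality of $\Phi^\rho_\sigma$ as a transformation $\E^\rho\to(\E^{\sigma(\rho)})^\sigma$ follows by restriction from the naturality of $\Phi_\sigma$, and by Lemma \ref{second_result_iterated_cores} the vector bundle structures on the $l$-cores are restrictions of those on $\E$ and $\E^\sigma$, so $\Phi^\rho_\sigma$ is a morphism of $l$-fold vector bundles.

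The main obstacle is purely notational bookkeeping: tracking the effect of $\sigma$ on indexing subsets of $\nset$ and on the composition of zero sections, and confirming that the identification of $(\E^{\sigma(\rho)})^\sigma$ with a functor on $\lozenge^\rho$ is compatible with the embeddings coming from Lemma \ref{second_result_iterated_cores}. No new geometric input is required beyond what is already contained in Definition \ref{def_sym_nvb} and the explicit formula \eqref{explicit_iterated_core}.
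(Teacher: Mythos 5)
Your proof is correct, but it takes a different route from the paper's. The paper argues recursively: it first treats the case of a highest order core $\E^{J}$ with $\#J=2$, where the restriction of $\Phi_\sigma$ to a core morphism is already available from \eqref{core_morphism}, establishes the identity $(\E^\sigma)^J=(\E^{\sigma(J)})^\sigma$, and then propagates the statement to arbitrary $l$-cores by induction on coarsements via $\left(\E^\sigma\right)^{\rho'}=\left(\left(\E^\sigma\right)^\rho\right)^{\rho'}=\left(\E^{\sigma(\rho')}\right)^\sigma$. You instead verify the set-theoretic inclusion $\Phi_\sigma(J)\bigl(\E^\rho(J)\bigr)\subseteq\E^{\sigma(\rho)}(\sigma(J))$ directly, by applying naturality of $\Phi_\sigma$ and preservation of composed zero sections to the defining equations in \eqref{explicit_iterated_core}. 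Your argument is self-contained at the level of this lemma and makes transparent exactly which structural properties of $\Phi_\sigma$ are used; it does, however, lean on the explicit description \eqref{explicit_iterated_core}, which the paper itself obtained by the same kind of recursion that its proof of this lemma reuses. The paper's inductive formulation has the advantage of simultaneously recording the functorial identity $(\E^\sigma)^\rho=(\E^{\sigma(\rho)})^\sigma$, which is what is actually invoked later (e.g.\ in Appendix \ref{proof_dec_sym_nvb}); if you adopt your direct argument you should still state that identity explicitly, since the mere existence of the restricted morphism does not by itself record it. Both routes correctly reduce the vector bundle morphism property to Lemma \ref{second_result_iterated_cores}, using that the projections and additions on the cores are restrictions of those of $\E$.
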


\begin{proof}
First choose a subset $J\subseteq \nset$ with two elements and consider the $(n-1)$-core $\E^{J}$ of $\E$. The morphism $\Psi_\sigma\colon \E\to \E^\sigma$ of $n$-fold vector bundles restricts to a 
morphism
\[ (\Psi_\sigma)^J\colon \E^J\to (\E^\sigma)^J
\]
for $(n-1)$-fold vector bundles. An easy computation shows that \begin{equation}\label{highest_sigma_comp}
(\E^\sigma)^J=(\E^{\sigma(J)})^\sigma
\end{equation}
 with, as usual,
$(\E^{\sigma(J)})^\sigma(I):=\E^{\sigma(J)}(\sigma(I))$
for all $I\subseteq\nset$. Assume that for an $l$-partition $\rho=\{I_1,\ldots,I_l\}$ of $\nset$, 
\[(\E^\sigma)^\rho=(\E^{\sigma(\rho)})^\sigma
\]
and choose an $(l-1)$-coarsement $\rho'$ of $\rho$. Then without loss of generality $\rho'=\{I_1\cup I_2, I_3, \ldots, I_l\}$ and 
\[ \left(\E^\sigma\right)^{\rho'}=\left(\left(\E^\sigma\right)^\rho\right)^{\rho'}=\left(\left(\E^{\sigma(\rho)}\right)^\sigma\right)^{\rho'}= \left(\left(\E^{\sigma(\rho)}\right)^{\sigma(\rho')}\right)^{\sigma}=\left(\E^{\sigma(\rho')}\right)^\sigma,
\]
since $\sigma(\rho')$ is a coarsement of $\sigma(\rho)$. In the third equality, \eqref{highest_sigma_comp} is used since the partition $\rho'$ gives a highest order core of $\E^\rho$.

The rest of the statement is then immediate.
\end{proof}

\subsection{Decomposed symmetric $n$-fold vector bundles}\label{dec_sym_nvb}
  Consider a smooth manifold $M$ and a collection of vector bundles
$\mathcal A=\{q_i\colon A_i\to M \mid 1\leq i\leq n\}$.  Define a
functor $\E^{\mathcal A}\colon \square^{n}\to \Man$ as follows.  Each
object $I\subseteq \nset$ is sent to \[\E^{\mathcal A}(I):=\prod^M_{\emptyset\neq J\subseteq I} A_{\# J}\, \simeq \prod^M_{i\in\{1,\ldots, n\}}A_i^{\#\{J\subseteq I\mid \# J=i\}},\]
as a fibered product of vector bundles over $M$. That is, 
\[ \E^{\mathcal A}(\{1,2\})=A_1\times A_1\times A_2,
\]
\[\E^{\mathcal A}(\{2,4,5\})=A_1\times A_1\times A_1\times A_2\times A_2\times A_2\times A_3, 
\]
etc.
	
  For $I\in\nset$ with $\# I\geq 1$ and for $k\in I$, the
  arrow $I\to I\setminus\{k\}$ is sent to the canonical vector bundle
  projection\footnote{By convention $A_0$ is set to be $M$.}
  \[p^I_{I\setminus \{k\}}\colon \prod^M_{\emptyset\neq J\subseteq I} A_{\# J}\to \prod^M_{J\subseteq
      I\setminus\{k\}} A_{\# J}.\] In particular, the arrow
  $\{i\}\to\emptyset$ for $i\in\nset$ is sent to the vector bundle
  projection
  $p^{\{i\}}_\emptyset=q_{\{i\}}\colon \E^{\mathcal A}(\{i\})=A_1\to
  \E^{\mathcal A}(\emptyset)=M$.    

  This decomposed  $n$-fold vector bundle $\E^{\mathcal A}$ satisfies the following two conditions:
  \begin{enumerate}
  \item By construction the building bundles satisfy $(\E^{\mathcal A})^{I}_{I}=A_{\# I}=A_{\# J}=(\E^{\mathcal A})^J_J$ for $\# I=\# J$. 
  \item The maps 
  \begin{equation*}
  \begin{split}
   \Psi^{\mathcal A}_\sigma(I)\colon \mathbb E^{\mathcal A}(I)&\to\mathbb E^{\mathcal A, \sigma}(I)=\mathbb E^{\mathcal A}(\sigma(I))\\
  (a_J)_{\emptyset\neq J\subseteq I}&\mapsto (\epsilon(\sigma\inv, J)a_{\sigma\inv(J)})_{\emptyset\neq J\subseteq \sigma(I)}
  \end{split}
  \end{equation*}
  for $\sigma\in S_n$ and $I\subseteq \nset$ define an $S_n$-symmetry on $\mathbb E^{\mathcal A}$.

For any $\sigma\in S_n$ the n-fold vector bundle $\mathbb E^{\mathcal A,\sigma}$ is 
the decomposed $n$-fold vector bundle with $\mathbb E^{\mathcal A,\sigma}(I)=\mathbb E^{\mathcal A}(\sigma(I))=\prod^M_{J\subseteq \sigma(I)} A_{\# J}$. Therefore, $\Psi^{\mathcal A}_\sigma\colon \mathbb E^{\mathcal A}\to \mathbb E^{\mathcal A, \sigma}$ is obviously a morphism of $n$-fold vector bundles.
Furthermore, by definition, $(\Psi^{\mathcal A}_\sigma)^I_I$ is clearly $\epsilon(\sigma\inv, \sigma(I))\cdot \id_{A_{\# I}}=\epsilon (\sigma, I)\cdot \id_{A_{\# I}}$ for all $I\in\underline n$.

Obviously, $\Psi^{\mathcal A}_{\id}(I)=\id_{\E^{\mathcal A}(I)}$ for all $I\subseteq \nset$. Choose $\emptyset\neq J\subseteq \nset$ and take
$\sigma, \tau\in S_n$ and
$\left(p, (v_I)_{\emptyset\neq I\subseteq J}\right)\in  \prod_{\emptyset\neq I\subseteq J}^M A_{\# I}=\E^{\mathcal A}(J)$. Then 
\begin{equation}\label{decomposed_Sn_action}
\begin{split}
\left(\Psi_\sigma^{\mathcal A}\circ \Psi_\tau^{\mathcal A}\right)\left(
  (v_I)_{\emptyset\neq I\subseteq J}\right)
  &\,=\Psi_\sigma^{\mathcal A}\left(\left( \underset{=:w_I\in A_{\# I}}{\underbrace{\epsilon(\tau\inv,I)v_{\tau\inv(I)}}}
  \right)_{\emptyset\neq I\subseteq \tau(J)}
  \right)\\
  &\,=\left( \left(\epsilon(\sigma\inv,I)w_{\sigma\inv(I)}\right)_{\emptyset\neq I\subseteq \sigma(\tau(J))}
  \right)\\
  &\,=\left( \left(\epsilon(\sigma\inv,I)\epsilon(\tau\inv, \sigma\inv(I))v_{\tau\inv(\sigma\inv(I))}\right)_{\emptyset\neq I\subseteq  \sigma(\tau(J))}
  \right)\\
  &\overset{\eqref{prod1}}{=}\left( \left(\epsilon((\sigma\tau)\inv, I)v_{(\sigma\tau)\inv(I))}\right)_{\emptyset\neq I\subseteq \sigma\tau(J)}
  \right)\\
  &\,=\Psi_{\sigma\tau}^{\mathcal A}\left(
  (v_I)_{\emptyset\neq I\subseteq J}\right).
  \end{split}
  \end{equation}
 This shows
$(\Psi_\sigma^{\mathcal A})^\tau\circ \Psi_\tau^{\mathcal A}= \Psi_{\sigma\tau}^{\mathcal A}$.

  \end{enumerate}
    
  \begin{definition}\label{decomposed_nfvb}
  Let $n\in\mathbb N$ and let $A_1, \ldots, A_n$ be vector bundles over a smooth manifold $M$.
  \begin{enumerate}
  \item 
  The symmetric $n$-fold vector bundle $\E^{\mathcal A}\colon \square^{n}\to \Man$ as defined above is the \textbf{decomposed symmetric $n$-fold vector
    bundle} defined by $\mathcal A:=\{A_1,\ldots, A_n\}$.
    \item If $A_2, \ldots, A_n$ are all trivial vector bundles over $M$, then
    the symmetric  decomposed $n$-fold vector bundle $\E^{\mathcal A}=:\E^{\{A_1\}}\colon \square^{n}\to \Man$ defined by $\mathcal A$ is \emph{vacant}.
    The functor $\E^{\{A_1\}}$ is here precisely given by $\E^{\{A_1\}}(I)=\prod_{i\in I}^MA_1=A_1^{\# I}$ for all $I\subseteq \nset$. It is the \textbf{vacant symmetric $n$-fold vector bundle} defined by the vector bundle $A_1$.
      \end{enumerate}
    \end{definition}
    There exists a canonical monomorphism $\iota\colon \E^{\{A_1\}}\to \E^{\mathcal A}$ of symmetric $n$-fold vector bundles, given by the canonical embeddings
    \[ \tau(I)\colon \prod_{i\in I}A_1 \hookrightarrow \prod_{J\subseteq I}^MA_{\# J}, \quad (a_i)_{i\in I}\mapsto (a_J)_{J\subseteq I}; \quad a_J:=\left\{\begin{array}{cl}
    a_i & J=\{i\}\\
    0_m^{A_l} & \# J=l\geq 2\end{array}\right.
    \]
    if $m\in M$ is the foot point of the tuple $(a_i)_{i\in I}$.
    
     Assume that $\E\colon \square^{n}\to \Man$ is a symmetric decomposed $n$-fold vector bundle and that for $i\in \{1,\ldots, n\}$, the vector bundle $A_i$ is $\E^I_I$ for any $I\subseteq \nset$ with $\# I=i$. Then the vacant symmetric decomposed $n$-fold vector bundle defined by $A_1$ is written $\overline{\E}\colon \square^{n}\to \Man$, and the symmetric decomposed $n$-fold vector bundle defined by $\{A_1,\ldots, A_n\}$ is written $\E^{\rm dec}\colon \square^{n}\to \Man$. If $\Psi$ is the $S_n$-action on $\E$, then the $S_n$-action on $\overline{\E}$ is written $\overline{\Psi}$, and the $S_n$-action $\Psi^{\A}$ on $\E^{\rm dec}$ is written $\Psi^{\rm dec}$. 
    \medskip

\subsection{Morphisms of decomposed symmetric $n$-fold vector bundles}  
  Let $\E^{\mathcal A}\colon \square^{n}\to \Man$ and
  $\E^{\mathcal B}\colon \square^{n}\to \Man$ be two decomposed
  symmetric $n$-fold vector bundles, defined by
  $\mathcal A=(A_i\to M)_{1\leq i\leq n}$ and
  $\mathcal B=(B_i\to N)_{1\leq i\leq n}$, respectively, and with
  $S_n$-actions $\Psi^{\mathcal A}$ and $\Psi^{\mathcal B}$.  Recall that a
  morphism of (decomposed) symmetric $n$-fold vector bundles
  from $\E^{\mathcal A}$ to $\E^{\mathcal B}$ is a natural
  transformation
  $\tau\colon \E^{\mathcal A}\rightarrow \E^{\mathcal B}$ such that
  for all objects $I$ of $\square^{n}$ and for all $i\in I$, the
  commutative diagram
\[
\begin{tikzcd}
\E^{\mathcal A}(I)\ar[r,"\tau(I)"] \ar[d,"p^{I}_{I\setminus\{i\}}"] & \E^{\mathcal B}(I)\ar[d,"p^{I}_{I\setminus\{i\}} "]\\
\E^{\mathcal A}(I\setminus \{i\})\ar[r,"\tau(I\setminus\{i\})  "] & \E^{\mathcal B}(I\setminus \{i\})
\end{tikzcd} 
\]
is a morphism of vector bundles, and such that for all $\sigma\in S_n$,
\[ \Psi_\sigma^{\mathcal B}(\nset)\circ \tau(\nset)=\tau(\nset)\circ \Psi_\sigma^{\mathcal A}(\nset).
  \]

In the following theorem, vector bundle morphisms $\tau_{(i_1,\ldots,i_k)}\colon A_{i_1}\otimes\ldots\otimes
  A_{i_k}\to B_i$ over $\tau(\emptyset)\colon M\to N$ are considered, which are indexed by pairs $(i, (i_1,\ldots,i_k))$, with $i\in\{1,\ldots,n\}$ and  $(i_1,\ldots,i_k)$ an ordered integer partition of $i$.
  Rewrite $A_{i_1}\otimes \ldots\otimes A_{i_k}$ as 
\[ A_1^{\otimes \#\{s\in\{1,\ldots,k\}\mid i_s=1\}}\otimes \ldots\otimes A_i^{\otimes \#\{s\in\{1,\ldots,k\}\mid i_s=i\}}.
\]
Then 
\[
\tau_{(i_1,\ldots,i_k)}\colon A_1^{\otimes \#\{s\in\{1,\ldots,k\}\mid i_s=1\}}\otimes \ldots\otimes A_i^{\otimes \#\{s\in\{1,\ldots,k\}\mid i_s=i\}}\to B_i
\]
is \textbf{symmetric} (respectively \textbf{skew-symmetric}) \textbf{in the $j$-entries}, for $j\in\{1,\ldots,i\}$, if it is symmetric (respectively alternating) on  its 
$j$-th factor $A_j^{\otimes \#\{s\in\{1,\ldots,k\}\mid i_s=j\}}$.

\begin{proposition}\label{morphisms_of_dSnVB}
  A morphism $\tau\colon \E^{\mathcal A}\to \E^{\mathcal B}$ of
  decomposed symmetric $n$-fold vector bundles is equivalent to a
  family of vector bundle morphisms
  $\tau_{(i_1,\ldots,i_k)}\colon A_{i_1}\otimes\ldots\otimes
  A_{i_k}\to B_i$ over $\tau(\emptyset)\colon M\to N$, indexed by pairs $(i, (i_1,\ldots,i_k))$, with $i\in\{1,\ldots,n\}$ and  $(i_1,\ldots,i_k)$ an ordered integer partition of $i$, such that for $j\in \{1,\dots, i\}$:
  \begin{enumerate}
  \item $\tau_{(i_1,\ldots,i_k)}$ is symmetric in the $j$-entries for $j$ even, and 
  \item $\tau_{(i_1,\ldots,i_k)}$ is skew-symmetric in the $j$-entries for $j$ odd.
  \end{enumerate}
  
  Given such a family, 
  the corresponding morphism of decomposed
  symmetric $n$-fold vector bundles
  $\tau\colon \E^{\mathcal A}\to \E^{\mathcal B}$ is given by
  $\tau(J)\colon \E^{\mathcal A}(J)\to \E^{\mathcal B}(J)$,
\[\tau(J)\left((a_I)_{\emptyset\neq I\subseteq J}\right)=\left(\underset{\in B_{\# I}}{\underbrace{\sum_{\rho=(I_1,\ldots,I_k)\in\mathcal P(I)}\sgn(\rho)\cdot \tau_{(\# I_1,\ldots,\# I_k)}(a_{I_1},\ldots,a_{I_k})}}\right)_{\emptyset\neq I\subseteq
    J},
\]
for all $\emptyset\neq J\subseteq \nset$, for $i\in\{1,\ldots, n\}$ and $(i_1,\ldots, i_k)$ a (canonically) ordered integer partition of $i$.

Conversely, given $\tau\colon \E^{\mathcal A}\to \E^{\mathcal B}$, the
morphism
$\tau_{(i_1,\ldots,i_k)}\colon A_{i_1}\otimes\ldots\otimes A_{i_k}\to
B_i$ is given as follows.
Consider the canonical partition $\rho^{i_1,\ldots,i_k}_{\rm can}:=(K_1,\ldots,K_k)$ of $\{1,\ldots, i\}$ defined by $(i_1,\ldots, i_k)$.
Then for $a_{i_1}\in A_{i_1}, \ldots, a_{i_k}\in A_{i_k}$, the image 
\[\tau_{(i_1,\ldots,i_k)}(a_{i_1},\ldots, a_{i_k})\in B_i=B_{\# \underline{i}}
\]
is the $\underline{i}$-entry of the image under $\tau(\nset)$ of the tuple $(a_J)_{\emptyset\neq J\subseteq \nset}$ with 
\[
a_J=\left\{\begin{array}{cl}0 &\text{ for } J\neq K_1,\ldots, K_k\\
 a_{i_l}&\text{ for } J=K_l, \,\, l\in\{1,\ldots, k\}.
\end{array}\right.
\]
\end{proposition}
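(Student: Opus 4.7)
The plan is to bootstrap from the analogous characterisation of morphisms of \emph{non-symmetric} decomposed $n$-fold vector bundles recalled in Section~\ref{multiple_vb}: a morphism $\tau\colon\E^{\mathcal A}\to\E^{\mathcal B}$ of such bundles is equivalent to a family of vector bundle morphisms $\tau_\rho\colon A_{I_1}\otimes\cdots\otimes A_{I_l}\to B_I$ over $\tau(\emptyset)\colon M\to N$, indexed by $\rho=(I_1,\ldots,I_l)\in\mathcal P(I)$ and $\emptyset\neq I\subseteq\nset$, with $\tau(J)\bigl((a_I)_{I\subseteq J}\bigr)$ given on each $I$-entry by $\sum_{\rho}\tau_\rho(a_{I_1},\ldots,a_{I_l})$. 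In the symmetric setting one has $A_I=A_{|I|}$ and $B_I=B_{|I|}$, so each $\tau_\rho$ is automatically a map between bundles depending only on cardinalities, and equivariance will cut this family down to one indexed by ordered integer partitions.

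The central step is to translate the equivariance $\Psi^{\mathcal B}_\sigma\circ\tau(I)=\tau(\sigma(I))\circ\Psi^{\mathcal A}_\sigma$ into a relation between $\tau_\rho$ and $\tau_{\sigma(\rho)}$. Evaluating both sides on a tuple $(a_J)_{J\subseteq I}$ supported on $\rho=(I_1,\ldots,I_l)$ (that is, with $a_J=0$ for $J\notin\{I_1,\ldots,I_l\}$) and reading off the $\sigma(I)$-entry, the defining formulas for $\Psi^{\mathcal A}_\sigma$ and $\Psi^{\mathcal B}_\sigma$, combined with the product formula \eqref{prod1} and Lemma~\ref{lemma_sign_partition_welldef}, yield after canonically reordering $\sigma(\rho)$ the identity
\[
\tau_\rho\;=\;\sgn(\rho)\cdot\tau_{\rho^{|\rho|}_{\rm can}}.
\]
Defining $\tau_{(i_1,\ldots,i_k)}:=\tau_{\rho^{i_1,\ldots,i_k}_{\rm can}}$ thus determines the whole family and, when inserted back into the non-symmetric sum formula, reproduces the proposed expression for $\tau(J)$. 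Since $\sgn(\rho^{i_1,\ldots,i_k}_{\rm can})=1$, evaluating $\tau(\underline{i})$ on a tuple whose only non-zero entries sit on the blocks $J=K_l$ of $\rho^{i_1,\ldots,i_k}_{\rm can}$ recovers $\tau_{(i_1,\ldots,i_k)}$, proving the extraction formula stated in the proposition.

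The (skew-)symmetry conditions (1) and (2) then emerge from equivariance under the subgroup of $S_n$ stabilising $\rho^{i_1,\ldots,i_k}_{\rm can}$ setwise. Inner permutations (those with $\sigma(K_r)=K_r$ for every $r$) impose no constraint, by the cancellation \eqref{signs_general_preserving}. The active condition comes from transpositions exchanging two blocks $K_r,K_s$ of equal size~$j$ order-preservingly: such a transposition has $\epsilon$-value $(-1)^{j^2}=(-1)^j$ on $K_r\cup K_s$, while its restrictions to $K_r$ and $K_s$ are order-preserving and thus contribute trivial $\epsilon$-factors on the $A_j$-inputs. The equivariance therefore forces $\tau_{(i_1,\ldots,i_k)}$ to be skew-symmetric in its $j$-entries when $j$ is odd and symmetric when $j$ is even. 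For the converse, given $\tau_{(i_1,\ldots,i_k)}$ satisfying (1) and (2), one sets $\tau_\rho:=\sgn(\rho)\tau_{|\rho|}$ for every $\rho$; the (skew-)symmetries ensure that this assignment is well-defined (i.e.\ independent of the auxiliary permutation used to reorder $\sigma(\rho)$ canonically), and the non-symmetric correspondence then yields a morphism $\tau$ whose $S_n$-equivariance follows by running the computation above in reverse. The main obstacle throughout is sign bookkeeping: the three independent sources of signs (the $\Psi$-actions, the reordering of $\sigma(\rho)$ to its canonical form, and $\sgn$ of the partition itself) must be reconciled cleanly via \eqref{prod1} and Lemma~\ref{lemma_sign_partition_welldef}.
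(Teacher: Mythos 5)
Your proposal is correct and takes essentially the same route as the paper: both reduce to the non-symmetric classification of morphisms of decomposed $n$-fold vector bundles, evaluate the equivariance condition on tuples supported on a single partition to extract the relation $\tau_\rho=\sgn(\rho)\cdot\tau_{|\rho|}$, and obtain the (skew-)symmetry conditions from permutations exchanging equal-size blocks order-preservingly (with the same parity count $(-1)^{j^2}=(-1)^j$). The sign verification you defer to ``running the computation in reverse'' is precisely the content of the paper's Lemma \ref{lemma_intermediate_morphisms}, whose case analysis (block-preserving permutations, block-exchanging permutations, and general permutations factored through order-preserving ones) matches the reduction you sketch.
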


The proof of this statement relies on the following lemma.

\begin{lemma}\label{lemma_intermediate_morphisms}
In the situation of the previous theorem, consider a family of vector bundle morphisms
  $\tau_{(i_1,\ldots,i_k)}\colon A_{i_1}\otimes\ldots\otimes
  A_{i_k}\to B_i$ over $\tau(\emptyset)\colon M\to N$, indexed by pairs $(i, (i_1,\ldots,i_k))$, with $i\in\{1,\ldots,n\}$ and  $(i_1,\ldots,i_k)$ an ordered integer partition of $i$, such that for $j\in \{1,\dots, i\}$:
  \begin{enumerate}
  \item $\tau_{(i_1,\ldots,i_k)}$ is symmetric in the $j$-entries for $j$ even, and 
  \item $\tau_{(i_1,\ldots,i_k)}$ is skew-symmetric in the $j$-entries for $j$ odd.
  \end{enumerate}
 Take a subset $\emptyset\neq I\subseteq \nset$ and an ordered partition $\rho=(I_1,\ldots, I_k)$ of $I$, as well as a permutation $\sigma\in S_n$. Denote by $(J_1^{\rho,\sigma}, \ldots, J_k^{\rho,\sigma})$ the ordered partition $\sigma(\rho)$. That is, the subsets $J_1^{\rho,\sigma}, \ldots, J_k^{\rho,\sigma}\subseteq \sigma(I)$ are the subsets $\sigma(I_1), \ldots, \sigma(I_k)$, but back in the lexicographic order.
 Then for  $a_{\sigma(I_1)}\in A_{\# I_1}, \ldots, a_{\sigma(I_k)}\in A_{\# I_k}$
 \begin{equation}\label{sym_dec_morphism_cond_simpl}
 \begin{split}
& \tau_{(\# I_1,\ldots,\# I_k)}(a_{\sigma(I_1)}, \ldots, a_{\sigma(I_k)}) =\frac{\sgn(\sigma(\rho))\cdot \epsilon(\sigma, I)}{\sgn(\rho)\cdot \prod_{l=1}^k\epsilon(\sigma, I_l)}\cdot \tau_{(\# I_1,\ldots, \# I_k)}\left(a_{J_1^{\rho,\sigma}}, \ldots, a_{J_k^{\rho,\sigma}}\right).
 \end{split}
 \end{equation}

\end{lemma}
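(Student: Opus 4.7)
The plan is to show that the coefficient appearing in the formula is precisely the sign produced when the (skew-)symmetry of $\tau_{(\#I_1,\ldots,\#I_k)}$ is used to reorder its inputs from the tuple $(a_{\sigma(I_l)})_l$ to the canonically ordered tuple $(a_{J^{\rho,\sigma}_l})_l$. The argument naturally splits into an input-reordering step (where the (skew-)symmetry is invoked) and a purely combinatorial step (where the coefficient in the formula is matched with the sign of a concrete block permutation).

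First I would define $\pi\in S_k$ by $\sigma(I_l)=J^{\rho,\sigma}_{\pi(l)}$ for $l=1,\ldots,k$. Since both $(\#I_l)_l$ and $(\#J^{\rho,\sigma}_l)_l$ are canonically ordered integer partitions with the same multi-set of values, $\pi$ maps each size-block $B_j:=\{l\mid \#I_l=j\}$ to itself; write $\pi_j$ for its restriction to $B_j$. From the tautology $(a_{\sigma(I_l)})_l=(a_{J^{\rho,\sigma}_{\pi(l)}})_l$ together with the assumed symmetry (respectively skew-symmetry) of $\tau_{(\#I_1,\ldots,\#I_k)}$ in its $j$-entries for $j$ even (respectively odd), it follows that
\[
\tau_{(\#I_1,\ldots,\#I_k)}\bigl(a_{\sigma(I_1)},\ldots,a_{\sigma(I_k)}\bigr)
=\Bigl(\prod_{j\text{ odd}}\sgn(\pi_j)\Bigr)\cdot
\tau_{(\#I_1,\ldots,\#I_k)}\bigl(a_{J^{\rho,\sigma}_1},\ldots,a_{J^{\rho,\sigma}_k}\bigr).
\]

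For the remaining identity $\prod_{j\text{ odd}}\sgn(\pi_j)=\frac{\sgn(\sigma(\rho))\,\epsilon(\sigma,I)}{\sgn(\rho)\cdot\prod_{l=1}^k\epsilon(\sigma,I_l)}$, I would set $(K_1,\ldots,K_k):=\rho^{\#I_1,\ldots,\#I_k}_{\rm can}$ and introduce the ``block permutation'' $P\in S_n$ that sends each $K_j$ onto $K_{\pi^{-1}(j)}$ preserving internal order and is the identity outside $\{1,\ldots,i\}$. Two properties of $P$ are crucial: (a) $\epsilon(P,K_j)=1$ for every $j$ because $P$ is order-preserving on each $K_j$; and (b) $\sgn(P)=\prod_{j\text{ odd}}\sgn(\pi_j)$, which follows by decomposing each $\pi_j$ into block transpositions and noting that swapping two adjacent size-$j$ blocks is a product of $j$ element transpositions, hence contributes $(-1)^j$ to the sign. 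Now pick any $\nu\in S_n$ with $\nu(K_j)=I_j$ for all $j$, which exists by the definition of $\sgn(\rho)$; a direct check gives $(\sigma\nu P)(K_j)=\sigma(I_{\pi^{-1}(j)})=J^{\rho,\sigma}_j$, so the composition $\sigma\nu P$ may be used to compute $\sgn(\sigma(\rho))$. Expanding $\epsilon(\sigma\nu P, K_j)$ and $\epsilon(\sigma\nu P,\{1,\ldots,i\})$ via the cocycle identity \eqref{prod1}, applying (a), and cancelling the $\nu$-dependent factors against $\sgn(\rho)=\prod_j\epsilon(\nu,K_j)/\epsilon(\nu,\{1,\ldots,i\})$, one obtains
\[
\sgn(\sigma(\rho))=\sgn(\rho)\cdot\frac{\prod_{l=1}^k\epsilon(\sigma,I_l)}{\epsilon(\sigma,I)\cdot\sgn(P)}.
\]
Solving for $\sgn(P)$ and using $\sgn(P)^2=1$ yields the claimed coefficient, which combined with (b) closes the argument.

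The main obstacle is the bookkeeping in the last step: several factors $\epsilon(\nu,\cdot)$ and $\epsilon(P,\cdot)$ must be tracked through iterated applications of \eqref{prod1}, and the $\nu$-dependent terms must be shown to disappear at the end. The key combinatorial observation that makes everything work out cleanly is the closed form $\sgn(P)=\prod_{j\text{ odd}}\sgn(\pi_j)$, which simultaneously encodes the (skew-)symmetry sign on the $\tau$-side and the coefficient on the $\sgn$-side.
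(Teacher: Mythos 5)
Your proof is correct, and it takes a genuinely different route from the paper's. The paper proves \eqref{sym_dec_morphism_cond_simpl} by a case analysis on $\sigma$: first permutations preserving each $I_j$ (reducing to \eqref{signs_general_preserving}), then permutations permuting the elements of $\rho$ (starting from the special block swaps of \eqref{special_permutation_3}, where the sign $(-1)^{\# I_j}$ appears and is matched against the (skew-)symmetry of $\tau$), and finally a general $\sigma$ factored as an order-preserving-on-blocks permutation composed with one preserving each block. Your argument instead isolates the single permutation $\pi\in S_k$ tracking how $\sigma$ scrambles the blocks, identifies the reordering sign $\prod_{j\text{ odd}}\sgn(\pi_j)$ coming from the (skew-)symmetry of $\tau$ in one step, and then proves the combinatorial identity uniformly by evaluating $\sgn(\sigma(\rho))$ with the representative $\sigma\nu P$ and the cocycle identity \eqref{prod1}; the $\nu$-dependent factors cancel against $\sgn(\rho)$, so no analogue of the paper's case (1) is even needed. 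The two key facts you use — that a swap of two equal-size-$j$ blocks is a product of $j$ disjoint transpositions and hence has sign $(-1)^{j}$, and that $\epsilon(P,K_j)=1$ for order-preserving block maps — are exactly the facts the paper uses in its cases (2) and (3), so the ingredients coincide; what your approach buys is the elimination of the three-way factorization of $\sigma$ in favour of one clean computation, at the cost of introducing the auxiliary permutation $P$ and the bookkeeping you acknowledge. One small caveat: when you decompose $\pi_j$ into transpositions, the corresponding block swaps need not be of adjacent blocks, but the sign of a swap of two equal-size-$j$ blocks fixing everything in between is still $(-1)^{j^2}=(-1)^j$ (the $2jc$ inversions with the intermediate elements cancel modulo $2$), so your identity $\sgn(P)=\prod_{j\text{ odd}}\sgn(\pi_j)$ holds as stated.
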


\begin{proof}
Choose a subset $\emptyset\neq I\subseteq \nset$ and an ordered partition $(I_1,\ldots, I_k)\in\mathcal P(I)$. For simplicity, write in this proof $\tau$ for $\tau_{(\# I_1,\ldots,\# I_k)}$.
\begin{enumerate}
\item If $\sigma\in S_n$ does $\sigma(I_j)=I_j$ for $j=1,\ldots,k$, then $\sigma(\rho)=\rho$ and $J_j^{\rho,\sigma}=\sigma(I_j)=I_j$ for $j=1,\ldots,k$. Hence both sides of \eqref{sym_dec_morphism_cond_simpl} are equal if and only if 
\[\prod_{j=1}^k\epsilon(\sigma, I_j)=\epsilon(\sigma, I).
\]
But this equation holds true  by \eqref{signs_general_preserving}.
\item Consider the case where $\sigma$ is a permutation preserving $\rho$, but not each of its elements. That is, here $\sigma(I_j)\in\rho$ for $j=1,\ldots, k$. Assume first the more special case where for some $j<l\in\{1,\ldots,k\}$, necessarily with $\#I_j=\#I_l$:
\begin{equation}\label{special_permutation_3}
\begin{split}
 &\sigma\an{I_j}\colon I_j\to I_l \text{ and } \sigma\an{I_l}\colon I_l\to I_j \text{ are order-preserving bijections and }\\
 &\sigma\an{I\setminus(I_j\cup I_l)}=\id_{I\setminus(I_j\cup I_l)}.
 \end{split}
 \end{equation}
 In this case $\sigma\an{I}$ is a product of $\#I_j$ transpositions, and so $\epsilon(\sigma, I)=(-1)^{\# I_j}$.
 On the other hand, $\epsilon(\sigma, I_j)=1=\epsilon(\sigma,I_l)$ because $\sigma$ is order-preserving on both sets, and $\epsilon(\sigma,I_s)=1$ for all $s\in\{1,\ldots,k\}\setminus\{j,l\}$ since 
 $\sigma$ is the identity on these sets.
Therefore 
\begin{equation}\label{sign_general_order_preserving_1}
\prod_{s=1}^l\epsilon(\sigma, I_s)=1=(-1)^{\# I_j}\epsilon(\sigma, I)
\end{equation}
and since $\rho=\sigma(\rho)$, 
\eqref{sym_dec_morphism_cond_simpl} is here 
 \begin{equation*}
\tau\left(a_{I_1}, \ldots, a_{I_l}, \ldots, a_{I_j}, \ldots, a_{I_k}\right)
=(-1)^{\#I_j}\tau\left(a_{I_1}, \ldots, a_{I_j}, \ldots, a_{I_l}, \ldots, a_{I_k}\right),
\end{equation*}
which holds since $\tau$ is symmetric in the $j$-entries for $j$ even, and 
 skew-symmetric in the $j$-entries for $j$ odd.

If $\sigma$ is now a general permutation preserving $\rho$, then its restriction to $I$ is a 
composition
\[\sigma\an{I}=(\sigma_r\circ\ldots\sigma_1\circ \lambda)\an{I}\]
with $\sigma_i\an{I}$ exchanging two of the elements of $\rho$ as in \eqref{special_permutation_3} for $i=1, \ldots, r$ and 
$\lambda\an{I}$ a permutation like in the first case and  in the proof of Lemma \ref{lemma_sign_partition_welldef}, i.e.~preserving each element of $\rho$.

Then if $\sigma_i$ exchanges two elements of cardinality $c_i$ for $i=1,\ldots,r$, then
\begin{equation*}
\begin{split}
\prod_{l=1}^k\epsilon(\sigma,I_l)&\overset{\eqref{prod1}}{=}\prod_{l=1}^k\left(\left(
\prod_{j=1}^r\epsilon(\sigma_j, \sigma_{j-1}\circ\ldots\circ\sigma_1(\lambda(I_l)))
\right)\cdot\epsilon(\lambda, I_l)\right)\\
&=\left(\prod_{j=1}^r\prod_{l=1}^k
\epsilon(\sigma_j, \sigma_{j-1}\circ\ldots\circ\sigma_1(\lambda(I_l)))
\right)\cdot \prod_{l=1}^k\epsilon(\lambda, I_l)\\
&\overset{\eqref{signs_general_preserving}, \eqref{sign_general_order_preserving_1}}{=}\left(\prod_{j=1}^r(-1)^{c_j}\epsilon(\sigma_j, \sigma_{j-1}\circ\ldots\circ\sigma_1(\lambda(I)))
\right)\cdot \epsilon(\lambda,I)\\
&\overset{\eqref{prod1}}{=}(-1)^{c_1+\ldots+c_r}\cdot\epsilon(\sigma,I),
\end{split}
\end{equation*}
which shows that \eqref{sym_dec_morphism_cond} follows in this case from \eqref{sym_dec_morphism_cond} for the simpler case above and the symmetry/skew-symmetry of $\tau$ on its different components.
\item Now take a general permutation $\sigma\in S_n$, but assume first that $\sigma\an{I_j}\colon I_j\to \sigma(I_j)$ is order-preserving for $j=1,\ldots, k$. The ordered partition $\sigma(\rho)$ consists in the subsets $\sigma(I_1), \ldots, \sigma(I_k)$, but reordered. Hence there is a permutation $\nu\in S_n$ that does 
\[ \nu(\sigma(I_l))=J_l^{\rho, \sigma} \text{ for } l=1, \ldots, k,
\]
and the restrictions of which to $\sigma(I_1), \ldots, \sigma(I_k)$ are all order-preserving. 
As before, consider the canonical partition $\rho^{\# I_1,\ldots,\# I_k}_{\rm can}:=(K_1,\ldots,K_k)$ defined by $(\# I_1,\ldots, \# I_k)$ on $\{1,\ldots, i\}$ for  $\# I=:i$.
Choose  a permutation $\lambda\in S_n$ 
with $\lambda(K_l)=I_l$ for $l=1,\ldots, k$ and  such that $\lambda\an{K_l}\colon K_l\to I_l$ is order-preserving for $l=1, \ldots, k$.
Then $\nu\sigma\lambda\in S_n$ does 
\[ (\nu\sigma\lambda)(K_l)=\nu(\sigma(I_l))=J_l^{\rho,\sigma} \quad \text{ for }  l=1,\ldots, k,
\]
and \[(\nu\sigma\lambda)\an{K_l}\colon K_l\to J_l^{\rho,\sigma} \quad \text{ is order preserving for }  l=1, \ldots, k.\]
Therefore 
\[ \sgn(\sigma(\rho))=\frac{\prod_{l=1}^k \epsilon(\nu\sigma\lambda, K_l)}{\epsilon(\nu\sigma\lambda, \underline i)}\overset{\eqref{prod1}}{=}\frac{1}{\epsilon(\nu, \sigma(I))\epsilon(\sigma, I)\epsilon(\lambda, \underline i)}
\]
and so 
\[ \frac{\sgn(\sigma(\rho))\cdot \epsilon(\sigma, I)}{\sgn(\rho)\cdot\prod_{l=1}^k\epsilon(\sigma, I_l)}=\frac{\epsilon(\nu, \sigma(I))\epsilon(\lambda, \underline i)}{\epsilon(\lambda,\underline i)}=\epsilon(\nu, \sigma(I))=\frac{\prod_{l=1}^k\epsilon(\nu, \sigma(I_l))}{\epsilon(\nu, \sigma(I))}.
\]
As in  the second step above, since $\nu$ is a composition of permutations $\sigma_r, \ldots, \sigma_1$ as in \eqref{special_permutation_3}. Conclude using the second case above.

Now a general $\tilde \sigma\in S_n$ is the composition $\tilde \sigma=\sigma\mu$ of a permutation $\mu$ that fixes $\rho$ with a permutation $\sigma$ as above, i.e.~that is order-preserving on each of the subsets $I_l$, for $l=1,\ldots, k$. Then $\tilde \sigma(\rho)=\sigma(\rho)$, $\tilde\sigma(I_l)=\sigma(I_l)$  and so also
$J_l^{\rho,\tilde\sigma}=J_l^{\rho,\sigma}$ for $l=1,\ldots, k$. Hence \eqref{sym_dec_morphism_cond_simpl}  follows from the first part of this case, using  \eqref{signs_general_preserving} and \eqref{prod1}.
\qedhere
\end{enumerate}
\end{proof}

\begin{proof}[Proof of Proposition \ref{morphisms_of_dSnVB}]
Recall that a morphism  $\tau$ of decomposed $n$-fold vector bundles is given as follows \cite{HeJo20} by its component $\tau(\nset)$ on the total space of $\mathbb E^{\mathcal A}$:
\[ \tau(\nset)((a_I)_{\emptyset\neq I\subseteq \nset})=\left(\sum_{\rho=(I_1,\ldots, I_k)\in\mathcal P(I)}\tau_\rho(a_{I_1}, \ldots, a_{I_k})\right)_{\emptyset\neq I\subseteq \nset}
\]
for all $(a_I)_{\emptyset\neq I\subseteq \nset}\in \mathbb E^{\mathcal A}(\nset)$, with vector bundle morphisms
\[ \tau_\rho\colon A_{\# I_1}\otimes \ldots\otimes A_{\# I_k}\to B_{\# I}
\]
over $\tau(\emptyset)\colon M\to N$, 
for $\emptyset\neq I\subseteq \nset$ and $\rho=(I_1,\ldots, I_k)\in\mathcal P(I)$.

For $\sigma\in S_n$ the morphism  $\tau(\nset)\circ \Psi_{\sigma\inv}^{\mathcal A}(\nset)$ sends $(a_I)_{\emptyset\neq I\subseteq \nset}$ to
\begin{equation*}
\begin{split}
\left(\tau\circ\Psi_{\sigma\inv}^{\mathcal A}\right)\left((a_I)_{\emptyset\neq I\subseteq \nset}\right)&=\tau\left(\left(\epsilon(\sigma, I)a_{\sigma(I)}\right)_{\emptyset\neq I\subseteq \nset}\right)\\
=\,& \left(\sum_{\rho=(I_1,\ldots, I_k)\in\mathcal P(I)}\tau_\rho\left(\epsilon(\sigma, I_1)a_{\sigma(I_1)}, \ldots, \epsilon(\sigma, I_k)a_{\sigma(I_k)}\right)\right)_{\emptyset\neq I\subseteq \nset}\\
&\left(\sum_{\rho=(I_1,\ldots, I_k)\in\mathcal P(I)}\prod_{l=1}^k\epsilon(\sigma, I_l)\cdot \tau_\rho\left(a_{\sigma(I_1)}, \ldots, a_{\sigma(I_k)}\right)\right)_{\emptyset\neq I\subseteq \nset},
\end{split}
\end{equation*}
while the morphism $\Psi_{\sigma\inv}^{\mathcal B}(\nset)\circ\tau(\nset)$ sends it to
\begin{equation*}
\begin{split}
\left(\Psi_{\sigma\inv}^{\mathcal B}\circ \tau\right)\left((a_I)_{\emptyset\neq I\subseteq \nset}\right)
=\,&\Psi_\sigma^{\mathcal B}\left(\left(\sum_{\rho=(I_1,\ldots, I_k)\in\mathcal P(I)}\tau_\rho(a_{I_1}, \ldots, a_{I_k})\right)_{\emptyset\neq I\subseteq \nset}
\right)\\
=\,&\left(\epsilon(\sigma, I)\cdot \sum_{\rho=(I_1,\ldots, I_k)\in\mathcal P(\sigma(I))}\tau_\rho(a_{I_1}, \ldots, a_{I_k})\right)_{\emptyset\neq I\subseteq \nset}\\
=\,&\left(\sum_{\rho=(I_1,\ldots, I_k)\in\mathcal P(I)}\epsilon(\sigma, I)\tau_{\sigma(\rho)}\left(a_{J^{\rho,
\sigma}_1}, \ldots, a_{J^{\rho,
\sigma}_k}\right)\right)_{\emptyset\neq I\subseteq \nset}.
\end{split}
\end{equation*}
where for $\rho\in\mathcal P(I)$ of length $k$, the tuple $\left(J^{\rho,\sigma}_1,\ldots,J^{\rho,\sigma}_k\right)$ is the ordered partition $\sigma(\rho)$ as in Lemma \ref{lemma_intermediate_morphisms}.

If $\tau_\rho=\sgn(\rho)\cdot \tau_{(\# I_1, \ldots, \# I_k)}$ for $\rho=(I_1,\ldots, I_k)\in\mathcal P(I)$, with $\tau_{(\# I_1, \ldots, \# I_k)}\colon A_{\# I_1}\otimes\ldots\otimes A_{\# I_k}\to B_{\# I}$ as in the statement of the theorem, then \eqref{sym_dec_morphism_cond_simpl} gives immediately $\tau(\nset)\circ \Psi_{\sigma\inv}^{\mathcal A}(\nset)=\Psi_{\sigma\inv}^{\mathcal B}(\nset)\circ\tau(\nset)$.
Hence a family of vector bundle morphisms $\tau_{(i_1,\ldots, i_k)}$ as in the statement defines a morphism of symmetric $n$-fold vector bundles.

\bigskip

It remains to show that any morphism of symmetric decomposed $n$-fold vector bundles must be of this form. If $\tau$ is a morphism of symmetric $n$-fold vector bundles, then 
$\left(\tau(\nset)\circ \Psi_{\sigma\inv}^{\mathcal A}(\nset)\right)\left( (a_J)_{\emptyset\neq J\subseteq \nset}\right)=\left(\Psi_{\sigma\inv}^{\mathcal B}(\nset)\circ\tau(\nset)\right)\left((a_J)_{\emptyset\neq J\subseteq \nset}\right)$ for all $(a_J)_{\emptyset\neq J\subseteq \nset}\in \mathbb E^{\mathcal A}(\nset)$ and all $\sigma\in S_n$.

Choose a subset $I\subseteq \nset$, an ordered partition $(I_1,\ldots, I_k)\in\mathcal P(I)$ and a permutation $\sigma\in S_n$. Evaluating the two sides of this formula on tuples $(a_J)_{\emptyset\neq J\subseteq \nset}$ with 
\[
a_J=0\text{ for } J\neq \sigma(I_1),\ldots, \sigma(I_k)
\]
yields 
\begin{equation}\label{sym_dec_morphism_cond}
 \prod_{l=1}^k\epsilon(\sigma, I_l)\cdot \tau_\rho\left(a_{\sigma(I_1)}, \ldots, a_{\sigma(I_k)}\right)
=\epsilon(\sigma, I)\tau_{\sigma(\rho)}\left(a_{J^{\rho,\sigma}_1}, \ldots, a_{J^{\rho,\sigma}_k}\right)
\end{equation}
for all $a_{\sigma(I_1)}\in A_{\# I_1}, \ldots, a_{\sigma(I_k)}\in A_{\# I_k}$ over a same point of $M$. 

A study of \eqref{sym_dec_morphism_cond} in two special cases yields the claim as follows.
\begin{enumerate}
\item Consider the case where $\sigma$ is a permutation preserving $\rho$, but not each of its elements. 
 That is, here $\sigma(I_j)\in\rho$ for $j=1,\ldots, k$. Assume first the more special case of
\eqref{special_permutation_3}.
 In this case 
\eqref{sign_general_order_preserving_1}
yields that  \eqref{sym_dec_morphism_cond} is 
 \begin{equation*}
\tau_\rho\left(a_{I_1}, \ldots, a_{I_j}, \ldots, a_{I_l}, \ldots, a_{I_k}\right)
=(-1)^{\#I_j}\tau_{\rho}\left(a_{I_1}, \ldots, a_{I_l}, \ldots, a_{I_j}, \ldots, a_{I_k}\right),
\end{equation*}
which shows that 
\[\tau_\rho\colon A_1^{\otimes \#\{s\in\{1,\ldots,k\}\mid \# I_s=1\}}\otimes \ldots\otimes A_n^{\otimes \#\{s\in\{1,\ldots,k\}\mid \# I_s=n\}}\to B_{\# I}\]
must be symmetric on $A_t^{\otimes \#\{s\in\{1,\ldots,k\}\mid \# I_s=t\}}$ for $t$ even, and skew-symmetric for $t$ odd.
\item Now choose a subset $I$ of $\nset$ with cardinality $i$ and choose a partition $\rho=(I_1,\ldots,I_k)\in\mathcal P(I)$. Set $i_j:=\#I_j$ for $j=1,\ldots,k$, and consider the canonical partition $\rho^{i_1,\ldots,i_k}_{\rm can}=(K_1,\ldots, K_k)$ of $\underline i$. Choose $\sigma\in S_n$ with
\[ \sigma(K_j)=I_j \quad \text{ for } \quad j=1,\ldots,k,
\]
i.e.~with $\rho=\sigma\left(\rho^{i_1,\ldots,i_k}_{\rm can}\right)$ and preserving the order of the partitions.  Set \[\tau_{(i_1,\ldots,i_k)}:=\tau_{\rho^{i_1,\ldots,i_k}_{\rm can}}\colon A_{i_1}\otimes\ldots\otimes A_{i_k}\to B_i.\]

Then \eqref{sym_dec_morphism_cond} applied to $\sigma$ and $\rho^{i_1,\ldots,i_k}_{\rm can}$ reads
\[\tau_\rho\left(a_{I_1}, \ldots, a_{I_k}\right)=\frac{\prod_{j=1}^k\epsilon(\sigma, K_j)}{\epsilon(\sigma, \underline i)}\cdot \tau_{i_1,\ldots,i_k}\left(a_{I_1}, \ldots, a_{I_k}\right)
=\sgn(\rho)\cdot \tau_{i_1,\ldots,i_k}\left(a_{I_1}, \ldots, a_{I_k}\right)
\]
for $a_{I_j}\in A_{i_j}$, $j=1, \ldots, k$.
 \qedhere
\end{enumerate}
 \end{proof}

\medskip

Next the composition of morphisms needs to be understood. Choose three decomposed symmetric $n$-fold vector bundles $\E^{\mathcal A}$, $\E^{\mathcal B}$ and $\E^{\mathcal C}$ defined by three families 
$\mathcal A=(A_i\to M \mid i=1,\ldots, n)$, $\mathcal B=(B_i\to M \mid i=1,\ldots, n)$ and $\mathcal C=(C_i\to M \mid i=1,\ldots, n)$ of vector bundles over smooth manifolds $M$, $N$ and $P$, respectively. Consider two morphisms 
\[ \eta=(\eta_p)_{p\in\mathcal P(n)}\colon \E^{\mathcal A}\to\E^{\mathcal B} \quad \text{ over }\quad \eta_0\colon M\to N
\]
and 
\[ \tau=(\tau_p)_{p\in\mathcal P(n)}\colon \E^{\mathcal B}\to\E^{\mathcal C} \quad \text{ over }\quad \tau_0\colon N\to P.
\]
The composition of $\tau$ with $\eta$ is given as follows. As stated in Proposition \ref{morphisms_of_dSnVB}, for $p=(i_1,\ldots, i_k)\in\mathcal P(n)$ with $\sum p=i$, and $a_l\in A_{i_l}$ for $l=1,\ldots,k$,
\[(\tau\circ\eta)_{(i_1,\ldots,i_k)}(a_{1},\ldots, a_{k})\in C_i=C_{\# \underline{i}}
\]
is the $\underline{i}$-entry of the image under $(\tau\circ\eta)(\nset)$ of the tuple $(a_J)_{\emptyset\neq J\subseteq \nset}$ with 
\[
a_J=\left\{\begin{array}{cl}0 &\text{ for } J\neq K_1,\ldots, K_k\\
 a_{l}&\text{ for } J=K_l, \,\, l\in\{1,\ldots, k\},
\end{array}\right.
\]
where $\rho^{i_1,\ldots,i_k}_{\rm can}:=(K_1,\ldots,K_k)$ is the canonical partition of $\underline i$ defined by $(i_1,\ldots, i_k)$.
Write $\rho^{i_1,\ldots,i_k}_{\rm can}=\rho$ for simplicity and compute
\begin{equation*}
\begin{split}
&(\tau\circ\eta)_{(i_1,\ldots,i_k)}(a_{1},\ldots, a_{k})= \left((\tau\circ\eta)(\nset)\left( (a_J)_{\emptyset\neq J\subseteq \nset}\right)\right)_{\underline i}\\
=    &\sum_{(J_1,\ldots, J_l)\in\operatorname{coars}(\rho)}
			\tau_{(J_1,\ldots,J_l)}
			\Bigl(\eta_{\rho\cap J_1}
			\bigl((a_{I})_{I\in\rho\cap J_1}\bigr),\ldots,
			\eta_{\rho\cap J_l}
			\bigl((a_I)_{I\in\rho\cap J_l}\bigr)\Bigr)\\
			=    &\sum_{\substack{\rho_1,\ldots, \rho_l \text{ ordered} \\
			\rho=\rho_1\cup\ldots\cup\rho_l\\
			\text{ as sets}\\
			\cup \rho_1\leq\ldots\leq\cup \rho_l}}
			\tau_{(\cup \rho_1,\ldots,\cup \rho_l)}
			\Bigl(\eta_{\rho_1}
			\bigl((a_{I})_{I\in\rho_1}\bigr),\ldots,
			\eta_{\rho_l}
			\bigl((a_I)_{I\in\rho_l}\bigr)\Bigr).
\end{split}
\end{equation*}
For $\rho_1,\ldots, \rho_l$ ordered partitions with $\rho=\rho_1\cup\ldots\cup\rho_l$ as sets and $\cup\rho_1< \ldots< \cup\rho_l$ 
the term 
\[
\tau_{(\cup \rho_1,\ldots,\cup \rho_l)}
			\Bigl(\eta_{\rho_1}
			\bigl((a_{I})_{I\in\rho_1}\bigr),\ldots,
			\eta_{\rho_l}
			\bigl((a_I)_{I\in\rho_l}\bigr)\Bigr)\]
			equals 
			\begin{equation*}
			\begin{split}
			\sgn(\cup\rho_1,\ldots,\cup\rho_l)\cdot\prod_{j=1}^l\sgn(\rho_j)\cdot\tau_{(\sum|\rho_1|,\ldots,\sum|\rho_l|)}
			\Bigl(\eta_{|\rho_1|}
			\bigl((a_{I})_{I\in\rho_1}\bigr),\ldots,
			\eta_{|\rho_l|}
			\bigl((a_I)_{I\in\rho_l}\bigr)\Bigr).
			\end{split}
			\end{equation*}
			This leads to the following result.
			\begin{proposition}\label{composition_sym_morphisms_nfold}
			Consider three decomposed symmetric $n$-fold vector bundles $\E^{\mathcal A}$, $\E^{\mathcal B}$ and $\E^{\mathcal C}$
over smooth 
manifolds $M$, $N$ and $P$, respectively, 
and two morphisms 
\[ \eta=(\eta_p)_{p\in\mathcal P(n)}\colon \E^{\mathcal A}\to\E^{\mathcal B} \quad \text{ over }\quad \eta_0\colon M\to N
\]
and 
\[ \tau=(\tau_p)_{p\in\mathcal P(n)}\colon \E^{\mathcal B}\to\E^{\mathcal C} \quad \text{ over }\quad \tau_0\colon N\to P
\]
of symmetric $n$-fold vector bundles.
The morphism
\[ \tau\circ\eta\colon \E^{\mathcal A}\to\E^{\mathcal C} \quad \text{ over }\quad \tau_0\circ\eta_0\colon M\to P
\]
of symmetric $n$-fold vector bundles 
is defined by 
\[ (\tau\circ\eta)_p=\!\!\!\!\!\!\!\!\! \sum_{\substack{\rho_1,\ldots, \rho_l \text{ ordered}\\
\text{such that } \rho_1\cup\ldots\cup \rho_l=\rho^p_{\rm can} \\
\text{ as sets}\\
 \cup\rho_1< \ldots<\cup\rho_l}}\sgn(\rho_1,\ldots,\rho_l)\cdot\tau_{\left(\Sigma |\rho_1|,\ldots,\Sigma |\rho_l|\right)}\circ \left(\eta_{|\rho_1|}, \ldots, \eta_{|\rho_l|}\right) 
\]
for all $p\in\mathcal P(n)$.
			\end{proposition}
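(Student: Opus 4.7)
The plan is to extract the formula essentially from the computation already laid out in the paragraph preceding the statement, assembling the pieces cleanly. First, by the second half of Proposition \ref{morphisms_of_dSnVB}, the component $(\tau\circ\eta)_{(i_1,\ldots,i_k)}$ associated to an integer partition $p=(i_1,\ldots,i_k)$ of $i\leq n$ is recovered as the $\underline{i}$-entry of $(\tau\circ\eta)(\nset)$ applied to the tuple $(a_J)_{\emptyset\neq J\subseteq\nset}$ that is zero outside the blocks $K_1,\ldots,K_k$ of the canonical partition $\rho^p_{\rm can}=(K_1,\ldots,K_k)$, with $a_{K_l}=a_l\in A_{i_l}$. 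Then I would apply the composition formula for morphisms of (non-symmetric) decomposed $n$-fold vector bundles recalled earlier in Section \ref{multiple_vb} to write this as a sum over coarsements $(J_1,\ldots,J_l)\in\operatorname{coars}(\rho^p_{\rm can})$.

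Next, each coarsement $(J_1,\ldots,J_l)$ corresponds bijectively to an ordered list $(\rho_1,\ldots,\rho_l)$ of canonically ordered partitions with $\rho_1\cup\ldots\cup\rho_l=\rho^p_{\rm can}$ as sets and $\cup\rho_1<\ldots<\cup\rho_l$, via $J_s=\cup\rho_s$ and $\rho_s=\rho^p_{\rm can}\cap J_s$. Reindexing the sum and invoking the symmetry property of $\eta$ and $\tau$ from the first half of Proposition \ref{morphisms_of_dSnVB}, which states that $\eta_{\rho_j}=\sgn(\rho_j)\cdot\eta_{|\rho_j|}$ and $\tau_{(\cup\rho_1,\ldots,\cup\rho_l)}=\sgn(\cup\rho_1,\ldots,\cup\rho_l)\cdot\tau_{(\Sigma|\rho_1|,\ldots,\Sigma|\rho_l|)}$, each summand indexed by $(\rho_1,\ldots,\rho_l)$ acquires the scalar prefactor
\[
\sgn(\cup\rho_1,\ldots,\cup\rho_l)\cdot\prod_{j=1}^l\sgn(\rho_j)
\]
multiplying $\tau_{(\Sigma|\rho_1|,\ldots,\Sigma|\rho_l|)}\circ(\eta_{|\rho_1|},\ldots,\eta_{|\rho_l|})$ applied to the relevant entries of $(a_J)_J$.

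The final step is the sign identity
\[
\sgn(\cup\rho_1,\ldots,\cup\rho_l)\cdot\prod_{j=1}^l\sgn(\rho_j)=\sgn(\rho_1,\ldots,\rho_l),
\]
which is an iteration of the identity $\sgn(\rho,\rho_3)\cdot\sgn^\rho(\rho_1,\rho_2)=\sgn(\rho_1,\rho_2,\rho_3)$ already established at the end of the proof of Lemma \ref{graded_wedge_prod}. Equivalently, by the last claim of Lemma \ref{lemma_sign_partition_welldef}, the left-hand side is the sign of the composition of the permutation of $\underline{i}$ reordering it according to $(\cup\rho_1,\ldots,\cup\rho_l)$ with the permutations internally reordering each block $\cup\rho_j$ according to $\rho_j$; this composition is precisely the unique permutation realising the order on $\underline{i}$ induced by $(\rho_1,\ldots,\rho_l)$, whose sign is by definition $\sgn(\rho_1,\ldots,\rho_l)$.

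The main obstacle is the last sign identity: once it is in place, the proposition follows by direct substitution into the computation already displayed before its statement. I would therefore either state the sign identity as a separate lemma, proved by induction on $l$ using \eqref{prod1} and the $l=3$ case from the proof of Lemma \ref{graded_wedge_prod}, or invoke the permutation-theoretic interpretation in Lemma \ref{lemma_sign_partition_welldef} to get it in one line. All other steps are routine bookkeeping with the definitions.
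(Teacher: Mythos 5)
Your proposal is correct and follows essentially the same route as the paper: the reduction to the non-symmetric composition formula over coarsements, the extraction of the prefactor $\sgn(\cup\rho_1,\ldots,\cup\rho_l)\cdot\prod_{j=1}^l\sgn(\rho_j)$ via Proposition \ref{morphisms_of_dSnVB}, and the reduction of the whole statement to the sign identity $\sgn(\rho_1,\ldots,\rho_l)=\sgn(\cup\rho_1,\ldots,\cup\rho_l)\cdot\prod_{j=1}^l\sgn(\rho_j)$ are exactly the steps carried out in the text preceding the proposition and in its proof. The only (immaterial) difference is that the paper verifies the sign identity by a direct cancellation in the quotient formula using a permutation adapted to the nested canonical partitions $(Q_j^r)$, whereas you derive it from the permutation-composition interpretation in Lemma \ref{lemma_sign_partition_welldef}; both rest on the same lemma and are equally valid.
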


\begin{proof}
It remains to show that for $(\rho_1,\ldots, \rho_l)$ as above with $\cup\rho_1< \ldots< \cup\rho_l$,
\[ \sgn(\rho_1,\ldots, \rho_l)=\sgn(\cup\rho_1, \ldots, \cup\rho_l)\cdot\prod_{j=1}^l\sgn(\rho_j).
\]
Set $(Q_1,\ldots,Q_l):=\rho^{(\#\cup\rho_1,\ldots,\#\cup\rho_l)}_{\rm can}$.
Then for each $j=1,\ldots, l$ the ordered partition $p_j:=|\rho_j|$ defines canonically a partition $(Q_j^1,\ldots, Q_j^{d_j})$ of $Q_j$.
Write $\rho_j=(I^1_j,\ldots, I^{d_j}_j)$. Then 
\[ (\rho_1,\ldots, \rho_l)=\left(I_1^1,\ldots, I_1^{d_1}, \ldots, I_l^1, \ldots, I_l^{d_l}\right).
\]
In the same manner, 
\[ \rho_{\rm can}^{(p_1,\ldots, p_l)}=\left(Q_1^1,\ldots, Q_1^{d_1}, \ldots, Q_l^1, \ldots, Q_l^{d_l}\right).
\]
Consider a permutation $\sigma\in S_i$ with $\sigma(Q_j^r)=I_j^r$
 for $j=1,\ldots, l$ and $r=1,\ldots, d_j$.
Then also $\sigma(Q_j)=\cup\rho_j$ for $j=1,\ldots, l$, and 
\begin{equation*}
\begin{split}
\sgn(\cup\rho_1,\ldots,\cup\rho_l)\cdot \prod_{j=1}^l\sgn(\rho_j)&=\frac{\cancel{\prod_{j=1}^l\epsilon(\sigma, Q_j)}}{\epsilon(\sigma, \underline i)}\cdot \prod_{j=1}^l\frac{\prod_{r=1}^{d_j}\epsilon(\sigma,Q_j^r)}{\cancel{\epsilon(\sigma, Q_j)}}\\
&= \frac{\prod_{j=1}^l\prod_{r=1}^{d_j}\epsilon(\sigma,Q_j^r)}{\epsilon(\sigma, \underline i)}=\sgn(\rho_1,\ldots, \rho_l).
\end{split}
\end{equation*}
This completes the proof.
\end{proof}

\begin{definition}
  The category of decomposed symmetric
  $n$-fold vector bundles defined in this section is written  $\mathsf{dSnVB}$.
\end{definition}

\subsection{Symmetric linear splittings and decompositions}

       Let $\E$ be a symmetric $n$-fold vector bundle. 
Consider the decomposed symmetric $n$-fold vector bundle $\E^{\rm dec}$ 
and the vacant decomposed symmetric $n$-fold vector bundle $\overline{\E}$ defined by $\E$ as in 
Definition \ref{decomposed_nfvb}.

\begin{definition}\label{def_n-dec}
Let $\mathbb E\colon \square^n\to \Man$ be a symmetric $n$-fold vector bundle.

A \textbf{symmetric linear splitting} of $\E$ is a monomorphism
$\Sigma\colon\overline{ \E}\to \E$ of symmetric $n$-fold vector bundles,
such that for $i=1,\ldots,n$, $\Sigma(\{i\})\colon \E(\{i\})\to \E(\{i\})$ is the
identity.

A \textbf{symmetric decomposition} of  $\E$ is a
natural isomorphism $\mathcal S\colon \E^{\rm dec}\to \E$
of symmetric $n$-fold vector bundles over the identity maps
$\mathcal S(\{i\})=\id_{\E(\{i\})}\colon \E(\{i\})\to \E(\{i\})$ such that
additionally the induced core morphisms $\mathcal S^I_I(I)$ are the
identities $\id_{\E^I_I}$ for all $I\subseteq\nset$.
\end{definition}

The following theorem implies that symmetric $n$-fold vector bundles always admit a symmetric $n$-fold vector bundle atlas. Its proof builds up on the results in \cite{HeJo20} and Section \ref{fix_of_3.6} and can be found in Appendix \ref{proof_dec_sym_nvb}.
  \begin{theorem}\label{dec_sym_nvb_thm}
    A symmetric $n$-fold  vector bundle always admits a symmetric decomposition.
    \end{theorem}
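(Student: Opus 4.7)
The plan is to mimic the proof of Corollary~\ref{existence_dec_non_sym}, constructing decompositions of all iterated highest order cores $\E^\rho$ of $\E$ inductively on the length $l$ of the partition $\rho$, but now every choice must be made compatibly with the $S_n$-symmetry. The key structural observation is that $S_n$ acts on the set of partitions of $\nset$ and, via $\Psi$, the morphisms $\Psi_\sigma^\rho\colon \E^\rho\to(\E^{\sigma(\rho)})^\sigma$ identify cores across any given $S_n$-orbit of partitions. Accordingly, I would fix a system of orbit representatives for the action of $S_n$ on partitions of $\nset$ (organised by length), build decompositions only on representatives, and transport them along the group action.

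For the base case $l=2$, pick an orbit representative $\rho=\{I,\nset\setminus I\}$. Then $\E^\rho$ is a double vector bundle, which admits a linear decomposition. The stabiliser $H:=\mathrm{Stab}_{S_n}(\rho)$ acts on $\E^\rho$, and by condition~(c) of Definition~\ref{def_sym_nvb} it acts by the signs $\epsilon(\sigma,\cdot)$ on the building bundles. Because the set of decompositions of a given double vector bundle is an affine space (over the appropriate module of linear maps between building bundles), one can average any chosen decomposition over $H$ with the appropriate signed weights to produce an $H$-equivariant decomposition $\S^\rho$. Then set $\S^{\sigma(\rho)}$ to be the decomposition of $\E^{\sigma(\rho)}$ obtained from $\S^\rho$ by conjugation with $\Psi_\sigma^\rho$ and the corresponding action on $\E^{\mathcal A,\rho}$; independence of the choice of coset representative $\sigma$ is exactly the $H$-equivariance of $\S^\rho$.

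For the inductive step, assume $S_n$-equivariant, pairwise compatible symmetric decompositions have been constructed for all $i$-cores with $i<l$. For each orbit representative $\rho$ of length $l$, the set of symmetric linear splittings $\Sigma^\rho$ of $\E^\rho$ that are compatible with the already fixed decompositions of the $(l-1)$-coarsements of $\rho$ is affine and non-empty by Proposition~\ref{missing_step} applied internally to $\E^\rho$. Signed averaging over $\mathrm{Stab}_{S_n}(\rho)$ therefore yields an equivariant such splitting. The symmetric analogue of Theorem~\ref{thm_split_dec_n_non_sym} (to be established in Appendix~\ref{dec_splittings}, with the uniqueness part guaranteeing that equivariance is preserved by the splitting-to-decomposition passage) then lifts $\Sigma^\rho$ to a symmetric decomposition $\S^\rho$ of $\E^\rho$. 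Transporting along $S_n$ to the rest of the orbit and iterating up to $l=n$ gives the desired symmetric decomposition of $\E$ itself; the cross-orbit compatibility needed at each step is automatic from the inductive equivariance, as in the proof of Corollary~\ref{existence_dec_non_sym}.

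The main obstacle is the equivariant averaging step. Because the stabiliser acts by the non-trivial characters $\sigma\mapsto \epsilon(\sigma,I)$ rather than trivially on the building bundles indexed by $I$, the averaging must be performed with the correct sign bookkeeping, essentially using the formula $|H|^{-1}\sum_{\sigma\in H}\epsilon(\sigma,\cdot)\,\Psi_\sigma^{-1}\circ(\cdot)\circ\Psi_\sigma^{\mathrm{dec}}$, and one has to verify that the result still lies in the affine space of \emph{compatible} splittings (respectively decompositions). Since the signs on different building bundles interact via the partition structure, this verification rests on the multiplicativity property in Lemma~\ref{lemma_sign_partition_welldef} and on the product formula~\eqref{prod1}, which is exactly why the full argument is postponed to Appendix~\ref{proof_dec_sym_nvb}.
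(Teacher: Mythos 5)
Your proposal is correct in substance and follows essentially the same route as the paper: a recursive construction of decompositions of the $l$-cores from symmetrically compatible decompositions of the $(l-1)$-cores via equivariant linear splittings, with equivariance obtained by averaging using the fibrewise additions (the paper averages over all of $S_n$ acting on the whole orbit of cores at once, rather than over the stabiliser of an orbit representative followed by transport, but these are the same idea). The one point where your write-up as stated would fail is the averaging formula: the extra character $\epsilon(\sigma,\cdot)$ in $|H|^{-1}\sum_{\sigma\in H}\epsilon(\sigma,\cdot)\,\Psi_\sigma^{-1}\circ(\cdot)\circ\Psi_\sigma^{\mathrm{dec}}$ must \emph{not} appear, since by condition (c) of Definition \ref{def_sym_nvb} the conjugation already carries those signs, and they cancel in pairs on each building bundle $\E^I_I$; with the extra factor the averaged core map would be $\bigl(|H|^{-1}\sum_{\sigma\in H}\epsilon(\sigma,I)\bigr)\cdot\id$, which vanishes whenever the sign character is non-trivial on the stabiliser, so the result would no longer be a decomposition. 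One should also record, as the paper does, that the sum of maps into a multiple vector bundle is taken with respect to one of the several additions $\dvplus{}{}$ and is independent of that choice by the interchange law, and that each conjugated term remains compatible with the fixed lower-core decompositions (this uses the inductive equivariance of those decompositions), so that the affine average stays in the space of compatible splittings.
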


\subsection{Symmetric $n$-fold vector bundle atlases}
This section finally discusses the local approach to symmetric $n$-fold vector bundles, i.e.~in the language of $n$-fold vector bundle atlases and cocycles.

\begin{definition}\label{sym_atlas_def}
  An $n$-fold vector bundle atlas $\{c_\alpha=(U_\alpha,\Theta_\alpha,(V_I)_{\emptyset\neq I\subseteq \nset}) \mid \alpha\in\Lambda\}$ as in Definition \ref{n_fold_vb_atlas_def} on a smooth surjective submersion $\pi\colon E\to M$ is called \textbf{symmetric} if
  \begin{enumerate}
  \item the model vector spaces satisfy $V_I=V_J=:V^{\# I}$ whenever $\# I=\# J$
  \item for all $\alpha,\beta\in\Lambda$, $\emptyset\neq I,J\subseteq \underline n$ with
    $\# I=\# J$ and $\rho_1\in\mathcal P(I)$,
    $\rho_2\in\mathcal P(J)$ with same size
    $l_{\rho_1}=l_{\rho_2}=(l_1,\ldots,l_k)$:
\[\sgn(\rho_1)\cdot\omega_{\alpha\beta}^{\rho_1}=\sgn(\rho_2)\cdot \omega_{\alpha\beta}^{\rho_2}.\]
\end{enumerate}
Set then
$\omega_{\alpha\beta}^{l_1,\ldots,l_k}:=\omega_{\alpha\beta}^{\rho^{l_1,\ldots,l_k}_{\rm can}}\in C^\infty(U_\alpha\cap U_\beta,\operatorname{Hom}(V^{l_1}\otimes\ldots \otimes V^{l_k},V^{l_1+\ldots+l_k}))$ for each ordered integer partition $(l_1,\ldots, l_k)\in\mathcal P(n)$ and all $\alpha,\beta\in\Lambda$.
  \begin{enumerate}\setcounter{enumi}{2}
  \item The forms $\omega_{\alpha\beta}^{l_1,\ldots,l_k}$ are elements
    of
    $C^\infty(U_\alpha\cap U_\beta,
    \operatorname{Hom}(V^{l_1}\odot\ldots\odot V^{l_k},
    V^{l_1+\ldots+l_k}))$, where  $V^{l_i}\odot V^{l_{i+1}}$
    means \begin{itemize}
\item $\otimes$ if $l_i< l_{i+1}$, 
\item $\wedge$ if $l_i= l_j$ is an odd number,
\item the symmetric product $\cdot$ if $l_i= l_j$ is an even number.
\end{itemize}
\end{enumerate}
\end{definition}
 Hence here for $\alpha,\beta\in\Lambda$, the change of chart
$\Theta_\alpha\circ\Theta_\beta\inv\colon (U_\alpha\cap
U_\beta)\times\prod_{\emptyset\neq I\subseteq \underline n}V^{\# I}\to (U_\alpha\cap
U_\beta)\times\prod_{\emptyset \neq I\subseteq \underline n}V^{\# I}$ reads
 \begin{equation}\label{change_of_charts_sym_atlas}
 \left(p,(v_I)_{\emptyset\neq I\subseteq \underline n}\right)\mapsto \left(p,
     \left(\sum_{\rho=(I_1,\ldots,I_k)\in\mathcal
         P(I)}\sgn(\rho)\cdot \omega_{\alpha\beta}^{\# I_1,\ldots, \# I_k}(p)(v_{I_1},\ldots,
       v_{I_k})\right)_{\emptyset\neq I\subseteq \underline n}\right).
 \end{equation}

 By construction, the space $E$ as in the definition above is locally diffeomorphic to spaces of the form
 \[ U\times \prod_{\emptyset\neq I\subseteq \nset} V^{\# I}.
 \]
 These are the total spaces of decomposed symmetric $n$-fold vector bundles defined as in Section \ref{dec_sym_nvb}
 by the family of vector bundles $(U\times V^{i}\to U)_{1\leq i\leq n}$. Further, by \eqref{change_of_charts_sym_atlas} and Proposition \ref{morphisms_of_dSnVB} the changes of charts $\Theta_\alpha\circ\Theta_{\beta}\inv$
 are morphisms of symmetric decomposed $n$-fold vector bundles 
 \[ (U_\alpha\cap U_\beta)\times \prod_{\emptyset\neq I\subseteq \nset} V^{\# I} \to (U_\alpha\cap U_\beta)\times \prod_{\emptyset\neq I\subseteq \nset} V^{\# I}.
 \]

 Hence there is an ``indirect''
 (i.e.~\emph{signed}, see \cite{Pradines77} in the case $n=2$) right $S_n$-action on any manifold endowed with a
 symmetric $n$-fold vector bundle atlas as above, and  the following proposition holds.
\begin{proposition}\label{S_n_action_symmetric_atlas}
  A smooth manifold $E$ endowed with a symmetric $n$-fold vector bundle atlas as in Definition \ref{sym_atlas_def}
  has a left $S_n$-action, given in a chart $\pi\inv(U_\alpha)$ by
\[\Phi_\sigma^\alpha\colon U_\alpha\times \prod_{\emptyset\neq I\subseteq \underline n}V^{\# I}\to U_\alpha\times \prod_{\emptyset\neq I\subseteq \underline n}V^{\# I},\quad 
  \Phi_\sigma^\alpha\left(p, (v_I)_{\emptyset\neq I\subseteq\underline n}\right)=\left(p,
  \left(\epsilon\left(\sigma\inv,I\right)v_{\sigma\inv(I)}\right)_{\emptyset\neq I\subseteq\underline n}\right).
\]
\end{proposition}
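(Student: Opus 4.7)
The plan is to build the $S_n$-action on $E$ by gluing the local diffeomorphisms $(\Theta_\alpha)^{-1}\circ \Phi^\alpha_\sigma\circ \Theta_\alpha$ defined on each $\pi^{-1}(U_\alpha)$, so there are essentially three things to verify: (i) that the formula for $\Phi^\alpha_\sigma$ is indeed the canonical $S_n$-action on the decomposed symmetric $n$-fold vector bundle $U_\alpha\times\prod_{\emptyset\neq I\subseteq\underline n}V^{\#I}$; (ii) that these local actions are compatible with the changes of charts; and (iii) that they satisfy the axioms of a left $S_n$-action. Steps (i) and (iii) are essentially bookkeeping: (i) is immediate by comparing the formula with $\Psi^{\mathcal A}_\sigma$ in Section~\ref{dec_sym_nvb} (take $\mathcal A=(U_\alpha\times V^i\to U_\alpha)_{1\leq i\leq n}$), and (iii) reduces to the computation already carried out in \eqref{decomposed_Sn_action}, which shows $\Psi^{\mathcal A}_{\sigma\tau}=(\Psi^{\mathcal A}_\sigma)^\tau\circ\Psi^{\mathcal A}_\tau$ and $\Psi^{\mathcal A}_{\id}=\id$.

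The main point, and the only step requiring real content, is the compatibility (ii). One needs to check that on $\pi^{-1}(U_\alpha\cap U_\beta)$,
\[
(\Theta_\alpha\circ\Theta_\beta^{-1})\circ \Phi^\beta_\sigma = \Phi^\alpha_\sigma \circ (\Theta_\alpha\circ \Theta_\beta^{-1}).
\]
The key observation is that the change of chart $\Theta_\alpha\circ\Theta_\beta^{-1}$, by the shape \eqref{change_of_charts_sym_atlas} prescribed in Definition~\ref{sym_atlas_def} (and in particular by conditions (2) and (3) of that definition), is precisely a morphism of decomposed symmetric $n$-fold vector bundles in the sense of Proposition~\ref{morphisms_of_dSnVB}: the factor $\sgn(\rho)$ together with the symmetry/skew-symmetry properties guaranteed by (3) are exactly what the characterisation in Proposition~\ref{morphisms_of_dSnVB} requires of the coefficients $\omega_{\alpha\beta}^{\ell_1,\ldots,\ell_k}$ viewed as the $\tau_{(\ell_1,\ldots,\ell_k)}$.

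Thus (ii) is automatic: morphisms of decomposed symmetric $n$-fold vector bundles are, by definition, $S_n$-equivariant with respect to the canonical actions $\Psi^{\mathcal A}_\sigma$ and $\Psi^{\mathcal B}_\sigma$, so applying this to $\Theta_\alpha\circ\Theta_\beta^{-1}$ gives exactly the desired intertwining relation. The genuinely combinatorial step---showing that the sign $\epsilon(\sigma^{-1},I)v_{\sigma^{-1}(I)}$ is compatible with the structure constants $\sgn(\rho)\,\omega_{\alpha\beta}^{|\rho|}$ of the cocycle---is therefore absorbed into Proposition~\ref{morphisms_of_dSnVB} (which in turn rests on Lemma~\ref{lemma_intermediate_morphisms} and the product formula \eqref{prod1} for signs).

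The expected obstacle is simply to track the indexing carefully when one unpacks the equivariance condition: on the $I$-component of the LHS one sums over $\rho\in\mathcal P(I)$, while on the RHS the analogous sum is over $\rho\in\mathcal P(\sigma^{-1}(I))$, and one has to re-index via $\rho\mapsto \sigma(\rho)$ (reordered canonically), using the sign identity that $\sgn(\sigma(\rho))\cdot\epsilon(\sigma,I)=\sgn(\rho)\cdot\prod_j\epsilon(\sigma,I_j)$ from Lemma~\ref{lemma_intermediate_morphisms}. Once this is in place the local actions glue to a globally defined map $\Phi_\sigma\colon E\to E$, and functoriality in $\sigma$ is inherited from the decomposed model, completing the proof.
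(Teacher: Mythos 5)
Your proposal is correct and follows essentially the same route as the paper: the intertwining with the changes of charts is obtained by recognising $\Theta_\alpha\circ\Theta_\beta^{-1}$ as a morphism of decomposed symmetric $n$-fold vector bundles via \eqref{change_of_charts_sym_atlas} and Proposition \ref{morphisms_of_dSnVB}, and the left-action axioms reduce to the computation \eqref{decomposed_Sn_action}. No further comment is needed.
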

\begin{proof}
For each $\sigma\in S_n$ and all $\alpha,\beta\in\Lambda$ the equality $\Phi_\sigma^\alpha\circ
\Theta_\alpha\circ\Theta_\beta\inv
=\Theta_\alpha\circ\Theta_\beta\inv\circ \Phi_\sigma^\beta$ holds by  \eqref{change_of_charts_sym_atlas} and Proposition \ref{morphisms_of_dSnVB}.
Hence the collection of maps $\{\Phi^\alpha_\sigma\}_{\alpha\in \Lambda}$ defines a global map $\Phi_\sigma\colon E\to E$.

It remains to check that the obtained map $\Phi\colon S_n\times E\to E$
is a left action. 
It is enough to verify that
\[\Phi^\alpha\colon S_n\times U_\alpha\times \prod_{\emptyset\neq I\subseteq \underline
  n}V^{\# I}\to U_\alpha\times \prod_{\emptyset\neq I\subseteq \underline n}V^{\#
  I}\] is a left action for each $\alpha\in\Lambda$. But this is the computation in \eqref{decomposed_Sn_action}.   \end{proof}

\begin{theorem}\label{symmetric_atlas_theorem}
	An $n$-fold vector bundle $\E$ admits a symmetric structure $\Phi$  if and 
	only if $\E(\nset)$ can be endowed with a symmetric atlas, such that the induced $S_n$-action as in Proposition \ref{S_n_action_symmetric_atlas} is $\Phi(\nset)$.
\end{theorem}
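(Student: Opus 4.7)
The plan is to prove the two directions separately, using the equivalence of the definitions of $n$-fold vector bundles (Corollary \ref{eq_defs_nvb}) together with Theorem \ref{dec_sym_nvb_thm} on the existence of symmetric decompositions.

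For the ``only if'' direction, start from a symmetric $n$-fold vector bundle $\E$ with $S_n$-action $\Phi$. Apply Theorem \ref{dec_sym_nvb_thm} to choose a symmetric decomposition $\mathcal S\colon \E^{\rm dec}\to \E$. The family $\mathcal A_\E=\{\E^I_I\mid I\subseteq \nset\}$ satisfies $\E^I_I=\E^J_J=:A_{\# I}$ whenever $\# I=\# J$ by the definition of a symmetric $n$-fold vector bundle. Choose an open cover $\{U_\alpha\}_{\alpha\in\Lambda}$ of $M$ that simultaneously trivialises the $n$ vector bundles $A_1,\ldots,A_n$, via trivialisations $\phi^\alpha_i\colon A_i\an{U_\alpha}\overset{\sim}{\to}U_\alpha\times V^i$, and define, exactly as in the proof of Corollary \ref{eq_defs_nvb},
\[\Theta_\alpha=\Bigl(\pi\times(\phi^\alpha_{\#I})_{\emptyset\neq I\subseteq\nset}\Bigr)\circ\mathcal S(\nset)^{-1}\an{\pi^{-1}(U_\alpha)}\colon\pi^{-1}(U_\alpha)\to U_\alpha\times\prod_{\emptyset\neq I\subseteq\nset}V^{\# I}.\]
This yields an $n$-fold vector bundle atlas $\mathfrak A=\{c_\alpha=(U_\alpha,\Theta_\alpha,(V^{\#I})_{\emptyset\neq I\subseteq\nset})\}$ satisfying condition (1) of Definition \ref{sym_atlas_def} automatically.

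Next, check that $\mathfrak A$ is symmetric. The change of chart $\Theta_\alpha\circ\Theta_\beta^{-1}$ equals $(\pi\times\phi^\alpha)\circ\bigl(\mathcal S(\nset)^{-1}\circ \mathcal S(\nset)\bigr)\circ(\pi\times\phi^\beta)^{-1}$, hence is a morphism of decomposed symmetric $n$-fold vector bundles on $(U_\alpha\cap U_\beta)\times\prod_{\emptyset\neq I\subseteq\nset}V^{\#I}$ because $\mathcal S$ is symmetric and the $\phi^\alpha_i,\phi^\beta_i$ preserve the $S_n$-invariant trivial structure. Proposition \ref{morphisms_of_dSnVB} then implies that the change of charts has the form \eqref{change_of_charts_sym_atlas} with $\omega^{(l_1,\ldots,l_k)}_{\alpha\beta}$ graded-symmetric in the $l_i$-entries, giving conditions (2) and (3). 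Finally, that the induced $S_n$-action from Proposition \ref{S_n_action_symmetric_atlas} coincides with $\Phi(\nset)$ follows because in each chart $\Theta_\alpha$ intertwines $\Phi(\nset)$ with the standard signed permutation action $\Psi^{\rm dec}$ on the decomposed $n$-fold vector bundle, by naturality and equivariance of $\mathcal S$.

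For the ``if'' direction, assume $E=\E(\nset)$ is endowed with a symmetric atlas $\mathfrak A$. By Corollary \ref{eq_defs_nvb}, $\mathfrak A$ reconstructs an $n$-fold vector bundle $\mathbb E'\colon\square^n\to\Man$ with $\mathbb E'(\nset)=E$, which agrees with $\E$ (up to canonical isomorphism). Condition (1) of Definition \ref{sym_atlas_def} guarantees that for each $\alpha$ and each $J\subseteq\nset$ the local model of $\E(J)$ is $U_\alpha\times\prod_{\emptyset\neq I\subseteq J}V^{\#I}$, so gluing the local $S_n$-maps
\[\Phi^\alpha_\sigma(J)\colon U_\alpha\times\!\!\prod_{\emptyset\neq I\subseteq J}\!\!V^{\#I}\to U_\alpha\times\!\!\prod_{\emptyset\neq I\subseteq\sigma(J)}\!\!V^{\#I},\quad (p,(v_I))\mapsto (p,(\varepsilon(\sigma^{-1},I)v_{\sigma^{-1}(I)})),\]
produces maps $\Phi_\sigma(J)\colon\E(J)\to\E(\sigma(J))$ that fit together into a natural transformation $\Phi_\sigma\colon\E\to\E^\sigma$; well-definedness on overlaps uses precisely the fact (as in the proof of Proposition \ref{S_n_action_symmetric_atlas}) that the changes of charts are morphisms of decomposed symmetric $n$-fold vector bundles. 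Computing in any chart shows that $\Phi_{\id}=\id_{\E}$, $\Phi_{\sigma\tau}=(\Phi_\sigma)^\tau\circ\Phi_\tau$ as in \eqref{decomposed_Sn_action}, and that the induced core morphism $\Phi^{I,I}_\sigma$ reads as multiplication by $\varepsilon(\sigma,I)$ on $\E^I_I$; moreover condition (1) of the symmetric atlas forces $\E^I_I=\E^J_J$ for $\# I=\# J$. Hence $\Phi$ is a symmetric structure on $\E$ whose top component equals, by construction, the action of Proposition \ref{S_n_action_symmetric_atlas}.

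The main obstacle in this argument is the bookkeeping of signs and the verification that the local maps $\Phi^\alpha_\sigma(J)$ glue on overlaps to global morphisms $\Phi_\sigma(J)$ of $n$-fold vector bundles; this is exactly the naturality check that $\Theta_\alpha\circ\Theta_\beta^{-1}$ intertwines $\Phi^\alpha_\sigma$ with $\Phi^\beta_\sigma$, which is already packaged in Proposition \ref{morphisms_of_dSnVB} and \eqref{change_of_charts_sym_atlas}. Everything else is a direct translation between the global functorial description and the local chart description, through the symmetric decomposition guaranteed by Theorem \ref{dec_sym_nvb_thm}.
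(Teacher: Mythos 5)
Your proposal is correct and follows essentially the same route as the paper: the forward direction builds the atlas from a symmetric decomposition via Theorem \ref{dec_sym_nvb_thm} exactly as in the proof of Corollary \ref{eq_defs_nvb}, and the converse reconstructs the functor from the cocycle and descends the local signed permutation action to the quotients (the paper carries out your ``gluing on overlaps'' step for general $J\subseteq\nset$ via Lemma \ref{lemma_intermediate_morphisms}, which is the same compatibility you invoke through Proposition \ref{morphisms_of_dSnVB}). The only cosmetic point is that in the forward direction the factor $\mathcal S(\nset)^{-1}\circ\mathcal S(\nset)$ cancels, so the change of charts is simply block-diagonal, which is how the paper sees directly that the higher components $\omega_\rho^{\alpha\beta}$ vanish and the atlas is symmetric.
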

\begin{proof}
This proof works as the proof of Corollary \ref{eq_defs_nvb}, but with the additional $S_n$-symmetry taken into account.

\medskip
First let $\mathbb E$ be a symmetric $n$-fold vector bundle. By Theorem \ref{dec_sym_nvb_thm}, there is a decomposition
$\mathcal S\colon \E^{\rm dec}\to \E$ of $\E$, with $\E^{\rm dec}$ the \emph{symmetric} decomposed $n$-fold vector bundle defined by 
the family $\left(q_l\colon A_l:=\E^{\lset}_{\lset}\to M\right)_{1\leq l\leq n}$ of vector bundles over $M$.  Set $E:=\E(\nset)$, as usual $M:=\E(\emptyset)$, and 
$\pi\colon \E(\nset\to\emptyset)\colon E\to M$. For each $l\in\{1,\ldots, n\}$,
set $V^l:=\mathbb R^{\dim A_l}$, the vector space on which $A_l=\E^{\lset}_{\lset}$ is
modelled. Take a covering $\{U_\alpha\}_{\alpha\in\Lambda}$ of $M$ by
open sets trivialising all the vector bundles $A_1, \ldots, A_n$;
\[ \phi^\alpha_l\colon q_{l}^{-1}(U_\alpha)\overset{\sim}{\longrightarrow} U_\alpha\times V^l
\]
for all  $1\leq l\leq n$ and all $\alpha\in\Lambda$.
Then define $n$-fold vector bundle charts $\Theta_\alpha\colon\pi^{-1}(U_\alpha)\to U_\alpha\times \prod_{\emptyset\neq I\subseteq \nset}V^{\# I}$ by 
\[\Theta_\alpha=\left(\pr_M\times \left(\phi^\alpha_{\# I}\right)_{\emptyset \neq I\subseteq \nset}\right)\circ \mathcal S(\nset)^{-1}\an{\pi^{-1}(U_\alpha)}\colon \pi^{-1}(U_\alpha)\to U_\alpha\times  \prod_{\emptyset\neq I\subseteq \nset}V^{\# I}.
\]
Given $\alpha,\beta\in\Lambda$ with $U_\alpha\cap U_\beta\neq\emptyset$, the change of chart
\[\Theta_\alpha\circ\Theta_\beta\inv\colon (U_\alpha\cap U_\beta)\times \prod_{\emptyset\neq I\subseteq \nset}V^{\# I}\to  (U_\alpha\cap U_\beta)\times \prod_{\emptyset\neq I\subseteq \nset}V^{\# I}
\]
is given by 
\begin{equation}\label{simple_change_of_charts_sym}
\left(p, (v_I)_{\emptyset \neq I\subseteq \nset}\right)\mapsto \left(p, \left(\omega_{\alpha\beta}^{\# I}(p)v_I\right)_{\emptyset \neq I\subseteq \nset}\right),
\end{equation}
with
$\omega^{\# I}_{\alpha\beta}\in C^\infty(U_\alpha\cap U_\beta,
\operatorname{Gl}(V^{\# I}))$
the cocycle defined by $\phi^\alpha_{\# I}\circ (\phi_{\# I}^\beta)\inv$. The
two charts are hence smoothly compatible and $\lie A=\{(U_\alpha,\Theta_\alpha, (V^{\# I})_{\emptyset\neq I\subseteq\nset})\mid \alpha\in
\Lambda\}$ is a symmetric $n$-fold vector bundle atlas on $E$. It remains to check that this is a \emph{symmetric} $n$-fold vector bundle atlas. But this is immediate since by \eqref{simple_change_of_charts_sym}
the morphism $\omega_\rho^{\alpha,\beta}\in C^\infty(U_\alpha\cap U_\beta, \Hom(V^{\# I_1}\otimes \ldots \otimes V^{\# I_k}, V^{\# I}))$ is trivial for $\emptyset\neq I\subseteq \nset$ and $\rho=(I_1,\ldots, I_k)$ an ordered partition of $I$ \emph{with $k\geq 2$}.

Finally, the action $\Phi^\alpha$ of $S_n$ on $U_\alpha\times\prod_{\emptyset\neq I\subseteq \nset}V^{\# I}$ is such that the diagram
\[\begin{tikzcd}
	{\pi^{-1}(U_\alpha)} && {\pi^{-1}(U_\alpha)} \\
	\\
	{\prod_{\emptyset\neq I\subseteq \nset}^MA_{\# I}\an{U_\alpha}} && {\prod_{\emptyset\neq I\subseteq \nset}^MA_{\# I}\an{U_\alpha}} \\
	\\
	{U_\alpha\times\prod_{\emptyset\neq I\subseteq \nset}V^{\# I}} && {U_\alpha\times\prod_{\emptyset\neq I\subseteq \nset}V^{\# I}}
	\arrow["{\mathcal S(\nset)\an{\pi\inv(U_\alpha)}}"', from=1-1, to=3-1]
	\arrow["{\pr_M\times \left(\phi^\alpha_{\# I}\right)_{\emptyset \neq I\subseteq \nset}}"', from=3-1, to=5-1]
	\arrow["{\Phi_\sigma(\nset)\an{\pi\inv(U_\alpha)}}", from=1-1, to=1-3]
	\arrow["{\Psi_\sigma^{\rm dec}(\nset)}", from=3-1, to=3-3]
	\arrow["{\mathcal S(\nset)\an{\pi\inv(U_\alpha)}}", from=1-3, to=3-3]
	\arrow["{\Phi^\alpha_\sigma}"', from=5-1, to=5-3]
	\arrow["{\pr_M\times \left(\phi^\alpha_{\# I}\right)_{\emptyset \neq I\subseteq \nset}}", from=3-3, to=5-3]
\end{tikzcd}\]
commutes, since the decomposition $\mathcal S$ is symmetric. Hence $\Phi^\alpha_\sigma$ is $\Phi_\sigma(\nset)$ in the chart $\Theta_\alpha$.

\medskip

Conversely, given a space $E$ with a symmetric  $n$-fold vector bundle structure
over a smooth manifold $M$ as in Definition \ref{sym_atlas_def},
define $\mathbb E\colon \square^{\N}\to \Man$ as follows.  Take a
maximal symmetric $n$-fold vector bundle atlas
\[\lie A=\left\{\left.\left(U_\alpha,\Theta_\alpha, \left(V^{\# I}\right)_{I\subseteq\nset}\right)\right| \, \alpha\in
\Lambda\right\}\]
and set  $\mathbb E(\nset)=E$, $\mathbb E(\emptyset)=M$, 
and more generally for $\emptyset\neq I\subseteq \nset$, 
\[\mathbb
E(I)=\left.\left(\bigsqcup_{\alpha\in\Lambda}\left(U_\alpha\times\prod_{\emptyset\neq J\subseteq
  I}V^{\# J}\right)\right)\right/\sim \]
with $\sim$ the equivalence relation defined on
$\bigsqcup_{\alpha\in\Lambda}(U_\alpha\times\prod_{\emptyset\neq J\subseteq I}V^{\# J})$
by 
\[ U_\alpha\times\prod_{\emptyset\neq J\subseteq I}V^{\# J}\quad \ni\quad
\left(p,(v_J)_{\emptyset\neq J\subseteq I}\right)\quad \sim \quad
\left(q,(w_J)_{\emptyset\neq J\subseteq I}\right)\quad \in \quad U_\beta\times\prod_{\emptyset\neq J\subseteq I}V^{\# J}
\]
if and only if $p=q$ and
\[v_J=\sum_{\rho=(J_1,\ldots,J_k)\in\mathcal
    P(J)}\sgn(\rho)\omega_{\alpha\beta}^{\# J_1,\ldots,\# J_k}(p)(w_{J_1},\ldots,w_{J_k})
\]
for $\emptyset\neq J\subseteq I$.
Let $\pr_I\colon \bigsqcup_{\alpha\in\Lambda}\left(U_\alpha\times\prod_{\emptyset\neq J\subseteq
  I}V^{\# J}\right)\to \E(I)$ be the quotient map.
Then,  as in the proof of Corollary \ref{eq_defs_nvb}, $\mathbb E(I)$ has a unique smooth
manifold structure such that
$\pr_I\colon \mathbb E(I)\to M$, $\pr_I[p,(v_I)_{\emptyset\neq I\subseteq J}]=p$
is a surjective submersion and such that
 the maps 
\[\Theta^I_\alpha\colon \pr_I\left(U_\alpha\times\prod_{\emptyset\neq J\subseteq
  I}V^{\#J}\right)\to U_\alpha\times\prod_{\emptyset\neq J\subseteq I}V^{\# J}, \qquad
\left[p,(v_I)_{\emptyset\neq I\subseteq J}\right]\mapsto \left(p,(v_I)_{\emptyset\neq I\subseteq J}\right)
\]
are diffeomorphisms. This leads to an $n$-fold vector bundle $\E\colon\square^n\to \Man$ as in the proof of Corollary \ref{eq_defs_nvb}.
The projections $\pr_I$ for $I\subseteq \nset$ define a morphism $\pr\colon \widetilde \E\to \E$ of $n$-fold vector bundles, 
where the $n$-fold vector bundle $\widetilde\E\colon \square^n\to \Man$ is defined\footnote{If $\Lambda$ is not countable, the images of the objects of $\square^n$ under the functor $\widetilde\E$ are not second-countable topological spaces. However, this is not relevant for the use made of $\widetilde\E$ in this proof.} by 
\[ \widetilde{\E}(I)=\bigsqcup_{\alpha\in\Lambda}\left(U_\alpha\times\prod_{\emptyset\neq J\subseteq
  I}V^{\# J}\right).
\]

  It remains to show that $\E$ is symmetric. First, by construction, for $\emptyset\neq I\subseteq \nset$
  \[
  \E^I_I=
\left.\left(\bigsqcup_{\alpha\in\Lambda}\left(U_\alpha\times V^{\# I}\right)\right)\right/\sim \]
with $\sim$ the equivalence relation defined on $\bigsqcup_{\alpha\in\Lambda}\left(U_\alpha\times V^{\# I}\right)$
by 
\[ U_\alpha\times V^{\# I}\quad \ni\quad
\left(p,v\right)\quad \sim \quad
\left(q,w\right)\quad \in \quad U_\beta\times V^{\# I}
\]
if and only if $p=q$ and
$v=\omega_{\alpha\beta}^{\# I}(p)(w)$. Hence the equality $\E_I^I=\E_{\sigma(I)}^{\sigma(I)}$ for $\sigma\in S_n$ is immediate.

$\widetilde\E$ is a symmetric $n$-fold vector bundle by Section \ref{dec_sym_nvb}. Let its $S_n$-action be denoted by $\widetilde\Phi$.
It is easy to check with Lemma \ref{lemma_intermediate_morphisms} that \[ U_\alpha\times\prod_{\emptyset\neq J\subseteq I}V^{\# J}\quad \ni\quad
\left(p,(v_J)_{\emptyset\neq J\subseteq I}\right)\quad \sim \quad
\left(q,(w_J)_{\emptyset\neq J\subseteq I}\right)\quad \in \quad U_\beta\times\!\prod_{\emptyset\neq J\subseteq I}V^{\# J}
\]
if and only if for all $\sigma\in S_n$
\[ U_\alpha\times\!\!\!\prod_{\emptyset\neq K\subseteq \sigma(I)}V^{\# K} \ni
\widetilde{\Phi_\sigma}(I)\left(p,(v_J)_{\emptyset\neq J\subseteq I}\right) \sim
\widetilde{\Phi_\sigma}(I)\left(q,(w_J)_{\emptyset\neq J\subseteq I}\right) \in  U_\beta\times\!\!\!\prod_{\emptyset\neq K\subseteq \sigma(I)}V^{\# K}.
\]
As a consequence, for each $\sigma\in S_n$ and each $\emptyset\neq I\subseteq \nset$, the map $\widetilde\Phi_\sigma(I)\colon \widetilde\E(I)\to \widetilde\E^\sigma(I)$ factors to a map $\Phi_\sigma(I)\colon \E(I)\to \E^\sigma(I)$ such that 
\[\begin{tikzcd}
	{\widetilde\E(I)} & {\widetilde\E^\sigma(I)} \\
	{\E(I)} & {\E^\sigma(I)}
	\arrow["{\widetilde\Phi_\sigma(I)}", from=1-1, to=1-2]
	\arrow["{\pr_I}"', from=1-1, to=2-1]
	\arrow["{\pr_{\sigma(I)}}", from=1-2, to=2-2]
	\arrow["{\Phi_\sigma(I)}"', from=2-1, to=2-2]
\end{tikzcd}\]
commutes.
Further, since $(\widetilde\Phi_\sigma(I))^I_ I=\epsilon(\sigma, I)\cdot \id_{\widetilde\E^I_ I}\colon \widetilde \E^I_I\to \widetilde \E^{\sigma(I)}_{\sigma(I)}$, also
$\Phi_\sigma(I)^I_I=\epsilon(\sigma, I)\cdot \id_{\E^I_I}$.
These morphisms define hence an $S_n$-symmetry on $\E$.
Since the spaces $U_\alpha\times\prod_{\emptyset\neq J\subseteq \nset}V^{\# J}$ are charts of $\E(\nset)$ and the $S_n$-action on $\E(\nset)$ is given by the action $\widetilde\Phi$ restricted to this chart, it coincides with the action of $S_n$ on $E$ as in Proposition \ref{S_n_action_symmetric_atlas}.
\end{proof}

As usual, it is clear from the proof above and from Proposition \ref{composition_sym_morphisms_nfold} that the information of a symmetric $n$-fold vector bundle atlas is completely encoded in the \emph{symmetric $n$-fold vector bundle cocycle} that it defines.
\begin{definition}
Let $M$ be a smooth manifold. Then a symmetric $n$-fold vector bundle cocycle consists in an open cover $(U_\alpha)_{\alpha\in\Lambda}$ of $M$, together with a family of isomorphisms 
\[ \omega^{\alpha\beta}\colon U_{\alpha\beta}\times \prod_{\emptyset\neq J\subseteq \nset}V^{\# J}\to U_{\alpha\beta}\times \prod_{\emptyset\neq J\subseteq \nset}V^{\# J}, \qquad \qquad \alpha,\beta\in\Lambda
\]
 of decomposed symmetric $n$-fold vector bundles over the identity on $U_{\alpha\beta}$, with $V^1, \ldots, V^n$ fixed vector spaces, such that 
\[ \omega^{\alpha\beta}=\omega^{\alpha\gamma}\circ\omega^{\gamma\beta}\colon U_{\alpha\gamma\beta}\times \prod_{\emptyset\neq J\subseteq \nset}V^{\# J}\to U_{\alpha\gamma\beta}\times \prod_{\emptyset\neq J\subseteq \nset}V^{\# J}
\]
for all $\alpha, \beta,\gamma\in \Lambda$, 
and 
\[ \omega^{\alpha\alpha}=\id\colon U_{\alpha}\times \prod_{\emptyset\neq J\subseteq \nset}V^{\# J}\to U_{\alpha}\times \prod_{\emptyset\neq J\subseteq \nset}V^{\# J}
\]
for all $\alpha\in\Lambda$.
\end{definition}
In other words for all of $\alpha,\beta$ in $\Lambda$, 
\[ \omega^{\alpha\beta}=\left(\omega^{\alpha\beta}_{(i_1,\ldots, i_l)}\in C^\infty(U_{\alpha\beta}, \Hom(V^{i_1}\odot\ldots\odot V^{i_l}, V^{i_1+\ldots+i_l}))\right)_{(i_1,\ldots, i_l)\in\mathcal P(n)},
\]
with the composition given as in Proposition \ref{composition_sym_morphisms_nfold}: for all $\alpha,\beta,\gamma\in \Lambda$ and all canonically ordered integer partitions $(i_1,\ldots, i_l)\in\mathcal P(n)$
\begin{equation*}
\begin{split}
& \omega^{\alpha\beta}(p)_{(i_1,\ldots, i_l)}=(\omega^{\alpha\gamma}(p)\circ\omega^{\gamma\beta}(p))_{(i_1,\ldots, i_l)}\\
 &=\!\!\!\!\!\!\!\!\! \sum_{\substack{\rho_1,\ldots, \rho_s \text{ ordered}\\
\text{such that } \rho_1\cup\ldots\cup \rho_s=\rho^{(i_1,\ldots, i_l)}_{\rm can} \\
\text{ as sets}\\
 \cup\rho_1< \ldots<\cup\rho_s}} \sgn(\rho_1,\ldots,\rho_s)\cdot\omega^{\alpha\gamma}_{\left(\Sigma |\rho_1|,\ldots,\Sigma |\rho_s|\right)}\circ \left(\omega^{\gamma\beta}_{|\rho_1|}, \ldots, \omega^{\gamma\beta}_{|\rho_s|}\right),
\end{split}
\end{equation*}
for $p\in U_{\alpha\beta\gamma}$,  and for $p\in U_\alpha$ 
\[  \omega^{\alpha\alpha}(p)_{(i_1,\ldots, i_l)}=\left\{\begin{array}{cc}
\id_{V^{i_1}} & l=1\\
0& l\geq 2
\end{array}\right..
\]

\section{The equivalence of symmetric $n$-fold vector bundles with
  $[n]$-manifolds}\label{algebraisation}
This section constructs the functor from the category of
symmetric $n$-fold vector bundles to the category of $[n]$-manifolds, and the functor 
from the category of $[n]$-manifolds to the category of symmetric $n$-fold vector bundles.
The key to both functors is the equality is the obvious bijection between $[n]$-manifold cocycles and symmetric $n$-fold vector bundle cocycles.
Using this, both functors are constructed in the same manner.

\medskip

Begin by considering a symmetric $n$-fold vector bundle $\mathbb E\colon \square^n\to \Man$, over a smooth manifold $M:=\E(\emptyset)$. Since $\E$ is symmetric, 
it defines $n$ vector bundles $A_1,\ldots, A_n$ over $M$, such that for $i=1,\ldots, n$
\[ A_i:=\E_I^I \quad \text{ for all } \quad I\subseteq \nset \text{ with } \# I=i.
\]
Let $r_i$ be the rank of $A_i$ for $i=1,\ldots,n$ and set $V^i:=\R^{r_i}$ for simplicity.
By Theorem \ref{symmetric_atlas_theorem} the manifold $\mathbb E(\nset)$ is equipped with a symmetric $n$-fold vector bundle atlas $\{c_\alpha=(U_\alpha, \Theta_\alpha)\mid \alpha\in\Lambda\}$ modeled on the vector spaces $\mathbb R^{r_1}, \ldots, \R^{r_n}$, and hence a symmetric $n$-fold vector bundle cocycle
\[ (U_\alpha)_{\alpha\in\Lambda}, \qquad \left(\omega^{\alpha\beta}:=\Phi_\alpha\circ\Phi_\beta\inv\colon U_{\alpha\beta}\times\prod_{\emptyset\neq J\subseteq \nset}V^{\# J}\to U_{\alpha\beta}\times\prod_{\emptyset\neq J\subseteq \nset}V^{\# J}\right)_{\alpha,\beta\in\Lambda}.
\]
Assume that the atlas is maximal.
Recall that for each pair of indices $\alpha, \beta\in\Lambda$, 
the morphism $\omega^{\alpha\beta}$ amounts to a collection
\[ \omega^{\alpha\beta}=\left( \omega^{\alpha\beta}_{(i_1,\ldots, i_l)}\in\Hom(V^{i_1}\odot\ldots\odot V^{i_l}, V^{i_1+\ldots+i_l})\right)_{(i_1,\ldots, i_l)\in\mathcal P(n)}. 
\]
By Proposition \ref{composition_sym_morphisms_nfold} and \eqref{composition_graded_morphisms}, the $n$-fold vector bundle cocycle defines hence the 
$[n]$-manifold cocycle 
\[ \left( (U_\alpha)_{\alpha\in\Lambda}, \left( \omega^{\alpha\beta}_{(i_1,\ldots, i_l)}\in\Hom(V^{i_1}\odot\ldots\odot V^{i_l}, V^{i_1+\ldots+i_l})\right)_{(i_1,\ldots, i_l)\in\mathcal P(n)}\right)\]
Denote the corresponding $[n]$-manifold by $\mathcal A(\E)$. This defines the map on objects of an \emph{algebraisation functor} 
\[ \mathcal A\colon  \mathsf{SnVB} \to \mathsf{[n]Man}
\]
Consider a morphism $\Phi\colon \E\to \F$ of symmetric $n$-fold vector bundles. Let as before $(U_\alpha, \phi_\alpha \mid \alpha\in\Lambda)$ be a symmetric $n$-fold vector bundle atlas for $\E$ modeled as above on $\R^{r_1}=:V^1, \ldots, \R^{r_n}=:V^n$, and let $(V_{\alpha'}, \psi_{\alpha'}\mid \alpha'\in \Lambda')$ be a symmetric $n$-fold vector bundle atlas for $\F$ modeled on $\R^{s_1}=:W^1, \ldots, \R^{s_n}=:W^n$. Then for each $\alpha\in\Lambda$ and $\alpha'\in\Lambda'$ the map $\Phi(\nset)\colon \E(\nset)\to \F(\nset)$ defines 
\[ \Phi_{\alpha'\alpha}:=\psi_{\alpha'}\circ \Phi(\nset)\circ \phi_{\alpha}\inv\colon (\Phi_0^{-1}(V_{\alpha'})\cap U_\alpha)\times \prod_{\emptyset\neq J\subseteq \nset}V^{\# J}\to V_{\alpha'}\times \prod_{\emptyset\neq J\subseteq \nset}W^{\# J},
\]
which is a morphism of symmetric $n$-fold vector bundles and hence equivalent to a collection
\[ \left( \Phi_{\alpha'\alpha}^{(i_1,\ldots, i_l)}\in C^\infty(\Phi_0^{-1}(V_\alpha')\cap U_\alpha, \Hom( V^{i_1}\odot\ldots\odot V^{i_l}, W^{i_1+\ldots+i_l}))\right)_{(i_1,\ldots, i_l)\in \mathcal P(n)}.
\]
For $\alpha, \beta\in\Lambda$, $\alpha', \beta'\in\Lambda'$ and all $x\in \Phi_0\inv(V_{\alpha'\beta'})\cap U_{\alpha\beta}$
\begin{equation}\label{morphism_cocycle}
\Phi_{\beta'\beta}(x)=\Psi_{\beta'\alpha'}(\Phi_0(x))\circ \Phi_{\alpha'\alpha}(x)\circ \phi_{\alpha\beta}(x).
\end{equation}
it is easy to see that the collection of morphisms $\Phi_{\alpha'\alpha}$, $\alpha\in\Lambda, \alpha'\in\Lambda'$ with \eqref{morphism_cocycle} are equivalent to $\Phi$ in the sense that $\Phi(\nset)$ can be fully recovered from it. Such a collection of morphisms is called a \textbf{morphism of $n$-fold vector bundle cocycles}.
\begin{definition}
Let $C_M:=\left((U_\alpha)_{\alpha\in\Lambda}, (\phi^{\alpha\beta})_{\alpha,\beta\in\Lambda}\right)$ be an $n$-fold vector bundle cocycle on a smooth manifold $M$ and let 
$C_N:=\left((V_{\alpha'})_{\alpha'\in\Lambda'}, (\psi^{\alpha'\beta'})_{\alpha',\beta'\in\Lambda'}\right)$ be an $n$-fold vector bundle cocycle on a smooth manifold $N$.

Then a morphism $\Phi\colon C_M\to C_N$ of $n$-fold vector bundle cocycles over a smooth map $\Phi_0$ is a collection 
\[ \left(\left( \Phi_{\alpha'\alpha}^{(i_1,\ldots, i_l)}\in C^\infty(\Phi_0^{-1}(V_\alpha')\cap U_\alpha, \Hom( V^{i_1}\odot\ldots\odot V^{i_l}, W^{i_1+\ldots+i_l}))\right)_{(i_1,\ldots, i_l)\in \mathcal P(n)}, \alpha\in\Lambda, \alpha'\in\Lambda'\right) 
\]
of symmetric $n$-fold vector bundle morphisms such that \eqref{morphism_cocycle} holds.
\end{definition}
It is also easy to see that such a morphism of symmetric $n$-fold vector bundle cocycles defines a morphism $\E(C_M)\to \E(C_N)$ of symmetric $n$-fold vector bundles, where $\E(C_M)$ is the symmetric $n$-fold vector bundle defined as in the proof of Theorem \ref{symmetric_atlas_theorem}.

The same discussion for $[n]$-manifold gives the notion of morphism of $[n]$-manifold cocycles, which are again exactly the same information as in the last definition. Hence a morphism $\Phi\colon \E\to \F$ of symmetric $n$-fold vector bundles defines a morphism of symmetric $n$-fold vector bundle cocycles, hence a morphism of $[n]$-manifold cocycles, and hence a morphism $\mathcal A(\Phi)\colon \mathcal A(\E)\to\mathcal A(\F)$.
This completes the definition of the functor 
\[  \mathcal A\colon  \mathsf{SnVB} \to \mathsf{[n]Man}.
\]
The \emph{geometrisation functor} 
\[ \mathcal G\colon   \mathsf{[n]Man}\to \mathsf{SnVB}
\]
is defined in exactly the same manner: starting from an $[n]$-manifold $\mathcal M$, consider an associated maximal $[n]$-manifold cocycle, which is a symmetric $n$-fold vector bundle cocycle, and hence defines a symmetric $n$-fold vector bundle $\E(\mathcal M)=:\mathcal G(\mathcal M)$. As above, this extends naturally to morphisms of $[n]$-manifolds.

The two obtained functors define an equivalence between the two categories 
$ \mathsf{SnVB}$ and $\mathsf{[n]Man}$: The composition 
\[  \mathcal G\circ \mathcal A\colon  \mathsf{SnVB} \to \mathsf{SnVB}
\]
sends a symmetric $n$-fold vector bundle $\E$ to the abstract symmetric $n$-fold vector bundle defined by a choice of maximal symmetric $n$-fold vector bundle cocycle 
associated to $\E$. The obtained symmetric $n$-fold vector bundle is canonically isomorphic to $\E$.
In the same manner, 
\[  \mathcal A\circ \mathcal G\colon  \mathsf{[n]Man} \to \mathsf{[n]Man}
\]
sends an $[n]$-manifold $\mathcal M$ to the abstract $[n]$-manifold defined by a choice of maximal $[n]$-manifold cocycle for $\mathcal M$. The two $[n]$-manifolds are canonically isomorphic. This completes the proof of the following theorem.
\begin{theorem}\label{main_main} Let $n\in\mathbb N$. 
The algebraisation and geometrisation functors \[  \mathcal A\colon  \mathsf{SnVB} \to \mathsf{[n]Man}.
\]
and 
\[ \mathcal G\colon   \mathsf{[n]Man}\to \mathsf{SnVB}
\]
establish an equivalence between the category of symmetric $n$-fold vector bundles and the category of $[n]$-manifolds.
\end{theorem}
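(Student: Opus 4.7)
The plan is to exploit the structural coincidence, already prepared in Sections \ref{cat_snMan} and \ref{symmetric_nvb}, between symmetric $n$-fold vector bundle cocycles and $[n]$-manifold cocycles. First I would invoke Theorem \ref{symmetric_atlas_theorem} to assign to every symmetric $n$-fold vector bundle $\E$ a maximal symmetric atlas $\{(U_\alpha,\Theta_\alpha)\}_{\alpha\in\Lambda}$ with transition isomorphisms $\omega^{\alpha\beta}=\Theta_\alpha\circ\Theta_\beta^{-1}$ of decomposed symmetric $n$-fold vector bundles; and to every $[n]$-manifold $\mathcal M$ a maximal cocycle $\{(U_\alpha,\phi^{\alpha\beta})\}$ whose $\phi^{\alpha\beta}$ are isomorphisms of split $[n]$-manifolds.

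The central observation is then that the composition law for symmetric $n$-fold vector bundle morphisms given by Proposition \ref{composition_sym_morphisms_nfold} coincides term-by-term with the composition law \eqref{composition_graded_morphisms} for split $[n]$-manifold morphisms: both are sums over ordered partitions $\rho_1,\ldots,\rho_l$ of $\rho^p_{\rm can}$ with $\cup\rho_1<\ldots<\cup\rho_l$, weighted by $\sgn(\rho_1,\ldots,\rho_l)$ and fed into the same tuple $(\mu_{|\rho_1|},\ldots,\mu_{|\rho_l|})$ followed by $\nu_{(\Sigma|\rho_1|,\ldots,\Sigma|\rho_l|)}$. Consequently, a family of vector-bundle morphisms $\omega^{\alpha\beta}_{(i_1,\ldots,i_l)}$ satisfies the symmetric $n$-fold vector bundle cocycle conditions on a cover $(U_\alpha)$ iff the \emph{same} family satisfies the $[n]$-manifold cocycle conditions. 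This gives a bijection between (maximal) cocycles of the two kinds on a fixed base $M$.

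With this identification at hand, the functors are defined by going through cocycles. On objects, $\mathcal A(\E)$ is the $[n]$-manifold glued from the cocycle of $\E$, and $\mathcal G(\mathcal M)$ is the symmetric $n$-fold vector bundle reconstructed from the cocycle of $\mathcal M$ via the quotient construction in the proof of Theorem \ref{symmetric_atlas_theorem}. On morphisms, I would use the fact that a morphism $\Phi\colon\E\to\F$ is completely encoded, relative to chosen atlases, by a family of local decomposed-symmetric-$n$-fold-vector-bundle morphisms $\Phi_{\alpha'\alpha}$ satisfying the compatibility relation \eqref{morphism_cocycle}; by the same composition-formula identification as above, this is precisely a morphism of the corresponding $[n]$-manifold cocycles, hence of the associated $[n]$-manifolds. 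The analogous assignment in the opposite direction defines $\mathcal G$ on morphisms. Functoriality in both cases reduces, via the cocycle description, to associativity of composition, which follows from the (common) composition formula.

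Finally, to establish the equivalence, I would exhibit natural isomorphisms $\mathcal G\circ\mathcal A\simeq \id_{\mathsf{SnVB}}$ and $\mathcal A\circ\mathcal G\simeq\id_{\mathsf{[n]Man}}$. For a symmetric $n$-fold vector bundle $\E$, the reconstruction from its maximal atlas is canonically isomorphic to $\E$ by the diffeomorphisms $\Theta^I_\alpha$ of Theorem \ref{symmetric_atlas_theorem}'s proof, assembled into a global morphism and checked to be $S_n$-equivariant; the corresponding statement for $[n]$-manifolds is standard. The main technical obstacle I anticipate is bookkeeping: verifying that passing through a choice of maximal atlas does not affect the end result up to canonical isomorphism, and that all identifications are compatible with composition of morphisms. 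Both issues are ultimately resolved by the syntactic identity of the composition formulas \eqref{composition_graded_morphisms} and the one in Proposition \ref{composition_sym_morphisms_nfold}, which is why the equivalence, once the correct notion of cocycle is in place, is essentially automatic.
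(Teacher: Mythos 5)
Your proposal follows the paper's own argument essentially verbatim: pass to maximal (symmetric) cocycles via Theorem \ref{symmetric_atlas_theorem}, identify the symmetric $n$-fold vector bundle cocycle conditions with the $[n]$-manifold cocycle conditions through the term-by-term agreement of the composition formula in Proposition \ref{composition_sym_morphisms_nfold} with \eqref{composition_graded_morphisms}, extend to morphisms via the cocycle description \eqref{morphism_cocycle}, and check that both composites are canonically isomorphic to the identity functors. This is correct and is the same route the paper takes.
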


Note that this equivalence can be extended to an equivalence of $\N\mathsf{Man}$ with the category of symmetric multiple vector bundles, but the details will be carried out in a future work.

\appendix

   \section{From linear splittings to decompositions}\label{dec_splittings}
   
   This section revisits in detail the proof of the following statement in \cite{HeJo20} (see Theorem 3.3 there), because its symmetric analogue requires further insights on it -- in particular the uniqueness part of this statement was not explained in detail in \cite{HeJo20}. In this section a general $n$-fold vector bundle is considered -- no $S_n$-symmetry is assumed.

\begin{proposition}\label{thm_split_dec_n_general}
Let $\E$ be an $n$-fold vector bundle.
Given a linear splitting $\Sigma$ of $\E$ and 
	compatible decompositions $\S^J\colon (E^{\rm dec})^{\rho_J}\to E^{\rho_J}$ of the highest order cores of $\E$, 
    for $J\subseteq \nset$ with $\# J=2$, there 
	exists a unique decomposition $\mathcal S$ of $\E$ such that 
	$\Sigma=\mathcal S\circ\iota$ and such that the core morphisms 
	of $\S$ equal $\S^J$ for all $J$. 
\end{proposition}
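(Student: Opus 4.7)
The plan is to pin down $\mathcal S$ uniquely first (this makes the construction itself essentially forced) and then verify that a map defined by the forced formula genuinely yields a decomposition.

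For uniqueness, suppose $\mathcal S_1,\mathcal S_2\colon \E^{\rm dec}\to \E$ are two decompositions satisfying $\Sigma=\mathcal S_i\circ\iota$ and $\mathcal S_i^{(\nset,J)}=\mathcal S^J$ for all two-element $J\subseteq\nset$. Each element of $\E^{\rm dec}(\nset)=\prod^M_{\emptyset\neq I\subseteq\nset}\E^I_I$ can be written, using the $n$ linear structures, as an iterated sum of ``pure'' tuples, i.e.\ tuples all of whose entries except one $a_I\in\E^I_I$ are zero. Since $\mathcal S_1(\nset)$ and $\mathcal S_2(\nset)$ are vector bundle morphisms in each of the $n$ directions, it suffices to check $\mathcal S_1=\mathcal S_2$ on such pure tuples. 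For $\#I=1$ this is $\Sigma$. For $\#I\geq 2$, choose any two-element subset $J\subseteq I$; then the pure tuple with entry $a_I$ lies in $(\E^{\rm dec})^{\rho_J}(I)$, and so its image under $\mathcal S_i(\nset)$ is determined by $\mathcal S_i^{(\nset,J)}(I)=\mathcal S^J(I)$, lifted by zero along the remaining direction. Hence $\mathcal S_1=\mathcal S_2$.

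For existence, this same analysis gives the construction: on a pure tuple $(0,\ldots,a_I,\ldots,0)$ with $\#I\geq 2$, pick any two-element $J\subseteq I$ and set
\[
\mathcal S(\nset)(0,\ldots,a_I,\ldots,0):=\mathbf 0^{\nset}_{\nset\setminus J}\circ \mathcal S^J(I)(\tilde a_I),
\]
where $\tilde a_I\in(\E^{\rm dec})^{\rho_J}(I)$ is the corresponding element and $\mathbf 0^{\nset}_{\nset\setminus J}$ denotes the iterated zero section embedding $\E^{\rho_J}(I)\hookrightarrow\E(\nset)$ (after first pulling back along the appropriate projections). For $\#I=1$ set $\mathcal S(\nset)(0,\ldots,a_I,\ldots,0):=\Sigma(\nset)(\ldots)$. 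Extend linearly through the several structures and check well-definedness: the output is independent of the choice of $J\subseteq I$ by the pairwise compatibility \eqref{compatible_dec} of the $\mathcal S^J$'s (which forces them to agree on common $(n-2)$-cores, hence on all common substructures containing the pure element), and it is independent of the order in which the $n$ linear structures are used to recombine the pure summands because each $\mathcal S^J$ is itself a morphism of $(n-1)$-fold vector bundles and $\Sigma$ is compatible with each $\mathcal S^J$ via \eqref{comp_sigma_S}.

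It then remains to verify three things, each essentially routine once the construction is in place: (i) naturality of $\mathcal S$ with respect to every projection $p^I_{I\setminus\{k\}}$, which follows because $\Sigma$ and each $\mathcal S^J$ are themselves natural transformations and the pure-summand formula above commutes with projections; (ii) the identities $\mathcal S(\{i\})=\id_{E_i}$ and $\mathcal S^{(I,I)}(I)=\id_{\E^I_I}$, both immediate from the pure-summand formula; (iii) the compatibilities $\Sigma=\mathcal S\circ\iota$ and $\mathcal S^{(\nset,J)}=\mathcal S^J$, which hold by construction. The main obstacle, and the only step requiring genuine care, is the independence of $J$ in the pure-summand formula for $\#I\geq 3$; this is where the compatibility hypothesis \eqref{compatible_dec} among the $\mathcal S^J$'s must be combined with the iterated-core machinery of Proposition \ref{l_cores_as_partitions}(3), so that any two choices $J_1,J_2\subseteq I$ lead to values that coincide after passing through the common $(n-2)$-core $\E^{\rho_{J_1}\sqcap\rho_{J_2}}$.
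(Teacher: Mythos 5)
The central reduction in your argument --- that a decomposition, being a vector bundle morphism in each of the $n$ directions, is determined by its values on ``pure'' tuples --- is false, and this gap infects both halves of the proof. Already for $n=2$ the decomposed double vector bundle $A_1\times_M A_2\times_M A_{12}$ contains the element $(a_1,a_2,0)$, which is \emph{not} an iterated sum of the pure tuples $(a_1,0,0)$ and $(0,a_2,0)$ in either of the two additive structures: those two pure tuples do not lie in a common fibre of either projection, so no addition between them is defined. This is precisely why a linear splitting of a double vector bundle is genuine extra data rather than being forced by the identity maps on the sides. In your setting the same phenomenon recurs one level down for every $n\geq 3$: an element of the core $(\E^{\rm dec})^{\rho_J}(\nset)$ whose ``side'' entries are nonzero but whose deeper core entries vanish (for instance, with $n=3$ and $J=\{1,2\}$, the tuple whose only nonzero entries are $a_{\{3\}}$ and $a_{\{1,2\}}$) is neither the vacant tuple, nor a pure tuple, nor an iterated sum of such. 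Consequently your uniqueness argument never sees the splitting parts of the core decompositions $\S^J$ --- two candidate decompositions can agree on the vacant part and on all pure tuples and still differ on such elements --- and your existence construction, which only ever evaluates $\S^J$ on pure elements before ``extending linearly'', neither defines $\mathcal S$ on all of $\E^{\rm dec}(\nset)$ nor can it be checked to restrict to the given $\S^J$ on the full cores.

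The correct generating set is the vacant part together with the \emph{full} cores $(\E^{\rm dec})^{\rho_J}$, and this is what the paper's proof uses: it adjoins the cores one at a time along a chain $\overline{\E}=\E^0\hookrightarrow\E^1\hookrightarrow\cdots\hookrightarrow\E^{\binom{n}{2}}=\E^{\rm dec}$, writing each $\mathbf x\in\E^{k+1}(\nset)$ as $\mathbf y\dvplus{}{\nset\setminus\{s\}}\bigl(\nvbzero{\nset}{p_s(\mathbf y)}\dvplus{}{\nset\setminus\{t\}}\mathbf z\bigr)$ with $\mathbf y\in\E^k(\nset)$ and $\mathbf z$ an element of the full core indexed by $J_{k+1}=\{s,t\}$ that retains the side components of $\mathbf y$. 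The genuinely hard step is then not the independence of a choice of two-element $J\subseteq I$ for a pure tuple, but the independence of the \emph{order} in which the $\binom{n}{2}$ cores are adjoined; this is where the compatibility \eqref{compatible_dec} on common $(l-2)$-cores enters, through a chain of interchange-law manipulations. You correctly identify \eqref{compatible_dec}, \eqref{comp_sigma_S} and the iterated-core combinatorics as the crux, but they must be applied to full core elements, not to pure tuples, and the well-definedness argument cannot be dismissed as routine --- it is the bulk of the paper's Appendix A.
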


The decomposition $\mathcal S\colon \E^{\rm dec}\to \E$ is recursively constructed as follows. 
Write $J_1, \ldots,J_{\binom{n}{2}}$ for the subsets of $\nset$ 
with $\# J_k=2$ and define an increasing chain of $\binom{n}{2}$ decomposed $n$-fold 
vector bundles: For $k=0,\ldots,\binom{n}{2}$  set $\A^k=(B^k_I)_{I\subseteq\nset}$ 
with \[B^k_I=\left\{\begin{array}{cl}
A_{\# I} &\quad \text{ if }\# I=1 \text{ or if there is 
$i\leq k$ such that } J_i\subseteq I;\\
M &\quad \text{ otherwise.}
\end{array}\right.
\]
Set
$\E^k:=\E^{\A^k}$.
There 
are obvious monomorphisms
\[\overline{\E}=\E^0\hookrightarrow \E^1\hookrightarrow\ldots
\hookrightarrow \E^{\binom{n}{2}}=\E^{\rm dec}\]
of $n$-fold vector bundles. In particular the spaces  $E^k:=\E^k(\nset)$ can be seen  
as submanifolds of $\E^{\rm dec}(\nset)$. Note that additionally 
$(\E^{\rm dec})^{\nset}_{J_i}(\nset)\subseteq \E^k(\nset)$ for all $i\leq k$. 

First set $\mathcal S^0:=\Sigma$.
Then take $k\geq 0$ and assume that a monomorphism $\mathcal S^k\colon\E^k\to \E$
 of $n$-fold vector bundles has already been constructed, that
restricts to $\Sigma$ on $\E^0$ and to $\mathcal S^{J_i}$ on
$E^k\cap(E^{\rm dec})^\nset_{J_i}$ for $i=1,\ldots,k$.
Take $\mathbf{x}=(x_I)_{I\subseteq\nset}\in \E^{k+1}(\nset)$ over a base point $m\in M$. Then in 
particular $x_I=0_m^{A_{\# I}}$ if $\#I\geq 2$ and there is no $i\leq k+1$ 
with $J_i\subseteq I$. Set $\mathbf{y}:=(y_I)_{I\subseteq\nset}\in \E^k(\nset)\subseteq \E^{\rm dec}(\nset)$ with 
\begin{equation}\label{y}
y_I=\left\{\begin{array}{cl}
x_I &\text{ if either $\# I=1$ or there is $i\leq k$ such that 
$J_i\subseteq I$},\\
 0^{A_{\# I}}_m& \text{  otherwise.} \end{array}\right.
\end{equation}
Set furthermore 
$\mathbf{z}:=(z_I)_{I\subseteq\nset}\in (\E^{\rm dec})^\nset_{J_{k+1}}$ where 
\begin{equation}\label{z}
z_I=\left\{\begin{array}{cl}y_I &\text{  whenever }
I\subseteq\nset\setminus J_{k+1}, \\
x_I& \text{ whenever $J_{k+1}\subseteq I$ 
and there is no $i\leq k$ with $J_i\subseteq I$,}\\
 0^{A_{\# I}}_m& \text{ otherwise. }
 \end{array}\right.
\end{equation}
 Then writing $J_{k+1}=\{s,t\}$, it is easy to check that 
\[
\mathbf{x}=\mathbf{y}\dvplus{}{\nset\setminus\{s\}}\left(
\nvbzero{\nset}{p_s(\mathbf{y})}\dvplus{}{\nset\setminus\{t\}}
\mathbf{z}\right)=\mathbf{y}\dvplus{}{\nset\setminus\{t\}}\left(
\nvbzero{\nset}{p_t(\mathbf{y})}\dvplus{}{\nset\setminus\{s\}}
\mathbf{z}\right)\,.
\] 
The last equality follows directly from the interchange law in the double vector bundle 
$(E;E_{\nset\setminus\{s\}},E_{\nset\setminus\{t\}};E_{\nset\setminus\{s,t\}})$
since $\mathbf{z}$ is in the core of this double vector bundle. 
The monomorphism $\mathcal S^{k+1}\colon \mathbb E^{k+1}\to \mathbb E$ is then defined by 
\begin{align*}
\mathcal S^{k+1}(\mathbf{x})&:=\mathcal S^k(\mathbf{y})\dvplus{}{\nset\setminus\{s\}}\left(
\nvbzero{\nset}{p_s\bigl(\mathcal S^k(\mathbf{y})\bigr)}\dvplus{}{\nset\setminus\{t\}}
\mathcal S^{J_{k+1}}(\mathbf{z})\right)\\
&= \mathcal S^k(\mathbf{y})\dvplus{}{\nset\setminus\{t\}}\left(
\nvbzero{\nset}{p_t\bigl(\mathcal S^k(\mathbf{y})\bigr)}\dvplus{}{\nset\setminus\{s\}}
\mathcal S^{J_{k+1}}(\mathbf{z})\right)\,.
\end{align*}
Repeating this construction $\binom{n}{2}$-times yields the decomposition $\mathcal S=\mathcal S^{\binom{n}{2}}\colon\E^{\rm dec}\to \E$ of $E$,
see \cite{HeJo20}. 
\medskip

The uniqueness of $\mathcal S$ in the statement needs to be checked next.
It is sufficient to show that $\mathcal S$  does not depend on the choice of numbering $J_1, \ldots, J_{\binom{n}{2}}$ of the subsets of $\nset$ with two elements. Since this fact is crucial for the proof below of the $S_n$-invariance of $\mathcal S$, its proof is done in detail here.
It suffices to show that for each $k=0,\ldots, \binom{n}{2}-2$, the monomorphism $\mathcal S^{k+2}$ does not depend on the order of the two subsets $J_{k+1}$ and $J_{k+2}$. Choose such a $k$ and set $J_{k+1}=\{s,t\}$ and $J_{k+2}=\{p,q\}$. Take $\mathbf{x}=(x_I)_{I\subseteq\nset}\in E^{k+2}$ over a base point $m\in M$. That is, $x_I=0_m^{A_{\# I}}$ if $\#I\geq 2$ and there is no $i\leq k+2$ 
with $J_i\subseteq I$. Set $\mathbf{y}:=(y_I)_{I\subseteq\nset}\in E^{k}\subseteq \E^{\rm dec}(\nset)$ with 
\[y_I=\left\{\begin{array}{cl}
x_I &\text{ if either $\# I=1$ or there is $1\leq i\leq k$ such that 
$J_i\subseteq I$},\\
 0^{A_{\# I}}_m& \text{  otherwise.} \end{array}\right.
\]
Define further $\mathbf{a}\in (\E^{\rm dec})^{J_{k+1}}$ and $\mathbf{b}\in (\E^{\rm dec})^{J_{k+2}}$ by 
\[a_I=\left\{\begin{array}{cl}x_I &\text{  if }
I\subseteq\nset\setminus J_{k+1} \text{ and } (\#I=1 \text{ or there exists } i\in\{1\,\ldots, k\}: J_i\subseteq I), \\
x_I& \text{ if  $J_{k+1}\subseteq I$ 
and there is no $i\in\{1,\ldots,k\}$ with $J_i\subseteq I$,}\\
0^{A_{\# I}}_m& \text{ otherwise, }
 \end{array}\right.
\]
and 
\[b_I=\left\{\begin{array}{cl}x_I &\text{  if }
I\subseteq\nset\setminus J_{k+2} \text{ and } (\#I=1 \text{ or there exists } i\in\{1\,\ldots, k,k+1\}: J_i\subseteq I), \\
x_I& \text{ if  $J_{k+2}\subseteq I$ 
and there is no $i\in\{1,\ldots,k,k+1\}$ with $J_i\subseteq I$,}\\
0^{A_{\# I}}_m& \text{ otherwise. }
 \end{array}\right.
\]
Then 
\[ \mathbf{u}:=\mathbf y\dvplus{}{\nset\setminus\{s\}}\left(\nvbzero{\nset}{p^{\nset}_{\nset\setminus\{s\}}(\mathbf{y})}\dvplus{}{\nset\setminus\{t\}}
\mathbf{a}\right)
\]
lies in $E^{k+1}$ and is described by
\[u_I=\left\{\begin{array}{cl}
x_I &\text{ if either $\# I=1$ or there is $1\leq i\leq k+1$ such that 
$J_i\subseteq I$},\\
 0^{A_{\# I}}_m& \text{  otherwise.} \end{array}\right.\]
The sum
\begin{equation}\label{first_variant_sum}
\mathbf u\dvplus{}{\nset\setminus\{p\}}\left(\nvbzero{\nset}{p^{\nset}_{\nset\setminus\{p\}}(\mathbf{u})}\dvplus{}{\nset\setminus\{q\}}
\mathbf{b}\right)=\left(\mathbf y\dvplus{}{\nset\setminus\{s\}}\left(\nvbzero{\nset}{p^{\nset}_{\nset\setminus\{s\}}(\mathbf{y})}\dvplus{}{\nset\setminus\{t\}}
\mathbf{a}\right)\right)
\dvplus{}{\nset\setminus\{p\}}\left(\nvbzero{\nset}{p^{\nset}_{\nset\setminus\{p\}}(\mathbf{u})}\dvplus{}{\nset\setminus\{q\}}
\mathbf{b}\right)
\end{equation}
then equals $\mathbf x$.

Now inverse the roles of $J_{k+1}$ and $J_{k+2}$.
Define  $\mathbf{c}\in (\E^{\rm dec})^{J_{k+2}}$ and $\mathbf{d}\in (\E^{\rm dec})^{J_{k+1}}$ by 
\[c_I=\left\{\begin{array}{cl}x_I &\text{  if }
I\subseteq\nset\setminus J_{k+2} \text{ and } (\#I=1 \text{ or there exists } i\in\{1\,\ldots, k\}: J_i\subseteq I), \\
x_I& \text{ if  $J_{k+2}\subseteq I$ 
and there is no $i\in\{1,\ldots,k\}$ with $J_i\subseteq I$,}\\
0^{A_{\# I}}_m& \text{ otherwise, }
 \end{array}\right.
\]
and 
\[d_I=\left\{\begin{array}{cl}x_I &\text{  if }
I\subseteq\nset\setminus J_{k+1} \text{ and } (\#I=1 \text{ or there exists } i\in\{1\,\ldots, k,k+2\}: J_i\subseteq I), \\
x_I& \text{ if  $J_{k+1}\subseteq I$ 
and there is no $i\in\{1,\ldots,k,k+2\}$ with $J_i\subseteq I$,}\\
0^{A_{\# I}}_m& \text{ otherwise. }
 \end{array}\right.
\]
Then 
\[ \mathbf{v}:=\mathbf y\dvplus{}{\nset\setminus\{p\}}\left(\nvbzero{\nset}{p^{\nset}_{\nset\setminus\{p\}}(\mathbf{y})}\dvplus{}{\nset\setminus\{q\}}
\mathbf{c}\right)
\]
is described by
\[v_I=\left\{\begin{array}{cl}
x_I &\text{ if either $\# I=1$ or there is $i\in\{1,\ldots,k, k+2\}$ such that 
$J_i\subseteq I$},\\
 0^{A_{\# I}}_m& \text{  otherwise.} \end{array}\right.\]
The sum
\begin{equation}\label{second_variant_sum}\mathbf v\dvplus{}{\nset\setminus\{s\}}\left(\nvbzero{\nset}{p^{\nset}_{\nset\setminus\{s\}}(\mathbf{v})}\dvplus{}{\nset\setminus\{t\}}
\mathbf{d}\right)=\left(\mathbf y\dvplus{}{\nset\setminus\{p\}}\left(\nvbzero{\nset}{p^{\nset}_{\nset\setminus\{p\}}(\mathbf{y})}\dvplus{}{\nset\setminus\{q\}}
\mathbf{c}\right)\right)
\dvplus{}{\nset\setminus\{s\}}\left(\nvbzero{\nset}{p^{\nset}_{\nset\setminus\{s\}}(\mathbf{v})}\dvplus{}{\nset\setminus\{t\}}
\mathbf{d}\right)
\end{equation}
then equals $\mathbf x$.
It is easy to check that the tuples $\mathbf a$, $\mathbf b$, $\mathbf c$ and $\mathbf d$ satisfy
\[ p^{\nset}_{\nset\setminus\{s\}}(\mathbf b)=p^{\nset}_{\nset\setminus\{s\}}(\mathbf c), \quad p^{\nset}_{\nset\setminus\{t\}}(\mathbf b)=p^{\nset}_{\nset\setminus\{t\}}(\mathbf c),
\]
as well as 
\[ p^{\nset}_{\nset\setminus\{p\}}(\mathbf a)=p^{\nset}_{\nset\setminus\{p\}}(\mathbf d), \quad \text{ and } \quad p^{\nset}_{\nset\setminus\{q\}}(\mathbf a)=p^{\nset}_{\nset\setminus\{q\}}(\mathbf d).
\]
It turns then out that 
\[ \mathbf b=\mathbf c\dvplus{}{\nset\setminus\{s\}}\left(\nvbzero{\nset}{p^{\nset}_{\nset\setminus\{s\}}(\mathbf{c})}\dvplus{}{\nset\setminus\{t\}}
\mathbf{e}
\right)=\mathbf c\dvplus{}{\nset\setminus\{t\}}\left(\nvbzero{\nset}{p^{\nset}_{\nset\setminus\{t\}}(\mathbf{c})}\dvplus{}{\nset\setminus\{s\}}
\mathbf{e}
\right)
\]
and also
\[ \mathbf d=\mathbf a\dvplus{}{\nset\setminus\{p\}}\left(\nvbzero{\nset}{p^{\nset}_{\nset\setminus\{p\}}(\mathbf{a})}\dvplus{}{\nset\setminus\{q\}}
\mathbf{e}
\right)=\mathbf a\dvplus{}{\nset\setminus\{q\}}\left(\nvbzero{\nset}{p^{\nset}_{\nset\setminus\{q\}}(\mathbf{a})}\dvplus{}{\nset\setminus\{p\}}
\mathbf{e}
\right)
\]
with $\mathbf{e}\in (\E^{\rm dec})^{\rho_{J_{k+1}}\sqcap\rho_{J_{k+2}}}$ defined by 
\[e_I=\left\{\begin{array}{cl}
x_I &\text{ if } I\subseteq \nset\setminus(J_{k+1}\cup J_{k+2}) \text{ and } (\#I=1 \text{ or there exists } i\in\{1,\ldots, k\}: J_i\subseteq I)\\
 x_I &\text{ if } J_{k+2}\subseteq I\subseteq \nset\setminus J_{k+1} \text{ and there is no }  i\in\{1,\ldots, k\} \text{ with } J_i\subseteq I\\
  x_I &\text{ if } J_{k+1}\subseteq I\subseteq \nset\setminus J_{k+2} \text{ and there is no }  i\in\{1,\ldots, k\} \text{ with } J_i\subseteq I\\
   x_I&\text{ if } J_{k+1}\cup J_{k+2}\subseteq I  \text{ and there is no }  i\in\{1,\ldots, k\} \text{ with } J_i\subseteq I\\
   0^{A_{\# I}}_m& \text{ otherwise. }
  \end{array}\right.\]
  
  \medskip
  
  Consider the second term of \eqref{first_variant_sum}. Compute
  \begin{equation*}
  \begin{split}
  \nvbzero{\nset}{p^{\nset}_{\nset\setminus\{p\}}(\mathbf{u})}&=\nvbzero{\nset}{p^{\nset}_{\nset\setminus\{p\}}\left(\mathbf y\dvplus{}{\nset\setminus\{s\}}\left(\nvbzero{\nset}{p^{\nset}_{\nset\setminus\{s\}}(\mathbf{y})}\dvplus{}{\nset\setminus\{t\}}
\mathbf{a}\right)\right)}\\
&=\nvbzero{\nset}{p^{\nset}_{\nset\setminus\{p\}}(\mathbf{y})\dvplus{}{\nset\setminus\{s,p\}}
\left(\nvbzero{\nset}{p^{\nset}_{\nset\setminus\{s,p\}}(\mathbf{y})}\dvplus{}{\nset\setminus\{t,p\}}p^{\nset}_{\nset\setminus\{p\}}(a)
\right)
}\\
&=\nvbzero{\nset}{p^{\nset}_{\nset\setminus\{p\}}(\mathbf{y})}\dvplus{}{\nset\setminus\{s\}}\left(
\nvbzero{\nset}{p^{\nset}_{\nset\setminus\{s,p\}}(\mathbf{y})}
\dvplus{}{\nset\setminus\{t\}}
\nvbzero{\nset}{p^{\nset}_{\nset\setminus\{p\}}(\mathbf{a})}
\right)
  \end{split}
  \end{equation*}
  and use $\mathbf b=\mathbf c\dvplus{}{\nset\setminus\{s\}}\left(\nvbzero{\nset}{p^{\nset}_{\nset\setminus\{s\}}(\mathbf{c})}\dvplus{}{\nset\setminus\{t\}}
\mathbf{e}
\right)$ as well as the interchange formula in the double vector bundle 
$\left(\E(\nset), \E(\nset\setminus\{s\}), \E(\nset\setminus\{q\}), \E(\nset\setminus\{s,q\})\right)$ in order to get
\begin{equation}\label{first_intermediate_J_k+1J_k+2}
\begin{split}
 \nvbzero{\nset}{p^{\nset}_{\nset\setminus\{p\}}(\mathbf{u})}\dvplus{}{\nset\setminus\{q\}}
\mathbf{b}
&=\left[\nvbzero{\nset}{p^{\nset}_{\nset\setminus\{p\}}(\mathbf{y})}\dvplus{}{\nset\setminus\{s\}}\left(
\nvbzero{\nset}{p^{\nset}_{\nset\setminus\{s,p\}}(\mathbf{y})}
\dvplus{}{\nset\setminus\{t\}}
\nvbzero{\nset}{p^{\nset}_{\nset\setminus\{p\}}(\mathbf{a})}
\right)\right]\dvplus{}{\nset\setminus\{q\}}\left[
\mathbf c\dvplus{}{\nset\setminus\{s\}}\left(\nvbzero{\nset}{p^{\nset}_{\nset\setminus\{s\}}(\mathbf{c})}\dvplus{}{\nset\setminus\{t\}}
\mathbf{e}\right)
\right]\\
&=\left[\nvbzero{\nset}{p^{\nset}_{\nset\setminus\{p\}}(\mathbf{y})}\dvplus{}{\nset\setminus\{q\}} \mathbf c
\right]\dvplus{}{\nset\setminus\{s\}} \left[\left(
\nvbzero{\nset}{p^{\nset}_{\nset\setminus\{s,p\}}(\mathbf{y})}
\dvplus{}{\nset\setminus\{t\}}
\nvbzero{\nset}{p^{\nset}_{\nset\setminus\{p\}}(\mathbf{a})}
\right)\dvplus{}{\nset\setminus\{q\}}\left(\nvbzero{\nset}{p^{\nset}_{\nset\setminus\{s\}}(\mathbf{c})}\dvplus{}{\nset\setminus\{t\}}
\mathbf{e}\right)
\right].
\end{split}
\end{equation}
With the interchange law in the double vector bundle $\left(\E(\nset), \E(\nset\setminus\{t\}), \E(\nset\setminus\{q\}), \E(\nset\setminus\{t,q\})\right)$, the second term on the right-hand face  becomes
\begin{equation*}
\begin{split}
 &\left(
\nvbzero{\nset}{p^{\nset}_{\nset\setminus\{s,p\}}(\mathbf{y})}
\dvplus{}{\nset\setminus\{t\}}
\nvbzero{\nset}{p^{\nset}_{\nset\setminus\{p\}}(\mathbf{d})}
\right)\dvplus{}{\nset\setminus\{q\}}\left(\nvbzero{\nset}{p^{\nset}_{\nset\setminus\{s\}}(\mathbf{c})}\dvplus{}{\nset\setminus\{t\}}
\mathbf{e}\right)\\
&=
 \left(
\nvbzero{\nset}{p^{\nset}_{\nset\setminus\{s,p\}}(\mathbf{y})}
\dvplus{}{\nset\setminus\{q\}}\nvbzero{\nset}{p^{\nset}_{\nset\setminus\{s\}}(\mathbf{c})}\right)
\dvplus{}{\nset\setminus\{t\}}
\left(\nvbzero{\nset}{p^{\nset}_{\nset\setminus\{p\}}(\mathbf{d})}\dvplus{}{\nset\setminus\{q\}}
\mathbf{e}\right).
\end{split}
\end{equation*}
Then in \eqref{first_variant_sum}, this yields against with the interchange law,
\begin{equation*}
\begin{split}
\mathbf x&=\left(\mathbf y\dvplus{}{\nset\setminus\{s\}}\left(\nvbzero{\nset}{p^{\nset}_{\nset\setminus\{s\}}(\mathbf{y})}\dvplus{}{\nset\setminus\{t\}}
\mathbf{a}\right)\right)
\dvplus{}{\nset\setminus\{p\}}\left(\nvbzero{\nset}{p^{\nset}_{\nset\setminus\{p\}}(\mathbf{u})}\dvplus{}{\nset\setminus\{q\}}
\mathbf{b}\right)\\
&=\left[\mathbf y\dvplus{}{\nset\setminus\{s\}}\left(\nvbzero{\nset}{p^{\nset}_{\nset\setminus\{s\}}(\mathbf{y})}\dvplus{}{\nset\setminus\{t\}}
\mathbf{a}\right)\right]\\
&\quad \dvplus{}{\nset\setminus\{p\}}\left[\left(\nvbzero{\nset}{p^{\nset}_{\nset\setminus\{p\}}(\mathbf{y})}\dvplus{}{\nset\setminus\{q\}} \mathbf c
\right)\dvplus{}{\nset\setminus\{s\}}\left( \left(
\nvbzero{\nset}{p^{\nset}_{\nset\setminus\{s,p\}}(\mathbf{y})}
\dvplus{}{\nset\setminus\{q\}}\nvbzero{\nset}{p^{\nset}_{\nset\setminus\{s\}}(\mathbf{c})}\right)
\dvplus{}{\nset\setminus\{t\}}
\left(\nvbzero{\nset}{p^{\nset}_{\nset\setminus\{p\}}(\mathbf{d})}\dvplus{}{\nset\setminus\{q\}}
\mathbf{e}\right)\right)\right]\\
&=\left[\mathbf y\dvplus{}{\nset\setminus\{p\}}\left(\nvbzero{\nset}{p^{\nset}_{\nset\setminus\{p\}}(\mathbf{y})}\dvplus{}{\nset\setminus\{q\}} \mathbf c
\right)
\right]\\
&\quad \dvplus{}{\nset\setminus\{s\}}\left[
\left(\nvbzero{\nset}{p^{\nset}_{\nset\setminus\{s\}}(\mathbf{y})}\dvplus{}{\nset\setminus\{t\}}
\mathbf{a}\right)
\dvplus{}{\nset\setminus\{p\}}\left( \left(
\nvbzero{\nset}{p^{\nset}_{\nset\setminus\{s,p\}}(\mathbf{y})}
\dvplus{}{\nset\setminus\{q\}}\nvbzero{\nset}{p^{\nset}_{\nset\setminus\{s\}}(\mathbf{c})}\right)
\dvplus{}{\nset\setminus\{t\}}
\left(\nvbzero{\nset}{p^{\nset}_{\nset\setminus\{p\}}(\mathbf{a})}\dvplus{}{\nset\setminus\{q\}}
\mathbf{e}\right)\right)\right].
\end{split}
\end{equation*}
Comparing the second term with \eqref{first_intermediate_J_k+1J_k+2}, by exchanging the roles of $\mathbf a$ and  $\mathbf c$, and of $\mathbf v$ with $\mathbf u$ yields  then 
\[ \mathbf x= \left[\mathbf y\dvplus{}{\nset\setminus\{p\}}\left(\nvbzero{\nset}{p^{\nset}_{\nset\setminus\{p\}}(\mathbf{y})}\dvplus{}{\nset\setminus\{q\}} \mathbf c
\right)
\right] \dvplus{}{\nset\setminus\{s\}}\left[
\nvbzero{\nset}{p^{\nset}_{\nset\setminus\{p\}}(\mathbf{v})}\dvplus{}{\nset\setminus\{q\}}
\mathbf{d}
\right]
\]
which is \eqref{second_variant_sum}.
The computation above shows that \eqref{first_variant_sum} and \eqref{second_variant_sum} can be obtained from each other via repeated uses of the interchange formula. Applying $\mathcal S^{k+2}$ to \eqref{first_variant_sum}
is then computing  
\begin{equation*}
\begin{split}
\mathcal S^{k+2}(\mathbf x)=&\mathcal S^{k+1}
\left(\mathbf y\dvplus{}{\nset\setminus\{s\}}\left(\nvbzero{\nset}{p^{\nset}_{\nset\setminus\{s\}}(\mathbf{y})}\dvplus{}{\nset\setminus\{t\}}
\mathbf{a}\right)\right)
\dvplus{}{\nset\setminus\{p\}}\left(\nvbzero{\nset}{p^{\nset}_{\nset\setminus\{p\}}(\mathcal S^{k+1}(\mathbf{u}))}\dvplus{}{\nset\setminus\{q\}}
\mathcal S^{J_{k+2}}(\mathbf{b})\right)\\
=&
\left(\mathcal S^k(\mathbf y)\dvplus{}{\nset\setminus\{s\}}\left(\nvbzero{\nset}{p^{\nset}_{\nset\setminus\{s\}}(\mathcal S^k(\mathbf{y}))}\dvplus{}{\nset\setminus\{t\}}
\mathcal S^{J_{k+1}}(\mathbf{a})\right)\right)
\dvplus{}{\nset\setminus\{p\}}\left(\nvbzero{\nset}{p^{\nset}_{\nset\setminus\{p\}}(\mathcal S^{k+1}(\mathbf{u}))}\dvplus{}{\nset\setminus\{q\}}
\mathcal S^{J_{k+2}}(\mathbf{b})\right)
\end{split}
\end{equation*}
and further transforming this as above using the interchange formulas shows that
this equals 
\begin{equation*}
\begin{split}
&\left(\mathcal S^k(\mathbf y)\dvplus{}{\nset\setminus\{p\}}\left(\nvbzero{\nset}{p^{\nset}_{\nset\setminus\{p\}}(\mathcal S^k(\mathbf{y}))}\dvplus{}{\nset\setminus\{q\}} \mathcal S^{J_{k+2}}(\mathbf c)
\right)
\right) \dvplus{}{\nset\setminus\{s\}}\\
&\quad \left(
\nvbzero{\nset}{p^{\nset}_{\nset\setminus\{p\}}\left(\mathcal S^k(\mathbf y)\dvplus{}{\nset\setminus\{p\}}\left(\nvbzero{\nset}{p^{\nset}_{\nset\setminus\{p\}}(\mathcal S^k(\mathbf{y}))}\dvplus{}{\nset\setminus\{q\}} \mathcal S^{J_{k+2}}(\mathbf c)
\right)\right)}
\dvplus{}{\nset\setminus\{q\}}
\mathcal S^{J_{k+1}}(\mathbf{d})
\right)
\end{split}
\end{equation*}
if and only if 
\[\mathcal S^{J_{k+1}}(\mathbf e)=\mathcal S^{J_{k+2}}(\mathbf e).
\]
But since $\mathbf e$ lies in $(\E^{\rm dec})^{\rho_{J_{k+1}}\sqcap\rho_{J_{k+2}}}$ and $\mathcal S^{J_{k+1}}$ and 
$\mathcal S^{J_{k+2}}$ are compatible as in \eqref{compatible_dec}, this equality is immediate.

  
\section{On the existence of symmetric linear splittings and symmetric decompositions}\label{proof_dec_sym_nvb}

This section proves that
any symmetric $n$-fold vector bundle admits a (non-canonical) symmetric linear
decomposition. The existence of a linear decomposition is proved in \cite{HeJo20}, see also Section \ref{fix_of_3.6}
so it remains to show that a \emph{symmetric} decomposition can always be chosen.

Recall that a  linear splitting $\Sigma$ of an $n$-fold vector bundle $\E$ and
decompositions $\mathcal S^{\rho}$ of the $(n-1)$-cores of $\E$ are called \textbf{compatible}
if \eqref{compatible_dec} and \eqref{comp_sigma_S} hold.
If $\E$ is now a symmetric $n$-fold vector bundle with symmetric structure 
$\Phi$,  the decompositions of the $(n-1)$-cores of $\E$ as above are furthermore 
\textbf{symmetrically compatible}, if additionally
\begin{equation}\label{sym_comp_cond}
\left(\Phi_\sigma\right)^{\rho}\circ \S^{\rho}
=\S^{\sigma(\rho)}\circ\left(\Psi^{\rm dec}_\sigma\right)^{\rho}
\end{equation}
for all $\sigma\in S_n$ and $I\subseteq\nset$ with $\# I=2$. For simplicity, the restriction of $\Phi_\sigma$, $\Psi_\sigma^{\rm dec}$ to iterated highest order cores of $\E$ and $\E^{\rm dec}$ are just written $\Phi_\sigma$ and respectively $\Psi_\sigma^{\rm dec}$ for $\sigma\in S_n$. That is,
\eqref{sym_comp_cond} is just written 
\[\Phi_\sigma\circ \S^{\rho}
=\S^{\sigma(\rho)}\circ\Psi^{\rm dec}_\sigma.
\]

\begin{proposition}\label{thm_split_dec_n}
\begin{enumerate}
\item[(a)] Let $\mathcal S$ be a symmetric decomposition of an $n$-fold vector 
bundle $\E\colon\square^n\to \Man$. Then the composition 
$\Sigma=\mathcal S\circ\iota\colon\overline{\E}\to\E$, with 
$\iota$ defined as in (3) of Definition \ref{decomposed_nfvb}, is a symmetric splitting of $\E$. 
Furthermore, the core morphisms $\mathcal S^{\rho_J}\colon(\E^{\rm dec})^{\rho_{J}}\to\E^{\rho_J}$
are decompositions of $\E^{\rho_J}$ for all $J\subseteq\nset$ with $\# J=2$ 
and 
these decompositions and the linear splitting are symmetrically compatible as in \eqref{sym_comp_cond}.

\item[(b)] Conversely, given a symmetric linear splitting $\Sigma$ of $\E$ and compatible, 
	symmetrically compatible decompositions $\mathcal S^{J}$ of the highest order cores 
	$\E^{\rho_J}$
    for $J\subseteq \nset$ with $\# J=2$, there 
	exists a unique symmetric decomposition $\mathcal S$ of $\E$ such that 
	$\Sigma=\mathcal S\circ\iota$ and such that the induced core morphisms $\mathcal S^{\rho_J}$
	of $\mathcal S$ equal $\mathcal S^{J}$ 
	for all $J$. 
\end{enumerate}
\end{proposition}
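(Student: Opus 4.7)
Part (a) should be a direct verification. The hypothesis that $\mathcal S\colon\E^{\rm dec}\to\E$ is a morphism of symmetric $n$-fold vector bundles means it is $S_n$-equivariant; moreover the canonical inclusion $\iota\colon\overline\E\to\E^{\rm dec}$ is easily seen to be $S_n$-equivariant directly from the formulas for $\iota$ and for the actions on $\overline\E$ and $\E^{\rm dec}$ in Section \ref{dec_sym_nvb} (the sign factors $\epsilon(\sigma^{-1},\{i\})$ on singletons are trivial). Hence $\Sigma=\mathcal S\circ\iota$ is $S_n$-equivariant, i.e.~a symmetric linear splitting. That each core morphism $\mathcal S^{\rho_J}$ is a decomposition of $\E^{\rho_J}$ is already contained in Theorem \ref{thm_split_dec_n_non_sym}(a). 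The symmetric compatibility \eqref{sym_comp_cond} is then automatic: restricting the equivariance identity $\Phi_\sigma\circ\mathcal S=\mathcal S^\sigma\circ\Psi^{\rm dec}_\sigma$ to iterated highest order cores as in Lemma \ref{second_result_iterated_cores} yields $\Phi_\sigma\circ\mathcal S^\rho=\mathcal S^{\sigma(\rho)}\circ\Psi^{\rm dec}_\sigma$ on the $\rho$-core for every partition $\rho$.

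For part (b), I plan first to invoke Proposition \ref{thm_split_dec_n_general} to obtain a unique decomposition $\mathcal S\colon\E^{\rm dec}\to\E$ of $\E$ as an ordinary $n$-fold vector bundle with $\Sigma=\mathcal S\circ\iota$ and $\mathcal S^{\rho_J}=\mathcal S^J$ for every $2$-subset $J$. The whole remaining task is to show that this $\mathcal S$ is $S_n$-equivariant, i.e.~that $\Phi_\sigma\circ\mathcal S=\mathcal S^\sigma\circ\Psi^{\rm dec}_\sigma$ for every $\sigma\in S_n$. The strategy is to produce, for each fixed $\sigma$, a second decomposition of $\E$ extending the same data and to conclude via the uniqueness clause of Proposition \ref{thm_split_dec_n_general}.

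Concretely, I would set
\[
\tilde{\mathcal S}_\sigma:=(\Phi_\sigma)^{-1}\circ\mathcal S^\sigma\circ\Psi^{\rm dec}_\sigma\colon\E^{\rm dec}\to\E,
\]
which is a morphism of $n$-fold vector bundles whose value on each singleton $\{i\}$ is the identity. I would then check (i) $\tilde{\mathcal S}_\sigma\circ\iota=\Sigma$ and (ii) $(\tilde{\mathcal S}_\sigma)^{\rho_J}=\mathcal S^J$ for every $J\subseteq\nset$ with $\#J=2$. Uniqueness in Proposition \ref{thm_split_dec_n_general} will then force $\tilde{\mathcal S}_\sigma=\mathcal S$, giving the equivariance. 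For (i):
\[
\tilde{\mathcal S}_\sigma\circ\iota=(\Phi_\sigma)^{-1}\circ\mathcal S^\sigma\circ\iota^\sigma\circ\overline\Psi_\sigma=(\Phi_\sigma)^{-1}\circ\Sigma^\sigma\circ\overline\Psi_\sigma=\Sigma,
\]
using the $S_n$-equivariance of $\iota$, the identity $\Sigma^\sigma=\mathcal S^\sigma\circ\iota^\sigma$ obtained by $\sigma$-flipping $\Sigma=\mathcal S\circ\iota$, and the symmetry of $\Sigma$. For (ii), the $\rho_J$-core restriction of $\tilde{\mathcal S}_\sigma$ is $((\Phi_\sigma)^{-1})^{\rho_J}\circ(\mathcal S^\sigma)^{\rho_J}\circ(\Psi^{\rm dec}_\sigma)^{\rho_J}$, which under the identification $\sigma(\rho_J)=\rho_{\sigma(J)}$ from Proposition \ref{l_cores_as_partitions} becomes exactly $(\Phi_\sigma)^{-1}\circ\mathcal S^{\sigma(J)}\circ\Psi^{\rm dec}_\sigma$ restricted to the $\rho_J$-core; this equals $\mathcal S^J$ by the hypothesised symmetric compatibility \eqref{sym_comp_cond} of the family $\{\mathcal S^J\}$.

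The main obstacle I anticipate is not conceptual but rather the careful bookkeeping of the several $\sigma$-flipped functors and natural transformations, and in particular verifying cleanly that $\sigma$-flipping commutes with the passage to iterated highest order cores via the relabelling $\sigma(\rho_J)=\rho_{\sigma(J)}$ so that $(\mathcal S^\sigma)^{\rho_J}$ really does coincide with $\mathcal S^{\sigma(J)}$ up to the $\sigma$-flip. No genuinely new idea beyond Proposition \ref{thm_split_dec_n_general} is required; the hypotheses \eqref{compatible_dec}, \eqref{comp_sigma_S} and the extra symmetric compatibility \eqref{sym_comp_cond} are precisely what is needed to ensure that the auxiliary morphisms $\tilde{\mathcal S}_\sigma$ satisfy the hypotheses of the uniqueness statement, for every $\sigma$ simultaneously.
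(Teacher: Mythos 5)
Your part (a) follows the paper's argument essentially verbatim: the $S_n$-equivariance of $\iota$ gives the symmetry of $\Sigma=\mathcal S\circ\iota$, and restricting the equivariance identity for $\mathcal S$ to the highest order cores gives \eqref{sym_comp_cond}. For part (b) you take a genuinely different and shorter route. The paper never forms the conjugate $\tilde{\mathcal S}_\sigma=(\Phi_\sigma)^{-1}\circ\mathcal S^\sigma\circ\Psi^{\rm dec}_\sigma$; instead it re-enters the recursive construction of Appendix \ref{dec_splittings}, builds for each $\sigma$ a second chain $\E^0_\sigma\hookrightarrow\cdots\hookrightarrow\E^{\binom{n}{2}}_\sigma$ of decomposed bundles using the permuted ordering $\sigma(J_1),\ldots,\sigma(J_{\binom{n}{2}})$ of the two-element subsets, and proves by induction on $k$ that $\Phi_\sigma\circ\mathcal S^k=(\mathcal S^k_\sigma)^\sigma\circ\Psi^{\rm dec}_\sigma$, the identification of the endpoint $\mathcal S^{\binom{n}{2}}_\sigma$ with $\mathcal S$ being supplied by the order-independence of the construction. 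Your argument compresses that induction into a single application of the uniqueness clause of Proposition \ref{thm_split_dec_n_general}: you check that $\tilde{\mathcal S}_\sigma$ is again a decomposition restricting to $\Sigma$ and to the $\mathcal S^J$ (the latter exactly by \eqref{sym_comp_cond}), and conclude $\tilde{\mathcal S}_\sigma=\mathcal S$. The two proofs are logically close --- the uniqueness clause is itself established in Appendix \ref{dec_splittings} precisely via the order-independence on which the paper's induction rests --- but yours packages the bookkeeping more cleanly and makes transparent that \eqref{sym_comp_cond} is used only once, to match the core restrictions. Two points you should make explicit when writing it out: first, $\tilde{\mathcal S}_\sigma$ must be verified to be a decomposition in the sense of Definition \ref{def_n-dec}, i.e.\ its induced $(I,I)$-core morphisms are identities; this holds because the sign $\epsilon(\sigma,I)$ contributed by $(\Psi^{\rm dec}_\sigma)^I_I$ cancels against the one contributed by the inverse of $\Phi_\sigma$ on $\E^{\sigma(I)}_{\sigma(I)}$. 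Second, the identification $(\mathcal S^\sigma)^{\rho_J}=(\mathcal S^{\rho_{\sigma(J)}})^\sigma$ rests on $\sigma(\rho_J)=\rho_{\sigma(J)}$ together with Lemma \ref{second_result_iterated_cores}; you flag this bookkeeping correctly, and it does go through.
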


\begin{proof}\begin{enumerate}
\item[(a)]
  Consider a symmetric decomposition $\mathcal S\colon \E^{\rm dec}\to\E$.  
 Then the composition $\Sigma=\mathcal S\circ\iota$ is a linear splitting of $\mathbb E$ and for all $J\subseteq \nset$ with $\# J=2$ the restrictions $\mathcal S^{\rho_J}$ are decompositions of $\mathbb E^{\rho_J}$ that are compatible with $\Sigma$, see Proposition 3.3 in \cite{HeJo20}.
 
Since $\iota$ is $S_n$-invariant, $\Sigma$ is clearly symmetric:
\begin{equation*}
\begin{split}
\Sigma(I)\circ \overline \Psi_\sigma(I)&=\mathcal S(I)\circ \iota(I)\circ  \overline\Psi_\sigma(I)=\mathcal S(I)\circ  \Psi_\sigma^{\rm dec}(I)\circ \iota(I) \\
&=\Phi_\sigma(I)\circ\mathcal S(I)\circ \iota(I)=\Phi_\sigma(I)\circ\Sigma(I)
\end{split}
\end{equation*}
for $\sigma\in S_n$ and $I\subseteq \nset$. It remains to show that the core morphisms are symmetrically compatible.
But this follows immediately by the definition of these objects and by the $S_n$-invariance of $\mathcal S$ and $\iota$:
\begin{equation*}
\begin{split}
\Phi_\sigma\circ \S^{\rho_J}&=\left.
\left(\Phi_\sigma\circ \mathcal S\right)\right\an{(\E^{\rm dec})^{\rho_J}}=\left.\left(\S\circ\Psi_\sigma^{\rm dec}\right)\right\an{(\E^{\rm dec})^{\rho_J}}
\\
&=\S^{\rho_{\sigma(J)}}\circ\left(\Psi^{\rm dec}_\sigma\right)^{J}
\end{split}
\end{equation*}
for all $J\subseteq \nset$ with $\# J=2$.

\item[(b)] Conversely, assume that an $S_n$-invariant splitting $\Sigma$ of $\E$ 
and compatible, symmetrically compatible decompositions $\mathcal S^J$ of the cores $\E^{\rho_J}$ 
with $J\subseteq \nset$, $\# J=2$ are given as in (b) and \eqref{sym_comp_cond}.
Proposition 3.3 of \cite{HeJo20} (see Proposition \ref{thm_split_dec_n_general} in Section \ref{dec_splittings} above) shows the existence of a unique decomposition $\mathcal S$ of $\E$ that restricts in the sense 
of (b) to $\Sigma$ and the core decompositions $\mathcal S^J$. It remains to check that this decomposition is symmetric.

Going back to the proof of Proposition \ref{thm_split_dec_n_general} in Section \ref{dec_splittings}, fix once and for all the numbering of the sets $J_1, \ldots, J_{\binom{n}{2}}$. For each $\sigma\in S_n$, the list of subsets 
\[ J^\sigma_1:=\sigma(J_1), \ldots, J^\sigma_{\binom{n}{2}}:=\sigma\left(J_{\binom{n}{2}}\right)
\]
is again a numbering of the subsets of $\nset$ with $2$ elements. Repeating the construction in Section \ref{dec_splittings} for this new choice yields 
again for each $k=1,\ldots, \binom{n}{2}$ a family $\mathcal A^k_\sigma=(B^{k,\sigma}_I)_{I\subseteq \nset}$,  
\begin{equation*}\label{new_B_sigma}
\begin{split}
B^{k,\sigma}_I&=\left\{\begin{array}{cl}
A_{\# I} &\quad \text{ if }\# I=1 \text{ or if there is 
$i\leq k$ such that } \sigma(J_i)\subseteq I;\\
M &\quad \text{ otherwise.}
\end{array}\right.
\end{split}
\end{equation*}
A new chain of decomposed $n$-fold vector bundles
\[ \E^0=\overline{\E}=:\E^0_\sigma\hookrightarrow \E^1_\sigma\hookrightarrow\ldots
\hookrightarrow \E^{\binom{n}{2}}_\sigma=\E^{\rm dec}=\E^{\binom{n}{2}},
\]
is defined by the families $\mathcal A^k_\sigma$, $k=0, \ldots, \binom{n}{2}$, as well as a family of monomorphisms
\[ \mathcal S^0_\sigma=\mathcal S^0=\Sigma\colon\overline{\E}\to\E, \quad \mathcal S^1_\sigma\colon \E^1_\sigma\to\E, \quad \ldots, \quad \mathcal S^{\binom{n}{2}}_\sigma=\mathcal S^{\binom{n}{2}}=\mathcal S\colon \E^{\rm dec} \to \E
\]
of $n$-fold vector bundles, where the first and last morphisms are $\Sigma$ and $\mathcal S$, respectively, the first by construction, and the latter since $\mathcal S$ does not depend on the choice of ordering of the subsets of $\nset$ with two elements, by Appendix \ref{dec_splittings}.

Consider  $\mathbf{x}=(x_J)_{J\subseteq I}\in \E^k_\sigma(I)\subseteq\E^{\rm dec}(I)$. Then there is an $m\in M$ such that for all $J\subseteq I$:
\[ x_J\left\{\begin{array}{ll}
=0^{A_{\# J}}_m & \text{ if } \# J\geq 2 \text{ and there exists no } i\in\{1,\ldots, k\} \text{ with } J_i\subseteq J\\
\in A_{\# J}(m) & \text{ otherwise}.  
\end{array}\right.
\]
For $\sigma\in S_n$ the image $\mathbf x'$ of this point $\mathbf x$ under $\Psi^{\rm dec} _\sigma(I)$ lives in $\E^{\rm dec}(\sigma(I))$ and its coordinates satisfy
\begin{equation*}
\begin{split}
x'_J&=\epsilon(\sigma\inv,J)x_{\sigma\inv(J)}\\
&\left\{\begin{array}{ll}
=0^{A_{\# J}}_m & \text{ if } \# J\geq 2 \text{ and there exists no } i\in\{1,\ldots, k\} \text{ with } J_i\subseteq \sigma\inv(J)\\
\in A_{\# J}(m) & \text{ otherwise}
\end{array}\right.
\end{split}
\end{equation*}
for all $J\subseteq \sigma(I)$.
Hence 
\begin{equation*}
\begin{split}
x'_J&=\epsilon(\sigma\inv,J)x_{\sigma\inv(J)}\\
&\left\{\begin{array}{ll}
=0^{A_{\# J}}_m & \text{ if } \# J\geq 2 \text{ and there exists no } i\in\{1,\ldots, k\} \text{ with } \sigma(J_i)\subseteq J\\
\in A_{\# J}(m) & \text{ otherwise}
\end{array}\right.
\end{split}
\end{equation*}
for all $J\subseteq \sigma(I)$, which shows that $\mathbf x'=\Psi^{\rm dec}_\sigma(\mathbf x)\in \E^k_\sigma(\sigma(I))$, and so that 
$\Psi^{\rm dec}_\sigma$ restricts to a natural isomorphism
\[ \Psi^{\rm dec}_\sigma\colon \E^k\to (\E^k_\sigma)^\sigma.
\]

Now since $\E^{\binom{n}{2}}_\sigma=\E^{\binom{n}{2}}=\E^{\rm dec}$ and $\S^{\binom{n}{2}}_\sigma=\mathcal S^{\binom{n}{2}}=\mathcal S$, it suffices to show inductively that for $k=0,\ldots, \binom{n}{2}$ and for $\sigma\in S_n$ the  diagram \[
\begin{tikzcd}
\E^k\ar[r,"\S^k"] \ar[d,"\Psi^{\rm dec}_\sigma"] & \E\ar[d,"\Psi_\sigma"]\\
\left(\E_\sigma^k\right)^\sigma \ar[r,"(\S^k_\sigma)^\sigma "] & \E^\sigma
\end{tikzcd} 
\]
of morphisms of $n$-fold
vector bundles commutes.
It is enough to check that at the top level, i.e.~to check that
\[\begin{tikzcd}
\E^k(\nset)\ar[r,"\S^k(\nset)"] \ar[d,"\Psi^{\rm dec}_\sigma(\nset)"] & \E(\nset)\ar[d,"\Psi_\sigma(\nset)"]\\
\E^k_\sigma(\nset) \ar[r,"\S^k_\sigma(\nset)"] & \E(\nset)
\end{tikzcd} 
\]
commutes.
For $k=0$ this is obviously the case since $\Sigma\colon \overline{\E}\to\E$ is a symmetric linear splitting of $\E$. 
Assume that the diagram commutes for a $k\in\left\{0, \ldots, \binom{n}{2}-1\right\}$, and choose as above $\mathbf{x}=(x_I)_{I\subseteq\nset}\in \E^{k+1}(\nset)$ over a base point $m\in M$. Define $\mathbf{y}:=(y_I)_{I\subseteq\nset}\in \E^k(\nset)\subseteq \E^{\rm dec}(\nset)$ and $\mathbf{z}:=(z_I)_{I\subseteq\nset}\in (\E^{\rm dec})^\nset_{J_{k+1}}$ as in \eqref{y} and \eqref{z}. As above  $\mathbf x':=\Psi^{\rm dec}_\sigma(\mathbf x)$ is then an element 
of $\E^{k+1}_\sigma(\nset)$ and the corresponding objects $\mathbf{y}'\in \E^k_\sigma(\nset)$ and $\mathbf{z}'\in (\E^{\rm dec})^\nset_{\sigma(J_{k+1})}$
are then $\Psi^{\rm dec}_\sigma(\mathbf y)$ and $\Psi^{\rm dec}_\sigma(\mathbf z)$, respectively.
Then
\begin{equation*}
\begin{split}
\Psi_{\sigma}(\mathcal S^{k+1}(\mathbf{x}))&=\Psi_\sigma\left(\mathcal S^k(\mathbf{y})\dvplus{}{\nset\setminus\{s\}}\left(
\nvbzero{\nset}{p_s\bigl(\mathcal S^k(\mathbf{y})\bigr)}\dvplus{}{\nset\setminus\{t\}}
\mathcal S^{J_{k+1}}(\mathbf{z})\right)\right)\\
&=\Psi_\sigma\left(\mathcal S^k(\mathbf{y})\right)
\dvplus{}{\nset\setminus\{\sigma(s)\}}
\left(\nvbzero{\nset}{p_{\sigma(s)}\bigl(\Psi_\sigma\left(\mathcal S^k(\mathbf{y})\right)\bigr)}\dvplus{}{\nset\setminus\{\sigma(t)\}}
\left(\Psi_\sigma\right)^{\nset}_{J_{k+1}}\left(\mathcal S^{J_{k+1}}(\mathbf{z})\right)\right)\\
&= \mathcal S^k_\sigma\left(\Psi^{\rm dec}_\sigma(\mathbf{y})\right)
\dvplus{}{\nset\setminus\{\sigma(s)\}}
\left(\nvbzero{\nset}{p_{\sigma(s)}\bigl(\mathcal S^k_\sigma\left(\Psi^{\rm dec}_\sigma(\mathbf{y})\right)\bigr)}\dvplus{}{\nset\setminus\{\sigma(t)\}}
\mathcal S^{\sigma(J_{k+1})}\left(\left(\Psi_\sigma^{\rm dec}\right)^{\nset}_{J_{k+1}}(\mathbf{z})\right)\right)\\
&=\mathcal S^{k+1}_\sigma\left(\Psi^{\rm dec}_\sigma(\mathbf{x})\right)
\end{split}
\end{equation*}
follows from the induction hypothesis and \eqref{sym_comp_cond}. \qedhere
\end{enumerate}
\end{proof}

The following is a more general version of the second statement of the previous theorem, which is the first step of the proof a symmetric decomposition of a symmetric $n$-fold vector bundle.

\begin{proposition}\label{core_split_to_dec}
Let $\E\colon \square^n\to \Man$ be a symmetric $n$-fold vector bundle, and consider an $l$-partition $\rho$ of $\nset$ for $1\leq l\leq n$. Consider for each partition $\rho'\in \{\sigma(\rho)\mid \sigma \in S_n\}$ a linear splitting $\Sigma^{\rho'}\colon \overline{\E^{\rho'}}\to \E^{\rho'}$. For each coarsement $\underline\rho$ in $(l-1)$ subsets of a partition $\rho'$ in the $S_n$-orbit of $\rho$ take a linear decomposition $\mathcal S^{\underline \rho}\colon (\E^{\rm dec})^{\underline\rho}\to \E^{\underline\rho}$ of the $(l-1)$-core $\E^{\underline\rho}$ of $\E$. Assume that:
\begin{enumerate}
\item For each $\sigma\in S_n$,
the diagram 
\[\begin{tikzcd}
\overline{\E^\rho}\ar[r,"\Sigma^\rho"] \ar[d,"\Psi^{\rm dec}_\sigma  "] & \E^\rho\ar[d,"\Phi_\sigma"]\\
\overline{\E^{\sigma(\rho)}}^\sigma \ar[r,"(\Sigma^{\sigma(\rho)})^\sigma"] & (\E^{\sigma(\rho)}) ^\sigma
\end{tikzcd} 
\]
 commutes.
\item For each $\sigma\in S_n$ and each coarsement $\underline \rho$ of $\rho$
\[  \Phi_\sigma\circ \S^{\underline\rho}
=\S^{\sigma(\underline\rho)}\circ \Psi^{\rm dec}_\sigma\colon (\E^{\rm dec})^{\underline\rho}\to \left(\E^{\sigma(\underline\rho)}\right)^\sigma.
\]
\item For two coarsements $\rho_1$ and $\rho_2$ of $\rho$ in $l-1$ subsets the two decompositions $\S^{\rho_1}$ and $\S^{\rho_2}$ are compatible as in \eqref{compatible_dec}.
\item $\Sigma^\rho$ is compatible with $\S^{\underline\rho}$ as in \eqref{comp_sigma_S} for each coarsement $\underline \rho$ of $\rho$ in $l-1$ subsets.
\end{enumerate}
Then Proposition \ref{thm_split_dec_n_general}  yields for each $\sigma\in S_n$ a decomposition 
\[ \mathcal S^{\sigma(\rho)}\colon (\E^{\rm dec})^{\sigma(\rho)}\to \E^{\sigma(\rho)}
\]
of the $l$-core $\E^{\sigma(\rho)}$ of $\E$, that restricts to $\Sigma^{\sigma(\rho)}$ and the decompositions of the appropriate $(l-1)$-cores of $\E$.
These decompositions satisfy
\[\Phi_\sigma(I)\circ \S^{\rho}(I)
=\S^{\sigma(\rho)}(\sigma(I))\circ\Psi^{\rm dec}_\sigma(I)\colon (\E^{\rm dec})^{\rho}(I)\to \E^{\sigma(\rho)}(\sigma(I))
\]
for all $\sigma\in S_n$ and all $I\in\Obj(\lozenge^\rho)$.
\end{proposition}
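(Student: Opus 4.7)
The plan is to mimic the structure of the proof of Proposition \ref{thm_split_dec_n}(b), but at the level of the $l$-core $\mathbb{E}^\rho$ rather than of the full $n$-fold vector bundle $\mathbb{E}$. The first step is to check that for each $\sigma \in S_n$ the hypotheses of Proposition \ref{thm_split_dec_n_general} (applied to the $l$-fold vector bundle $\mathbb{E}^{\sigma(\rho)}$ in place of $\mathbb{E}$) are met: one needs a linear splitting of $\mathbb{E}^{\sigma(\rho)}$, together with pairwise compatible decompositions of all its $(l-1)$-cores, and this family must be mutually compatible with the splitting. The splitting is $\Sigma^{\sigma(\rho)}$ and the $(l-1)$-core decompositions are the $\mathcal{S}^{\underline{\rho'}}$ with $\underline{\rho'}$ an $(l-1)$-coarsement of $\sigma(\rho)$. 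Pairwise compatibility of the latter follows from hypothesis (3) applied to $\rho$, transported along $\sigma$ via hypothesis (2), together with the observation that the $(l-2)$-partition $\rho_1 \sqcap \rho_2$ satisfies $\sigma(\rho_1) \sqcap \sigma(\rho_2) = \sigma(\rho_1 \sqcap \rho_2)$. Compatibility of $\Sigma^{\sigma(\rho)}$ with these $(l-1)$-decompositions follows in the same way from hypotheses (1) and (4).

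Applying Proposition \ref{thm_split_dec_n_general} then yields, for each $\sigma \in S_n$, a unique decomposition $\mathcal{S}^{\sigma(\rho)}\colon (\mathbb{E}^{\rm dec})^{\sigma(\rho)} \to \mathbb{E}^{\sigma(\rho)}$ restricting to $\Sigma^{\sigma(\rho)}$ and to the prescribed $(l-1)$-core decompositions. The heart of the proof is then the verification of the equivariance
\[
\Phi_\sigma(I) \circ \mathcal{S}^\rho(I) = \mathcal{S}^{\sigma(\rho)}(\sigma(I)) \circ \Psi^{\rm dec}_\sigma(I)
\]
for all $I \in \operatorname{Obj}(\lozenge^\rho)$, which I would do at the top level $I = \bigcup \rho = \nset$ since everything is determined by the top map. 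Number the $(l-1)$-coarsements of $\rho$ as $\underline{\rho}_1, \ldots, \underline{\rho}_N$ (there are $N = \binom{l}{2}$ of them). As in Appendix \ref{dec_splittings}, $\mathcal{S}^\rho$ arises as the terminal term of a chain of monomorphisms $\mathcal{S}^{\rho,k}$ built recursively from $\Sigma^\rho$ by fibered sums along $\mathcal{S}^{\underline{\rho}_{k+1}}$. Applying $\Phi_\sigma$ to the recursive formula
\[
\mathcal{S}^{\rho,k+1}(\mathbf{x}) = \mathcal{S}^{\rho,k}(\mathbf{y}) \dvplus{}{*}\bigl(\mathbf{0} \dvplus{}{*} \mathcal{S}^{\underline{\rho}_{k+1}}(\mathbf{z})\bigr)
\]
and using that $\Phi_\sigma$ is a morphism of $l$-fold vector bundles (so it commutes with the two fibered sums, sending $\dvplus{}{K}$ to $\dvplus{}{\sigma(K)}$), together with the inductive equivariance of $\mathcal{S}^{\rho,k}$ and the equivariance of $\mathcal{S}^{\underline{\rho}_{k+1}}$ from hypothesis (2), reduces the left-hand side to the corresponding recursive expression built for $\sigma(\rho)$ and ordering $\sigma(\underline{\rho}_1), \ldots, \sigma(\underline{\rho}_N)$, applied to $\Psi^{\rm dec}_\sigma(\mathbf{x})$. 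The base case $k=0$ is exactly hypothesis (1).

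The main subtlety -- and the step I expect to require the most care -- is that the list $(\sigma(\underline{\rho}_1), \ldots, \sigma(\underline{\rho}_N))$ is in general not the ordering that one would naturally choose a priori when running the construction of $\mathcal{S}^{\sigma(\rho)}$. Here the order-independence result proved at length in Appendix \ref{dec_splittings} (the uniqueness part of Proposition \ref{thm_split_dec_n_general}, established via iterated applications of the interchange law in the appropriate double vector bundles together with the compatibility of the $(l-1)$-core decompositions on their common $(l-2)$-cores) intervenes decisively: it ensures that the recursive construction for $\sigma(\rho)$ delivers the same decomposition $\mathcal{S}^{\sigma(\rho)}$ regardless of the ordering of the $(l-1)$-coarsements of $\sigma(\rho)$, so in particular it produces $\mathcal{S}^{\sigma(\rho)}$ when run with the ordering inherited from $\sigma$. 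Once this is in hand, the inductive step closes and the equivariance follows.
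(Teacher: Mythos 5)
Your proposal is correct and follows essentially the same route as the paper's proof: induction over the recursive chain of intermediate monomorphisms from Appendix \ref{dec_splittings}, with hypothesis (1) as the base case, the inductive step closed by the compatibility of $\Phi_\sigma$ with the fibered additions together with hypothesis (2), and the order-independence of the construction (the uniqueness part of Proposition \ref{thm_split_dec_n_general}) invoked to identify the endpoint of the $\sigma$-transported chain with $\mathcal S^{\sigma(\rho)}$. Your explicit remark that hypotheses (3) and (4) must first be transported along $\sigma$ via (1) and (2) before Proposition \ref{thm_split_dec_n_general} can be applied to $\E^{\sigma(\rho)}$ is a point the paper leaves implicit, and is correctly handled.
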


\begin{proof}
The proof is very similar to the proof of the second statement in Proposition \ref{thm_split_dec_n}. Write $\rho=\{I_1, \ldots, I_l\}$ and build all sets 
\[ J_1, \ldots, J_{\binom l 2} \in \Obj\left(\lozenge^\rho\right),
\]
each consisting in the union of exactly two elements of $\rho$. Then for each $\sigma\in S_n$ 
\[ J_1^\sigma:=\sigma(J_1), \ldots, J^\sigma_{\binom l 2}:=\sigma\left(J_{\binom l 2}\right)
\]
is such a list of subsets for the partition $\sigma(\rho)=\{\sigma(I_1), \ldots, \sigma(I_l)\}$ of $\nset$. Each such subset $J_i^\sigma$ defines a coarsement $\underline\rho$ of $\sigma(\rho)$, namely the one with $J_i^\sigma$ replacing the two elements of $\sigma(\rho)$ it is a union of. The corresponding splitting 
\[ \mathcal S^{\underline\rho}\colon \E^{{\rm dec}, \underline\rho} \to \E^{\underline\rho}
\]
which is chosen in the hypotheses of the theorem is named $\mathcal S^{J_i^\sigma}$ in the following.

As in Section \ref{dec_splittings} these lists yield for each $\sigma\in S_n$ and each $k=1,\ldots, \binom{l}{2}$ a family $\mathcal A^k_\sigma=(B^{k,\sigma}_I)_{I\in \Obj(\lozenge^{\sigma(\rho)})}$,  
\begin{equation*}\label{new_B_sigma}
\begin{split}
B^{k,\sigma}_I&=\left\{\begin{array}{cl}
A_{\# I} &\quad \text{ if }I\in\sigma(\rho) \text{ or if there is 
$i\leq k$ such that } \sigma(J_i)\subseteq I;\\
M &\quad \text{ otherwise.}
\end{array}\right.
\end{split}
\end{equation*}
A chain of decomposed $l$-fold vector bundles $\lozenge^{\sigma(\rho)}\to \Man$
\[ \overline{\E^{\sigma(\rho)}}=:\E^0_\sigma\hookrightarrow \E^1_\sigma\hookrightarrow\ldots
\hookrightarrow \E^{\binom{l}{2}}_\sigma:=\E^{{\rm dec}, \sigma(\rho)},
\]
is defined by the families $\mathcal A^k_\sigma$, $k=0, \ldots, \binom{l}{2}$, as well as a family of monomorphisms
\[ \mathcal S^0_\sigma=\Sigma^{\sigma(\rho)}\colon\overline{\E^{\sigma(\rho)}}\to\E^{\sigma(\rho)}, \quad \mathcal S^1_\sigma\colon \E^1_\sigma\to\E^{\sigma(\rho)}, \quad \ldots, \quad \mathcal S^{\binom{l}{2}}_\sigma=\mathcal S^{\sigma(\rho)}\colon \E^{{\rm dec}, \sigma(\rho)} \to \E^{\sigma(\rho)}
\]
of $l$-fold vector bundles $\lozenge^{\sigma(\rho)}\to \Man$, where the first and last morphisms are $\Sigma^{\sigma(\rho)}$ and $\mathcal S^{\sigma(\rho)}$, respectively.

Consider  \[\mathbf{x}=(x_J)_{\substack{J\subseteq I\\ J\in\Obj(\lozenge^{\rho})}}\in \E^k_{\id}(I)\]
for some $k\in\{1,\ldots,\binom l 2\}$ and for $\sigma=\id\in S_n$.
Then as before there is an $m\in M$ such that for all $J\subseteq I$ with $J\in\Obj(\lozenge^{\rho})$:
\[ x_J\left\{\begin{array}{ll}
=0^{A_{\# J}}_m & \text{ if } J\not\in\rho \text{ and there exists no } i\in\{1,\ldots, k\} \text{ with } J_i\subseteq J\\
\in A_{\# J}(m) & \text{ otherwise}.  
\end{array}\right.
\]
For $\sigma\in S_n$ the image $\mathbf x'$ of this point $\mathbf x$ under $\Psi^{\rm dec} _\sigma$ lives in $\E^{\rm dec, \sigma(\rho)}(\sigma(I))$ and its coordinates satisfy
\begin{equation*}
\begin{split}
x'_J&=\epsilon(\sigma\inv,J)x_{\sigma\inv(J)}\\
&\left\{\begin{array}{ll}
=0^{A_{\# J}}_m & \text{ if } \sigma\inv(J)\not \in\rho \text{ and there exists no } i\in\{1,\ldots, k\} \text{ with } J_i\subseteq \sigma\inv(J)\\
\in A_{\# J}(m) & \text{ otherwise}
\end{array}\right.
\end{split}
\end{equation*}
for all $J\subseteq \sigma(I)$ with $J\in\Obj(\lozenge^{\sigma(\rho)})$.
Hence 
\begin{equation*}
\begin{split}
x'_J&=\epsilon(\sigma\inv,J)x_{\sigma\inv(J)}\\
&\left\{\begin{array}{ll}
=0^{A_{\# J}}_m & \text{ if } J\not\in\sigma(\rho) \text{ and there exists no } i\in\{1,\ldots, k\} \text{ with } \sigma(J_i)\subseteq J\\
\in A_{\# J}(m) & \text{ otherwise}
\end{array}\right.
\end{split}
\end{equation*}
for all $J\subseteq \sigma(I)$ with $J\in\Obj(\lozenge^{\sigma(\rho)})$, which shows that $\mathbf x'=\Psi^{\rm dec}_\sigma(\mathbf x)\in \E^k_\sigma(\sigma(I))$, and so that 
$\Psi^{\rm dec}_\sigma$ restricts to a natural isomorphism
\[ \Psi^{\rm dec}_\sigma\colon \E^k\to (\E^k_\sigma)^\sigma
\]
of the functors $\E^k, (\E^k_\sigma)^\sigma\colon \lozenge^\rho\to \Man$.

Now since $\E^{\binom{l}{2}}=(\E^{\rm dec})^{\rho}$, $\E^{\binom{l}{2}}_\sigma=(\E^{\rm dec})^{\sigma(\rho)}$ and $\S^{\binom{l}{2}}_{\id}=\mathcal S^\rho$ as well as $\mathcal S^{\binom{l}{2}}_\sigma=\mathcal S^{\sigma(\rho)}$, it suffices to show inductively that for $k=0,\ldots, \binom{l}{2}$ and for $\sigma\in S_n$ the  diagram \[
\begin{tikzcd}
\E^k_{\id}\ar[r,"\S^k_{\id}"] \ar[d,"\Psi^{\rm dec}_\sigma"] & \E^\rho\ar[d,"\Phi^\rho_\sigma"]\\
\left(\E_\sigma^k\right)^\sigma \ar[r,"(\S^k_\sigma)^\sigma "] & (\E^{\sigma(\rho)})^\sigma
\end{tikzcd} 
\]
of morphisms of $l$-fold
vector bundles commutes.
It is enough to check this at the top level, i.e.~to check that
\[\begin{tikzcd}
\E^k_{\id}(\nset)\ar[r,"\S^k_{\id}(\nset)"] \ar[d,"\Psi^{\rm dec}_\sigma(\nset)"] & \E^\rho(\nset)\ar[d,"\Phi^\rho_\sigma(\nset)"]\\
\E^k_\sigma(\nset) \ar[r,"\S^k_\sigma(\nset)"] & \E^{\sigma(\rho)}(\nset)
\end{tikzcd} 
\]
commutes. 
For $k=0$ this is given by the first assumption in the theorem.
Assume that the diagram commutes for a $k\in\left\{0, \ldots, \binom{l}{2}-1\right\}$, and choose \[\mathbf{x}=(x_I)_{ I\in\operatorname{Obj}(\lozenge^\rho)}\in \E^{k+1}_{\id}(\nset)\] over a base point $m\in M$. Define $\mathbf{y}:=(y_I)_{I\in\operatorname{Obj}(\lozenge^\rho)}\in \E^k_{\id}(\nset)\subseteq \E^{\rm dec, \rho}(\nset)$
by
\[y_I=\left\{\begin{array}{cl}
x_I &\text{ if  $I\in \rho$ or there is $i\leq k$ such that 
$J_i\subseteq I$},\\
 0^{A_{\# I}}_m& \text{  otherwise.} \end{array}\right.
\]
 and $\mathbf{z}:=(z_I)_{I\in\operatorname{Obj}(\lozenge^\rho)}\in \E^{{\rm dec}, \rho_{J_{k+1}}}(\nset)$
 by
\[z_I=\left\{\begin{array}{cl}y_I &\text{  whenever }
I\subseteq\nset\setminus J_{k+1}, \\
x_I& \text{ whenever $J_{k+1}\subseteq I$ 
and there is no $i\leq k$ with $J_i\subseteq I$,}\\
 0^{A_{\# I}}_m& \text{ otherwise. }
 \end{array}\right.
\]
Here, $\rho_{J_{k+1}}$ is the $(l-1)$-coarsement of $\rho$ obtained by replacing by $J_{k+1}$ the two sets $J_{k+1}$ is the union of.
  Then writing $J_{k+1}=I_s\cup I_t$ with $1\leq s<t\leq l$, it is easy to check that 
\[
\mathbf{x}=\mathbf{y}\dvplus{}{\nset\setminus I_s}\left(
\nvbzero{\nset}{p^{\nset}_{\nset\setminus I_s}(\mathbf{y})}\dvplus{}{\nset\setminus I_t}
\mathbf{z}\right)=\mathbf{y}\dvplus{}{\nset\setminus I_t}\left(
\nvbzero{\nset}{p^{\nset}_{\nset\setminus I_t}(\mathbf{y})}\dvplus{}{\nset\setminus I_s}
\mathbf{z}\right)\,,
\] 
and 
$\mathcal S^{k+1}_{\id}(\mathbf{x})$ is then defined by
\[\mathcal S^{k+1}_{\id}(\mathbf{x})=\mathcal S^k_{\id}(\mathbf{y})\dvplus{}{\nset\setminus I_s}\left(
\nvbzero{\nset}{p^{\nset}_{\nset\setminus I_s}\bigl(\mathcal S^k_{\id}(\mathbf{y})\bigr)}\dvplus{}{\nset\setminus I_t}
\mathcal S_{\id}^{J^{\id}_{k+1}}(\mathbf{z})\right)
\]
 As above  $\mathbf x':=\Psi^{\rm dec}_\sigma(\mathbf x)$ is then an element 
of $\E^{k+1}_\sigma(\nset)$ and the corresponding objects $\mathbf{y}'\in \E^k_\sigma(\nset)$ and $\mathbf{z}'\in {\E^{{\rm dec}, \rho_{J^\sigma_{k+1}}}}(\nset)$
are then $\Psi^{\rm dec}_\sigma(\mathbf y)$ and $\Psi^{{\rm dec}}_\sigma(\mathbf z)$, respectively.
Then
\begin{equation*}
\begin{split}
\Phi_{\sigma}\left(\mathcal S^{k+1}_{\id}(\mathbf{x})\right)&=\Phi_\sigma\left(\mathcal S^k_{\id}(\mathbf{y})\dvplus{}{\nset\setminus I_s}\left(
\nvbzero{\nset}{p^{\nset}_{\nset\setminus I_s}\bigl(\mathcal S^k_{\id}(\mathbf{y})\bigr)}\dvplus{}{\nset\setminus I_t}
\mathcal S_{\id}^{J^{\id}_{k+1}}(\mathbf{z})\right)\right)\\
&=\Phi_\sigma\left(\mathcal S_{\id}^k(\mathbf{y})\right)
\dvplus{}{\nset\setminus\sigma(I_s)}
\left(\nvbzero{\nset}{p^{\nset}_{\nset\setminus\sigma(I_s)}\bigl(\Phi_\sigma\left(\mathcal S_{\id}^k(\mathbf{y})\right)\bigr)}\dvplus{}{\nset\setminus\sigma(I_t)}
\Phi_\sigma\left(\mathcal S^{J^{\id}_{k+1}}(\mathbf{z})\right)\right)\\
&= \mathcal S^k_\sigma\left(\Psi^{\rm dec}_\sigma(\mathbf{y})\right)
\dvplus{}{\nset\setminus\sigma(I_s)}
\left(\nvbzero{\nset}{p^{\nset}_{\nset\setminus\sigma(I_s)}\bigl(\mathcal S^k_\sigma\left(\Psi^{\rm dec}_\sigma(\mathbf{y})\right)\bigr)}\dvplus{}{\nset\setminus\sigma(I_t)}
\mathcal S^{J^\sigma_{k+1}}\left(\Psi_\sigma^{\rm dec}(\mathbf{z})\right)\right)\\
&=\mathcal S^{k+1}_\sigma\left(\Psi^{\rm dec}_\sigma(\mathbf{x})\right)
\end{split}
\end{equation*}
follows from the induction hypothesis and \eqref{sym_comp_cond}. \qedhere
\end{proof}

\begin{proposition}\label{missing_step_sym}
Let $\E\colon\square^n\to \Man$ be a symmetric $n$-fold vector bundle
and consider an $l$-partition $\rho$ of $\nset$.
Assume that 
\begin{enumerate}
\item For each $\rho'$ in the $S_n$-orbit of $\rho$ and for each $(l-1)$-coarsement $\underline{\rho}$ of $\rho'$ there is
a decomposition $\mathcal S^{\underline{\rho}}\colon \E^{{\rm dec}, {\underline{\rho}}}\to \E^{\underline{\rho}}$.
\item For each pair $\rho_1, \rho_2$
of $(l-1)$-coarsements of a partition $\rho'$ in the $S_n$-orbit of $\rho$, the decompositions
$\mathcal S^{\rho_1}$ and $\mathcal S^{\rho_2}$ are compatible as in \eqref{compatible_dec}.
\item For each $\sigma\in S_n$ and each coarsement $\underline\rho$ of $\rho$
\[  \Phi_\sigma\circ \S^{\underline\rho}
=(\S^{\sigma(\underline\rho)})^\sigma\circ\Psi^{\rm dec}_\sigma\colon (\E^{\rm dec})^{\underline\rho}\to \E^{\sigma(\underline\rho)}.
\]
\end{enumerate}
Then for each $\rho'$ in the $S_n$-orbit of $\rho$ there exists a linear splitting 
\[ \Sigma^{\rho'}\colon \overline{\E^{\rho'}}\to \E^{\rho'}
\]
of $\E^{\rho'}$ that is compatible as in \eqref{comp_sigma_S} with all decompositions $\mathcal S^{\underline{\rho}}$ for all coarsements $\underline{\rho}$ of $\rho'$, and such that 
\begin{equation*}
  \Phi_\sigma\circ \Sigma^{\rho}=(\Sigma^{\sigma(\rho)})^\sigma\circ \Psi_\sigma.
  \end{equation*}
  for all $\sigma\in S_n$.
\end{proposition}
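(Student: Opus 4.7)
My plan is to build $\Sigma^\rho$ for one chosen representative $\rho$ of the $S_n$-orbit and then propagate it across the orbit by equivariance, symmetrising along the way over the stabilizer $G := \{\tau \in S_n : \tau(\rho) = \rho\}$ of $\rho$. First I would, forgetting the $S_n$-symmetry momentarily, apply the non-symmetric Proposition~\ref{missing_step} to the $l$-fold vector bundle $\E^\rho$: its $(l-1)$-cores are precisely the $\E^{\underline\rho}$ for $(l-1)$-coarsements $\underline\rho$ of $\rho$, and the pairwise compatibility of the $\mathcal S^{\underline\rho}$ required by that proposition is supplied by hypothesis~(2). This produces an initial, not yet equivariant, linear splitting $\Sigma^\rho_0\colon\overline{\E^\rho}\to\E^\rho$ satisfying \eqref{comp_sigma_S} against every $\mathcal S^{\underline\rho}$.

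Next, for each $\tau\in G$ I would consider the operator $F_\tau(\Sigma) := (\Phi_\tau)^{-1}\circ\Sigma^\tau\circ\Psi_\tau$ on linear morphisms $\overline{\E^\rho}\to\E^\rho$. A direct computation using the action identity $\Psi_{\sigma\tau} = (\Psi_\sigma)^\tau\circ\Psi_\tau$ gives $F_{\tau'}\circ F_\tau = F_{\tau\tau'}$, so that the average
\[
\Sigma^\rho := \frac{1}{|G|}\sum_{\tau\in G} F_\tau(\Sigma^\rho_0)
\]
satisfies $F_{\tau'}(\Sigma^\rho)=\Sigma^\rho$ for all $\tau'\in G$, i.e.~$\Sigma^\rho$ is $G$-equivariant. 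On the building bundle $\E^{I_i}_{I_i}$ each summand $F_\tau(\Sigma^\rho_0)$ restricts to $\epsilon(\tau,I_i)^2\cdot\id = \id$ by Definition~\ref{def_sym_nvb}(c) (using that $\tau\in G$ permutes the elements of $\rho$ by preserving their cardinalities), so $\Sigma^\rho$ is still a linear splitting of $\E^\rho$.

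The transport step is then routine: for each $\sigma\in S_n$, define $\Sigma^{\sigma(\rho)}\colon\overline{\E^{\sigma(\rho)}}\to\E^{\sigma(\rho)}$ to be the unique linear morphism satisfying $\Phi_\sigma\circ\Sigma^\rho = (\Sigma^{\sigma(\rho)})^\sigma\circ\Psi_\sigma$. Well-definedness reduces to the observation that any two $\sigma_1,\sigma_2$ with $\sigma_1(\rho)=\sigma_2(\rho)$ differ by an element of $G$, and $\Sigma^\rho$ is $G$-invariant by the previous step. Compatibility of $\Sigma^{\sigma(\rho)}$ with each $\mathcal S^{\underline{\sigma(\rho)}}$ follows formally by combining compatibility of $\Sigma^\rho$ with $\mathcal S^{\underline\rho}$, the equivariance identity of hypothesis~(3), and the (trivial) $S_n$-equivariance of the canonical embedding $\iota\colon\overline\E\to\E^{\rm dec}$.

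The main obstacle is to verify that the averaging in the second step preserves compatibility with each individual decomposition $\mathcal S^{\underline\rho}$. Naively, $F_\tau$ exchanges compatibility with $\mathcal S^{\tau(\underline\rho)}$ for compatibility with $\mathcal S^{\underline\rho}$, so the worry is that the averaged splitting might only be compatible ``on average''. The key is that $\Sigma^\rho_0$ is simultaneously compatible with $\mathcal S^{\underline\rho'}$ for \emph{every} $(l-1)$-coarsement $\underline\rho'$ of $\rho$, and in particular with $\mathcal S^{\tau(\underline\rho)}$. Expanding $F_\tau(\Sigma^\rho_0)(J) = (\Phi_\tau)^{-1}(J)\circ\Sigma^\rho_0(\tau(J))\circ\Psi_\tau(J)$ on an object $J$ in the face $\{I_k\mid k\ne s,t\}$ of $\underline\rho = \{I_s\cup I_t, I_k\mid k\ne s,t\}$, substituting compatibility of $\Sigma^\rho_0$ with $\mathcal S^{\tau(\underline\rho)}$ at $\tau(J)$, and then applying the equivariance identity $\mathcal S^{\tau(\underline\rho)}(\tau(J))\circ\Psi^{\rm dec}_\tau(J) = \Phi_\tau(J)\circ\mathcal S^{\underline\rho}(J)$ from hypothesis~(3) together with the relation $\iota(\tau(J))\circ\Psi_\tau(J)=\Psi^{\rm dec}_\tau(J)\circ\iota(J)$, the factors of $\Phi_\tau$ and $\Psi^{\rm dec}_\tau$ telescope to yield
\[
F_\tau(\Sigma^\rho_0)(J) = \mathcal S^{\underline\rho}(J)\circ\iota(J).
\]
Thus every summand already realises the desired compatibility, and so does the average $\Sigma^\rho$. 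An identical manipulation with an arbitrary $\sigma\in S_n$ in place of $\tau\in G$ shows the analogous compatibility for the transported $\Sigma^{\sigma(\rho)}$ in the third step, completing the proof.
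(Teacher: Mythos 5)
Your overall strategy is essentially the paper's: start from the non-symmetric existence result, observe that each conjugate $F_\tau(\Sigma^\rho_0)$ is again compatible with the core decompositions, and symmetrise by averaging over the group. Your telescoping computation showing $F_\tau(\Sigma^\rho_0)(J)=\mathcal S^{\underline\rho}(J)\circ\iota(J)$ on the relevant faces is correct and is exactly the mechanism that makes the averaging compatible with the $\mathcal S^{\underline\rho}$; averaging over the stabiliser and then transporting, instead of averaging over all of $S_n$ directly as the paper does, is only a cosmetic difference, and the transport step is fine.

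The genuine gap is that the expression $\frac{1}{|G|}\sum_{\tau\in G}F_\tau(\Sigma^\rho_0)$ is not a well-defined morphism $\overline{\E^\rho}\to\E^\rho$ as written. The total space $\E^\rho(\nset)$ carries $l$ distinct additions $\dvplus{}{\nset\setminus I_i}$, and a fibrewise sum of the values $F_\tau(\Sigma^\rho_0)(\nset)(\mathbf x)$ in the structure over $\E^\rho(\nset\setminus I_i)$ only makes sense if all summands have the same image under $p^{\nset}_{\nset\setminus I_i}$, i.e.\ if $F_\tau(\Sigma^\rho_0)(\nset\setminus I_i)$ is independent of $\tau$. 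It is not: your compatibility argument pins the conjugates down only on objects $J\subseteq\nset\setminus(I_s\cup I_t)$, whereas on the $(l-1)$-faces $\nset\setminus I_i$ and at the top the conjugates of a non-equivariant $\Sigma^\rho_0$ genuinely differ, since Proposition \ref{missing_step} gives no equivariance there. The averaging therefore has to be carried out level by level from the bottom up: first average the induced splittings on the sides $\E_{\rho',\nset\setminus I}$, summing in the vector bundle structure over $\E_{\rho',\nset\setminus(I\cup K)}$ for a choice of $K$, and check both that the result is independent of $K$ (via the interchange law) and that it is still a monomorphism; then reassemble these into a splitting of $\E^{\rho'}$ by the extension result of \cite{HeJo20}; and only then average at the top level. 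These well-definedness and injectivity verifications constitute the technical core of the paper's proof of this proposition and are what your one-line average elides.
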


\begin{proof}
Recall that the following claim is proved in \cite[Theorem 3.5]{HeJo20}, see also 
Section \ref{fix_of_3.6}:
Given an $n$-fold vector bundle $\E$, 
with linear splittings
$\Sigma_I$ of $\E^{I,\emptyset}$ for all $I\subsetneq \nset$ with $\# I=n-1$, such that
$\Sigma_{I_1}(J)=\Sigma_{I_2}(J)$ whenever $J\subseteq I_1\cap I_2$, then there exists a  linear
  splitting $\Sigma$ of $\E$ with $\Sigma(J)=\Sigma_{I}(J)$
  whenever $J\subseteq I\subseteq\nset$.
  
  \medskip

For a partition $\rho$ of $\nset$ each $I\in\operatorname{Obj}(\lozenge^\rho)$ defines a face $\E^{\rho, I}$ of $\E^\rho$. It is the restriction of $\E$ to 
the full subcategory of $\lozenge^\rho$ with objects contained in $I$. 

\medskip

Choose an $l$-partition $\rho$ of $\nset$.   Take an $l$-partition $\rho'=\{I_1,\ldots, I_l\}$ of $\nset$ in the $S_n$-orbit of $\rho$ and
choose an $(l-1)$-coarsement $\underline\rho$ of $\rho'$. Then there exist $1\leq s<t\leq l$ such that 
\[\underline \rho=\{I_s\cup I_t, I_1, \ldots, I_{s-1}, I_{s+1}, \ldots, I_{t-1}, I_{t+1}, \ldots, I_l\}=:\rho_{st}.\]
The decomposition $\mathcal S^{\underline\rho}$ of $\E^{\underline\rho}$ defines then a decomposition of the face  $\E_{\rho', \nset\setminus(I_s\cup I_t)}$ of $\E^{\rho'}$ since $\E_{\rho', \nset\setminus(I_s\cup I_t)}=\E_{\underline\rho, \nset\setminus(I_s\cup I_t)}$ is also a face of $\E^{\underline{\rho}}$.
Denote by $\Sigma_{\rho',\nset\setminus (I_s\cup I_t)}$ the induced linear splitting of $\E_{\rho',\nset\setminus (I_s\cup I_t)}$.

Fix $s\in \{1,\ldots, l\}$. Then as above the $(l-2)$-fold vector bundles $\E_{\rho',\nset\setminus (I_s\cup I_t)}$, for $t\in \lset\setminus \{s\}$,  are all sides of the $(l-1)$-fold 
vector bundle $\E_{\rho', \nset\setminus I_s}$. 
Choose $t,r\in \lset\setminus\{s\}$. Then \[\rho_{st}\sqcap\rho_{sr}=\{I_s\cup I_r\cup I_t\}\cup \{I_x\mid x\in\lset\setminus\{r,s,t\}\}.\]
Since $\mathcal S^{\rho_{st}}$ and $\mathcal S^{\rho_{sr}}$ coincide on the common core $\E^{\rho_{st}\sqcap \rho_{sr}}$ of $\E^{\rho_{st}}$ and $\E^{\rho_{sr}}$ by \eqref{compatible_dec}, 
they coincide in particular on its face $\E_{\rho',\nset\setminus (I_s\cup I_r\cup I_t)}$.
Hence $\mathcal S^{\rho_{st}}\an{\nset\setminus(I_s\cup I_r\cup I_t)}=\mathcal S^{\rho_{sr}}\an{\nset\setminus(I_s\cup I_r\cup I_t)}$  and the splittings $\Sigma_{\rho', \nset\setminus (I_s\cup I_t)}$ and $\Sigma_{\rho', \nset\setminus (I_s\cup I_r)}$ satisfy consequently
\[ \Sigma_{\rho', \nset\setminus (I_s\cup I_t)}(I)=\Sigma_{\rho', \nset\setminus (I_s\cup I_r)}(I)
\]
for all $I\in\operatorname{Obj}(\lozenge^{\rho'})$ with $I \subseteq \nset\setminus(I_s\cup I_t\cup I_r)$. 

By the claim proved in \cite[Theorem 3.5]{HeJo20} there is consequently a linear splitting 
$\Sigma_{\rho',\nset\setminus I_s}$
of $\E_{\rho', \nset\setminus I_s}$, such that
\begin{equation}\label{comp_obtained_splittings}
 \Sigma_{\rho', \nset\setminus I_s}\an{\nset\setminus (I_s\cup I_t)}=\Sigma_{\rho', \nset\setminus (I_s\cup I_t)}
\end{equation}
for all $t\in\lset\setminus \{s\}$. 

Apply this to each $l$-partition $\rho'$ in the $S_n$-orbit of $\rho$ and obtain linear splittings of the $(l-1)$-sides of $\E^{\rho'}$, that are compatible with the given linear splittings of its $(l-2)$-sides.
By the condition (2) in the hypotheses of the theorem, 
  \begin{equation}\label{condition_comp_sym_level-2}
  \Phi_\sigma(J)\circ \Sigma_{\rho', \nset\setminus I}(J)=\Sigma_{\sigma(\rho'),\nset\setminus\sigma(I)}(\sigma(J))\circ \Psi_\sigma(J)
  \end{equation}
  follows immediately for all $\sigma\in S_n$ and for all $J\in\operatorname{Obj}(\lozenge^{\rho'})$ with $J\subsetneq \nset\setminus I$.

\medskip

Now choose $\rho'$ in the $S_n$-orbit of $\rho$ and for $I\in\rho'$ define
\[ \tilde\Sigma_{\rho', \nset\setminus I}\colon \overline{\E^{\rho'}}^{\nset\setminus I}\to \E^{\rho', \nset\setminus I},
\]
by
\[ \tilde\Sigma_{\rho', \nset\setminus I}(J)=\Sigma_{\rho', \nset\setminus I}(J)
\]
for $J\in\operatorname{Obj}(\lozenge^{\rho'})$ with $J\subsetneq \nset\setminus I$ and 
\[ \tilde\Sigma_{\rho',\nset\setminus I}(\nset\setminus I)= \frac{1}{n!}\cdot_{K}\sum^{K}_{\sigma\in S_n} \left(\Phi_{\sigma\inv}\circ \Sigma_{\sigma(\rho'),\nset\setminus\sigma(I)}\circ \Psi_{\sigma}\right)(\nset\setminus I)
\]
for any $K\in \rho'\setminus\{I\}$, where the scalar multiplication $\cdot_K$ and the addition $\sum^K$ are the scalar multiplication and the addition in the vector bundle
\[ p^{\nset\setminus I}_{\nset\setminus (I\cup K)}\colon \E^{\rho'}(\nset\setminus I)\to \E^{\rho'}(\nset\setminus (I\cup K)).
\] 
This sum is well-defined since for all such $K$ and for all $\sigma\in S_n$
\begin{equation*}
\begin{split}
&p^{\nset\setminus I}_{\nset\setminus (I\cup K)}\circ \Phi_{\sigma\inv}(\nset\setminus\sigma(I))\circ \Sigma_{\sigma(\rho'),\nset\setminus\sigma(I)}(\nset\setminus \sigma(I))\circ \Psi_{\sigma}(\nset\setminus I)\\
&\,\,=
\Phi_{\sigma\inv}(\nset\setminus\sigma(I\cup K))\circ p^{\nset\setminus\sigma(I)}_{\nset\setminus\sigma(I\cup K)} \circ \Sigma_{\sigma(\rho'), \nset\setminus\sigma(I)}(\nset\setminus\sigma(I))\circ \Psi_{\sigma}(\nset\setminus I)\\
&\overset{\eqref{comp_obtained_splittings}}{=}\Phi_{\sigma\inv}(\nset\setminus\sigma(I\cup K)) \circ \Sigma_{\sigma(\rho'), \nset\setminus\sigma(I)}(\nset\setminus\sigma(I\cup K))\circ p^{\nset\setminus\sigma(I)}_{\nset\setminus\sigma(I\cup K)}\circ \Psi_{\sigma}(\nset\setminus I)\\
&\overset{\eqref{condition_comp_sym_level-2} }{=} \Sigma_{\rho', \nset\setminus I}(\nset\setminus(I\cup K))\circ \Psi_{\sigma\inv}(\nset\setminus\sigma(I\cup K)) \circ  \Psi_{\sigma}(\nset\setminus (I\cup K))\circ p^{\nset\setminus I}_{\nset\setminus (I\cup K)}\\
&\,\,= \Sigma_{\rho', \nset\setminus I}(\nset\setminus(I\cup K))\circ p^{\nset\setminus I}_{\nset\setminus (I\cup K)},
\end{split}
\end{equation*}
which does not depend on $\sigma$.

For $\mathbf x\in \overline{\E^{\rho'}}^{\nset\setminus I}(\nset\setminus I)$ and $K\neq J\in\rho'\setminus\{I\}$
 \begin{equation*}
 \begin{split}
 &n!\cdot_{J}\sum^{K}_{\sigma\in S_n}(\Phi_{\sigma\inv}\circ \tilde\Sigma_{\sigma(\rho), \sigma(I)}\circ \Psi_{\sigma})(\mathbf x)\\
 &=\underset{n!-\text{times}}{\underbrace{\left(\sum^{K}_{\sigma\in S_n}(\Phi_{\sigma\inv}\circ \tilde\Sigma_{\sigma(\rho),\sigma(I)}\circ \Psi_{\sigma})(\mathbf x)\right)\dvplus{}{J}\ldots\dvplus{}{J}\left(\sum^{K}_{\sigma\in S_n}(\Phi_{\sigma\inv}\circ \tilde\Sigma_{\sigma(\rho),\sigma(I)}\circ \Psi_{\sigma})(\mathbf x)\right)}}.
 \\
 \end{split}
 \end{equation*}
 Since all summands can be added with each other over $\nset\setminus K$ and over $\nset \setminus J$,  a reordering of the summands over $\nset\setminus  K$ in each sum and the interchange law\footnote{For $n=2$ this is $\left(\mathbf x_{\id}\dvplus{}{K}\mathbf x_{(12)}\right)\dvplus{}{J}\left(\mathbf x_{(12)}\dvplus{}{K}\mathbf x_{\id}\right)=\left(\mathbf x_{\id}\dvplus{}{J}\mathbf x_{(12)}\right)\dvplus{}{K}\left(\mathbf x_{(12)}\dvplus{}{J}\mathbf x_{\id}\right)$.
}  yield
 \begin{equation*}
 \begin{split}
 &\underset{n!-\text{times}}{\underbrace{\left(\sum^{J}_{\sigma\in S_n}(\Phi_{\sigma\inv}\circ \tilde\Sigma_{\sigma(\rho),\sigma(I)}\circ \Psi_{\sigma})(\mathbf x)\right)\dvplus{}{K}\ldots\dvplus{}{K}\left(\sum^{J}_{\sigma\in S_n}(\Phi_{\sigma\inv}\circ \tilde\Sigma_{\sigma(\rho),\sigma(I)}\circ \Psi_{\sigma})(\mathbf x)\right)}}\\
 &=n!\cdot_{K}\left(\sum^{J}_{\sigma\in S_n}(\Phi_{\sigma\inv}\circ \tilde\Sigma_{\sigma(\rho),\sigma(I)}\circ \Psi_{\sigma})(\mathbf x)\right).
 \end{split}
 \end{equation*}
 Thus by multiplying both sides with $\frac{1}{n!}$ over $\nset\setminus K$ and with 
 $\frac{1}{n!}$ over $\nset\setminus J$:
  \begin{equation*}
 \begin{split}
 \frac{1}{n!}\cdot_{K}\left(\sum^K_{\sigma\in S_n} (\Phi_{\sigma\inv}\circ 
 \tilde\Sigma_{\sigma(\rho),\sigma(I)}\circ \Psi_{\sigma})(\mathbf x)\right)=\frac{1}{n!}
 \cdot_{J}\left(\sum^J_{\sigma\in S_n}(\Phi_{\sigma\inv}\circ \tilde\Sigma_{\sigma(\rho),\sigma(I)}\circ \Psi_{\sigma})(\mathbf x)\right).
 \end{split}
 \end{equation*}

Hence the definition of $\tilde\Sigma_{\rho', \nset\setminus I}$ does not depend on the choice of $K$, and so for each $K\in \rho'\setminus \{I\}$ the map
\[ \tilde\Sigma_{\rho', \nset\setminus I}(\nset\setminus I)\colon \overline{\E^{\rho'}}(\nset\setminus I)\to \E^{\rho'}(\nset\setminus I)
\]
is a vector bundle morphism over 
\[\tilde\Sigma_{\rho', \nset\setminus I}(\nset\setminus(I\cup K))\colon \overline{\E^{\rho'}}(\nset\setminus (I\cup K))\to \E^{\rho'}(\nset\setminus (I\cup K)).
\]
Assume that 
\[ \tilde\Sigma_{\rho', \nset\setminus I}(\nset\setminus I)\left((a_J)_{J\in \rho'\setminus \{I\}}\right)=\tilde\Sigma_{\rho', \nset\setminus I}(\nset\setminus I)\left((b_J)_{J\in \rho'\setminus \{I\}}\right)
\]
for $\left((a_J)_{J\in \rho'\setminus \{I\}}\right)$ and $\left((b_J)_{J\in \rho'\setminus \{I\}}\right)$ in 
$\overline{\E^{\rho'}}(\nset\setminus I)$. Then for each $K\in\rho'\setminus \{I\}$
\begin{equation*}
\begin{split}
\Sigma_{\rho', \nset\setminus (I\cup K)}(\nset\setminus(I\cup K))\left((a_J)_{J\in \rho'\setminus \{I,K\}}\right)&
=\tilde\Sigma_{\rho', \nset\setminus I}(\nset\setminus(I\cup K))\left((a_J)_{J\in \rho'\setminus \{I,K\}}\right)\\
&=p^{\nset\setminus I}_{\nset\setminus (I\cup K)}\left(\tilde\Sigma_{\rho', \nset\setminus I}(\nset\setminus I)\left((a_J)_{J\in \rho'\setminus \{I\}}\right)\right)\\
&=p^{\nset\setminus I}_{\nset\setminus (I\cup K)}\left(\tilde\Sigma_{\rho', \nset\setminus I}(\nset\setminus I)\left((b_J)_{J\in \rho'\setminus \{I\}}\right)\right)\\
&=\tilde\Sigma_{\rho', \nset\setminus I}(\nset\setminus(I\cup K))\left((b_J)_{J\in \rho'\setminus \{I,K\}}\right)\\
&=\Sigma_{\rho', \nset\setminus (I\cup K)}(\nset\setminus(I\cup K))\left((b_J)_{J\in \rho'\setminus \{I,K\}}\right).
\end{split}
\end{equation*}
Since $\Sigma_{\rho', \nset\setminus (I\cup K)}$ is a linear splitting of $\E_{\rho', \nset\setminus(I\cup K)}$, hence a monomorphism of vector bundles, 
\[(a_J)_{J\in \rho'\setminus \{I,K\}}=(b_J)_{J\in \rho'\setminus \{I,K\}}.
\]
But since $K\in\rho'\setminus \{I\}$ was arbitrary, this shows 
\[(a_J)_{J\in \rho'\setminus \{I\}}=(b_J)_{J\in \rho'\setminus \{I\}}.
\]
This shows that $\tilde\Sigma_{\rho', \nset\setminus I}$ is a monomorphism of $(l-1)$-fold vector bundles, hence a linear splitting of the $(l-1)$-fold vector bundle
$\mathbb E^{\rho',I}$.

\bigskip

 By construction the obtained collection of linear splittings of the sides of the $l$-fold vector bundles $\E^{\rho'}$  for all $\rho'$ in the $S_n$-orbit of $\rho$ 
 \begin{equation*}
  \Phi_\sigma(J)\circ \tilde\Sigma_{\rho', \nset\setminus I}(J)=\tilde\Sigma_{\sigma(\rho'),\nset\setminus \sigma(I)}(\sigma(J))\circ \Psi_\sigma(J)
  \end{equation*}
  for all $I\in \rho'$, $J\in \operatorname{Obj}(\lozenge^{\rho'})$ with $J\subseteq \nset\setminus I$ and all $\sigma\in S_n$.
In addition for $I_1\neq I_2\in\rho'$ and $J\in\operatorname{Obj}(\lozenge^{\rho'}$ such that
\[ J\subseteq (\nset\setminus I_1)\cap(\nset\setminus I_2)=\nset\setminus(I_1\cup I_2),
\]
\[ \tilde\Sigma_{\rho', \nset\setminus I_1}(J)=\Sigma_{\rho', \nset\setminus(I_1\cup I_2)}(J)=\tilde\Sigma_{\rho', \nset\setminus I_2}(J).
\]
As a consequence, there exists a linear splitting 
\[ \Sigma^{\rho'}\colon \overline{\E^{\rho'}}\to\E^{\rho'}
\]
of $\E^{\rho'}$ with 
\[ \Sigma^{\rho'}\an{\nset\setminus I}=\tilde\Sigma_{\rho', \nset\setminus I}
\]
for all $I \in \rho'$. 
As above, an averaging of the obtained family of splittings (for all $\rho'$ in the $S_n$-orbit of $\rho$) shows that these linear splittings can be chosen such that 
 \begin{equation}\label{condition_comp_sym_top}
  \Phi_\sigma(J)\circ \Sigma^{\rho'}(J)=\Sigma^{\sigma(\rho')}(\sigma(J))\circ \Psi_\sigma(J)
  \end{equation}
  for all $J\in \operatorname{Obj}(\lozenge^{\rho'})$ and all $\sigma\in S_n$.

The compatibility in \eqref{comp_sigma_S} with the decompositions of the highest order cores is immediate by construction:
For $I_s,I_t\in \rho'$ as above and $J\in\operatorname{Obj}(\lozenge^{\rho_{st}})\subseteq \operatorname{Obj}(\lozenge^{\rho'})$
\[ \Sigma^{\rho'}(J)=\tilde\Sigma_{\rho, \nset\setminus I_s}(J)=\Sigma_{\rho', \nset\setminus(I_s\cup I_t)}(J)=\mathcal S^{\rho_{st}}\circ\iota(J).
\qedhere
\]
\end{proof} 

\subsection{Proof of Theorem \ref{dec_sym_nvb_thm}}

Finally Theorem \ref{dec_sym_nvb_thm} can be proved. This is the subject of the remainder of this section.
	For simplicity, the action $\Psi^{\rm dec}$ of $S_n$ on the decomposed $n$-fold vector bundle $\E^{\rm dec}$ is simply written $\Psi$ in this proof.
	First consider all $2$-cores of $\E$. As explained in Proposition \ref{l_cores_as_partitions}, these are indexed by all possible partitions of $\nset$ in two subsets.
	Consider such a partition $\rho=\{I, \nset\setminus I\}$ for $\emptyset\neq I\subsetneq \nset$
and choose a linear decomposition
\[\begin{tikzcd}
	{E_I^I\times_M E^{\nset\setminus I}_{\nset\setminus I}\times_M E_{\nset}^{\nset}} && {\E^{\rho}(\nset)} \\
	& {E_{\nset\setminus I}^{\nset\setminus I}} && {E_{\nset\setminus I}^{\nset\setminus I}} \\
	{E_I^I} && {E_I^I} \\
	& M && M
	\arrow[from=1-1, to=2-2]
	\arrow[from=1-1, to=3-1]
	\arrow[from=3-1, to=4-2]
	\arrow[ from=2-2, to=4-2]
	\arrow["{\widetilde{\mathcal S^{\rho}}}", from=1-1, to=1-3]
	\arrow[from=1-3, to=2-4]
	\arrow[ from=1-3, to=3-3]
	\arrow[from=3-3, to=4-4]
	\arrow[from=2-4, to=4-4]
	\arrow["{\id\qquad}", from=2-2, to=2-4]
	\arrow["{\id}", from=4-2, to=4-4]
	\arrow["{\id }", from=3-1, to=3-3]
\end{tikzcd}\]
of the $2$-core $\E^{\rho}$, which is a double vector bundle with sides $E_I^I$ and $E^{\nset\setminus I}_{\nset\setminus I}$ and with core $ E_{\nset}^{\nset}$.
After such a decomposition $\mathcal S^{\rho'} $  of $\E^{\rho'} $ has been chosen for each partition $\rho'$ of $\nset$ in two sets, choose again a fixed such partition $\rho=\{I, \nset\setminus I\}$ for $\emptyset\neq I\subsetneq \nset$ and define
\[ \mathcal S^\rho\colon \E^{{\rm dec}, \rho}\to \E^\rho
\]
by
\begin{equation}\label{level_2_sym} \mathcal S^{\rho}(\nset):=\frac{1}{n! }\cdot_{E_I^I}\sum_{\sigma\in S_{n}}^{E_I^I}\left(\Phi_{\sigma\inv}\circ \widetilde{\S^{\sigma(\rho)}}\circ\Psi_\sigma\right)(\nset),
\end{equation}
as well as 
\[ \mathcal S^\rho(I)=\id_{E^{I}_{I}}\qquad \mathcal S^\rho(\nset\setminus I)=\id_{E^{\nset\setminus I}_{\nset\setminus I}}
\]
and $\mathcal S^\rho(\emptyset)=\id_M$, respectively.
\eqref{level_2_sym} is well-defined since for all $\sigma\in S_{n}$, 
\begin{equation*}
\begin{split}
 p^{\nset}_{I}\circ \Phi_{\sigma\inv}(\nset)\circ \widetilde{\S^{\sigma(\rho)}}(\nset)\circ \Psi_\sigma(\nset)
&=\Phi_{\sigma\inv}(\sigma(I))\circ p^{\nset}_{\sigma(I)}\circ \widetilde{\S^{\sigma(\rho)}}(\nset)\circ \Psi_\sigma(\nset)\\
&=\Phi_{\sigma\inv}(\sigma(I))\circ  \id_{E_{\sigma(I)}^{\sigma(I)}}\circ p^{\nset}_{\sigma(I)}\circ\Psi_\sigma(\nset)\\
&=\Phi_{\sigma\inv}(\sigma(I))\circ\Psi_\sigma(I)\circ p^{\nset}_{I}\\
&=\left(\epsilon(\sigma, I)\cdot\id_{E_{I}^{I}}\right)\circ \left(\epsilon(\sigma, I)\cdot\id_{E_{I}^{I}}\right)\circ p^{\nset}_{I}=p_{I}^{\nset},
\end{split}
\end{equation*}
since $\Phi_{\sigma\inv}(\sigma(I))$ and $\Psi_\sigma(I)$, being the restrictions to $\E^{\sigma(\rho)}(\sigma(I))=E_{\sigma(I)}^{\sigma(I)}$ and $\E^{{\rm dec}, \rho}(I)=E_{I}^{I}$ of $\Phi_{\sigma\inv}(\sigma(I))$ and $\Psi_{\sigma}(I)$, must equal $\epsilon(\sigma\inv, \sigma(I))\cdot\id_{E_{\sigma(I)}^{\sigma(I)}}=\epsilon(\sigma, I)\cdot\id_{E_{I}^{I}}$ and $\epsilon(\sigma, I)\cdot\id_{E_{I}^{I}}$, respectively.
Further, by the interchange law, \eqref{level_2_sym} does not change if $I$ is replaced by $\nset\setminus I$.

By construction, \begin{equation}\label{gen_eq_level_2}
\Phi_\sigma\circ \mathcal S^\rho=\mathcal S^{\sigma(\rho)}\circ \Psi_\sigma\colon \E^{{\rm dec}, \rho}\rightarrow (\E^{\sigma(\rho)})^\sigma
\end{equation}
for all $\sigma\in S_n$ and all partitions $\rho$ of $\nset$ in two subsets.
All the $2$-partitions of $\nset$ have the unique common $1$-coarsement $\underline\rho=\{\nset\}$. Since the restriction to the ultracore of each decomposition of a $2$-core is the identity $\id_{\E^{\nset}_{\nset}}$, all the obtained $2$-core decompositions are compatible as in \eqref{compatible_dec}.

By Proposition \ref{missing_step_sym} there exist linear splittings of the $3$-cores of $\E$, which are symmetrically compatible and also compatible with all previously chosen decompositions of the $2$-cores. By Proposition \ref{core_split_to_dec} there exist then symmetrically compatible decompositions of all the $3$-cores of $\E$, that are compatible as in \eqref{compatible_dec}. A recursive use of Propositions \ref{missing_step_sym}
and \ref{core_split_to_dec} yields then for each $l=3,\ldots,n$ symmetrically compatible decompositions of all $l$-cores, which are compatible as in \eqref{compatible_dec}. The claim is then proved at the last step $l=n$.

\def\cprime{$'$} \def\polhk#1{\setbox0=\hbox{#1}{\ooalign{\hidewidth
  \lower1.5ex\hbox{`}\hidewidth\crcr\unhbox0}}} \def\cprime{$'$}
  \def\cprime{$'$}

\end{document}